%%% CHANGES:
%%% VERSION 2
%%% asymptotic -> eventual
%%% role of trivial partitions clarified in places
%%% the analysis of arbitrary imprimitive groups made more explicit (Theorems 4.14 and 4.15)
%%% brief summary of results, focusing on how fast the eventual behaviour is reached (Corollary 4.18)
%%% VERSION 3
%%% references updated
%%% typos fixed
%%% VERSION 4
%%% extensively reorganized and rewritten
%%% misprints corrected

\documentclass[a4paper]{amsart}

\usepackage[latin1]{inputenc}
\usepackage{amssymb,amsmath,amsthm}
\usepackage{amsfonts}
\usepackage{paralist}
\usepackage{enumitem}
\usepackage{booktabs}
\usepackage{ifthen}

\usepackage{url}
\usepackage{hyphenat}

\newcommand{\vect}[1]{\ensuremath{\mathbf{#1}}}
\newcommand{\card}[1]{\ensuremath{\lvert{#1}\rvert}}

\newcommand{\IN}{\ensuremath{\mathbb{N}}}
\newcommand{\IF}{\ensuremath{\mathbb{F}}}
\newcommand{\allpermutations}{\ensuremath{\mathbb{P}}}

\newcommand{\nset}[1]{\ensuremath{[{#1}]}}

\newcommand{\subperm}[2]{\ensuremath{{#1}_{#2}}}
\newcommand{\subpermsimple}[2]{\ensuremath{{#1}{\upharpoonright}_{#2}}}
\newcommand{\substring}[2]{\ensuremath{{#1}[{#2}]}}
\newcommand{\maxintervals}[1]{\ensuremath{I_{#1}}}
\newcommand{\maxintervalsmiddle}[1]{\ensuremath{I_{#1}^{-}}}
\newcommand{\interwoven}{\ensuremath{\propto}}
\newcommand{\notinterwoven}{\ensuremath{\not\propto}}
\newcommand{\maxblocksize}{\ensuremath{M}}

\newcommand{\symm}[1]{\ensuremath{S_{#1}}}         % symmetric group
\newcommand{\alt}[1]{\ensuremath{A_{#1}}}          % alternating group
\newcommand{\cycl}[1]{\ensuremath{Z_{#1}}}         % natural cyclic group
\newcommand{\dihed}[1]{\ensuremath{D_{#1}}}        % natural dihedral group
\newcommand{\compalt}[1]{\ensuremath{O_{#1}}}      % odd n-permutations; complement of alternating group
\newcommand{\gensg}[1]{\ensuremath{\langle {#1} \rangle}}
\newcommand{\asc}[1]{\ensuremath{\iota_{#1}}}      % identity (ascending) permutation
\newcommand{\desc}[1]{\ensuremath{\delta_{#1}}}    % descending permutation
\newcommand{\natcycle}[1]{\ensuremath{\zeta_{#1}}} % natural cycle (1 2 ... n)
\newcommand{\dja}[2]{\ensuremath{\nu^{({#1})}_{#2}}}
\newcommand{\ajd}[2]{\ensuremath{\lambda^{({#1})}_{#2}}}

\newcommand{\patt}[2]{\ensuremath{\Patl[{#1}]{#2}}}

\newcommand{\interval}[2]{\ensuremath{[{#1},{#2}]}}
\newcommand{\eqclass}[2]{\ensuremath{{#1}/{#2}}}
\newcommand{\jumps}[1]{\ensuremath{J({#1})}}

\newcommand{\interchange}[1]{\ensuremath{W_{#1}}}
\newcommand{\oddeven}[1]{\ensuremath{\text{\textit{\OE}}_{#1}}}
\newcommand{\grOE}[1]{\ensuremath{\Xi_{#1}}}

\DeclareMathOperator{\Sub}{Sub}                  % set of subgroups
\DeclareMathOperator{\red}{red}                  % reduced form
\DeclareMathOperator{\Pat}{Pat}
\DeclareMathOperator{\Comp}{Comp}
\DeclareMathOperator{\gPat}{gPat}
\DeclareMathOperator{\gComp}{gComp}
\DeclareMathOperator{\Av}{Av}
\DeclareMathOperator{\Aut}{Aut}
\DeclareMathOperator{\orbits}{Orb}
\DeclareMathOperator{\reversals}{rev}
\DeclareMathOperator{\Part}{Part}
\DeclareMathOperator{\IntPart}{IntPart}

\newcommand{\AGL}[2]{\ensuremath{\mathit{AGL}_{#1}({#2})}}
\newcommand{\AGammaL}[2]{\ensuremath{\mathit{A \Gamma L}_{#1}({#2})}}
\newcommand{\PGL}[2]{\ensuremath{\mathit{PGL}_{#1}({#2})}}
\newcommand{\PGammaL}[2]{\ensuremath{\mathit{P \Gamma L}_{#1}({#2})}}
\newcommand{\PSL}[2]{\ensuremath{\mathit{PSL}_{#1}({#2})}}

\newcommand{\Compn}[2][n]{\ifthenelse{\equal{#2}{}}{\Comp^{({#1})}}{\Comp^{({#1})}{#2}}}
\newcommand{\Patl}[2][\ell]{\ifthenelse{\equal{#2}{}}{\Pat^{({#1})}}{\Pat^{({#1})}{#2}}}
\newcommand{\gCompn}[2][n]{\ifthenelse{\equal{#2}{}}{\gComp^{({#1})}}{\gComp^{({#1})}{#2}}}
\newcommand{\gPatl}[2][\ell]{\ifthenelse{\equal{#2}{}}{\gPat^{({#1})}}{\gPat^{({#1})}{#2}}}

\theoremstyle{plain}
\newtheorem{theorem}{Theorem}[section]
\newtheorem{proposition}[theorem]{Proposition}
\newtheorem{lemma}[theorem]{Lemma}
\newtheorem{corollary}[theorem]{Corollary}

\newtheorem{claim}{Claim}
\numberwithin{claim}{theorem}

\theoremstyle{definition}
\newtheorem{definition}[theorem]{Definition}
\newtheorem{example}[theorem]{Example}

\newtheorem{fact}[theorem]{Fact}

\theoremstyle{remark}
\newtheorem{remark}[theorem]{Remark}

\numberwithin{equation}{section}

%% For cross-references to second-level enumerated lists.
% \makeatletter
% \renewcommand\p@enumii{}
% \makeatother

\hyphenation{Bool-ean de-cades Ki-sie-le-wicz}

\begin{document}
\title[Permutation groups arising from pattern involvement]{Permutation groups \\ arising from pattern involvement}
\author{Erkko Lehtonen}
\address{Technische Universit\"at Dresden,
Institut f\"ur Algebra \\
01062 Dresden,
Germany}

%%%%%%%%%%%%%%%%%%%%%%%%%%%%%%%%%%%%%%%%%%%%%%%%%%

\begin{abstract}
\noindent
For an arbitrary finite permutation group $G$, subgroup of the symmetric group $\symm{\ell}$, we determine
the permutations involving only members of $G$ as $\ell$\hyp{}patterns,
i.e., avoiding all patterns in the set $\symm{\ell} \setminus G$.
The set of all $n$\hyp{}permutations with this property constitutes again a permutation group.
We consequently refine and strengthen the classification of sets of permutations closed under pattern involvement and composition that is due to Atkinson and Beals.
\end{abstract}

\maketitle

\section{Introduction}

This study combines two different points of view to permutations:
the algebraic one of permutation groups and the combinatorial one of permutation patterns.
Permutation groups are a fundamental and extensively studied topic in classical algebra, and it requires no further introduction here.
The theory of permutation patterns and pattern avoidance has been an active field of research over the past decades.
The basic building block of the theory is the pattern involvement relation, which is a partial order on the set of all finite permutations (see Section~\ref{sec:preliminaries} for precise definitions).
Sets of permutations that are downwards closed with respect to the pattern involvement order are called permutation classes.
Permutation classes can be equivalently described by certain pattern avoidance conditions.

A recurrent theme in the theory of permutation patterns is that of describing and enumerating specific permutation classes, especially classes avoiding a small number of patterns.
Following this line of research, we focus in this paper on permutation classes that are motivated by algebraic considerations.
Namely, there is a perhaps surprising connection between permutation patterns and permutation groups that was observed by P\"oschel and the current author in~\cite{LehPos}:
for any permutation group $G$, a subgroup of the symmetric group $\symm{\ell}$, and for every $n \geq \ell$, the set of $n$\hyp{}permutations involving only members of $G$ as $\ell$\hyp{}patterns is a subgroup of $\symm{n}$; in other words, the class $\Av(\symm{\ell} \setminus G)$ of permutations avoiding the complement of $G$ is comprised of levels that are permutation groups.

All this is closely related to the work of Atkinson and Beals~\cite{AtkBea1999,AtkBea2001} on group classes, that is, permutation classes in which every level is a permutation group.
They determined
how level sequences of group classes eventually behave
(see Theorem~\ref{thm:AtkBea-asymptotic}).
Moreover, they completely and explicitly described those group classes in which every level is a transitive group (see Theorem~\ref{thm:AtkBea-T4}).

The main goal of the current paper is, in other words, to refine and strengthen Atkinson and Beals's results on group classes.
On the one hand, we would like to focus on the local behaviour of the level sequence of $\Av(\symm{\ell} \setminus G)$, for an arbitrary group $G \leq \symm{\ell}$.
On the other hand, we would like to find out how fast this level sequence reaches the
eventual
behaviour predicted by Atkinson and Beals's results.

Our main technical tool is the monotone Galois connection $(\Compn{}, \Patl{})$ between the symmetric groups $\symm{\ell}$ and $\symm{n}$ ($\ell \leq n$), which was introduced by P\"oschel and the current author in \cite{LehPos}.
For subsets $S \subseteq \symm{\ell}$ and $T \subseteq \symm{n}$, $\Patl{T}$ is the set of all $\ell$\hyp{}patterns of members of $T$, and $\Compn{S}$ is the set of all $n$\hyp{}permutations all $\ell$\hyp{}patterns of which belong to $S$ (in other words, all $n$\hyp{}permutations avoiding $\symm{\ell} \setminus S$).
Our goal is to determine for an arbitrary permutation group $G$, a subgroup of $\symm{\ell}$, and for every $n \geq \ell$, the set $\Compn{G}$.
With the help of a ``transitive property'' of the operators $\Compn{}$, $\Patl{}$, it is sufficient to determine only $\Compn[n+1]{G}$ for any $G \leq \symm{n}$ (see Lemma~\ref{lem:Comp-Pat-transitive}).
The analysis is split into several cases corresponding to certain classes of permutation groups (such as intransitive, imprimitive, and primitive groups), which require different proof techniques.

Accordingly, the main results of this paper are of the following form:
\emph{If $G \leq \symm{n}$ is a permutation group belonging to a certain class $\mathcal{C}$ of permutation groups, then $\Compn[n+1]{G}$ equals a certain subgroup of $\symm{n+1}$.}
Sometimes also $\Compn[n+2]{G}$ is described directly.
Repeated applications of such theorems, together with the ``transitive property'' then yield the sequence $G, \Compn[n+1]{G}, \dots, \Compn[n+i]{G}, \dots$ for any group $G \leq \symm{n}$.

The material is organized as follows.
In Section~\ref{sec:preliminaries}, we provide basic definitions related to permutations, permutation patterns, and other notions that will be used in this paper.
Then, in Section~\ref{sec:AtkBea}, we quote the main results from the paper by Atkinson and Beals~\cite{AtkBea2001} and briefly explain our main goals.
We first introduce some notation and tools for the proofs and deal with a few special groups (symmetric and trivial groups, as well as the group generated by the descending permutation) in Section~\ref{sec:tools}.
Each one of the remaining sections deals with a particular type of groups:
alternating groups (Section~\ref{sec:alternating}),
groups containing the natural cycle (Section~\ref{sec:cycle}),
intransitive groups (Section~\ref{sec:intransitive}),
imprimitive groups (Section~\ref{sec:imprimitive}),
and, finally,
primitive groups (Section~\ref{sec:primitive}).
We make some concluding remarks in Section~\ref{sec:concluding}.

%%%%%%%%%%%%%%%%%%%%%%%%%%%%%%%%%%%%%%%%%%%%%%%%%%

\section{Permutation patterns}
\label{sec:preliminaries}

The set of nonnegative integers is denoted by $\IN$, and $\IN_+ := \IN \setminus \{0\}$.
For any $a, b \in \IN$, the \emph{interval} $\{i \in \IN \mid a \leq i \leq b\}$ is denoted by $\interval{a}{b}$.
(Note that $\interval{a}{b} = \emptyset$ if $a > b$.)
For any $n \in \IN_+$, the interval $\interval{1}{n}$ is denoted simply by $\nset{n}$.
% The set of all $\ell$-element subsets of $\nset{n}$ is denoted by $\Snl$.

We recall some standard terminology and notation related to permutations, permutation groups, and permutation patterns.
For more background on these topics, we refer the reader, e.g., to the books by B\'ona~\cite{Bona}, Dixon and Mortimer~\cite{DixMor}, and Kitaev~\cite{Kitaev}.
The set of all permutations of the set $\nset{n}$ (also referred to as \emph{$n$\hyp{}permutations}) is denoted by $\symm{n}$.
Together with the operation of functional composition, it constitutes a group called the \emph{symmetric group \textup{(}of degree $n$\textup{)}.}
The subgrops of the symmetric group $\symm{n}$ are called \emph{permutation groups \textup{(}of degree $n$\textup{)}.}
The subgroup generated by a subset $S \subseteq \symm{n}$ is denoted by $\gensg{S}$.
We write $G \leq H$ to denote that $G$ is a subgroup of $H$.
To be precise, when we speak about ``permutation groups'' or ``subgroups'', we usually really mean subuniverses of the symmetric group $\symm{n}$.
It should also be emphasized here that we will always compose functions (in particular, permutations) from right to left, and we often denote functional composition simply by juxtaposition. Thus $fg(x) = (f \circ g)(x) = f(g(x))$.

Any permutation $\pi \in \symm{n}$ induces a bijective map $\hat{\pi}$ on the power set $\mathcal{P}(\nset{n})$, defined as follows: $\hat{\pi} \colon \mathcal{P}(\nset{n}) \to \mathcal{P}(\nset{n})$, $\hat{\pi}(X) := \{\pi(x) \mid x \in X\}$ for all $X \in \mathcal{P}(\nset{n})$.
This further induces a bijection $\hat{\hat{\pi}}$ on $\mathcal{P}(\mathcal{P}(\nset{n}))$ defined as $\hat{\hat{\pi}}(\Pi) \colon \mathcal{P}(\mathcal{P}(\nset{n})) \to \mathcal{P}(\mathcal{P}(\nset{n}))$, $\hat{\hat{\pi}}(\Pi) := \{\hat{\pi}(B) \mid B \in \Pi\}$ for all $\Pi \in \mathcal{P}(\mathcal{P}(\nset{n}))$.
Without any risk of confusion, we will denote the maps $\hat{\pi}$ and $\hat{\hat{\pi}}$ simply by $\pi$.

We will often write permutations $\pi \in \symm{n}$ as strings $\pi_1 \pi_2 \dots \pi_n$, where $\pi_i = \pi(i)$ for all $i \in \nset{n}$.
We will also use the conventional cycle notation for permutations: if $a_1, a_2, \dots, a_\ell$ are distinct elements of $\nset{n}$, then $(a_1 \; a_2 \; \cdots \; a_\ell)$ denotes the permutation that maps $a_\ell$ to $a_1$ and $a_i$ to $a_{i+1}$ for $1 \leq i \leq \ell - 1$ and keeps the remaining elements fixed.
Such a permutation is called a \emph{cycle}, or an \emph{$\ell$\hyp{}cycle.}
Every permutation is a product of pairwise disjoint cycles.

\begin{definition}
The following permutations will play an important role throughout the paper:
\begin{itemize}
\item the \emph{identity permutation,} or the \emph{ascending permutation} $\asc{n} := 12 \dots n$,
\item the \emph{descending permutation} $\desc{n} := n (n-1) \dots 1$,
\item the \emph{natural cycle} $\natcycle{n} := 2 3 \dots n 1 = (1 \; 2 \; \cdots \; n)$.
\end{itemize}

The subgroup $\gensg{\natcycle{n}}$ of $\symm{n}$ generated by the natural cycle $\natcycle{n}$ is called the \emph{natural cyclic group} of degree $n$ and is denoted by $\cycl{n}$.
The subgroup $\gensg{\natcycle{n}, \desc{n}}$ is called the \emph{natural dihedral group} of degree $n$ and is denoted by $\dihed{n}$.
\end{definition}

The \emph{direct sum} $\sigma \oplus \tau$ and the \emph{skew sum} $\sigma \ominus \tau$ of two permutations $\sigma \in \symm{m}$ and $\tau \in \symm{n}$ are permutations of the set $\nset{m+n}$ and they are given by the following rules:
\begin{align*}
(\sigma \oplus \tau)(i) &=
\begin{cases}
\sigma(i), & \text{if $1 \leq i \leq m$,} \\
m + \tau(i - m), & \text{if $m+1 \leq i \leq m+n$,}
\end{cases}
\\
(\sigma \ominus \tau)(i) &=
\begin{cases}
\makebox[0pt][l]{$n + \sigma(i)$,}
\phantom{m + \tau(i - m),} & \text{if $1 \leq i \leq m$,} \\
\tau(i - m), & \text{if $m+1 \leq i \leq m+n$.}
\end{cases}
\end{align*}

For any string $\vect{a} = a_1 a_2 \dots a_n$ and an index set $I \subseteq \nset{n}$, $I = \{i_1, i_2, \dots, i_\ell\}$ with $i_1 < i_2 < \dots < i_\ell$, we denote the scattered substring $a_{i_1} a_{i_2} \dots a_{i_\ell}$ of $\vect{a}$ by $\substring{\vect{a}}{I}$.
For any string $\vect{u} = u_1 u_2 \dots u_\ell$ of distinct integers, the \emph{reduction} or \emph{reduced form} of $\vect{u}$, denoted by $\red(\vect{u})$, is the permutation obtained from the string $\vect{u}$ by replacing its $i$\hyp{}th smallest entry with $i$, for $1 \leq i \leq \ell$.

A permutation $\tau = \tau_1 \tau_2 \dots \tau_\ell \in \symm{\ell}$ is a \emph{pattern} (or an \emph{$\ell$\hyp{}pattern,} if we want to emphasize the number $\ell$) of a permutation $\pi = \pi_1 \pi_2 \dots \pi_n \in \symm{n}$, or $\pi$ \emph{involves} $\tau$, denoted $\tau \leq \pi$, if there exists a substring $\substring{\pi}{I} = \pi_{i_1} \pi_{i_2} \dots \pi_{i_\ell}$ of $\pi$ (where $I = \{i_1, i_2, \dots, i_\ell\}$, $i_1 < i_2 < \dots < i_\ell$) such that $\red(\substring{\pi}{I}) = \tau$.
We denote by $\subperm{\pi}{I}$ the pattern of $\pi$ corresponding to the index set $I \subseteq \nset{n}$, i.e., $\subperm{\pi}{I} := \red(\substring{\pi}{I})$.
If $\tau \nleq \pi$, the permutation $\pi$ is said to \emph{avoid} $\tau$.
The pattern involvement relation $\leq$ is a partial order on the set $\allpermutations := \bigcup_{n \geq 1} \symm{n}$ of all finite permutations.
Furthermore, every covering relationship in this order links permutations of two consecutive degrees.

\begin{fact}
\label{fact:between}
If $\ell \leq m \leq n$ and $\sigma \in \symm{\ell}$, $\tau \in \symm{n}$ and $\sigma \leq \tau$, then there exists $\pi \in \symm{m}$ such that $\sigma \leq \pi \leq \tau$.
(For a proof of this well\hyp{}known fact, see, e.g., \cite[Lemma~2.8]{LehPos}.)
\end{fact}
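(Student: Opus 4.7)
The plan is to unfold the definition of pattern involvement and construct $\pi$ directly. Since $\sigma \leq \tau$, there is an index set $I \subseteq \nset{n}$ with $|I| = \ell$ and $\red(\substring{\tau}{I}) = \sigma$. The first step is to enlarge $I$ to an index set $J$ of size $m$ with $I \subseteq J \subseteq \nset{n}$; such an enlargement exists because $|\nset{n} \setminus I| = n - \ell \geq m - \ell$, so we may adjoin any $m - \ell$ elements of $\nset{n} \setminus I$ to $I$. Setting $\pi := \red(\substring{\tau}{J}) \in \symm{m}$ already gives $\pi \leq \tau$ by construction, with $J$ as the witnessing index set.

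To see $\sigma \leq \pi$, I would use the fact that reduction commutes with taking scattered substrings: if the increasing bijection $J \to \nset{m}$ sends $I$ to $I' \subseteq \nset{m}$, then $\red(\substring{\pi}{I'}) = \red(\substring{\tau}{I}) = \sigma$, since reduction preserves the relative order of the entries indexed by any fixed subset of positions. Hence $\sigma \leq \pi$ via the witness $I'$.

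The main (minor) obstacle is the bookkeeping for this ``reduction of a reduction'' identity; it is intuitively immediate but requires tracking two index sets simultaneously. A cleaner alternative that sidesteps the identity is induction on $n - m$. The base case $m = n$ is trivial with $\pi = \tau$, and for the inductive step it suffices to produce some $\pi' \in \symm{n-1}$ with $\sigma \leq \pi' \leq \tau$ whenever $\ell \leq n - 1$. This is obtained by picking any $j \in \nset{n} \setminus I$ (which exists since $|I| = \ell < n$) and setting $\pi' := \red(\substring{\tau}{\nset{n} \setminus \{j\}})$; then $\pi' \leq \tau$ by the removed index, and $\sigma \leq \pi'$ because the entries indexed by $I$ retain their relative order after the single deletion. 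Iterating this one-step construction from $n$ down to $m$ delivers the desired $\pi$.
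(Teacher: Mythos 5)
Your argument is correct and is exactly the standard proof of this fact (the paper itself gives no proof, deferring to \cite[Lemma~2.8]{LehPos}, which proceeds the same way): enlarge the witnessing index set $I$ to a set $J$ of size $m$ and take $\pi := \red(\substring{\tau}{J})$, using that reduction preserves the relative order of any fixed subset of entries. Both your direct construction and your one-index-at-a-time induction are sound.
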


Downward closed subsets of $\allpermutations$ under the pattern involvement order are called \emph{permutation classes.}
For a permutation class $C$ and for $n \in \IN_+$, the set $C^{(n)} := C \cap \symm{n}$ is called the \emph{$n$\hyp{}th level} of $C$.
For any set $B \subseteq \allpermutations$, let $\Av(B)$ be the set of all permutations that avoid every member of $B$.
It is clear that $\Av(B)$ is a permutation class and, conversely, every permutation class is of the form $\Av(B)$ for some $B \subseteq \allpermutations$.

\begin{definition}
Let $\ell, n \in \IN_+$ with $\ell \leq n$.
We denote by $\patt{\ell}{\pi}$ the set of all $\ell$\hyp{}patterns of $\pi$, i.e., $\patt{\ell}{\pi} := \{\tau \in \symm{\ell} : \tau \leq \pi\}$. 
We say that a permutation $\tau \in \symm{n}$ is \emph{compatible} with a set $S \subseteq \symm{\ell}$ of $\ell$\hyp{}permutations if $\patt{\ell}{\tau} \subseteq S$. For $S \subseteq \symm{\ell}$, $T \subseteq \symm{n}$, we write
\begin{align*}
\Compn{S} & := \{\tau \in \symm{n} \mid \patt{\ell}{\tau} \subseteq S\}
= (\Av(\symm{\ell} \setminus S))^{(n)}
\\
\Patl{T} & := \bigcup_{\tau \in T} \patt{\ell}{\tau}.
\end{align*}
\end{definition}

\begin{example}
\label{ex:asc-desc-cycle}
For any $\ell, n \in \IN_+$ with $\ell < n$,
\[
\patt{\ell}{\asc{n}} = \{\asc{\ell}\},
\qquad
\patt{\ell}{\desc{n}} = \{\desc{\ell}\},
\qquad
\patt{\ell}{\natcycle{n}} = \{\asc{\ell}, \natcycle{\ell}\}.
\]
\end{example}

\begin{remark}
\label{rem:level-sequences}
For any permutation class $C$ and for any $\ell, m, n \in \IN_+$ with $\ell \leq m \leq n$, it obviously holds that
\[
\Patl[\ell]{C^{(m)}} \subseteq C^{(\ell)},
\qquad
C^{(n)} \subseteq \Compn[n]{C^{(m)}}.
\]
In other words, given just the $m$\hyp{}th level of a permutation class $C$, the operators $\Patl{}$ and $\Compn{}$ provide lower bounds for lower levels and upper bounds for higher levels of $C$, respectively.
Keeping in mind that every permutation class is of the form $\Av(B) = \bigcap_{m \in \IN_+} \Av(B^{(m)})$ for some $B \subseteq \allpermutations$, and for every $n \in \IN_+$ it holds that $(\Av{B^{(m)}})^{(n)} = \Compn[n]{(\symm{m} \setminus B^{(m)})}$,
we see that studying the operators $\Patl{}$, $\Compn{}$ is essentially the same thing as studying permutation classes.
\end{remark}

\begin{lemma}
\label{lem:Comp-intersection}
For any $S, S' \subseteq \symm{\ell}$, $\Compn[n]{(S \cap S')} = \Compn[n]{S} \cap \Compn[n]{S'}$.
\end{lemma}

\begin{proof}
For any $\pi \in \symm{n}$, it holds that
\begin{align*}
&
\pi \in \Compn[n]{(S \cap S')}
\iff
\Patl[\ell]{\pi} \subseteq S \cap S'
\\ &
\iff
\Patl[\ell]{\pi} \subseteq S \wedge \Patl[\ell]{\pi} \subseteq S'
\\ &
\iff
\pi \in \Compn[n]{S} \wedge \pi \in \Compn[n]{S'}
\\ &
\iff
\pi \in \Compn[n]{S} \cap \Compn[n]{S'}.
\qedhere
\end{align*}
\end{proof}

The operators $\Compn{}$ and $\Patl{}$ are clearly monotone, i.e., if $S \subseteq S'$ then $\Compn{S} \subseteq \Compn{S'}$, and if $T \subseteq T'$ then $\Patl{T} \subseteq \Patl{T'}$.
It was shown in~\cite[Section~3]{LehPos} that $\Compn{}$ and $\Patl{}$ are the upper and lower adjoints of the monotone Galois connection (residuation) between the power sets $\mathcal{P}(\symm{\ell})$ and $\mathcal{P}(\symm{n})$ induced by the pattern avoidance relation $\nleq$.
This means that $\Patl{\Compn{}}$ and $\Compn{\Patl{}}$ are kernel and closure operators, respectively, i.e., for all $S \subseteq \symm{\ell}$ and $T \subseteq \symm{n}$, it holds that
\[
\begin{array}{c@{\qquad}c}
\Patl{\Compn{S}} \subseteq S, & \Compn{S} = \Compn{\Patl{\Compn{S}}}, \\
T \subseteq \Compn{\Patl{T}}, & \Patl{T} = \Patl{\Compn{\Patl{T}}}.
\end{array}
\]

Our work builds on a few simple yet crucial properties of the operators $\Patl{}$ and $\Compn{}$ that are described in Lemma~\ref{lem:Ppitau-PpiPtau}, Proposition~\ref{prop:S-subgroup}, and Lemma~\ref{lem:Comp-Pat-transitive}.
It is exactly these properties that make a connection between permutation groups and permutation patterns.

\begin{lemma}[{Lehtonen, P\"oschel~\cite[Lemma~2.6(ii)]{LehPos}}]
\label{lem:Ppitau-PpiPtau}
Let $\pi, \tau \in \symm{n}$, and let $\ell \leq n$.
Then $\Patl{\pi \tau} \subseteq (\Patl{\pi}) (\Patl{\tau}) := \{\sigma \sigma' \mid \sigma \in \Patl{\pi},\, \sigma' \in \Patl{\tau}\}$.
\end{lemma}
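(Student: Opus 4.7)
\emph{Proof sketch.}
The plan is to produce, for each $\rho \in \Patl{\pi\tau}$, an explicit factorisation $\rho = \sigma\sigma'$ with $\sigma \in \Patl{\pi}$ and $\sigma' \in \Patl{\tau}$. The natural candidates are read off from the \emph{same} index set that witnesses $\rho$ in $\pi\tau$: write $\rho = \subperm{(\pi\tau)}{I}$ for some $I = \{i_1 < \cdots < i_\ell\} \in \Snl$, and set $J := \tau(I)$. Since $\tau$ is a bijection, $|J| = \ell$; list $J$ in increasing order as $J = \{j_1 < \cdots < j_\ell\}$. Define $\sigma' := \subperm{\tau}{I} \in \Patl{\tau}$ and $\sigma := \subperm{\pi}{J} \in \Patl{\pi}$.

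To verify $\rho = \sigma\sigma'$, I would track ranks. By definition of $\red$, $\sigma'(k)$ equals the position of $\tau(i_k)$ in the sorted listing $j_1 < \cdots < j_\ell$, giving $\tau(i_k) = j_{\sigma'(k)}$; analogously, $\sigma(m)$ is the rank of $\pi(j_m)$ in the value set $\pi(J) = \{\pi(j_1),\ldots,\pi(j_\ell)\}$. Since $\pi(J) = \{(\pi\tau)(i_1),\ldots,(\pi\tau)(i_\ell)\}$, the rank of $(\pi\tau)(i_k) = \pi(j_{\sigma'(k)})$ inside this set is $\sigma(\sigma'(k))$, which by definition of $\rho$ equals $\rho(k)$. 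Hence $\rho = \sigma\sigma' \in (\Patl{\pi})(\Patl{\tau})$.

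I do not anticipate a real obstacle: the whole statement is a bookkeeping identity about how the reduction map interacts with functional composition. The only points requiring attention are the use of bijectivity of $\tau$ to ensure $|J| = |I|$, so that $\subperm{\pi}{J}$ really is an $\ell$-pattern, and keeping the convention $\pi\tau = \pi \circ \tau$ straight so that the ranks compose in the correct order $\sigma \circ \sigma'$ rather than the reverse.
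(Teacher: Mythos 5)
Your argument is correct and complete: taking $J=\tau(I)$, the rank computation showing $\rho(k)=\sigma(\sigma'(k))$ is exactly right, and you correctly use the right-to-left composition convention and the bijectivity of $\tau$ to ensure $\lvert J\rvert=\ell$. The paper itself does not reprove this lemma (it is quoted from the cited reference), but your factorisation via the index sets $I$ and $\tau(I)$ is the standard argument for this fact, so nothing further is needed.
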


\begin{proposition}[{Lehtonen, P\"oschel \cite[Proposition~3.1]{LehPos}}]
\label{prop:S-subgroup}
If $S$ is a subgroup of $\symm{\ell}$, then $\Compn{S}$ is a subgroup of $\symm{n}$.
\end{proposition}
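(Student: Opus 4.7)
The plan is to verify the three standard subgroup axioms for $\Compn{S}$ viewed as a subset of the finite group $\symm{n}$; since $\symm{n}$ is finite, it will suffice to exhibit an element and to check closure under composition.

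First I would check that $\asc{n} \in \Compn{S}$. By Example~\ref{ex:asc-desc-cycle}, $\patt{\ell}{\asc{n}} = \{\asc{\ell}\}$, and since $S$ is a subgroup of $\symm{\ell}$ it contains the identity $\asc{\ell}$; hence $\patt{\ell}{\asc{n}} \subseteq S$, so $\Compn{S}$ is nonempty.

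Next I would establish closure under composition, which is the whole point of Lemma~\ref{lem:Ppitau-PpiPtau}. Suppose $\pi, \tau \in \Compn{S}$, so that $\Patl{\pi} \subseteq S$ and $\Patl{\tau} \subseteq S$. By Lemma~\ref{lem:Ppitau-PpiPtau},
\[
\Patl{\pi\tau} \subseteq (\Patl{\pi})(\Patl{\tau}) \subseteq S \cdot S \subseteq S,
\]
where the last inclusion uses that $S$, being a subgroup of $\symm{\ell}$, is closed under composition. Therefore $\pi\tau \in \Compn{S}$.

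Finally, since $\Compn{S}$ is a nonempty subset of the finite group $\symm{n}$ that is closed under composition, it is automatically closed under inverses and hence a subgroup. The only nontrivial step is the composition step, and there the heavy lifting is done entirely by Lemma~\ref{lem:Ppitau-PpiPtau}; the rest is bookkeeping. I do not foresee a real obstacle, only the need to invoke the right lemma at the right moment.
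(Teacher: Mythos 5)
Your proof is correct and follows exactly the intended route: the paper quotes this result from Lehtonen--P\"oschel and the argument there is precisely yours, namely closure under composition via Lemma~\ref{lem:Ppitau-PpiPtau} together with the fact that a nonempty subset of a finite group closed under the group operation is a subgroup. Nothing is missing.
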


Using the standard terminology of the theory of permutation patterns, we can rephrase Proposition~\ref{prop:S-subgroup} as follows:
the set of $n$\hyp{}permutations avoiding the complement of a subgroup of $\symm{\ell}$ is a subgroup of $\symm{n}$.

For $S \subseteq \symm{\ell}$ and $T \subseteq \symm{n}$, write
\[
\gCompn{S} = \gensg{\Compn{S}},
\qquad
\gPatl{T} = \gensg{\Patl{T}}.
\]
By Proposition~\ref{prop:S-subgroup}, the equality $\gCompn{S} = \Compn{S}$ holds whenever $S$ is a subgroup of $\symm{\ell}$.
In fact, it was shown in \cite[Lemma~3.3]{LehPos} that $(\gCompn{}, \gPatl{})$ constitutes a monotone Galois connection between the subgroup lattices $\Sub(\symm{\ell})$ and $\Sub(\symm{n})$ of the symmetric groups $\symm{\ell}$ and $\symm{n}$.
This means that $\gPatl{\gCompn{}}$ and $\gCompn{\gPatl{}}$ are kernel and closure operators, respectively, i.e., for all subgroups $G \leq \symm{\ell}$ and $H \leq \symm{n}$, it holds that
\[
\begin{array}{c@{\qquad}c}
\gPatl{\gCompn{G}} \leq G, & \gCompn{G} = \gCompn{\gPatl{\gCompn{G}}}, \\
H \leq \gCompn{\gPatl{H}}, & \gPatl{H} = \gPatl{\gCompn{\gPatl{H}}}.
\end{array}
\]

The operators $\Compn{}$ and $\Patl{}$, as well as $\gCompn{}$ and $\gPatl{}$ satisfy the following ``transitive property''.

\begin{lemma}
\label{lem:Comp-Pat-transitive}
\label{lem:gComp-gPat-transitive}
Assume that $\ell \leq m \leq n$.
Then for all subsets $S \subseteq \symm{\ell}$, $T \subseteq \symm{n}$,
\[
\Compn[n]{\Compn[m]{S}} = \Compn[n]{S},
\qquad
\Patl[\ell]{\Patl[m]{T}} = \Patl[\ell]{T},
\]
and for all subgroups $G \leq \symm{\ell}$, $H \leq \symm{n}$,
\[
\gCompn[n]{\gCompn[m]{G}} = \gCompn[n]{G},
\qquad
\gPatl[\ell]{\gPatl[m]{H}} = \gPatl[\ell]{H}.
\]
\end{lemma}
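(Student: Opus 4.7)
The plan is to establish the four identities in order of increasing difficulty: first the set-level ones for $\Pat$ and $\Comp$, and then derive the group-level ones. Both set-level identities rest on the same two ingredients, namely the transitivity of the pattern-involvement order $\leq$ for one direction, and Fact~\ref{fact:between} for the other.

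First I would prove the $\Pat$ identity by a pair of inclusions. The inclusion $\Pat^{(\ell)}(\Pat^{(m)}(T)) \subseteq \Pat^{(\ell)}(T)$ is immediate: any chain $\sigma \leq \tau \leq \pi$ with $\pi \in T$ collapses by transitivity. For the reverse, given $\sigma \in \Pat^{(\ell)}(\pi)$ with $\pi \in T$, Fact~\ref{fact:between} produces an intermediate $\tau \in \symm{m}$ with $\sigma \leq \tau \leq \pi$, placing $\sigma$ in $\Pat^{(\ell)}(\tau) \subseteq \Pat^{(\ell)}(\Pat^{(m)}(T))$. The $\Comp$ identity is dual and uses the same two tools: transitivity supplies $\Comp^{(n)}(S) \subseteq \Comp^{(n)}(\Comp^{(m)}(S))$, since any $\ell$-pattern of an $m$-pattern of $\pi$ is itself an $\ell$-pattern of $\pi$; Fact~\ref{fact:between} supplies the reverse, since any $\ell$-pattern $\sigma$ of $\pi \in \Comp^{(n)}(\Comp^{(m)}(S))$ factors through some $m$-pattern of $\pi$ that is forced to lie in $\Comp^{(m)}(S)$, which in turn forces $\sigma \in S$. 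The $\gComp$ identity then comes for free from Proposition~\ref{prop:S-subgroup}: since $G \leq \symm{\ell}$, both $\Comp^{(m)}(G)$ and $\Comp^{(n)}(\Comp^{(m)}(G))$ are already subgroups, so the angle brackets in $\gComp^{(n)}(\gComp^{(m)}(G))$ do nothing, and the claim reduces to the $\Comp$ identity just shown.

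The main obstacle is the $\gPat$ identity, which does not collapse in the same way because $\Pat^{(m)}(H)$ need not be a subgroup. The inclusion $\gPat^{(\ell)}(H) \subseteq \gPat^{(\ell)}(\gPat^{(m)}(H))$ follows by monotonicity from the $\Pat$ identity just proved, so the crux is the reverse inclusion, for which it suffices to show $\Pat^{(\ell)}(\gPat^{(m)}(H)) \subseteq \gPat^{(\ell)}(H)$. Here I would use the fact that in a finite group the subgroup generated by a set coincides with the submonoid it generates: any $\tau \in \gPat^{(m)}(H)$ factors as $\tau = \alpha_1 \alpha_2 \cdots \alpha_k$ with each $\alpha_i$ an $m$-pattern of some $\pi_i \in H$. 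An iterated application of Lemma~\ref{lem:Ppitau-PpiPtau} in $\symm{m}$ then decomposes any $\sigma \in \Pat^{(\ell)}(\tau)$ as $\sigma = \sigma_1 \sigma_2 \cdots \sigma_k$ with $\sigma_i \in \Pat^{(\ell)}(\alpha_i) \subseteq \Pat^{(\ell)}(\pi_i) \subseteq \Pat^{(\ell)}(H)$; hence $\sigma \in \gPat^{(\ell)}(H)$, as required.
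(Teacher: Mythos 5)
Your proposal is correct and follows essentially the same route as the paper: transitivity of $\leq$ and Fact~\ref{fact:between} for the two set-level identities, Proposition~\ref{prop:S-subgroup} to collapse the generation operators in the $\gComp$ case, and Lemma~\ref{lem:Ppitau-PpiPtau} applied to a factorization of elements of $\gPat^{(m)}(H)$ into products of $m$-patterns for the $\gPat$ case. Your explicit remark that the subgroup generated by a finite set equals the submonoid it generates (so no inverses are needed in the factorization) is a point the paper uses silently.
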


\begin{proof}
We prove first the equality $\Patl[\ell]{\Patl[m]{T}} = \Patl[\ell]{T}$.
Assume that $\sigma \in \Patl[\ell]{\Patl[m]{T}}$.
Then there exists $\pi \in \Patl[m]{T}$ such that $\sigma \leq \pi$.
This in turn implies that there exists $\tau \in T$ such that $\pi \leq \tau$.
Since the pattern involvement relation is transitive, we have $\sigma \leq \tau$, so $\sigma \in \Patl[\ell]{T}$.
Thus the inclusion $\Patl[\ell]{\Patl[m]{T}} \subseteq \Patl[\ell]{T}$ holds.
For the converse inclusion, assume that $\sigma \in \Patl[\ell]{T}$.
Then there exists $\tau \in T$ such that $\sigma \leq \tau$.
By Fact~\ref{fact:between}, there exists $\pi \in \symm{m}$ such that $\sigma \leq \pi \leq \tau$.
This implies that $\pi \in \Patl[m]{T}$; hence $\sigma \in \Patl[\ell]{\Patl[m]{T}}$, so $\Patl[\ell]{T} \subseteq \Patl[\ell]{\Patl[m]{T}}$.

Next we prove the equality $\Compn[n]{\Compn[m]{S}} = \Compn[n]{S}$.
Assume that $\tau \in \Compn[n]{\Compn[m]{S}}$, and
let $\sigma$ be an $\ell$\hyp{}pattern of $\tau$.
By Fact~\ref{fact:between}, there exists $\pi \in \symm{m}$ such that $\sigma \leq \pi \leq \tau$.
Since $\tau \in \Compn[n]{\Compn[m]{S}}$, we have $\pi \in \Compn[m]{S}$. Consequently, $\sigma \in S$.
Thus $\tau \in \Compn[n]{S}$, so the inclusion $\Compn[n]{\Compn[m]{S}} \subseteq \Compn[n]{S}$ holds.
For the converse inclusion, assume that $\tau \in \Compn[n]{S}$.
Let $\pi \in \symm{m}$ such that $\pi \leq \tau$.
Then for every $\sigma \in \symm{\ell}$ such that $\sigma \leq \pi$, we have $\sigma \leq \tau$ by the transitivity of pattern involvement, so $\sigma \in S$.
It follows that $\pi \in \Compn[m]{S}$.
Hence $\tau \in \Compn[n]{\Compn[m]{S}}$, so $\Compn[n]{S} \subseteq \Compn[n]{\Compn[m]{S}}$.

The equality $\gCompn[n]{\gCompn[m]{G}} = \gCompn[n]{G}$ follows from Proposition~\ref{prop:S-subgroup} and what we have shown above.
Namely, for any subgroup $G \leq \symm{\ell}$, we have
\begin{multline*}
\gCompn[n]{G}
= \gensg{\Compn[n]{G}}
= \gensg{\Compn[n]{\Compn[m]{G}}} \\
= \gensg{\Compn[n]{\gensg{\Compn[m]{G}}}}
= \gCompn[n]{\gCompn[m]{G}}.
\end{multline*}

For the last equality $\gPatl[\ell]{\gPatl[m]{H}} = \gPatl[\ell]{H}$,
the inclusion $\gPatl[\ell]{H} \subseteq \gPatl[\ell]{\gPatl[m]{H}}$ can be proved by making use of the monotonicity of $\Patl{}$, as follows:
\begin{align*}
\gPatl[\ell]{H}
&= \gensg{\Patl[\ell]{H}}
= \gensg{\Patl[\ell]{\Patl[m]{H}}} \\
&\subseteq \gensg{\Patl[\ell]{\gensg{\Patl[m]{H}}}}
= \gPatl[\ell]{\gPatl[m]{H}}.
\end{align*}
In order to prove the converse inclusion $\gPatl[\ell]{\gPatl[m]{H}} \subseteq \gPatl[\ell]{H}$, assume that $\sigma \in \gPatl[\ell]{\gPatl[m]{H}}$.
Then there exist $\sigma^1, \sigma^2, \dots, \sigma^p \in \Patl[\ell]{\gPatl[m]{H}}$ such that $\sigma = \sigma^1 \circ \sigma^2 \circ \cdots \circ \sigma^p$.
This in turn means that, for each $i \in \{1, \dots, p\}$, there exists $\tau^i \in \gPatl[m]{H}$ such that $\sigma^i \leq \tau^i$.
Consequently, for each $i \in \{1, \dots, p\}$, there exist $\tau^{i1}, \tau^{i2}, \dots, \tau^{ij_i} \in \Patl[m]{H}$ such that $\tau^i = \tau^{i1} \circ \tau^{i2} \circ \cdots \circ \tau^{ij_i}$.
Now, by applying Lemma~\ref{lem:Ppitau-PpiPtau} and the monotonicity of $\Patl{}$, we obtain, for every $i \in \{1, \dots, p\}$, that
\begin{align*}
\sigma^i
& \in \Patl[\ell]{\tau^i}
= \Patl[\ell]{\tau^{i1} \tau^{i2} \cdots \tau^{ij_i}}
\subseteq (\Patl[\ell]{\tau^{i1}}) (\Patl[\ell]{\tau^{i2}}) \cdots (\Patl[\ell]{\tau^{ij_i}}) \\
& \subseteq (\Patl[\ell]{\Patl[m]{H}}) (\Patl[\ell]{\Patl[m]{H}}) \cdots (\Patl[\ell]{\Patl[m]{H}}) \\
& = (\Patl[\ell]{H}) (\Patl[\ell]{H}) \cdots (\Patl[\ell]{H})
\subseteq \gensg{\Patl[\ell]{H}}
= \gPatl[\ell]{H}.
\end{align*}
Since $\gPatl[\ell]{H}$ is a group, we have $\sigma = \sigma^1 \circ \sigma^2 \circ \cdots \circ \sigma^p \in \gPatl[\ell]{H}$.
\end{proof}

%%%%%%%%%%%%%%%%%%%%%%%%%%%%%%%%%%%%%%%%%%%%%%%%%%

\section{Group classes}
\label{sec:AtkBea}

Atkinson and Beals~\cite{AtkBea1999,AtkBea2001} investigated \emph{group classes,} or \emph{group closed sets,} i.e., permutation classes in which every level is a permutation group.
They determined the eventual behaviour of level sequences of group classes, and we rephrase this result in Theorem~\ref{thm:AtkBea-asymptotic} below.
As the theorem reveals, the level sequence of any group class eventually coincides with one of only a few possible sequences of permutation groups, which we call \emph{stable sequences.}
In this description, $\symm{n}^{a,b}$ (for $n \geq a+b$) denotes the group of all permutations in $\symm{n}$ that map each one of the intervals $\interval{1}{a}$ and $\desc{n}(\interval{1}{b}) = \interval{n-b+1}{n}$ onto itself and fix the points in $\interval{a+1}{n-b}$.
Note that $\symm{n}^{1,1}$ is the trivial subgroup of $\symm{n}$.
(Since we are only concerned about the eventual behaviour of the level sequence, it does not matter how $\symm{n}^{a,b}$ is defined when $n < a + b$, so we may just take $\symm{n}^{a,b} := \symm{n}$ in this case.)

\begin{theorem}[Atkinson, Beals~\cite{AtkBea1999,AtkBea2001}]
\label{thm:AtkBea-asymptotic}
If $C$ is a permutation class in which every level $C^{(n)}$ is a permutation group, then the level sequence $C^{(1)}, C^{(2)}, \dots, C^{(i)}, \dots$ eventually coincides with one of the following sequences of permutation groups:
\begin{enumerate}[label=\upshape{(\arabic*)}]
\item\label{asymptotic-families-1} the groups $\symm{n}^{a,b}$ for some fixed $a, b \in \IN_+$,
\item the natural cyclic groups $\cycl{n}$,
\item the full symmetric groups $\symm{n}$,
\item the groups $\gensg{G_n, \desc{n}}$, where $(G_n)_{n \in \IN}$ is one of the above sequences \textup{(}with $a = b$ in \ref{asymptotic-families-1}\textup{)}.
\end{enumerate}
\end{theorem}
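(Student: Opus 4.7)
Set $G_n := C^{(n)}$. By Remark~\ref{rem:level-sequences}, the family $(G_n)_{n \in \IN_+}$ is internally consistent under pattern involvement in the sense that $\Patl[\ell]{G_n} \subseteq G_\ell$ whenever $\ell \leq n$, and by hypothesis each $G_n$ is a subgroup of $\symm{n}$. The plan is to analyse the orbit structure of $G_n$ on $\nset{n}$ for large $n$, isolate bounded ``active'' initial and final intervals outside of which every $G_n$ acts trivially, and finally reduce any growing transitive piece to the classification of transitive group classes (Theorem~\ref{thm:AtkBea-T4}).

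The first step is to understand the set $F_n$ of common fixed points of $G_n$. I would argue that there exist constants $a, b \in \IN$ and an integer $N$ such that, for all $n \geq N$, either $\interval{a+1}{n-b} \subseteq F_n$, or else the orbit of $G_n$ containing, say, position $\lfloor n/2 \rfloor$ has size comparable to $n$. Intuitively, if some $\sigma \in G_n$ displaced a position deep in the middle of $\nset{n}$ for arbitrarily large $n$ while keeping the ``active'' part bounded on both sides, then for each fixed $\ell$ one could extract from $\sigma$ an $\ell$\hyp{}pattern whose non-trivial support lay in the interior; these patterns would have to sit inside the single finite group $G_\ell$, and collecting enough of them across $n$ would eventually force $G_\ell$ to contain patterns incompatible with its own $\Patl[\ell-1]{\cdot}$-image. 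A careful iteration of this argument, using Lemma~\ref{lem:Ppitau-PpiPtau} to translate products of permutations into products of patterns, pins down the constants $a$ and $b$ and gives the dichotomy above.

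In the ``bounded ends'' branch, the group $G_n$ restricts to subgroups of $\symm{a}$ and $\symm{b}$ acting on $\interval{1}{a}$ and $\interval{n-b+1}{n}$ respectively, possibly with additional elements swapping the two ends. Pattern compatibility at levels $\ell \leq a+b$ forces these restrictions themselves to be eventually constant in $n$, so the ``generic'' element of $G_n$ belongs to $\symm{n}^{a,b}$, yielding family (1). An extra generator that interchanges the two ends must, after reduction to a small level, remain a fixed finite permutation compatible with pattern involvement from arbitrarily large $n$; the only such candidate is $\desc{n}$, whose action on $\interval{1}{a}$ and $\interval{n-b+1}{n}$ forces $a = b$, giving the $\desc{n}$-extension in family (4). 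In the ``growing orbit'' branch, $G_n$ is eventually transitive on a block of size tending to infinity, and I would invoke Theorem~\ref{thm:AtkBea-T4} to conclude that, possibly up to a bounded fixed-point contribution on the side that can be absorbed into a change of $N$, the sequence eventually coincides with $\cycl{n}$, $\symm{n}$, or $\dihed{n} = \gensg{\cycl{n}, \desc{n}}$, which are cases (2), (3), and the corresponding $\desc{n}$-extension of (2) in family (4).

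I expect the main obstacle to be the dichotomy in the second paragraph: converting the intuition that ``interior movement must leak into small-level patterns'' into an explicit bound $a, b$ valid uniformly for all large $n$. Concretely, one must prevent the groups $G_n$ from exhibiting behaviour on $\interval{a+1}{n-b}$ that drifts with $n$ while still projecting consistently into the fixed finite groups $G_\ell$; doing this cleanly requires a bookkeeping argument that combines Lemma~\ref{lem:Ppitau-PpiPtau}, the transitivity property of $\Patl{}$ from Lemma~\ref{lem:Comp-Pat-transitive}, and a careful case analysis of how an orbit structure on $\nset{n}$ can be compatible with the orbit structures forced on $\nset{\ell}$ for all $\ell \leq n$ simultaneously.
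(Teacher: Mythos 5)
There is no in-paper proof to compare against here: Theorem~\ref{thm:AtkBea-asymptotic} is quoted from Atkinson and Beals~\cite{AtkBea1999,AtkBea2001} without proof, so your attempt can only be judged on its own merits. Judged that way, it has a genuine gap, and you name it yourself: the dichotomy in your second paragraph (``either the complement of two bounded end-intervals is fixed pointwise for all large $n$, or some orbit grows with $n$'') is the entire substance of the theorem, and nothing in your sketch actually establishes it. The phrases ``collecting enough of them across $n$ would eventually force $G_\ell$ to contain patterns incompatible with its own $\Patl[\ell-1]{\cdot}$-image'' and ``a careful iteration of this argument'' are placeholders for the real work. What actually closes this gap in Atkinson--Beals (and in the present paper's machinery for the intransitive case) is a concrete structural lemma: Lemma~\ref{lem:AtkBea-L6} shows that any $\pi \in \Compn[n+1]{\symm{\Pi}}$ must, at every boundary $t$ between $\Pi$-blocks, either fix $t$ and preserve the two sides, or act as $\desc{n+1}$ does at $t$ and swap them; iterating this (as in Theorems~\ref{thm:CompSPi}--\ref{thm:SPi-asymptotic}) is what forces interior blocks to erode one element per level until only the two end-intervals survive. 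Your sketch gestures at this conclusion but supplies no mechanism for it.

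A second problem is the growing-orbit branch. Theorem~\ref{thm:AtkBea-T4} applies only to classes in which \emph{every} level is a transitive group; you cannot invoke it for a class that merely has an orbit of unbounded size, and in Atkinson--Beals the transitive classification is a refinement built on top of the asymptotic theorem rather than an ingredient of its proof, so leaning on it here risks circularity. The correct tool in that branch is Lemma~\ref{lem:AtkBea-L11}: a permutation in some level whose consecutive values are not consecutive modulo $n$ forces a transposition, and hence all of $\symm{n-2}$, into a lower level, which (combined with generation results such as Lemma~\ref{lem:IsaZie} or Jordan-type theorems) is what pins the large-orbit case down to $\cycl{n}$, $\dihed{n}$, $\alt{n}$, or $\symm{n}$. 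Your identification of the limiting families and of the role of $\desc{n}$ (including why $a=b$ is forced in family (4)) is correct in outline, but as written the argument does not constitute a proof.
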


Moreover, Atkinson and Beals completely and explicitly described those group classes in which every level is a transitive group.
Following the terminology of \cite{AtkBea2001}, a permutation group $G \leq \symm{n}$ is \emph{anomalous} if $\natcycle{n} \in G$ and $\gensg{\Patl[n-1]{G}} \neq \symm{n-1}$.
The natural cyclic and dihedral groups are anomalous.
Alternating groups are not anomalous, because $A_n$ contains the $3$\hyp{}cycles $(i \; i+1 \; i+2)$ for $1 \leq i \leq n-2$, the adjacent transposition $(i \; i+1)$ is an $(n-1)$\hyp{}pattern of $(i \; i+1 \; i+2)$, and these adjacent transpositions generate the full symmetric group $\symm{n-1}$.
Note that the alternating group $\alt{n}$ contains the natural cycle $\natcycle{n}$ if and only if $n$ is odd.
(For more insight on anomalous groups, see Proposition~\ref{prop:anomalous}.)

\begin{theorem}[{Atkinson, Beals~\cite[Theorem~2]{AtkBea2001}}]
\label{thm:AtkBea-T4}
Let $C$ be a permutation class in which every level $C^{(n)}$ is a transitive group.
Then, with the exception of at most two levels, one of the following holds.
\begin{enumerate}[label=\upshape{(\arabic*)}]
\item $C^{(n)} = \symm{n}$ for all $n \in \IN_+$.
\item For some $M \in \IN$, $C^{(n)} = \symm{n}$ for $1 \leq n \leq M$, and $C^{(n)} = \dihed{n}$ for $n > M$.
\item For some $M, N \in \IN$ with $M \leq N$, $C^{(n)} = \symm{n}$ for $1 \leq n \leq M$, $C^{(n)} = \dihed{n}$ for $M + 1 \leq n \leq N$, and $C^{(n)} = \cycl{n}$ for $n > N$.
\end{enumerate}
The exceptions, if any, may occur in the second and third cases and are of the following two possible types:
\begin{enumerate}[label=\upshape{(\roman*)}]
\item $C^{(M+1)} = \alt{M+1}$ and $C^{(M+2)}$ is an anomalous group that is neither $\dihed{M+2}$ nor $\cycl{M+2}$, or
\item $C^{(M+1)}$ is a proper overgroup of $\cycl{M+1}$ but is not $\dihed{M+1}$.
\end{enumerate}
\end{theorem}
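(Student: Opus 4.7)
The plan is to anchor the argument at Theorem~\ref{thm:AtkBea-asymptotic} and propagate its information backwards through the level sequence using the constraint $\gPatl[n-1]{C^{(n)}} \le C^{(n-1)}$, which follows from Proposition~\ref{prop:S-subgroup} together with Remark~\ref{rem:level-sequences}. My first step is to identify which of the four asymptotic families of Theorem~\ref{thm:AtkBea-asymptotic} is compatible with transitivity at every level. Since $\symm{n}^{a,b}$ with $a,b \ge 1$ fixes every point of $\interval{a+1}{n-b}$ for $n > a+b$, and adjoining $\desc{n}$ merges the two extreme blocks while leaving the middle pointwise fixed, the only eventually transitive families in that theorem are $\symm{n}$, $\dihed{n}$, and $\cycl{n}$. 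Hence there exists a threshold $N$ such that $C^{(n)} \in \{\symm{n},\dihed{n},\cycl{n}\}$ for every $n \ge N$.

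Next, the permutation-class property of $C$ forces these three regimes to appear in a strictly nested order. Since $\Patl[n-1]{\symm{n}} = \symm{n-1}$, a level $C^{(n)} = \symm{n}$ forces $C^{(n-1)} = \symm{n-1}$. Moreover, Example~\ref{ex:asc-desc-cycle} together with the generating set $\{\natcycle{n},\desc{n}\}$ of $\dihed{n}$ yields $\gPatl[n-1]{\dihed{n}} \supseteq \dihed{n-1}$, which prevents $C^{(n)} = \dihed{n}$ whenever $C^{(n-1)} = \cycl{n-1}$. Consequently the sequence of levels traces out an initial $\symm{n}$-segment, an intermediate $\dihed{n}$-window, and a $\cycl{n}$-tail, exactly as in cases~(1)--(3), apart from two short transition windows of length at most two: one where the sequence leaves $\symm{n}$ and settles to $\dihed{n}$, and one where it leaves $\dihed{n}$ and settles to $\cycl{n}$. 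It remains to show that within each such window only the sporadic deviations~(i) and~(ii) can arise.

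The main obstacle is the classification, inside a transition window, of the proper transitive subgroups of $\symm{n}$ that can actually occur. I would proceed by the primitive versus imprimitive dichotomy, splitting the primitive case further into the non-anomalous and anomalous subcases (the latter as defined before the theorem, namely those $G$ with $\natcycle{n}\in G$ and $\gensg{\Patl[n-1]{G}}\ne\symm{n-1}$). Non-anomalous transitive $G$ satisfy $\gensg{\Patl[n-1]{G}} = \symm{n-1}$ and so fit under $C^{(n-1)}=\symm{n-1}$; among these only the alternating group plays a role, since $\alt{M+1}$ contains the adjacent $3$-cycles $(i\;i+1\;i+2)$ whose $M$-patterns include all adjacent transpositions, forcing exception~(i). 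The anomalous case, which provides the proper overgroup of $\cycl{M+1}$ in exception~(ii), is controlled by the structural analyses of Sections~\ref{sec:alternating}--\ref{sec:primitive}. Matching each candidate for a transition-window level against the upper constraint $C^{(n+1)}\subseteq\Compn[n+1]{C^{(n)}}$ and the prescribed $\dihed{n}$/$\cycl{n}$ tail then rules out any further deviant levels.
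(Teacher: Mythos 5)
This theorem is not proved in the paper at all: it is quoted verbatim from Atkinson and Beals \cite[Theorem~2]{AtkBea2001}, so there is no in-paper argument to compare yours against. Judged on its own terms, your proposal has a reasonable skeleton --- anchor at Theorem~\ref{thm:AtkBea-asymptotic}, note that $\symm{n}^{a,b}$ and $\gensg{\symm{n}^{a,a},\desc{n}}$ are eventually intransitive so only the $\symm{n}$, $\dihed{n}$, $\cycl{n}$ tails survive, and propagate constraints through $\Patl[n-1]{C^{(n)}}\subseteq C^{(n-1)}$ and $C^{(n+1)}\subseteq\Compn[n+1]{C^{(n)}}$ --- and it is legitimate (non-circular) to try to assemble the rest from the paper's own Theorems~\ref{thm:Comp-alt}--\ref{thm:Comp-primitive}, since those are proved without appeal to Theorem~\ref{thm:AtkBea-T4}.

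However, the core of the theorem --- that the deviant levels number at most two, sit exactly at positions $M+1$ and $M+2$, and are only of types (i) and (ii) --- is asserted rather than proved. Two concrete problems. First, your claim that ``non-anomalous transitive $G$ satisfy $\gensg{\Patl[n-1]{G}}=\symm{n-1}$'' is false: by the definition given before the theorem, non-anomalous means $\natcycle{n}\notin G$ \emph{or} $\gensg{\Patl[n-1]{G}}=\symm{n-1}$, so transitive groups lacking the natural cycle (imprimitive or primitive) are non-anomalous yet need not project onto $\symm{n-1}$. These must be excluded by a separate argument --- namely that for such $G$ the set $\Compn[n+2]{G}$ is intransitive (Proposition~\ref{prop:Comp-n+2-primitive} and Theorem~\ref{thm:CompAutPi-2}), contradicting transitivity of $C^{(n+2)}$ --- and your sketch never makes this step. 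Second, the sentence ``matching each candidate \dots\ rules out any further deviant levels'' is exactly the case analysis that constitutes Atkinson and Beals's proof: one must show that a proper overgroup of $\cycl{m}$ other than $\symm{m},\alt{m}$ collapses to $\dihed{m+1}$ or $\cycl{m+1}$ at the next level (Theorem~\ref{thm:Comp-cycle}), that $\alt{m}$ can occur only for odd $m$ and forces $\Compn[m+1]{\alt{m}}$ to be the specific anomalous group of Theorem~\ref{thm:Comp-alt} and $\Compn[m+2]{\alt{m}}\in\{\cycl{m+2},\dihed{m+2}\}$, and that the exceptional levels abut the symmetric segment because $\gPatl[M]{C^{(M+1)}}=\symm{M}$ in case (i). None of this is carried out, so as written the proposal is an outline with the decisive steps missing.
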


It should be noted that every group mentioned in Theorem~\ref{thm:AtkBea-T4} contains the natural cycle (of the appropriate degree).
For group classes with an intransitive group at some level, only the
eventual behaviour of the level sequence is revealed by
the above theorems.
This raises the question how the level sequence of an arbitrary group class behaves before reaching one of the possible eventual stable sequences and how fast a stable sequence is reached.

The main goal of this paper is to refine and strengthen
Theorems~\ref{thm:AtkBea-asymptotic} and \ref{thm:AtkBea-T4}.
More precisely, we would like to describe the sequence
\begin{equation}
\label{eq:comp-sequence}
G, \Compn[n+1]{G}, \Compn[n+2]{G}, \dots, \Compn[n+i]{G}, \dots
\end{equation}
for an arbitrary permutation group $G \leq \symm{n}$.
In view of Remark~\ref{rem:level-sequences} and Proposition~\ref{prop:S-subgroup}, studying such sequences is essentially the same thing as studying the levels of group classes.
By Lemma~\ref{lem:Comp-Pat-transitive}, it would suffice to determine $\Compn[n+1]{G}$ for every group $G \leq \symm{n}$ and for every $n \in \IN_+$.
The entire sequence \eqref{eq:comp-sequence} could then be determined by applying such results repeatedly, one level at a time.
Unfortunately, in some cases, the exact description of $\Compn[n+1]{G}$ eludes us.
Taking a suitable subgroup $H_1$ and an overgroup $H_2$ of $G$, the monotonicity of $\Compn[n+1]{}$ implies that $\Compn[n+1]{H_1} \leq \Compn[n+1]{G} \leq \Compn[n+1]{H_2}$, and we may nevertheless be able to obtain good lower and upper bounds for $\Compn[n+1]{G}$.
With these results, it is also possible to find the exact point (or a good estimate) where the sequence \eqref{eq:comp-sequence} reaches
one of the eventual stable sequences
predicted by Theorem~\ref{thm:AtkBea-asymptotic}.
In fact, we will see that, unless $\natcycle{n} \notin G$ and $G$ is intransitive or imprimitive, the smallest $i \in \IN$ for which $\Compn[n+i]{G}$
coincides with one of the stable sequences
is at most $2$.

Our study is divided into six main sections, each dealing with a certain type of groups and employing particular proof techniques.
We first consider separately some special groups, namely, the symmetric and trivial groups, as well as the group generated by the descending permutation $\desc{n}$.
Second, we consider alternating groups.
For the remaining groups, we make a first distinction between those groups that contain the natural cycle and those that do not.
The groups not containing the natural cycle are divided into intransitive and transitive ones.
The transitive groups without a natural cycle are further subdivided into imprimitive and primitive ones.
(Recall that for a permutation group $G \leq \symm{n}$ and an element $i \in \nset{n}$, the \emph{$G$\hyp{}orbit} of $i$ is the set $\{\pi(i) \mid \pi \in G\}$.
The orbits of $G$ partition $\nset{n}$.
A permutation group is \emph{transitive} if it has only one orbit; otherwise it is \emph{intransitive.}
A transitive subgroup of $\symm{n}$ is \emph{primitive} if it preserves no nontrivial partition of $\nset{n}$; otherwise it is \emph{imprimitive.})
This case distinction is summarized in Table~\ref{table:roadmap}, which also provides an index to the main results.

\begin{table}
\begin{center}
\begin{tabular}{ll}
\toprule
Type of group & Result \\
\midrule
$\bullet$ $\symm{n}$, $\gensg{\desc{n}}$, trivial & Theorem~\ref{thm:symmetric-trivial} \\
$\bullet$ $\alt{n}$ & Theorems~\ref{thm:Comp-alt}, \ref{thm:Comp-alt-n+2} \\
$\bullet$ $\natcycle{n} \in G$ and $\alt{n} \nleq G$ & Theorem~\ref{thm:Comp-cycle} \\
$\bullet$ $\natcycle{n} \notin G$: & \\
\quad -- intransitive & Theorem~\ref{thm:CompSPi-general}, \ref{thm:SPi-asymptotic}, \ref{thm:general-intransitive} \\
\quad -- transitive: & \\
\quad\quad -- imprimitive & Theorems~\ref{thm:CompAutPi}, \ref{thm:CompAutPi-2}, \ref{thm:general-imprimitive} \\
\quad\quad -- primitive & Theorem~\ref{thm:Comp-primitive} \\
\bottomrule
\end{tabular}
\end{center}

\bigskip
\caption{Index of main results.}
\label{table:roadmap}
\end{table}

We would like to emphasize that we are dealing with concrete subgroups of the symmetric group $\symm{n}$, not with abstract groups.
Even if two subgroups $G$ and $H$ of $\symm{n}$ are isomorphic, or even conjugate, the groups $\Compn[n+1]{G}$ and $\Compn[n+1]{H}$ may be markedly different.

%%%%%%%%%%%%%%%%%%%%%%%%%%%%%%%%%%%%%%%%%%%%%%%%%%

\section{Tools and notation, symmetric and trivial groups}
\label{sec:tools}
\label{sec:simple}

Recall that the \emph{reverse} of a permutation $\pi \in \symm{n}$ is $\pi^\mathrm{r} = \pi \circ \desc{n}$, and the \emph{complement} of $\pi$ is $\pi^\mathrm{c} = \desc{n} \circ \pi$.
It is well known that pattern involvement is preserved under reverses, complements, and inverses of permutations, i.e.,
\[
\tau \leq \pi \iff \tau^\mathrm{r} \leq \pi^\mathrm{r}, \qquad
\tau \leq \pi \iff \tau^\mathrm{c} \leq \pi^\mathrm{c}, \qquad
\tau \leq \pi \iff \tau^{-1} \leq \pi^{-1}.
\]
The \emph{reverse-complement} of $\pi$, i.e., $\pi^\mathrm{rc} = \desc{n} \circ \pi \circ \desc{n}$, equals the conjugate of $\pi$ with respect to the descending permutation $\desc{n}$ (note that $\desc{n}^{-1} = \desc{n}$).
Consequently, for any $S \subseteq \symm{n}$, the statement $T = \Compn[m]{S}$ is true if and only if the statement $\desc{m} T \desc{m} = \Compn[m]{\desc{n} S \desc{n}}$ is true.
Similarly, $T = \Patl{S}$ if and only if $\desc{\ell} T \desc{\ell} = \Patl{\desc{n} S \desc{n}}$.
Whenever we have proved a statement of the form $T = \Compn[m]{S}$ or $T = \Patl{S}$, we obtain also the statement $\desc{m} T \desc{m} = \Compn[m]{\desc{n} S \desc{n}}$ or $\desc{\ell} T \desc{\ell} = \Patl{\desc{n} S \desc{n}}$, whichever applies, for free, and we indicate this by saying that we ``consider conjugates with respect to the descending permutation''.

In order to simplify notation, for any $\pi \in \symm{n}$ and $i \in \nset{n}$, we write $\subpermsimple{\pi}{i} := \subperm{\pi}{\nset{n} \setminus \{i\}}$.
We say that integers $y$ and $z$ are \emph{consecutive modulo $n$} if $y + 1 \equiv z \pmod{n}$ or $y - 1 \equiv z \pmod{n}$.

\begin{fact}
\label{fact:dihedral}
Let $\pi \in \symm{n}$. Then $\pi(x)$ and $\pi(x+1)$ are consecutive modulo $n$ for all $x \in \nset{n-1}$ if and only if $\pi \in \dihed{n}$.
\end{fact}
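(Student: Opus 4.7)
The plan is to prove both implications of the biconditional separately, using that $\dihed{n} = \gensg{\natcycle{n}, \desc{n}}$ consists of the rotations $\natcycle{n}^k$ and the reflections $\desc{n}\natcycle{n}^k$ for $0 \leq k < n$.

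For the ``if'' direction, I would verify the condition directly on these two families. Writing $\natcycle{n}^k$ in one-line notation, one sees that $\natcycle{n}^k(x+1) - \natcycle{n}^k(x) \equiv 1 \pmod n$ for every $x \in \nset{n-1}$, and analogously $\desc{n}\natcycle{n}^k(x+1) - \desc{n}\natcycle{n}^k(x) \equiv -1 \pmod n$. In either case, consecutive images differ by $\pm 1$ modulo $n$, so they are consecutive modulo $n$ in the sense just defined.

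For the ``only if'' direction, I would determine $\pi$ inductively from its first two values. Assume $n \geq 3$ (the cases $n \leq 2$ are trivial because then $\dihed{n} = \symm{n}$, so every permutation belongs to $\dihed{n}$ and the biconditional holds vacuously). Set $a := \pi(1)$. The hypothesis forces $\pi(2) \equiv a + \epsilon \pmod n$ for some $\epsilon \in \{+1,-1\}$. I would then show by induction on $x$ that $\pi(x) \equiv a + (x-1)\epsilon \pmod n$ for all $x \in \nset{n}$: if $\pi(x-1) \equiv a + (x-2)\epsilon$ and $\pi(x) \equiv a + (x-1)\epsilon$, then $\pi(x+1) \equiv \pi(x) \pm 1 \pmod n$ by hypothesis, and the ``wrong'' choice $\pi(x) - \epsilon$ equals $\pi(x-1)$, which is forbidden by injectivity of $\pi$ (here we use $n \geq 3$ so that $+\epsilon$ and $-\epsilon$ give distinct residues). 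Hence $\pi(x+1) \equiv \pi(x) + \epsilon \pmod n$, completing the induction.

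Once $\pi$ is pinned down in this arithmetic-progression form, I would identify it explicitly with an element of $\dihed{n}$: the sequence $\pi(x) \equiv a + (x-1) \pmod n$ (reduced to $\nset{n}$) coincides with $\natcycle{n}^{a-1}(x)$, and the sequence $\pi(x) \equiv a - (x-1) \pmod n$ coincides with $\desc{n}\natcycle{n}^{n-a}(x)$, so in both cases $\pi \in \dihed{n}$. The only real subtlety is the $n = 2$ degeneracy where the two directions of progression collapse, which is handled by the trivial observation $\dihed{2} = \symm{2}$; everything else is bookkeeping with modular arithmetic in $\nset{n}$.
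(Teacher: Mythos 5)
Your argument is correct. The paper states this as a \emph{Fact} and offers no proof of its own (it is treated as a routine verification), so there is nothing to compare against; your write-up simply supplies the missing details. The two directions are handled soundly: the forward verification on the normal forms $\natcycle{n}^k$ and $\desc{n}\natcycle{n}^k$ is a direct computation, and the converse's two-step induction correctly uses injectivity of $\pi$ to rule out the sign reversal $\pi(x+1)\equiv\pi(x)-\epsilon$ (which would force $\pi(x+1)=\pi(x-1)$), with the $n\le 2$ degeneracies dispatched by $\dihed{n}=\symm{n}$. The final identifications $\pi=\natcycle{n}^{a-1}$ and $\pi=\desc{n}\natcycle{n}^{n-a}$ check out against the paper's conventions $\natcycle{n}(i)\equiv i+1$ and $\desc{n}(i)=n+1-i$.
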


\begin{lemma}[{Atkinson, Beals~\cite[Lemma~1]{AtkBea2001}}]
\label{lem:AtkBea-L11}
Let $\pi \in \symm{n}$ and let $i \in \nset{n-1}$.
Let $j := \pi(i)$ and $k := \pi(i+1)$, and let $\gamma := \subpermsimple{\pi}{i+1} \circ (\subpermsimple{\pi}{i})^{-1}$.
\textup{(}Note that $\gamma \in \gPatl[n-1]{\pi}$.\textup{)}
Then $\gamma$ is the cycle $(j \; j+1 \; \cdots \; k-1)$ if $j < k$ and the cycle $(k \; k+1 \; \cdots \; j-1)$ if $j > k$.
In addition, if $j$ and $k$ are not consecutive modulo $n$, then $\gPatl[n-2]{\pi}$ contains a transposition of the form $(t \;\; t+1)$.
\end{lemma}

Before going to the main results, let us make a couple of simple observations that reveal the importance of descending permutations and natural cycles for group classes.
In many of the results that follow, we make statements of the form $\Compn[m]{G} \leq \gensg{\desc{m}}$.
It is then clear from Lemma~\ref{lem:descending} which one of the two subgroups of $\gensg{\desc{m}}$ (the trivial subgroup or $\gensg{\desc{m}}$ itself) equals $\Compn[m]{G}$.

\begin{lemma}
\label{lem:descending}
Let $n, m \in \IN_+$ with $n \leq m$.
Let $G \leq \symm{n}$.
Then $\desc{m} \in \Compn[m]{G}$ if and only if $\desc{n} \in G$.
\end{lemma}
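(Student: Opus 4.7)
The plan is to observe that this lemma is a direct consequence of the definition of $\Compn[m]{}$ together with the identification of the $n$\hyp{}patterns of $\desc{m}$ that was already recorded in Example~\ref{ex:asc-desc-cycle}. So the proof will be essentially a one-liner once the right piece of earlier notation is cited.

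Unwinding definitions, $\desc{m} \in \Compn[m]{G}$ holds precisely when $\patt{n}{\desc{m}} \subseteq G$, i.e., when every $n$\hyp{}pattern of the descending permutation of length $m$ belongs to $G$. The key step is therefore to compute $\patt{n}{\desc{m}}$. In the trivial case $n = m$, the only $n$\hyp{}pattern of any $\tau \in \symm{n}$ is $\red(\tau) = \tau$ itself, so the condition collapses to $\desc{n} \in G$. In the case $n < m$, Example~\ref{ex:asc-desc-cycle} gives $\patt{n}{\desc{m}} = \{\desc{n}\}$: every scattered $n$\hyp{}entry substring of $\desc{m}$ is strictly decreasing, and its reduction must then be $\desc{n}$. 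Hence the condition $\patt{n}{\desc{m}} \subseteq G$ is equivalent to $\desc{n} \in G$, as required.

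There is essentially no obstacle to overcome; the entire content of the lemma is packaged into the one-line computation of $\patt{n}{\desc{m}}$. The reason it is worth stating separately is purely expository: in the later results of the paper, one frequently concludes $\Compn[m]{G} \leq \gensg{\desc{m}}$, and this lemma then serves as a bookkeeping device pinning down which of the two subgroups $\{\asc{m}\}$ or $\gensg{\desc{m}}$ of $\gensg{\desc{m}}$ is actually equal to $\Compn[m]{G}$, depending solely on whether $\desc{n}$ lies in $G$.
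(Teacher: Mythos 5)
Your proof is correct and follows exactly the same route as the paper: both reduce the claim to the computation $\patt{n}{\desc{m}} = \{\desc{n}\}$ recorded in Example~\ref{ex:asc-desc-cycle} and then unwind the definition of $\Compn[m]{}$. Your explicit treatment of the degenerate case $n = m$ is a minor addition the paper leaves implicit.
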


\begin{proof}
As noted in Example~\ref{ex:asc-desc-cycle}, $\Patl[n]{\desc{m}} = \{\desc{n}\}$, so $\desc{m} \in \Compn[m]{G}$ if and only if $\desc{n} \in G$.
\end{proof}

\begin{lemma}
\label{lem:ZD}
For $n \geq 2$ and $i \in \nset{n-1}$,
\begin{align*}
& \Patl[n-1]{\natcycle{n}^i} = \{\natcycle{n-1}^i, \natcycle{n-1}^{i-1}\},
&& \Patl[n-1]{(\natcycle{n}^i \circ \desc{n})} = \{\natcycle{n-1}^i \circ \desc{n-1}, \natcycle{n-1}^{i-1} \circ \desc{n-1}\},
\\
& \Patl[n-1]{\cycl{n}} = \cycl{n-1},
&& \Patl[n-1]{\dihed{n}} = \dihed{n-1}.
\end{align*}
\end{lemma}

\begin{proof}
Recall from Example~\ref{ex:asc-desc-cycle} that $\Patl[n-1]{\asc{n}} = \{\asc{n-1}\}$ and $\Patl[n-1]{\desc{n}} = \{\desc{n-1}\}$.
It is easy to verify that for $i \in \nset{n-1}$, $\Patl[n-1]{\natcycle{n}^i} = \{\natcycle{n-1}^i, \natcycle{n-1}^{i-1}\}$ and $\Patl[n-1]{(\natcycle{n}^i \circ \desc{n})} = \{\natcycle{n-1}^i \circ \desc{n-1}, \natcycle{n-1}^{i-1} \circ \desc{n-1}\}$.
Consequently,
$\Patl[n-1]{\cycl{n}} = \cycl{n-1}$ and
$\Patl[n-1]{\dihed{n}} = \dihed{n-1}$,
as claimed.
\end{proof}

\begin{lemma}
\label{lem:cyclic-dihedral}
Let $G \leq \symm{n}$.
Then the following statements hold.
\begin{enumerate}[label=\upshape{(\roman*)}]
\item\label{cd1} The following statements are equivalent.
\begin{enumerate}[label=\upshape{(\alph*)}]
\item $\cycl{n} \leq G$.
\item $\cycl{n+1} \leq \Compn[n+1]{G}$.
\item $\Compn[n+1]{G}$ contains a permutation $\pi \in \cycl{n+1} \setminus \{\asc{n+1}\}$.
\end{enumerate}
\item\label{cd2} The following statements are equivalent.
\begin{enumerate}[label=\upshape{(\alph*)}]
\item $\dihed{n} \leq G$.
\item $\dihed{n+1} \leq \Compn[n+1]{G}$.
\item $\Compn[n+1]{G}$ contains a permutation $\pi \in \dihed{n+1} \setminus (\cycl{n+1} \cup \{\desc{n+1}\})$.
\end{enumerate}
\end{enumerate}
\end{lemma}

\begin{proof}
\ref{cd1}
The implication $\text{(a)} \Rightarrow \text{(b)}$ holds by Lemma~\ref{lem:ZD}, and $\text{(b)} \Rightarrow \text{(c)}$ is obvious.
It remains to show that (c) implies (a).
Assume that $\pi \in \Compn[n+1]{G}$ and $\pi \in \cycl{n+1} \setminus \{\asc{n+1}\}$.
Then $\pi = \natcycle{n+1}^i$ for some $i \in \{1, \dots, n\}$.
By Lemma~\ref{lem:ZD}, the $n$\hyp{}patterns of $\natcycle{n+1}^i$ are $\natcycle{n}^i$ and $\natcycle{n}^{i-1}$, and they are members of $G$.
Consequently, $G$ contains $\natcycle{n}^i \circ (\natcycle{n}^{i-1})^{-1} = \natcycle{n}$.
We conclude that $\cycl{n} = \gensg{\natcycle{n}} \leq G$.

\ref{cd2}
As above, the implication $\text{(a)} \implies \text{(b)}$ holds by Lemma~\ref{lem:ZD}, and $\text{(b)} \implies \text{(c)}$ is obvious.
It remains to show that (c) implies (a).
Assume that $\pi \in \Compn[n+1]{G}$ and $\pi \in \dihed{n+1} \setminus (\cycl{n+1} \cup \{\desc{n+1}\})$.
Then $\pi = \natcycle{n+1}^i \circ \desc{n+1}$ for some $i \in \{1, \dots, n\}$.
By Lemma~\ref{lem:ZD}, the $n$\hyp{}patterns of $\natcycle{n+1}^i \circ \desc{n+1}$ are $\natcycle{n}^i \circ \desc{n}$ and $\natcycle{n}^{i-1} \circ \desc{n}$, and they are members of $G$.
Consequently, $G$ contains
$(\natcycle{n}^i \circ \desc{n}) \circ (\natcycle{n}^{i-1} \circ \desc{n})^{-1} = \natcycle{n}$
and hence also
$(\natcycle{n}^i)^{-1} \circ (\natcycle{n}^i \circ \desc{n}) = \desc{n}$.
We conclude that $\dihed{n} = \gensg{\natcycle{n}, \desc{n}} \leq G$.
\end{proof}

As the first point in our classification of groups, we have a few special groups:
the symmetric and trivial groups, and the group generated by the descending permutation.
These just give rise to some of the stable sequences of Theorem~\ref{thm:AtkBea-asymptotic}.

\begin{theorem}
\label{thm:symmetric-trivial}
The following statements hold for all $n \in \IN_+$.
\begin{enumerate}[label=\upshape{(\roman*)}]
\item\label{st:i} $\Compn[n+1]{\symm{n}} = \symm{n+1}$.
\item If $n \geq 2$, then $\Compn[n+1]{\{\asc{n}\}} = \{\asc{n+1}\}$.
\item If $n \geq 3$, then $\Compn[n+1]{\gensg{\desc{n}}} = \gensg{\desc{n+1}}$.
\end{enumerate}
\end{theorem}

\begin{proof}
Statement \ref{st:i} is trivial.
The other statements are easy to verify.
\end{proof}

%%%%%%%%%%%%%%%%%%%%%%%%%%%%%%%%%%%%%%%%%%%%%%%%%%

\section{Alternating groups}
\label{sec:alternating}

The second class of groups we investigate are the alternating groups.
Recall that the \emph{alternating group \textup{(}of degree $n$\textup{)},} denoted $\alt{n}$, is the group of all even permutations of the set $\nset{n}$.
A permutation is \emph{even} if it is the product of an even number of transpositions; otherwise it is \emph{odd.}
The oddness or evenness of a permutation is referred to as its \emph{parity.}

\begin{lemma}
\label{lem:parities}
Let $\pi \in \symm{n}$.
Then $\pi$ and $\subpermsimple{\pi}{1}$ have the same parity if and only if $\pi(1)$ is odd.
\end{lemma}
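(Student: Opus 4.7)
The plan is to compare parities via inversion counts. Recall that for any permutation $\rho$, the parity of $\rho$ equals the parity of $\inv(\rho)$, the number of inversions of $\rho$. I would split the inversions of $\pi$ according to whether or not they involve position $1$.

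First I would count the inversions involving position $1$: these are the pairs $(1, k)$ with $k \in \{2, \dots, n\}$ and $\pi(1) > \pi(k)$. Since the multiset $\{\pi(2), \dots, \pi(n)\}$ is exactly $\nset{n} \setminus \{\pi(1)\}$, the number of such $k$ equals the number of elements of $\nset{n}$ strictly less than $\pi(1)$, namely $\pi(1) - 1$.

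Next I would count the inversions not involving position $1$: these are the pairs $(i, k)$ with $1 < i < k \leq n$ and $\pi(i) > \pi(k)$. The reduction operation $\red$ preserves the relative order of the entries of $\pi(2) \pi(3) \dots \pi(n)$, so this count equals $\inv(\subpermsimple{\pi}{1})$. Combining the two contributions gives
\[
\inv(\pi) = (\pi(1) - 1) + \inv(\subpermsimple{\pi}{1}),
\]
so $\inv(\pi)$ and $\inv(\subpermsimple{\pi}{1})$ have the same parity precisely when $\pi(1) - 1$ is even, i.e., when $\pi(1)$ is odd. This yields the claimed equivalence. There is no real obstacle here; the only thing to be careful about is recording that $\red$ is order-preserving on the remaining entries so that inversions are counted correctly after reduction.
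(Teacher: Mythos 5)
Your proof is correct and follows essentially the same route as the paper: the paper also counts reversals (inversions), observes that exactly $\pi(1)-1$ of them involve position $1$ and that the rest biject with the reversals of $\subpermsimple{\pi}{1}$, and concludes from the resulting identity. No issues.
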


\begin{proof}
Recall that a couple $\{i, j\} \subseteq \nset{n}$ is a \emph{reversal} of a permutation $\pi \in \symm{n}$ if $i < j$ and $\pi(i) > \pi(j)$.
Let us denote the number of reversals of $\pi$ by $\reversals(\pi)$.
The parity of a permutation equals the parity of the number of its reversals.
It is clear that for all $\{i, j\} \subseteq \nset{n-1}$, $\{i, j\}$ is a reversal of $\subpermsimple{\pi}{1}$ if and only if $\{i + 1, j + 1\}$ is a reversal of $\pi$.
Furthermore, there are exactly $\pi(1) - 1$ reversals of $\pi$ of the form $\{1, j\}$ for some $j$.
Thus $\reversals(\pi) = \reversals(\subpermsimple{\pi}{1}) + \pi(1) - 1$.
From this equality it follows that $\reversals(\pi) \equiv \reversals(\subpermsimple{\pi}{1}) \pmod{2}$ if and only if $\pi(1) \equiv 1 \pmod{2}$.
\end{proof}

For $n \geq 2$, denote by $\compalt{n}$ the set of all odd permutations in $\symm{n}$, i.e., $\compalt{n} := \symm{n} \setminus \alt{n}$.
Let $\oddeven{n}$ be the partition of $\nset{n}$ into odd and even numbers, i.e.,
\[
\oddeven{n} :=
\bigl\{ \{i \in \nset{n} \mid \text{$i$ odd}\}, \{i \in \nset{n} \mid \text{$i$ even}\} \bigr\}.
\]
Denote by $\symm{\oddeven{n}}$ the set of permutations in $\symm{n}$ that preserve the blocks of $\oddeven{n}$, i.e., $\pi \in \symm{\oddeven{n}}$
if and only if $\pi(i) \equiv i \pmod{2}$ for all $i \in \nset{n}$.
Denote by $\interchange{\oddeven{n}}$ the set of permutations in $\symm{n}$ that interchange the blocks of $\oddeven{n}$, i.e., $\pi \in \interchange{\oddeven{n}}$
if and only if $\pi(i) \not\equiv i \pmod{2}$ for all $i \in \nset{n}$.
Note that $\interchange{\oddeven{n}} = \emptyset$ if $n$ is odd.
Let
\[
\grOE{n} := (\symm{\oddeven{n}} \cap \alt{n}) \cup (\interchange{\oddeven{n}} \cap \compalt{n}).
\]
It is not difficult to verify that $\grOE{n}$ is a permutation group.

\begin{theorem}
\label{thm:Comp-alt}
For $n \geq 1$,
$\Compn[n+1]{\alt{n}} = \grOE{n+1}$.
\end{theorem}

\begin{proof}
For $i \in \nset{n}$, let $\gamma(\pi,i) := \subpermsimple{\pi}{i+1} \circ \subpermsimple{\pi}{i}^{-1}$ as in Lemma~\ref{lem:AtkBea-L11}.
Then $\gamma(\pi,i)$ is a cycle of length $m(\pi,i) := \card{\pi(i) - \pi(i+1)}$.

Let $\pi \in \Compn[n+1]{\alt{n}}$.
Since $\pi \in \Compn[n+1]{\alt{n}}$, the $n$\hyp{}patterns $\subpermsimple{\pi}{i}$ and $\subpermsimple{\pi}{i+1}$ of $\pi$ are even; hence also $\gamma(\pi,i)$ is an even permutation, so every $m(\pi,i)$ must be odd ($i \in \nset{n}$).
This implies that the sequence $\pi(1), \pi(2), \dots, \pi(n+1)$ alternates between odd and even numbers.
Hence either $\pi$ maps odd numbers to odd numbers and even numbers to even numbers, or $\pi$ interchanges the set of odd numbers with the set of even numbers.
In other words, $\pi \in \symm{\oddeven{n+1}} \cup \interchange{\oddeven{n+1}}$.

Since $\subpermsimple{\pi}{1}$ is an even permutation, it follows from Lemma~\ref{lem:parities} that $\pi(1)$ is odd if and only if $\pi$ is an even permutation.
This means that $\pi \in \symm{\oddeven{n+1}}$ if and only if $\pi \in \alt{n}$.
We conclude that $\pi \in \grOE{n+1}$.
Therefore we have proved the inclusion $\Compn[n+1]{\alt{n}} \subseteq \grOE{n+1}$.

In order to show the converse inclusion, assume that $\pi \in \grOE{n+1}$.
Then the sequence $\pi(1), \pi(2), \dots, \pi(n+1)$ alternates between odd and even numbers.
Therefore the cycles $\gamma(\pi,i)$ are all of odd length, i.e., they are all even permutations.
It follows that all $n$\hyp{}patterns of $\pi$ have the same parity.
If $\pi \in \symm{\oddeven{n+1}} \cap \alt{n+1}$, then $\pi$ is even and $\pi(1)$ is odd, so Lemma~\ref{lem:parities} implies that also $\subpermsimple{\pi}{1}$ is even.
If $\pi \in \interchange{\oddeven{n+1}} \cap \compalt{n+1}$, then $\pi$ is odd and $\pi(1)$ is even, so Lemma~\ref{lem:parities} implies that $\subpermsimple{\pi}{1}$ is even also in this case.
Thus all $n$\hyp{}patterns of $\pi$ are even, so $\pi \in \Compn[n+1]{\alt{n}}$.
This shows that $\grOE{n+1} \subseteq \Compn[n+1]{\alt{n}}$.
\end{proof}

Let us find out how the level sequence of $\alt{n}$ continues after $\grOE{n+1}$.

\begin{lemma}
\label{lem:desc-in-alt}
The descending permutation $\desc{n}$ is a member of the alternating group $\alt{n}$ if and only if $n \equiv 0 \pmod{4}$ or $n \equiv 1 \pmod{4}$.
\end{lemma}

\begin{proof}
In the case where $n$ is even, say $n = 2k$, it holds that the descending permutation $\desc{n} = (1 \; n) (2 \; n-1) \cdots (k \; k+1)$ is even if and only if $k$ is even, i.e., $n = 2k \equiv 0 \pmod{4}$.
In the case where $n$ is odd, say $n = 2k + 1$, it holds that $\desc{n} = (1 \; n) (2 \; n-1) \cdots (k \; k+2)$ is even if and only if $k$ is even, i.e., $n = 2k + 1 \equiv 1 \pmod{4}$.
\end{proof}

\begin{theorem}
\label{thm:Comp-alt-n+2}
For $n \geq 2$,
\[
\Compn[n+2]{\alt{n}} =
\begin{cases}
\gensg{\desc{n+2}}, & \text{if $n \equiv 0 \pmod{4}$,} \\
\dihed{n+2},        & \text{if $n \equiv 1 \pmod{4}$,} \\
\{\asc{n+2}\},      & \text{if $n \equiv 2 \pmod{4}$,} \\
\cycl{n+2},         & \text{if $n \equiv 3 \pmod{4}$.} \\
\end{cases}
\]
\end{theorem}

\begin{proof}
Observe first that $\Compn[n+2]{\alt{n}} \leq \dihed{n+2}$.
For, assume that $\pi \in \symm{n+2} \setminus \dihed{n+2}$.
By Fact~\ref{fact:dihedral} there exists $x \in \nset{n+1}$ such that $\pi(x)$ and $\pi(x+1)$ are not consecutive modulo $n+2$, so by Lemma~\ref{lem:AtkBea-L11}, $\gPatl[n]{\pi}$ contains a transposition, an odd permutation; hence $\pi \notin \Compn[n+2]{\alt{n}}$.

Assume first that $n$ is even.
Then $\natcycle{n} \notin \alt{n}$, which implies
\begin{gather*}
(\Compn[n+2]{\alt{n}}) \cap (\cycl{n+2} \setminus \{\asc{n+2}\}) = \emptyset, \\
(\Compn[n+2]{\alt{n}}) \cap (\dihed{n+2} \setminus (\cycl{n+2} \cup \{\desc{n+2}\})) = \emptyset
\end{gather*}
by Lemma~\ref{lem:cyclic-dihedral}.
Consequently, $\Compn[n+2]{\alt{n}} \subseteq \{\asc{n+2}, \desc{n+2}\}$.
By Lemma~\ref{lem:descending}, we have $\desc{n+2} \in \Compn[n+2]{\alt{n}}$ if and only if $\desc{n} \in \alt{n}$.
It then follows from Lemma~\ref{lem:desc-in-alt} that $\Compn[n+2]{\alt{n}} = \{\asc{n+2}, \desc{n+2}\} = \gensg{\desc{n+2}}$ if $n \equiv 0 \pmod{4}$ and $\Compn[n+2]{\alt{n}} = \{\asc{n+2}\}$ if $n \equiv 2 \pmod{4}$.

Assume then that $n$ is odd.
Then $\natcycle{n} \in \alt{n}$.
If $n \equiv 1 \pmod{4}$, then $\desc{n} \in \alt{n}$ by Lemma~\ref{lem:desc-in-alt},
and it follows from Lemma~\ref{lem:cyclic-dihedral} that
$\dihed{n+2} \leq \Compn[n+2]{\alt{n}}$;
hence $\Compn[n+2]{\alt{n}} = \dihed{n+2}$.
If $n \equiv 3 \pmod{4}$, then $\desc{n} \notin \alt{n}$ by Lemma~\ref{lem:desc-in-alt},
which implies $\desc{n+2} \notin \Compn[n+2]{\alt{n}}$ by Lemma~\ref{lem:descending}.
Moreover, $\cycl{n+2} \leq \Compn[n+2]{\alt{n}}$ and
$(\Compn[n+2]{\alt{n}}) \cap (\dihed{n+2} \setminus (\cycl{n+2} \cup \{\desc{n+2}\})) = \emptyset$
by Lemma~\ref{lem:cyclic-dihedral}.
Hence $\Compn[n+2]{\alt{n}} = \cycl{n+2}$.
\end{proof}

%%%%%%%%%%%%%%%%%%%%%%%%%%%%%%%%%%%%%%%%%%%%%%%%%%

\section{Overgroups of the natural cyclic group}
\label{sec:cycle}

In this section, we consider permutation groups containing the natural cycle $\natcycle{n}$, in other words, overgroups of the natural cyclic group $\cycl{n}$.
We will make use of a well\hyp{}known technical lemma that describes the group generated by the natural cycle and another cycle.

\begin{lemma}[{Isaacs, Zieschang~\cite[Theorem~B]{IsaZie}}]
\label{lem:IsaZie}
Let $m$ and $n$ be integers such that $1 < m < n$, and let $\tau_m := (1 \; 2 \; \cdots \; m) \in \symm{n}$.
Then $\gensg{\natcycle{n}, \tau_m} = \symm{n}$ unless both $m$ and $n$ are odd, in which case $\gensg{\natcycle{n}, \tau_m} = \alt{n}$.
\end{lemma}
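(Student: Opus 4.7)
The plan is to combine parity bookkeeping with a short commutator computation, a primitivity argument, and Jordan's classical theorem. First I record parities: $\natcycle{n}$ is even iff $n$ is odd (it is an $n$\hyp{}cycle, hence has sign $(-1)^{n-1}$), and similarly $\tau_m$ is even iff $m$ is odd. Consequently, if both $m$ and $n$ are odd, $G := \gensg{\natcycle{n}, \tau_m} \leq \alt{n}$; otherwise $G$ contains an odd permutation. It will therefore suffice to prove $\alt{n} \leq G$ in all cases, since the parity dichotomy then pins down whether $G$ equals $\alt{n}$ or $\symm{n}$.

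Second, I would exhibit a $3$\hyp{}cycle in $G$. Since conjugation by $\natcycle{n}$ shifts labels by one, $\natcycle{n} \tau_m \natcycle{n}^{-1} = (2 \; 3 \; \cdots \; m+1)$, and a direct check of the action on $\nset{n}$ shows
\[
(\natcycle{n} \tau_m \natcycle{n}^{-1}) \, \tau_m^{-1} = (1 \; m+1 \; 2),
\]
so this $3$\hyp{}cycle lies in $G$. Conjugating by powers of $\natcycle{n}$ then produces the whole family $\{(i \; i+1 \; i+m) : 1 \leq i \leq n\}$ of $3$\hyp{}cycles in $G$, with indices read modulo $n$.

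Third, I would show that $G$ is primitive. Any nontrivial $\natcycle{n}$\hyp{}invariant block system on $\nset{n}$ consists of arithmetic progressions $B_k = \{j \in \nset{n} : j \equiv k \pmod d\}$ for some proper divisor $d$ of $n$ with $1 < d < n$. If the block $B_1$ contains an element larger than $m$, that element is fixed by $\tau_m$ and so lies in $\tau_m(B_1) \cap B_1$, while $\tau_m(1) = 2 \in B_2 \neq B_1$; hence $\tau_m(B_1)$ meets two distinct blocks, which is impossible. The only remaining case is $B_1 \subseteq \nset{m}$, but then iterating $B_1 \to B_2 \to B_3 \to \cdots$ under $\tau_m$ eventually reaches some block $B_k$ that does meet $\{m+1, \dots, n\}$ (since the $B_k$ partition $\nset{n}$ and $m < n$), and the same obstruction strikes there. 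Thus $G$ preserves no nontrivial partition.

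Jordan's classical theorem --- that a primitive subgroup of $\symm{n}$ containing a $3$\hyp{}cycle contains $\alt{n}$ --- now yields $\alt{n} \leq G$, and combining with the parity dichotomy gives the statement. I expect the primitivity step to be the main obstacle: it is elementary but requires honestly tracking several cases according to the relative sizes of $d$, $m$, and $n - m$, and checking that the ``iterate until a block straddles $m$'' argument indeed reaches a contradiction in every scenario. Everything else is a routine calculation.
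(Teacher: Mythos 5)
The paper does not prove this lemma at all: it is quoted verbatim from Isaacs and Zieschang (Theorem~B of their Monthly article), whose own argument is an elementary generation argument that does not pass through primitivity or Jordan's theorem. So there is no proof in the paper to compare against, and your proposal stands or falls on its own. Its skeleton is sound and standard: the parity bookkeeping is right (sign of an $\ell$\hyp{}cycle is $(-1)^{\ell-1}$), the computation $(\natcycle{n}\tau_m\natcycle{n}^{-1})\tau_m^{-1}=(1\;m{+}1\;2)$ checks out with the paper's right\hyp{}to\hyp{}left composition, and Jordan's theorem that a primitive group containing a $3$\hyp{}cycle contains $\alt{n}$ is the correct classical input. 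Note that this is a stronger statement than the version of Jordan's theorem quoted in Section~9 of the paper (which requires a $p$\hyp{}cycle with $p\le n-3$); you need the unrestricted $3$\hyp{}cycle form, which is fine but should be cited as such.

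The one place your sketch is not yet a proof is the primitivity step, and specifically the phrase ``the same obstruction strikes there.'' The blocks of a nontrivial $\natcycle{n}$\hyp{}invariant system are the congruence classes $B_1,\dots,B_d$ modulo a divisor $d$ with $1<d<n$. Your obstruction (a block containing both a point $>m$, hence fixed by $\tau_m$, and a point $i\le m$ with $\tau_m(i)$ in a different block) does \emph{not} strike at a block contained entirely in $\interval{m+1}{n}$: such a block is fixed pointwise by $\tau_m$ and yields no contradiction by itself. The repair is short but uses a different mechanism: if a block $B$ meets $\interval{m+1}{n}$ then $\tau_m(B)=B$, so $B\cap\nset{m}$ is invariant under the $m$\hyp{}cycle $\tau_m|_{\nset{m}}$ and is therefore empty or all of $\nset{m}$ (the latter forces $1,2\in B$, i.e.\ $d=1$). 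Hence both $\nset{m}$ and $\interval{m+1}{n}$ are unions of blocks; since every residue class modulo $d$ has an element $\le d$ and an element $\ge n-d+1$, this forces $d\ge m+1$ and $d\ge n-m+1$, whence $2d\ge n+2$, contradicting $d\le n/2$. With that subcase filled in, your argument is complete.
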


\begin{lemma}
\label{lem:cycle-dihedr-alt}
Let $G \leq \symm{n}$.
If $\natcycle{n} \in G$ and $\Compn[n+1]{G} \nleq \dihed{n+1}$, then $\alt{n} \leq G$.
\end{lemma}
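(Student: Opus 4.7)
The plan is to extract from a permutation $\pi \in \Compn[n+1]{G} \setminus \dihed{n+1}$ a short cycle of consecutive integers that lives in $G$, and then combine it with $\natcycle{n}$ using Isaacs and Zieschang's generation theorem (Lemma~\ref{lem:IsaZie}).

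First I would pick $\pi \in \Compn[n+1]{G} \setminus \dihed{n+1}$. Since $\pi \notin \dihed{n+1}$, Fact~\ref{fact:dihedral} supplies some index $x \in \nset{n}$ such that $j := \pi(x)$ and $k := \pi(x+1)$ are not consecutive modulo $n+1$. Writing $\gamma := \subpermsimple{\pi}{x+1} \circ \subpermsimple{\pi}{x}^{-1}$, Lemma~\ref{lem:AtkBea-L11} tells us that $\gamma$ is the cycle on consecutive integers $(a \; a{+}1 \; \cdots \; a{+}m{-}1) \in \symm{n}$, where $a = \min(j,k)$ and $m = |j - k|$. Because $\pi \in \Compn[n+1]{G}$, both $\subpermsimple{\pi}{x}$ and $\subpermsimple{\pi}{x+1}$ lie in $G$, so $\gamma \in G$.

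Next I would verify that $m$ lies in the range required by Lemma~\ref{lem:IsaZie}. The hypothesis that $j,k$ are not consecutive modulo $n+1$ rules out both $|j-k| = 1$ and the wrap-around case $\{j,k\} = \{1, n+1\}$ (which would give $|j-k| = n$), leaving $2 \leq m \leq n-1$. Using that $\natcycle{n} \in G$, I would then conjugate $\gamma$ by $\natcycle{n}^{a-1}$ to shift its support to $\{1, \dots, m\}$: a short computation shows that $\natcycle{n}^{-(a-1)} \circ \gamma \circ \natcycle{n}^{a-1} = (1 \; 2 \; \cdots \; m) = \tau_m$, and this permutation also belongs to $G$.

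Finally, with $\natcycle{n}, \tau_m \in G$ and $1 < m < n$, Lemma~\ref{lem:IsaZie} gives $\gensg{\natcycle{n}, \tau_m} \in \{\symm{n}, \alt{n}\}$; in either case $\alt{n} \leq G$, as required. The only mildly delicate step is the interval check $2 \leq m \leq n-1$ in the second paragraph, since one must remember that both the trivial jump and the modular wrap-around have to be excluded when translating ``not consecutive modulo $n+1$'' into an inequality on $|j-k|$; everything else is routine.
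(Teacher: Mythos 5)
Your proof is correct and follows essentially the same route as the paper: extract a cycle of consecutive integers from the two $n$\hyp{}patterns $\subpermsimple{\pi}{x}$, $\subpermsimple{\pi}{x+1}$ of a non\hyp{}dihedral $\pi$ via Fact~\ref{fact:dihedral} and Lemma~\ref{lem:AtkBea-L11}, conjugate it into $(1\;2\;\cdots\;m)$ using $\natcycle{n}$, and invoke Lemma~\ref{lem:IsaZie}. The only cosmetic difference is that you work directly in $G$ rather than in $\gPatl[n]{\Compn[n+1]{G}}$, and your interval check $2 \leq m \leq n-1$ is carried out correctly.
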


\begin{proof}
According to Fact~\ref{fact:dihedral}, $\Compn[n+1]{G}$ contains a permutation $\pi$ such that $\pi(t)$ and $\pi(t+1)$ are not consecutive modulo $n$ for some $t$.
By Lemma~\ref{lem:AtkBea-L11}, $\gPatl[n]{\Compn[n+1]{G}}$ contains a cycle of the form $(y \; y+1 \; \cdots \; y + m - 1)$ for some $y \in \nset{n}$ and $1 < m < n$.
Since $\gPatl[n]{\Compn[n+1]{G}}$ also contains $\natcycle{n}$ by Lemma~\ref{lem:cyclic-dihedral} and Example~\ref{ex:asc-desc-cycle}, we have
\[
(\natcycle{n}^{y-1})^{-1} \circ (y \; y+1 \; \cdots \; y + m - 1) \circ \natcycle{n}^{y-1} 
= (1 \; 2 \; \cdots \; m)
\in \gPatl[n]{\Compn[n+1]{G}}.
\]
It follows from Lemma~\ref{lem:IsaZie} that $\alt{n} \leq \gPatl[n]{\Compn[n+1]{G}} \leq G$.
\end{proof}

We are now ready to describe $\Compn[n+1]{G}$ for any overgroup $G$ of $\cycl{n}$.
We exclude the symmetric group $\symm{n}$ and the alternating group $\alt{n}$, because they have already been taken care of by Theorems~\ref{thm:symmetric-trivial} and \ref{thm:Comp-alt}, respectively.

\begin{theorem}
\label{thm:Comp-cycle}
Assume that $G \leq \symm{n}$, $G \notin \{\symm{n}, \alt{n}\}$, and $\natcycle{n} \in G$.
\begin{enumerate}[label=\upshape{(\roman*)}]
\item\label{prop:Comp-cycle:dihed} If $\dihed{n} \leq G$, then $\Compn[n+1]{G} = \dihed{n+1}$.
\item\label{prop:Comp-cycle:other} If $\dihed{n} \nleq G$, then $\Compn[n+1]{G} = \cycl{n+1}$.
\end{enumerate}
\end{theorem}

\begin{proof}
Since $\natcycle{n} \in G$, we have $\cycl{n} \leq G$, so Lemma~\ref{lem:cyclic-dihedral} implies $\cycl{n+1} \leq \Compn[n+1]{G}$.
Since $\alt{n} \nleq G$, Lemma~\ref{lem:cycle-dihedr-alt} implies $\Compn[n+1]{G} \leq \dihed{n+1}$.
If $\dihed{n} \leq G$, then $\dihed{n+1} \leq \Compn[n+1]{G}$ by Lemma~\ref{lem:cyclic-dihedral}; hence $\Compn[n+1]{G} = \dihed{n+1}$.
If $\dihed{n} \nleq G$, then $\Compn[n+1]{G} \cap (\dihed{n+1} \setminus (\cycl{n+1} \cup \{\desc{n+1}\})) = \emptyset$ by Lemma~\ref{lem:cyclic-dihedral}.
Moreover, since $\desc{n} \notin G$, Lemma~\ref{lem:descending} implies $\desc{n+1} \notin \Compn[n+1]{G}$.
Consequently, $\Compn[n+1]{G} = \cycl{n+1}$.
\end{proof}

Compare the above with the description of group classes in which every level is transitive (Theorem~\ref{thm:AtkBea-T4}).
In fact, with the results we have obtained so far, we can describe precisely the anomalous groups, which play an important role in Theorem~\ref{thm:AtkBea-T4}.

\begin{proposition}
\label{prop:anomalous}
A permutation group $G \leq \symm{n}$ is anomalous if and only if
$G = \cycl{n}$, $G = \dihed{n}$, or
$\cycl{n} \leq G \leq \grOE{n}$.
\textup{(}Note that the last case is possible only for even $n$.\textup{)}
\end{proposition}

\begin{proof}
Assume first that $G$ is anomalous.
Then $\cycl{n} \leq G$ and $\gPatl[n-1]{G} \neq \symm{n-1}$ hold by definition.
By Example~\ref{ex:asc-desc-cycle}, $\Patl[n-1]{\natcycle{n}} = \{\asc{n-1}, \natcycle{n-1}\}$, so $\cycl{n-1} \leq \gPatl[n-1]{G}$.
Since $\gPatl[n-1]{G} \neq \symm{n-1}$, it follows from Theorems~\ref{thm:Comp-alt} and \ref{thm:Comp-cycle} that $\Compn[n]{\gPatl[n-1]{G}}$ must be one of the groups $\cycl{n}$, $\dihed{n}$ and $\grOE{n}$.
Since the inclusion $G \leq \Compn[n]{\gPatl[n-1]{G}}$ holds by the general properties of Galois connections, we have either $G = \cycl{n}$, $G = \dihed{n}$, or $\cycl{n} \leq G \leq \grOE{n}$.

Assume then that $G = \cycl{n}$, $G = \dihed{n}$, or $\cycl{n} \leq G \leq \grOE{n}$.
Since $\gPatl[n-1]{\cycl{n}} = \cycl{n-1}$ and $\gPatl[n-1]{\dihed{n}} = \dihed{n-1}$ by Lemma~\ref{lem:ZD}, we have $\gPatl[n-1]{G} \neq \symm{n-1}$ in the cases where $G = \cycl{n}$ or $G = \dihed{n}$.
If $\cycl{n} \leq G \leq \grOE{n}$, then
\[
\gPatl[n-1]{G}
\leq \gPatl[n-1]{\grOE{n}}
= \gPatl[n-1]{\Compn[n]{\alt{n-1}}}
\leq \alt{n-1}
\]
by Theorem~\ref{thm:Comp-alt} and by the fact that $\gPatl[n-1]{\Compn[n]{}}$ is a kernel operation.
Hence $\gPatl[n-1]{G} \neq \symm{n-1}$ also in this case, and we conclude that $G$ is anomalous.
\end{proof}

%%%%%%%%%%%%%%%%%%%%%%%%%%%%%%%%%%%%%%%%%%%%%%%%%%

\section{Intransitive groups}
\label{sec:intransitive}

\subsection{Partitions}

Before moving on to the next class of permutation groups,
let us recall some terminology related to partitions of sets.
Let $\Pi$ be a partition of $\nset{n}$.
The blocks of $\Pi$ are also referred to as \emph{$\Pi$\hyp{}blocks.}
The $\Pi$\hyp{}block containing the element $x$ is denoted by $\eqclass{x}{\Pi}$.
We write $x \sim_\Pi y$ to denote the fact that $x$ and $y$ are members of the same $\Pi$\hyp{}block, i.e., $\eqclass{x}{\Pi} = \eqclass{y}{\Pi}$.
Blocks of cardinality $1$ are \emph{trivial.}
A partition is \emph{trivial} if all its blocks are trivial
or it has only one block.
A partition is called an \emph{interval partition} if its blocks are intervals.
We say that blocks $B$ and $B'$ are \emph{consecutive} if there exist $t \in \nset{n}$ such that $t-1 \in B$ and $t \in B'$.
A partition $\Pi$ is a \emph{refinement} of a partition $\Gamma$, and $\Gamma$ is a \emph{coarsening} of $\Pi$, denoted $\Pi \sqsubseteq \Gamma$, if every $\Pi$\hyp{}block is a subset of some $\Gamma$\hyp{}block.
The set $\Part(n)$ of all partitions of $\nset{n}$, ordered by the refinement relation $\sqsubseteq$, constitutes a lattice, and we denote by $\Pi \vee \Gamma$ and $\Pi \wedge \Gamma$ the finest common coarsening of $\Pi$ and $\Gamma$ and the coarsest common refinement of $\Pi$ and $\Gamma$, respectively.
The set $\IntPart(n)$ of all interval partitions of $\nset{n}$ is a sublattice of $\Part(n)$.

\subsection{Intransitive groups}
Let $G$ be a permutation group, a subgroup of $\symm{n}$, and let $i \in \nset{n}$.
The \emph{$G$\hyp{}orbit} (or simply \emph{orbit}) of $i$ is the set $\{\pi(i) \mid \pi \in G\}$.
The set of all $G$\hyp{}orbits is denoted by $\orbits G$; it is a partition of $\nset{n}$.
A permutation group is \emph{transitive} if it has only one orbit; otherwise it is \emph{intransitive.}
In this section, we investigate the level sequences of intransitive groups.

At this point, let us introduce an important family of groups that will recur in what follows.

\begin{definition}
Let $\Pi$ be a partition of $\nset{n}$, and define $\symm{\Pi}$ to be the set of all $n$\hyp{}permutations that map each $\Pi$\hyp{}block onto itself, i.e.,
\[
\symm{\Pi} := \{\pi \in \symm{n} \mid \text{$\pi(B) = B$ for every block $B \in \Pi$}\}.
\]
It is clear that $\symm{\Pi}$ is a group, and if $\Pi$ has at least two blocks, then $\symm{\Pi}$ is intransitive.
Note that the group $\symm{n}^{a,b}$ of Theorem~\ref{thm:AtkBea-asymptotic} equals $\symm{\Pi_n^{a,b}}$, where
\[
\Pi_n^{a,b} := \{\interval{1}{a}, \desc{n}(\interval{1}{b})\} \cup \{\{i\} \mid a < i < \desc{n}(b)\}.
\]
\end{definition}

If $G$ is a intransitive group, then $G \leq \symm{\orbits G}$.
Moreover, $\orbits G$ is the finest partition $\Pi$ of $\nset{n}$ such that $G \leq \symm{\Pi}$.
Therefore, in order to describe $\Compn[n+i]{G}$, or at least to bound it from above, it is useful to determine $\Compn[n+1]{\symm{\orbits G}}$.
Therefore, we focus first on groups of the form $\symm{\Pi}$ where $\Pi$ is a partition of $\nset{n}$.

\begin{definition}
For an arbitrary partition $\Pi$ of $\nset{n}$, let $\maxintervals{\Pi}$ be the coarsest interval partition of $\nset{n}$ that is a refinement of $\Pi$, and let $\maxintervalsmiddle{\Pi} := \maxintervals{\Pi} \setminus \{\eqclass{1}{\maxintervals{\Pi}}, \eqclass{n}{\maxintervals{\Pi}}\}$.
Define the partition $\Pi'$ of the set $\nset{n+1}$ as $\Pi' := \maxintervals{\Pi}^1 \wedge \maxintervals{\Pi}^{n+1}$, where $\maxintervals{\Pi}^1$ and $\maxintervals{\Pi}^{n+1}$ are the partitions of $\nset{n+1}$ given by
\begin{align*}
\maxintervals{\Pi}^1 &:= \{B + 1 \mid B \in \maxintervals{\Pi}, \; 1 \notin B\} \cup \{((\eqclass{1}{\maxintervals{\Pi}}) + 1) \cup \{1\}\}, \\
\maxintervals{\Pi}^{n+1} &:= \{B \mid B \in \maxintervals{\Pi}, \; n \notin B\} \cup \{\eqclass{n}{\maxintervals{\Pi}} \cup \{n + 1\}\}.
\end{align*}
In other words, $\maxintervals{\Pi}^{n+1}$ is obtained from $\maxintervals{\Pi}$ by adjoining the element $n + 1$ to the block $\eqclass{n}{\maxintervals{\Pi}}$,
and $\maxintervals{\Pi}^1$ is obtained from $\maxintervals{\Pi}$ by adding $1$ to all elements and then adjoining the element $1$ to the set $(\eqclass{1}{\maxintervals{\Pi}}) + 1$.
Note that $\maxintervals{\Pi}^1$ and $\maxintervals{\Pi}^{n+1}$ are interval partitions by construction, and, consequently, their meet, $\Pi'$, is also an interval partition.

We write $\Pi^{(1)} := \Pi'$ and $\Pi^{(i+1)} := (\Pi^{(i)})'$ for $i \geq 1$.
We also write $\desc{n}(\Pi) := \{\desc{n}(B) \mid B \in \Pi\}$.
\end{definition}

\begin{example}
Let
$\Pi = \bigl\{ \{1,2,3,7,8,9,10\}, \{4,5,6,12,13,14\}, \{11\} \bigr\}$.
Then
\begin{align*}
\maxintervals{\Pi} &= \bigl\{ \{1,2,3\}, \{4,5,6\}, \{7,8,9,10\}, \{11\}, \{12,13,14\} \bigr\}, \\
\maxintervals{\Pi}^1 &= \bigl\{ \{1,2,3,4\}, \{5,6,7\}, \{8,9,10,11\}, \{12\}, \{13,14,15\} \bigr\}, \\
\maxintervals{\Pi}^{n+1} &= \bigl\{ \{1,2,3\}, \{4,5,6\}, \{7,8,9,10\}, \{11\}, \{12,13,14,15\} \bigr\}, \\
\Pi^{(1)} &= \Pi' = \bigl\{ \{1,2,3\}, \{4\}, \{5,6\}, \{7\}, \{8,9,10\}, \{11\}, \{12\}, \{13,14,15\} \bigr\}, \\
\Pi^{(2)} &= \bigl\{ \{1,2,3\}, \{4\}, \{5\}, \{6\}, \{7\}, \{8\}, \{9,10\}, \{11\}, \{12\}, \{13\}, \{14,15,16\} \bigr\}, \\
\Pi^{(3)} &= \bigl\{ \{1,2,3\}, \{4\}, \{5\}, \{6\}, \{7\}, \{8\}, \{9\}, \{10\}, \{11\}, \{12\}, \{13\}, \{14\}, \\ &\phantom{{}=\big\{} \{15,16,17\} \bigr\}.
\end{align*}
\end{example}

\begin{fact}
\label{fact:IPi}
It is easy to verify that if $\interval{a}{b} \in \maxintervals{\Pi}$, then the following statements hold:
\begin{enumerate}
\item If $a = 1$ and $b < n$, then $\interval{a}{b} \in \Pi'$.
\item If $a > 1$ and $b < n$, then $\{a\} \in \Pi'$; moreover, if $a < b$, then also $\interval{a+1}{b} \in \Pi'$.
\item If $a > 1$ and $b = n$, then $\{a\}, \interval{a+1}{n+1} \in \Pi'$.
\item If $a = 1$ and $b = n$, then $\interval{1}{n+1} \in \Pi'$.
\end{enumerate}
\end{fact}

\begin{lemma}[{Atkinson, Beals~\cite[Lemma~3]{AtkBea2001}}]
\label{lem:AtkBea-L6}
Let $\Pi$ be a partition of $\nset{n}$, and let $\pi \in \Compn[n+1]{\symm{\Pi}}$.
Assume that $t \in \nset{n}$ is a number such that $t-1 \not\sim_\Pi t$.
Then either
\begin{enumerate}[label=\upshape{(\alph*)}]
\item\label{AB-L6-1-a} $\pi(t) = t$ and $\pi$ maps each one of the intervals $\interval{1}{t-1}$ and $\interval{t+1}{n+1}$ onto itself, or
\item\label{AB-L6-1-b} $\pi(t) = \desc{n+1}(t) = n - t + 2$, $\pi(n - t + 2) = \desc{n+1}(n - t + 2) = t$, and $\pi$ interchanges the interval $\interval{1}{t-1}$ with $\desc{n+1}(\interval{1}{t-1}) = \interval{n - t + 3}{n+1}$ and the interval $\interval{t+1}{n+1}$ with $\desc{n+1}(\interval{t+1}{n+1}) = \interval{1}{n - t + 1}$.
\end{enumerate}
\end{lemma}

\begin{lemma}
\label{lem:AtkBea-L6-2}
If $\pi \in \symm{n+1}$ and $t, t' \in \nset{n}$ are numbers satisfying condition \ref{AB-L6-1-a} or \ref{AB-L6-1-b} of Lemma~\ref{lem:AtkBea-L6}, then both $t$ and $t'$ satisfy the same condition \textup{(}\ref{AB-L6-1-a} or \ref{AB-L6-1-b}\textup{)}.
\end{lemma}

\begin{proof}
Suppose, to the contrary, that $t$ satisfies condition \ref{AB-L6-1-a} and $t'$ satisfies condition \ref{AB-L6-1-b}.
Then $\pi(\interval{1}{t-1}) = \interval{1}{t-1}$ and $\pi(\interval{1}{t'-1}) = \interval{n-t'+3}{n+1}$.
One of the intervals $\interval{1}{t-1}$ and $\interval{1}{t'-1}$ is included in the other, but their images under $\pi$, namely $\interval{1}{t-1}$ and $\interval{n-t'+3}{n+1}$, are incomparable under the subset inclusion order, which is clearly impossible, so we have reached a contradiction.
\end{proof}

\begin{lemma}
\label{lem:CompSPi}
Let $\Pi$ be a partition of $\nset{n}$.
Then $\Compn[n+1]{\symm{\Pi}} \subseteq \symm{\Pi'} \cup \desc{n+1} \symm{\Pi'}$.
\end{lemma}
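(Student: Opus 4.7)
The plan is to apply Lemma~\ref{lem:AtkBea-L6} at every ``boundary'' between consecutive blocks of $\maxintervals{\Pi}$, use Lemma~\ref{lem:AtkBea-L6-2} to see that a single alternative (a) or (b) is in force throughout, and in each case read off the block-invariance of $\pi$ or of $\desc{n+1}\circ\pi$ from a joint intersection of the invariant intervals supplied by Lemma~\ref{lem:AtkBea-L6}.

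First, fix $\pi \in \Compn[n+1]{\symm{\Pi}}$. Let the blocks of $\maxintervals{\Pi}$ be
\[
\interval{1}{b_1},\ \interval{b_1+1}{b_2},\ \ldots,\ \interval{b_{k-1}+1}{b_k=n},
\]
so that the set of boundary points is $T := \{t_i := b_i + 1 \mid 1 \leq i \leq k-1\} = \{t\in\nset{n} : t-1\not\sim_\Pi t\}$. If $T=\emptyset$ then $\symm{\Pi}=\symm{n}$ and there is nothing to prove, so assume $T\neq\emptyset$. By Lemma~\ref{lem:AtkBea-L6} every $t\in T$ satisfies alternative \eqref{AB-L6-1-a} or \eqref{AB-L6-1-b}, and by Lemma~\ref{lem:AtkBea-L6-2} the same alternative holds uniformly over $T$.

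Suppose first that alternative \eqref{AB-L6-1-a} holds for every $t_i$. Then $\pi(t_i)=t_i$ and $\pi$ maps each of the intervals $\interval{1}{t_i-1}$ and $\interval{t_i+1}{n+1}$ onto itself. Intersecting these invariant sets over all $i$ produces the following $\pi$-invariant partition of $\nset{n+1}$: the ``left end'' $\interval{1}{b_1}$, the singleton $\{t_i\}$ for each interior boundary, the ``middle'' intervals $\interval{b_i+2}{b_{i+1}}$ when non-empty, and the ``right end'' $\interval{b_{k-1}+2}{n+1}$. A direct comparison with the case analysis defining $\Pi'$ given in the excerpt (the four bulleted cases according to whether $a=1$ or $a>1$ and $b<n$ or $b=n$) shows that this is precisely the block structure of $\Pi'$. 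Hence $\pi\in\symm{\Pi'}$.

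Suppose instead that alternative \eqref{AB-L6-1-b} holds for every $t_i$, i.e., $\pi(t_i)=\desc{n+1}(t_i)$ and $\pi$ swaps $\interval{1}{t_i-1}$ with $\desc{n+1}(\interval{1}{t_i-1})$ and $\interval{t_i+1}{n+1}$ with $\desc{n+1}(\interval{t_i+1}{n+1})$. Set $\sigma:=\desc{n+1}\circ\pi$. Since $\desc{n+1}$ is an involution, $(\desc{n+1}\circ\pi)(t_i)=t_i$, and applying $\desc{n+1}$ to the two interchanges above shows that $\sigma$ leaves both $\interval{1}{t_i-1}$ and $\interval{t_i+1}{n+1}$ invariant for every $i$. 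That is, $\sigma$ satisfies alternative \eqref{AB-L6-1-a} at every boundary, so by the paragraph above $\sigma\in\symm{\Pi'}$, whence $\pi=\desc{n+1}\sigma\in\desc{n+1}\symm{\Pi'}$.

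Combining the two cases gives $\Compn[n+1]{\symm{\Pi}}\subseteq\symm{\Pi'}\cup\desc{n+1}\symm{\Pi'}$. The only delicate point, and the one I would check most carefully when writing the full proof, is the bookkeeping that identifies the joint intersection of the invariant intervals in alternative~\eqref{AB-L6-1-a} with the definition of $\Pi'$; the cases $a=1$ and $b=n$ (where the extremal block is inflated by adjoining $1$ or $n+1$) need to be matched up with the fact that alternative~\eqref{AB-L6-1-a} provides no boundary at $1$ or at $n+1$.
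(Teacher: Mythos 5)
Your argument is correct and follows essentially the same route as the paper: invoke Lemma~\ref{lem:AtkBea-L6} at each point $t$ with $t-1 \not\sim_\Pi t$, use Lemma~\ref{lem:AtkBea-L6-2} to fix a single alternative, and in case \eqref{AB-L6-1-a} read off invariance of each $\Pi'$\hyp{}block (the paper does this via the same four-case match with the definition of $\Pi'$ that you defer), while in case \eqref{AB-L6-1-b} pass to $\desc{n+1}\circ\pi$. The bookkeeping you flag as delicate does go through exactly as you describe, so no gap remains.
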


\begin{proof}
Let $\pi \in \Compn[n+1]{\symm{\Pi}}$.
By Lemma~\ref{lem:AtkBea-L6-2}, all numbers $t \in \nset{n}$ with $t-1 \not\sim_\Pi t$ satisfy the same condition (\ref{AB-L6-1-a} or \ref{AB-L6-1-b}) of Lemma~\ref{lem:AtkBea-L6}.
Assume first that condition \ref{AB-L6-1-a} is satisfied by all such numbers $t$.
It follows from Fact~\ref{fact:IPi} and Lemma~\ref{lem:AtkBea-L6} that for each $\interval{a}{b} \in \maxintervals{\Pi}$, the following holds:
\begin{enumerate}
\item if $a = 1$ and $b < n$, then $\pi$ maps the interval $\interval{a}{b}$ onto itself;
\item if $a > 1$ and $b < n$, then $\pi(a) = a$; moreover, if $a < b$, then $\pi$ maps $\interval{a+1}{b}$ onto itself;
\item if $a > 1$ and $b = n$, then $\pi(a) = a$ and $\pi$ maps $\interval{a+1}{n+1}$ onto itself;
\item If $a = 1$ and $b = n$, then $\pi$ maps $\interval{1}{n+1}$ onto itself.
\end{enumerate}
In other words, $\pi$ maps every $\Pi'$\hyp{}block onto itself, that is, $\pi \in \symm{\Pi'}$.

Assume then that condition \ref{AB-L6-1-b} is satisfied.
Arguing as above, we see that $\pi$ maps each block $B$ of $\Pi'$ onto $\desc{n+1}(B)$.
Consequently, $\desc{n+1} \circ \pi$ maps every $\Pi'$\hyp{}block onto itself, that is, $\desc{n+1} \circ \pi \in \symm{\Pi'}$.
Therefore, $\pi \in \desc{n+1} \symm{\Pi'}$.
We conclude that $\Compn[n+1]{\symm{\Pi}} \subseteq \symm{\Pi'} \cup \desc{n+1} \symm{\Pi'}$.
\end{proof}

\begin{lemma}
\label{lem:Pi-prime-in-Comp-Pi}
Let $\Pi$ be a partition of $\nset{n}$.
Then $\symm{\Pi'} \subseteq \Compn[n+1]{\symm{\Pi}}$.
\end{lemma}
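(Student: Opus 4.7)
The plan is to prove $\symm{\Pi'} \subseteq \Compn[n+1]{\symm{\Pi}}$ via a generator argument. By Proposition~\ref{prop:S-subgroup}, $\Compn[n+1]{\symm{\Pi}}$ is a subgroup of $\symm{n+1}$, and the group $\symm{\Pi'}$ is generated by the set of transpositions $(x \; y)$ with $x$ and $y$ lying in the same nontrivial block of $\Pi'$. Hence it suffices to verify that every such transposition has all of its $n$\hyp{}patterns in $\symm{\Pi}$.

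The key structural observation is the following: for every nontrivial block $B$ of $\Pi'$, the set $(B \cup \{\min B - 1\}) \cap \nset{n}$ is contained in a single block of $\maxintervals{\Pi}$, and hence in a single block of $\Pi$. This is immediate from the four\hyp{}case description of $\Pi'$ recorded just before Theorem~\ref{thm:CompSPi}; for instance, if $B = \interval{a+1}{n+1}$ arises from the $\maxintervals{\Pi}$\hyp{}block $\interval{a}{n}$, then $(B \cup \{a\}) \cap \nset{n} = \interval{a}{n}$, and the remaining cases are analogous or easier.

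Next, fix $\pi := (x \; y)$ with $x < y$ in a common nontrivial $\Pi'$\hyp{}block $B$, fix $i \in \nset{n+1}$, and let $\tau := \subpermsimple{\pi}{i}$. A routine reduction calculation, separating the subcases $i < x$, $i = x$, $x < i < y$, $i = y$, and $i > y$ (the last being vacuous when $y = n+1$), shows that $\tau$ is in each case either a transposition in $\{\,(x-1 \; y-1), (x \; y-1), (x \; y)\,\}$ or a cycle whose support is contained in $\{x, x+1, \dots, y-1\}$. In every case the support of $\tau$ lies in $\{x-1, x, \dots, y\} \cap \nset{n} \subseteq (B \cup \{\min B - 1\}) \cap \nset{n}$, which by the structural observation lies inside a single $\Pi$\hyp{}block; consequently $\tau$ preserves every block of $\Pi$ setwise, that is, $\tau \in \symm{\Pi}$.

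No step is conceptually difficult: the structural observation is a matter of inspection, and the reduction computation is mechanical. The only mildly delicate point is the boundary subcase $i = y = n+1$ with $B = \interval{a+1}{n+1}$, where the reduction produces a cycle with support $\{x, x+1, \dots, n\}$ rather than $\{x, \dots, y-1\}$; here one must observe that this support still sits inside the $\maxintervals{\Pi}$\hyp{}block $\interval{a}{n}$, which it does because $x \geq a+1$.
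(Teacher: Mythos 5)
Your proof is correct, but it takes a genuinely different route from the paper's. The paper fixes an arbitrary $\pi \in \symm{\Pi'}$ and verifies directly, by a case analysis on the position of each index $j$ relative to the deleted index $i$ and to the $\Pi'$\hyp{}block of $i$, that $\subpermsimple{\pi}{i}(j) \in \eqclass{j}{\Pi}$ for every $j$; it never invokes the group structure of $\Compn[n+1]{\symm{\Pi}}$. You instead use Proposition~\ref{prop:S-subgroup} to reduce to the transpositions generating $\symm{\Pi'}$, compute the $n$\hyp{}patterns of a single transposition $(x \; y)$ explicitly, and conclude via a support argument: each pattern is a transposition or a cycle supported inside $\interval{x-1}{y} \cap \nset{n} \subseteq (B \cup \{\min B - 1\}) \cap \nset{n}$, which your structural observation places inside one $\maxintervals{\Pi}$\hyp{}block and hence one $\Pi$\hyp{}block, so the pattern fixes every $\Pi$\hyp{}block setwise. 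I checked the structural observation against the four\hyp{}case description of $\Pi'$ and the reduction computations in all five subcases, and they hold. What your approach buys is a much more concrete case analysis (patterns of a transposition rather than of an arbitrary block\hyp{}preserving permutation), at the price of leaning on Proposition~\ref{prop:S-subgroup} and on the four\hyp{}case description of $\Pi'$, which the paper asserts without proof; the paper's pointwise verification is longer but self\hyp{}contained and establishes the slightly finer fact $\subpermsimple{\pi}{i}(j) \in \eqclass{j}{\Pi}$ for arbitrary $\pi \in \symm{\Pi'}$. Your boundary remarks (the case $\min B = 1$, where intersecting with $\nset{n}$ discards the element $0$, and the case $i = y = n+1$) are handled correctly.
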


\begin{proof}
The claim clearly holds when $\Pi = \{\nset{n}\}$ (in which case $\Pi' = \{\nset{n+1}\}$), so we may assume that $\Pi$ has at least two blocks and hence so does $\Pi'$.
Let $\pi \in S_{\Pi'}$ and $i \in \nset{n+1}$.
We want to show that $\subpermsimple{\pi}{i} \in \symm{\Pi}$.
Let $B := \eqclass{i}{\Pi'}$.
We have $\pi(i) \in B$, because $\pi$ maps every $\Pi'$\hyp{}block onto itself.
Let $j \in \nset{n}$.
We shall consider different cases according to the position of $j$ relative to $i$ and $B$.

Case 1: $j < i$ and $j \notin B$.
Since the blocks of $\Pi'$ are intervals and $\Pi'$ has at least two blocks, it necessarily holds that $\pi(j) < \pi(i)$ and
$j \not\sim_{\Pi'} n+1$.
Therefore, $\subpermsimple{\pi}{i}(j) = \pi(j) \in \eqclass{j}{\Pi'} \subseteq \eqclass{j}{\Pi}$.

Case 2: $j \geq i$ and $j + 1 \notin B$. Then necessarily $\pi(j+1) > \pi(i)$ and
$1 \not\sim_{\Pi'} j+1$.
Therefore, $\subpermsimple{\pi}{i}(j) = \pi(j+1) - 1 \in \eqclass{(j+1)}{\Pi'} - 1 \subseteq \eqclass{j}{\Pi}$.

Case 3: $j < i$ and $j \in B$, or $j \geq i$ and $j + 1 \in B$.
In either case, both $j$ and $j+1$ are in $B$, because the block $B$ is an interval, and therefore also $\pi(j), \pi(j+1) \in B$.

Subase 3.1: $j < i$ and $j \in B$.
If $\pi(j) < \pi(i)$, then we also have $\pi(j) + 1 \in B$, i.e., $\pi(j) \in B - 1$.
Consequently, $\subpermsimple{\pi}{i}(j) = \pi(j) \in B \cap (B - 1)$.
If $\pi(j) > \pi(i)$, then we also have $\pi(j) - 1 \in B$.
Consequently, $\subpermsimple{\pi}{i}(j) = \pi(j) - 1 \in B \cap (B - 1)$.

Subcase 3.2: $j \geq i$ and $j + 1 \in B$.
If $\pi(j+1) < \pi(i)$, then we also have $\pi(j+1) + 1 \in B$, i.e., $\pi(j+1) \in B - 1$.
Consequently, $\subpermsimple{\pi}{i}(j) = \pi(j+1) \in B \cap (B - 1)$.
If $\pi(j+1) > \pi(i)$, then we also have $\pi(j+1) - 1 \in B$.
Consequently, $\subpermsimple{\pi}{i}(j) = \pi(j+1) - 1 \in B \cap (B - 1)$.

In both subcases (3.1 and 3.2), we have arrived to the conclusion that $\subpermsimple{\pi}{i}(j) \in B \cap (B - 1)$.
Depending on whether $n+1 \notin B$ or $1 \notin B$ (at least one of these must hold, because $\Pi'$ has at least two blocks), we have $\eqclass{j}{\Pi'} \subseteq \eqclass{j}{\Pi}$ or $\eqclass{(j+1)}{\Pi'} - 1 \subseteq \eqclass{j}{\Pi}$.
Since $B = \eqclass{j}{\Pi'} = \eqclass{(j+1)}{\Pi'}$, we have $B \cap (B - 1) \subseteq \eqclass{j}{\Pi}$. Consequently $\subpermsimple{\pi}{i}(j) \in \eqclass{j}{\Pi}$.

We conclude that $\subpermsimple{\pi}{i}(j) \in \eqclass{j}{\Pi}$ for all $j \in \nset{n}$, that is, $\subpermsimple{\pi}{i} \in \symm{\Pi}$.
From this it follows that $\pi \in \Compn[n+1]{\symm{\Pi}}$.
\end{proof}

\begin{lemma}
\label{lem:desc-CompSPi}
Let $\Pi$ be a partition of $\nset{n}$.
Then the following statements hold.
\begin{enumerate}[label=\upshape{(\roman*)}]
\item\label{dCSPi:1} If $\desc{n} \in \symm{\Pi}$, then $\desc{n+1} \symm{\Pi'} \subseteq \Compn[n+1]{\symm{\Pi}}$.
\item\label{dCSPi:2} If $\desc{n} \notin \symm{\Pi}$, then $\desc{n+1} \symm{\Pi'} \cap \Compn[n+1]{\symm{\Pi}} = \emptyset$.
\end{enumerate}
\end{lemma}

\begin{proof}
\ref{dCSPi:1}
Let $\pi \in \desc{n+1} \symm{\Pi'}$.
Then $\pi = \desc{n+1} \circ \tau$ for some $\tau \in \symm{\Pi'}$.
By Lemma~\ref{lem:Pi-prime-in-Comp-Pi}, $\tau \in \Compn[n+1]{\symm{\Pi}}$.
We also have $\desc{n+1} \in \Compn[n+1]{\symm{\Pi}}$ by Lemma~\ref{lem:descending}.
Since $\Compn[n+1]{\symm{\Pi}}$ is a permutation group by Proposition~\ref{prop:S-subgroup}, it follows that $\pi = \desc{n+1} \circ \tau \in \Compn[n+1]{\symm{\Pi}}$.

\ref{dCSPi:2}
Suppose, to the contrary, that there exists a permutation $\pi \in \desc{n+1} \symm{\Pi'} \cap \Compn[n+1]{\symm{\Pi}}$.
Then $\pi = \desc{n+1} \circ \tau$ for some $\tau \in \symm{\Pi'}$,
and we have $\tau \in \Compn[n+1]{\symm{\Pi}}$ by Lemma~\ref{lem:Pi-prime-in-Comp-Pi}.
Since $\Compn[n+1]{\symm{\Pi}}$ is a group, we also have that $\desc{n+1} = \pi \circ \tau^{-1} \in \Compn[n+1]{\symm{\Pi}}$.
Lemma~\ref{lem:descending} implies $\desc{n} \in \symm{\Pi}$, and we have reached a contradiction.
\end{proof}

\begin{lemma}
\label{lem:Pi-PiDeltan}
Let $\Pi$ be a partition of $\nset{n}$.
Then the following statements hold.
\begin{enumerate}[label=\upshape{(\roman*)}]
\item\label{item:desc-in-Pi}
$\desc{n} \in \symm{\Pi}$ if and only if $B = \desc{n}(B)$ for all $B \in \Pi$.

\item\label{item:desc-Pi=Pir}
If $\desc{n} \in \symm{\Pi}$, then $\Pi = \desc{n}(\Pi)$.

\item\label{item:Pi=Pir-Pi'=Pi'r}
If $\Pi = \desc{n}(\Pi)$, then $\Pi' = \desc{n+1}(\Pi')$.
\end{enumerate}
\end{lemma}

\begin{proof}
\ref{item:desc-in-Pi}
Assume first that $\desc{n} \in \symm{\Pi}$.
Let $B \in \Pi$ and $x \in B$.
Then $\desc{n}(x) \in B$, so $x \in \desc{n}^{-1}(B) = \desc{n}(B)$; hence $B \subseteq \desc{n}(B)$.
Let then $x \in \desc{n}(B)$.
Then $\desc{n}(x) = \desc{n}^{-1}(x) \in B$.
Since $\desc{n} \in \symm{\Pi}$, we have $x = \desc{n}(\desc{n}(x)) \in B$; hence $\desc{n}(B) \subseteq B$.

Assume then that $B = \desc{n}(B)$ for all $B \in \Pi$.
Then for all $x \in \nset{n}$, we have $\desc{n}(x) \in \eqclass{\desc{n}(x)}{\Pi} = \eqclass{x}{\Pi}$, so $\desc{n} \in \symm{\Pi}$.

\ref{item:desc-Pi=Pir}
If $\desc{n} \in \symm{\Pi}$, then for every $B \in \Pi$, we have $\desc{n}(B) \in \Pi$
by part \ref{item:desc-in-Pi}.
This implies $\Pi = \desc{n}(\Pi)$.

\ref{item:Pi=Pir-Pi'=Pi'r}
Assume that $\Pi = \desc{n}(\Pi)$. Then we have $\desc{n}(\interval{a}{b}) \in \maxintervals{\Pi}$ whenever $\interval{a}{b} \in \maxintervals{\Pi}$.
Let $B \in \Pi'$. Then $B = \interval{a}{b}$ for some $a \leq b$. Let us consider the different possibilities.

If $a = 1$ and $b < n+1$, then $\interval{1}{b} \in \maxintervals{\Pi}$.
We also have $\desc{n}(\interval{1}{b}) = \interval{\desc{n}(b)}{n} \in \maxintervals{\Pi}$.
Consequently, $\interval{\desc{n}(b)+1}{n+1} = \desc{n+1}(\interval{1}{b}) \in \Pi'$.

If $1 < a = b < n+1$, then $B = \{a\}$ and there exists $c \in \nset{n}$ with $a \leq c$ such that $\interval{a}{c} \in \maxintervals{\Pi}$.
We also have $\desc{n}(\interval{a}{c}) = \interval{\desc{n}(c)}{\desc{n}(a)} \in \maxintervals{\Pi}$.
Then there exists $d \in \nset{n}$ with $\desc{n}(a) < d$ such that $\interval{\desc{n}(a)+1}{d} \in \maxintervals{\Pi}$.
Consequently, $\{\desc{n}(a)+1\} = \desc{n+1}(\{a\}) \in \Pi'$.

If $1 < a < b < n+1$, then $a \geq 2$ and $\interval{a-1}{b} \in \maxintervals{\Pi}$.
We also have $\desc{n}(\interval{a-1}{b}) = \interval{\desc{n}(b)}{\desc{n}(a-1)} \in \maxintervals{\Pi}$.
Consequently, $\interval{\desc{n}(b)+1}{\desc{n}(a-1)} = \desc{n+1}(\interval{a}{b}) \in \Pi'$.

If $a > 1$ and $b = n+1$, then $\interval{a-1}{n} \in \maxintervals{\Pi}$.
We also have $\desc{n}(\interval{a-1}{n}) = \interval{1}{\desc{n}(a-1)} \in \maxintervals{\Pi}$.
Consequently, $\interval{1}{\desc{n}(a-1)} = \desc{n+1}(\interval{a}{n+1}) \in \Pi'$.

Therefore, $\desc{n+1}(B) \in \Pi'$ whenever $B \in \Pi'$, that is, $\Pi' = \desc{n+1}(\Pi')$.
\end{proof}

\begin{lemma}
\label{lem:conjugate-SPi}
For any partition $\Pi$ of $\nset{n}$, we have $\desc{n} \symm{\Pi} \desc{n} = \symm{\desc{n}(\Pi)}$.
\end{lemma}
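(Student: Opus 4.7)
The plan is to recognize this as a routine conjugation fact and unpack it in one direction; the other follows by symmetry, since $\desc{n}$ is an involution, so $\desc{n}^{-1} = \desc{n}$.

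First I would record the general observation that for any $\pi, \sigma \in \symm{n}$ and any subset $B \subseteq \nset{n}$, the conjugate $\sigma \pi \sigma^{-1}$ maps $\sigma(B)$ setwise to itself if and only if $\pi$ maps $B$ setwise to itself. This is immediate from $(\sigma \pi \sigma^{-1})(\sigma(x)) = \sigma(\pi(x))$: the image $(\sigma \pi \sigma^{-1})(\sigma(B))$ equals $\sigma(\pi(B))$, which equals $\sigma(B)$ iff $\pi(B) = B$ (the last step using that $\sigma$ is injective).

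Applied with $\sigma = \desc{n}$, this gives for every $\pi \in \symm{n}$: $\pi \in \symm{\Pi}$, i.e.\ $\pi(B) = B$ for all $B \in \Pi$, if and only if $\desc{n} \pi \desc{n}$ fixes setwise every block of $\desc{n}(\Pi) = \{\desc{n}(B) \mid B \in \Pi\}$, i.e.\ $\desc{n} \pi \desc{n} \in \symm{\desc{n}(\Pi)}$. Hence $\desc{n} \symm{\Pi} \desc{n} \subseteq \symm{\desc{n}(\Pi)}$.

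For the reverse inclusion, apply the same observation with $\desc{n}(\Pi)$ in place of $\Pi$: any $\tau \in \symm{\desc{n}(\Pi)}$ satisfies $\desc{n} \tau \desc{n} \in \symm{\desc{n}(\desc{n}(\Pi))} = \symm{\Pi}$, because $\desc{n} \circ \desc{n} = \asc{n}$. Thus $\tau = \desc{n} (\desc{n} \tau \desc{n}) \desc{n} \in \desc{n} \symm{\Pi} \desc{n}$, and equality follows. There is no real obstacle here; the only point to be careful about is the involutive nature of $\desc{n}$, which collapses $\desc{n}^{-1}$ to $\desc{n}$ and lets the argument be written entirely in terms of the single permutation $\desc{n}$.
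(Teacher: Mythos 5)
Your proof is correct and takes essentially the same approach as the paper: both reduce the statement to the observation that $\desc{n}\pi\desc{n}$ maps $\desc{n}(B)$ onto itself exactly when $\pi$ maps $B$ onto itself, using $\desc{n}^{-1}=\desc{n}$. The paper writes this as a single chain of equivalences, whereas you split it into two inclusions, but your first ``if and only if'' already gives both directions, so the second paragraph is merely a redundant (and harmless) restatement.
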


\begin{proof}
By definition, we have $\pi \in \symm{\desc{n}(\Pi)}$
if and only if $\pi(B) = B$ for all $B \in \desc{n}(\Pi)$,
or, equivalently, $\pi(\desc{n}(C)) = \desc{n}(C)$ for all $C \in \Pi$.
Composing both sides from the left with $\desc{n} = \desc{n}^{-1}$ yields the equivalent condition $\desc{n} \circ \pi \circ \desc{n}(C) = C$ for all $C \in \Pi$.
By definition, this holds if and only if $\desc{n} \circ \pi \circ \desc{n} \in \symm{\Pi}$,
that is, $\pi \in \desc{n} \symm{\Pi} \desc{n}$.
\end{proof}

\begin{lemma}
\label{lem:Pi-deltaPi}
Let $\Pi$ be a partition of $\nset{n}$.
If $\Pi = \desc{n}(\Pi)$, then $\gensg{\symm{\Pi}, \desc{n}} = \symm{\Pi} \cup \desc{n} \symm{\Pi} = \symm{\Pi} \cup \symm{\Pi} \desc{n}$.
\end{lemma}

\begin{proof}
The inclusions $\symm{\Pi} \cup \desc{n} \symm{\Pi} \subseteq \gensg{\symm{\Pi}, \desc{n}}$ and $\symm{\Pi} \cup \symm{\Pi} \desc{n} \subseteq \gensg{\symm{\Pi}, \desc{n}}$ are obvious.
In order to prove the converse inclusions, note first that
any permutation $\pi \in \gensg{\symm{\Pi}, \desc{n}}$ can be written as
\[
\pi = \tau_1 \circ \desc{n} \circ \tau_2 \circ \desc{n} \circ \dots \circ \desc{n} \circ \tau_{q-1} \circ \desc{n} \circ \tau_q
\]
for some $\tau_i \in \symm{\Pi}$ ($1 \leq i \leq q$).
According to Lemma~\ref{lem:conjugate-SPi} and our hypothesis, we have $\desc{n} \symm{\Pi} \desc{n} = \symm{\desc{n}(\Pi)} = \symm{\Pi}$.
Setting $\tau'_i := \desc{n} \circ \tau_i \circ \desc{n}$, we have $\tau'_i \in \symm{\Pi}$ for any $\tau_i \in \symm{\Pi}$, and we can rewrite the above expression for $\pi$ as
\begin{align*}
\pi &= \tau_1 \circ \tau'_2 \circ \tau_3 \circ \dots \circ \tau'_{q-1} \circ \tau_q, & & \text{for odd $q$,} \\
\pi &= \desc{n} \circ \tau'_1 \circ \tau_2 \circ \tau'_3 \circ \dots \circ \tau'_{q-1} \circ \tau_q, & & \text{for even $q$.}
\end{align*}
This implies that $\pi \in \symm{\Pi} \cup \desc{n} \symm{\Pi}$.
Therefore $\gensg{\symm{\Pi}, \desc{n}} \subseteq \symm{\Pi} \cup \desc{n} \symm{\Pi}$.
A similar argument shows that $\gensg{\symm{\Pi}, \desc{n}} \subseteq \symm{\Pi} \cup \symm{\Pi} \desc{n}$
\end{proof}

\begin{proposition}
\label{prop:CompSPi}
Let $\Pi$ be a partition of $\nset{n}$.
Then the following statements hold.
\begin{enumerate}[label=\upshape{(\roman*)}]
\item\label{item:CompSPi:without-desc} If $\desc{n} \notin \symm{\Pi}$, then $\Compn[n+1]{\symm{\Pi}} = \symm{\Pi'}$.
\item\label{item:CompSPi:with-desc} If $\desc{n} \in \symm{\Pi}$, then $\Compn[n+1]{\symm{\Pi}} = \symm{\Pi'} \cup \desc{n+1} \symm{\Pi'} = \gensg{\symm{\Pi'}, \desc{n+1}}$; moreover, $\Pi' = \desc{n+1}(\Pi')$.
\end{enumerate}
\end{proposition}

\begin{proof}
The following inclusions hold by Lemmas~\ref{lem:CompSPi} and \ref{lem:Pi-prime-in-Comp-Pi}:
\[
\symm{\Pi'} \subseteq \Compn[n+1]{\symm{\Pi}} \subseteq \symm{\Pi'} \cup \desc{n+1} \symm{\Pi'}.
\]
Together with Lemma~\ref{lem:desc-CompSPi}, this implies that $\Compn[n+1]{\symm{\Pi}} = \symm{\Pi'}$ if $\desc{n} \notin \symm{\Pi}$, and $\Compn[n+1]{\symm{\Pi}} = \symm{\Pi'} \cup \desc{n+1} \symm{\Pi'}$ if $\desc{n} \in \symm{\Pi}$.
Moreover, if $\desc{n} \in \symm{\Pi}$, then $\Pi' = \desc{n+1}(\Pi')$ by Lemma~\ref{lem:Pi-PiDeltan}.
Consequently, $\symm{\Pi'} \cup \desc{n+1} \symm{\Pi'} = \gensg{\symm{\Pi'}, \desc{n+1}}$ by Lemma~\ref{lem:Pi-deltaPi}.
\end{proof}

Proposition~\ref{prop:CompSPi} describes $\Compn[n+1]{\symm{\Pi}}$ for an arbitrary partition $\Pi$ of $\nset{n}$.
In the case where $\desc{n} \notin \symm{\Pi}$, repeated application of Proposition~\ref{prop:CompSPi}\ref{item:CompSPi:without-desc}, together with Lemma~\ref{lem:descending}, shows that $\Compn[n+i]{\symm{\Pi}} = \symm{\Pi^{(i)}}$ for all $i \geq 1$.
We are now going to find out how the level sequence continues when $\desc{n} \in \symm{\Pi}$.

\begin{lemma}
\label{lem:PiGamma}
Let $\Pi$ and $\Gamma$ be partitions of $\nset{n}$.
Then $\symm{\Pi \vee \Gamma} = \gensg{\symm{\Pi}, \symm{\Gamma}}$.
\end{lemma}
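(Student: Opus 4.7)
The plan is to prove the two inclusions separately. The inclusion $\gensg{\symm{\Pi}, \symm{\Gamma}} \subseteq \symm{\Pi \vee \Gamma}$ is the easy direction: every $(\Pi \vee \Gamma)$-block is a union of $\Pi$-blocks, so any $\pi \in \symm{\Pi}$ maps each $(\Pi \vee \Gamma)$-block onto itself, i.e., $\symm{\Pi} \subseteq \symm{\Pi \vee \Gamma}$, and similarly $\symm{\Gamma} \subseteq \symm{\Pi \vee \Gamma}$. Since $\symm{\Pi \vee \Gamma}$ is a group, it contains the subgroup generated by their union.

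For the converse $\symm{\Pi \vee \Gamma} \subseteq \gensg{\symm{\Pi}, \symm{\Gamma}}$, the first step is to reduce to transpositions. Any element of $\symm{\Pi \vee \Gamma}$ is a product of transpositions $(i \; j)$ with $i, j$ in the same $(\Pi \vee \Gamma)$-block, because $\symm{\Pi \vee \Gamma}$ is a direct product of the symmetric groups on its blocks and each symmetric group is generated by transpositions. Hence it suffices to show that every such transposition $(i \; j)$ lies in $\gensg{\symm{\Pi}, \symm{\Gamma}}$.

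Next, I would invoke the combinatorial characterization of the join: $i \sim_{\Pi \vee \Gamma} j$ holds if and only if there exists a sequence $i = z_0, z_1, \dots, z_k = j$ such that for each $m \in \{0, \dots, k-1\}$, either $z_m \sim_\Pi z_{m+1}$ or $z_m \sim_\Gamma z_{m+1}$. For each such consecutive pair, the transposition $(z_m \; z_{m+1})$ swaps two elements of a common $\Pi$-block or a common $\Gamma$-block and fixes everything else, so it belongs to $\symm{\Pi} \cup \symm{\Gamma}$. Using the well-known conjugation identity $(a \; c) = (a \; b)(b \; c)(a \; b)$ and induction on $k$, one can write $(z_0 \; z_k)$ as a product of the transpositions $(z_m \; z_{m+1})$. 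Hence $(i \; j) \in \gensg{\symm{\Pi}, \symm{\Gamma}}$, completing the proof.

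There is no significant obstacle here; the only mildly delicate point is verifying the inductive step for expressing $(z_0 \; z_k)$ as a product of adjacent-in-the-path transpositions, but this is a standard fact about the symmetric group being generated by the edges of any connected graph on its ground set.
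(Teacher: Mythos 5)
Your proof is correct and follows essentially the same route as the paper's: the easy inclusion via $\symm{\Pi}, \symm{\Gamma} \leq \symm{\Pi \vee \Gamma}$, reduction of an element of $\symm{\Pi \vee \Gamma}$ to transpositions within $(\Pi \vee \Gamma)$-blocks, the chain characterization of the join, and the conjugation identity $(x \; y)(y \; z)(x \; y) = (x \; z)$ with induction along the chain. The only cosmetic difference is that the paper reduces to transpositions via the disjoint cycle decomposition rather than by citing the direct-product structure of $\symm{\Pi \vee \Gamma}$.
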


\begin{proof}
Observe first that $\symm{\Pi}$ and $\symm{\Gamma}$ are subgroups of $\symm{\Pi \vee \Gamma}$, so $\gensg{\symm{\Pi}, \symm{\Gamma}} \subseteq \symm{\Pi \vee \Gamma}$.
In order to prove the converse inclusion, let $\pi \in \symm{\Pi \vee \Gamma}$, and write $\pi$ as a product of disjoint cycles:
$\pi = (c^1_1 \; c^1_2 \; \dots \; c^1_{p_1}) \circ \dots \circ (c^q_1 \; c^q_2 \; \dots \; c^q_{p_q})$.
Every orbit $\{c^i_1, c^i_2, \dots, c^i_{p_i}\}$ of $\pi$ is contained in a block of $\Pi \vee \Gamma$.
Thus, for each pair $(c^i_j, c^i_{j+1})$, there exists a sequence $c^i_j = d_0, d_1, \dots, d_r = c^i_{j+1}$ such that for all $j \in \{0, \dots, r-1\}$, it holds that either $d_j \sim_\Pi d_{j+1}$ or $d_j \sim_\Gamma d_{j+1}$. Therefore, the transposition $(d_j \; d_{j+1})$ is in $\symm{\Pi}$ or in $\symm{\Gamma}$.
Since $(x \; y)(y \; z)(x \; y) = (x \; z)$, we can show by an easy inductive argument that the transposition $(c^i_j \; c^i_{j+1})$ is in $\gensg{\symm{\Pi}, \symm{\Gamma}}$.
From the fact that $(c^i_1 \; c^i_2 \; \dots \; c^i_{p_i}) = (c^i_1 \; c^i_2)(c^i_2 \; c^i_3) \dots (c^i_{p_i-1} \; c^i_{p_i})$, it follows that $(c^i_1 \; c^i_2 \; \dots \; c^i_{p_i}) \in \gensg{\symm{\Pi}, \symm{\Gamma}}$.
Consequently, $\pi \in \gensg{\symm{\Pi}, \symm{\Gamma}}$.
\end{proof}

For $n \in \IN_+$, let $\Lambda_n := \{\{i, \desc{n}(i)\} \mid 1 \leq i \leq n\}$. The set $\Lambda_n$ is a partition of $\nset{n}$.

\begin{lemma}
\label{lem:Pi-Pi-prime}
Let $\Pi$ be a partition of $\nset{n}$.
Then the following statements hold.
\begin{enumerate}[label=\upshape{(\roman*)}]
\item\label{Pi-Pi-prime-B} $\desc{n} \in \symm{\Pi}$ if and only if $\Pi = \Pi \vee \Lambda_n$.
\item\label{Pi-delta} $\gensg{\symm{\Pi}, \desc{n}} \subseteq \symm{\Pi \vee \Lambda_n}$.
\end{enumerate}
\end{lemma}

\begin{proof}
\ref{Pi-Pi-prime-B}
By definition, the orbits of $\desc{n}$ are precisely the blocks of $\Lambda_n$. Therefore, it is clear that $\desc{n} \in \symm{\Pi}$ if and only if $\Pi$ is a coarsening of $\Lambda_n$, i.e., $\Pi = \Pi \vee \Lambda_n$.

\ref{Pi-delta}
Since $\desc{n} \in \symm{\Lambda_n}$, we have $\gensg{\symm{\Pi}, \desc{n}} \subseteq \gensg{\symm{\Pi}, \symm{\Lambda_n}} = \symm{\Pi \vee \Lambda_n}$ by Lemma~\ref{lem:PiGamma}.
\end{proof}

\begin{lemma}
\label{lem:middle-block}
Assume that $\Pi$ is an interval partition of $\nset{n}$ with no consecutive
nontrivial
blocks.
If $\Pi = \desc{n}(\Pi)$, then $\Pi' = (\Pi \vee \Lambda_n)'$.
\end{lemma}

\begin{proof}
Since $\Pi$ is an interval partition,
we have $\maxintervals{\Pi} = \Pi$.
The assumption $\Pi = \desc{n}(\Pi)$ implies that $\Pi \vee \Lambda_n = \{B \cup \desc{n}(B) \mid B \in \Pi\}$.
The blocks of $\Pi \vee \Lambda_n$ are
of the form $\interval{a}{b} \cup \desc{n}(\interval{a}{b})$ for some $1 \leq a \leq b \leq n$, that is, they are
unions of two
nonconsecutive
intervals, with the exception of the ``middle block'', i.e., the block containing the element $\lceil n/2 \rceil$.

If $n = 2k+1$, then the middle block coincides with a single $\Pi$\hyp{}block:
\[
\eqclass{(k+1)}{(\Pi \vee \Lambda_n)} = \eqclass{(k+1)}{\Pi} \cup \desc{n}(\eqclass{(k+1)}{\Pi}) = \eqclass{(k+1)}{\Pi}.
\]
In this case, $\maxintervals{\Pi \vee \Lambda_n} = \maxintervals{\Pi}$,
so it clearly holds that $\Pi' = (\Pi \vee \Lambda_n)'$.

If $n = 2k$, then the middle block is
\[
\eqclass{k}{(\Pi \vee \Lambda_n)}
= \eqclass{k}{\Pi} \cup \desc{n}(\eqclass{k}{\Pi})
= \eqclass{k}{\Pi} \cup \eqclass{(k+1)}{\Pi}.
\]
If $\eqclass{k}{\Pi} = \eqclass{(k+1)}{\Pi}$, then the middle block is the $\Pi$\hyp{}block $\eqclass{k}{\Pi}$.
In this case,
we have again that $\maxintervals{\Pi \vee \Lambda_n} = \maxintervals{\Pi}$,
so it holds that $\Pi' = (\Pi \vee \Lambda_n)'$.
If the blocks $\eqclass{k}{\Pi}$ and $\eqclass{(k+1)}{\Pi}$ are distinct, then these blocks must be singletons, namely $\{k\}$ and $\{k+1\}$, because we are assuming that $\Pi = \desc{n}(\Pi)$ and $\Pi$ has no consecutive
nontrivial
blocks.
Consequently, the middle block of $\Pi \vee \Lambda_n$ is $\eqclass{k}{(\Pi \vee \Lambda_n)} = \{k, k+1\}$.
Therefore,
$\maxintervals{\Pi \vee \Lambda_n}$ coincides with $\maxintervals{\Pi}$ with the exception that we have $\{k, k+1\} \in \maxintervals{\Pi \vee \Lambda_n}$ while $\{k\}, \{k+1\} \in \maxintervals{\Pi}$.
Notwithstanding this little difference between $\maxintervals{\Pi \vee \Lambda_n}$ and $\maxintervals{\Pi}$, the partitions $\Pi'$ and $(\Pi \vee \Lambda_n)'$ are easily seen to be equal, because
the singletons $\{k\}$ and $\{k+1\}$ in $\maxintervals{\Pi}$ remain as separate blocks when we construct $\Pi'$,
and
the interval $\{k, k+1\}$ in $\maxintervals{\Pi \vee \Lambda_n}$ is split into new blocks $\{k\}$ and $\{k+1\}$ when we construct $(\Pi \vee \Lambda_n)'$.
\end{proof}

\begin{proposition}
\label{prop:CompSPi-more}
Let $\Pi$ be a partition of $\nset{n}$.
Then the following statements hold.
\begin{enumerate}[label=\upshape{(\roman*)}]
\item\label{item:CompSPi:intpart} $\Pi'$ is an interval partition with no consecutive nontrivial blocks.
\item\label{item:CompSPi:SPi-desc} If $\Pi$ is an interval partition with no consecutive nontrivial blocks and $\Pi = \desc{n}(\Pi)$, then $\Compn[n+1]{\gensg{\symm{\Pi}, \desc{n}}} = \gensg{\symm{\Pi'}, \desc{n+1}}$; moreover, $\Pi' = \desc{n+1}(\Pi')$.
\end{enumerate}
\end{proposition}

\begin{proof}
\ref{item:CompSPi:intpart}
Clear from the definition of $\Pi'$.

\ref{item:CompSPi:SPi-desc}
We have $\Pi' = (\Pi \vee \Lambda_n)'$ by Lemma~\ref{lem:middle-block}.
Since $\desc{n} \in \symm{\Pi \vee \Lambda_n}$ by Lemma~\ref{lem:Pi-Pi-prime}\ref{Pi-Pi-prime-B}, we get from Proposition~\ref{prop:CompSPi} that
\[
\Compn[n+1]{\symm{\Pi \vee \Lambda_n}}
= \symm{(\Pi \vee \Lambda_n)'} \cup \desc{n+1} \symm{(\Pi \vee \Lambda_n)'}
= \symm{\Pi'} \cup \desc{n+1} \symm{\Pi'}.
\]
By Lemma~\ref{lem:Pi-PiDeltan}, we have $\Pi' = \desc{n+1}(\Pi')$, so Lemma~\ref{lem:Pi-deltaPi} implies $\symm{\Pi'} \cup \desc{n+1} \symm{\Pi'} = \gensg{\symm{\Pi'}, \desc{n+1}}$.
We also have $\symm{\Pi} \subseteq \gensg{\symm{\Pi}, \desc{n}} \subseteq \symm{\Pi \vee \Lambda_n}$ by Lemma~\ref{lem:Pi-Pi-prime}\ref{Pi-delta} and $\symm{\Pi'} \subseteq \Compn[n+1]{\symm{\Pi}}$ by Lemma~\ref{lem:Pi-prime-in-Comp-Pi}.
Putting these facts together and using the monotonicity of $\Compn[n+1]{}$, we get
\[
\symm{\Pi'} \subseteq \Compn[n+1]{\symm{\Pi}} \subseteq \Compn[n+1]{\gensg{\symm{\Pi}, \desc{n}}} \subseteq \Compn[n+1]{\symm{\Pi \vee \Lambda_n}} = \gensg{\symm{\Pi'}, \desc{n+1}}.
\]
Since $\desc{n+1} \in \Compn[n+1]{\gensg{\symm{\Pi}, \desc{n}}}$ by Lemma~\ref{lem:descending}
and $\symm{\Pi'} \subseteq \Compn[n+1]{\gensg{\symm{\Pi}, \desc{n}}}$ by the above inclusions, we have
$\gensg{\symm{\Pi'}, \desc{n+1}} \subseteq \Compn[n+1]{\gensg{\symm{\Pi}, \desc{n}}}$,
and we conclude that $\Compn[n+1]{\gensg{\symm{\Pi}, \desc{n}}} = \gensg{\symm{\Pi'}, \desc{n+1}}$.
\end{proof}

\begin{theorem}
\label{thm:CompSPi-general}
Let $\Pi$ be a partition of $\nset{n}$. Then for all $i \geq 1$, it holds
\[
\Compn[n+i]{\symm{\Pi}} =
\begin{cases}
\symm{\Pi^{(i)}}, & \text{if $\desc{n} \notin \symm{\Pi}$,} \\
\gensg{\symm{\Pi^{(i)}}, \desc{n+i}}, & \text{if $\desc{n} \in \symm{\Pi}$.}
\end{cases}
\]
\end{theorem}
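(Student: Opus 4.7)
The plan is to prove the statement by induction on $i$, using the transitivity Lemma~\ref{lem:Comp-Pat-transitive} to peel off one level at a time and reduce the inductive step to Theorems~\ref{thm:CompSPi} and~\ref{thm:CompSPi-more}. Specifically, I would write
\[
\Compn[n+i+1]{\symm{\Pi}} \;=\; \Compn[n+i+1]{\Compn[n+i]{\symm{\Pi}}},
\]
apply the inductive hypothesis inside the outer $\Compn[n+i+1]{}$, and then invoke the appropriate part of Theorem~\ref{thm:CompSPi} or~\ref{thm:CompSPi-more} with $\Pi^{(i)}$ playing the role of the partition. The base case $i = 1$ is literally Theorem~\ref{thm:CompSPi}.

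For the inductive step in the case $\desc{n} \notin \symm{\Pi}$, the hypothesis gives $\Compn[n+i]{\symm{\Pi}} = \symm{\Pi^{(i)}}$. Before applying Theorem~\ref{thm:CompSPi}\eqref{item:CompSPi:without-desc} to $\Pi^{(i)}$, I need to check that $\desc{n+i} \notin \symm{\Pi^{(i)}}$; this follows from Lemma~\ref{lem:descending}, since $\desc{n+i} \in \Compn[n+i]{\symm{\Pi}}$ would force $\desc{n} \in \symm{\Pi}$. Then Theorem~\ref{thm:CompSPi}\eqref{item:CompSPi:without-desc} yields $\Compn[n+i+1]{\symm{\Pi^{(i)}}} = \symm{(\Pi^{(i)})'} = \symm{\Pi^{(i+1)}}$, which closes the induction.

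For the inductive step in the case $\desc{n} \in \symm{\Pi}$, I would carry along two extra invariants in the induction hypothesis, namely that $\Pi^{(i)}$ is an interval partition with no consecutive nontrivial blocks and that $\Pi^{(i)} = \desc{n+i}(\Pi^{(i)})$. Both are established for $i=1$ by Theorems~\ref{thm:CompSPi}\eqref{item:CompSPi:with-desc} and~\ref{thm:CompSPi-more}\eqref{item:CompSPi:intpart}, and they are exactly the hypotheses needed to invoke Theorem~\ref{thm:CompSPi-more}\eqref{item:CompSPi:SPi-desc} at $\Pi^{(i)}$. That theorem then gives
\[
\Compn[n+i+1]{\gensg{\symm{\Pi^{(i)}}, \desc{n+i}}} = \gensg{\symm{\Pi^{(i+1)}}, \desc{n+i+1}},
\]
and also propagates $\Pi^{(i+1)} = \desc{n+i+1}(\Pi^{(i+1)})$; the interval/no-consecutive-nontrivial-blocks property of $\Pi^{(i+1)}$ is preserved automatically by Theorem~\ref{thm:CompSPi-more}\eqref{item:CompSPi:intpart}. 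Combined with the transitivity identity from Lemma~\ref{lem:Comp-Pat-transitive}, this completes the induction.

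There is no real obstacle, since all the substantive work sits in Theorems~\ref{thm:CompSPi} and~\ref{thm:CompSPi-more}. The one thing that needs attention is the careful bookkeeping of the two auxiliary invariants (interval partition without consecutive nontrivial blocks, and $\desc{}$-invariance) in the second case, because they are what make the inductive step of Theorem~\ref{thm:CompSPi-more}\eqref{item:CompSPi:SPi-desc} applicable at each new stage; without them, Theorem~\ref{thm:CompSPi-more}\eqref{item:CompSPi:SPi-desc} could not be reapplied at step $i+1$.
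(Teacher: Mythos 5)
Your proposal is correct and is essentially the paper's own proof: the paper also proceeds by induction on $i$ via Lemma~\ref{lem:Comp-Pat-transitive}, Theorem~\ref{thm:CompSPi}, Theorem~\ref{thm:CompSPi-more}, and Lemma~\ref{lem:descending}, and in the case $\desc{n} \in \symm{\Pi}$ it likewise strengthens the induction hypothesis to carry exactly the two invariants you identify (that $\Pi^{(i)}$ is an interval partition with no consecutive nontrivial blocks and that $\Pi^{(i)} = \desc{n+i}(\Pi^{(i)})$). Your write-up simply spells out the bookkeeping that the paper leaves as ``an easy inductive argument.''
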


\begin{proof}
If $\desc{n} \notin \symm{\Pi}$, then an easy inductive argument that uses Lemma~\ref{lem:Comp-Pat-transitive}, Proposition~\ref{prop:CompSPi}\ref{item:CompSPi:without-desc}, and Lemma~\ref{lem:descending} shows that $\Compn[n+i]{\symm{\Pi}} = \symm{\Pi^{(i)}}$ for all $i \geq 1$.

For the case where $\desc{n} \in \symm{\Pi}$, we will prove the following slightly stronger statement: for all $i \geq 1$, $\Compn[n+i]{\symm{\Pi}} = \gensg{\symm{\Pi^{(i)}}, \desc{n+i}}$, $\Pi^{(i)}$ is an interval partition with no consecutive nontrivial blocks, and $\Pi^{(i)} = \desc{n+i}(\Pi^{(i)})$.
This follows by an easy inductive argument that uses Lemma~\ref{lem:Comp-Pat-transitive}, Proposition~\ref{prop:CompSPi}\ref{item:CompSPi:with-desc}, Proposition~\ref{prop:CompSPi-more} and Lemma~\ref{lem:descending}.
\end{proof}

\subsection{Speed of reaching a steady sequence}

We can also determine the exact point where the sequence $(\Compn[n+i]{\symm{\Pi}})_{i \in \IN_+}$ reaches one of the steady sequences predicted by Theorem~\ref{thm:AtkBea-asymptotic}.
For a partition $\Pi$ of $\nset{n}$ and $a, b \in \IN_+$, let
\begin{gather*}
\maxblocksize(\Pi) := \max(\{\card{B} : B \in \maxintervalsmiddle{\Pi}\} \cup \{1\}), \\
\maxblocksize_{a,b}(\Pi) := \max(\maxblocksize(\Pi), \card{\eqclass{1}{\maxintervals{\Pi}}} - a + 1, \card{\eqclass{n}{\maxintervals{\Pi}}} - b + 1).
\end{gather*}

\begin{lemma}
\label{lem:blocks-Pi-i}
Let $\Pi$ be a partition of $\nset{n}$. Then the following statements hold.
\begin{enumerate}[label=\upshape{(\roman*)}]
\item\label{item:M-Pi'}
$\maxblocksize(\Pi') = \maxblocksize(\Pi) = 1$ or $\maxblocksize(\Pi') = \maxblocksize(\Pi) - 1$.
\item\label{item:nontrivial-blocks}
Assume that $\Pi \neq \{\nset{n}\}$.
Then for all $i \geq 1$, it holds that $\eqclass{1}{\Pi^{(i)}} = \eqclass{1}{\maxintervals{\Pi}}$ and $\eqclass{(n+i)}{\Pi^{(i)}} = (\eqclass{n}{\maxintervals{\Pi}}) + i$.
\item\label{item:B-nontrivial}
Assume that $\Pi$ is nontrivial. Then for all $m \geq \max(\maxblocksize(\Pi)-1, 1)$, it holds that every nontrivial $\Pi^{(m)}$\hyp{}block is either $\eqclass{1}{\Pi^{(m)}}$ or $\eqclass{(n+m)}{\Pi^{(m)}}$.
\end{enumerate}
\end{lemma}

\begin{proof}
\ref{item:M-Pi'}
Consider first the case where $\maxblocksize(\Pi) = 1$ and $\maxintervalsmiddle{\Pi} = \emptyset$.
Then either $\Pi = \{\nset{n}\}$ or $\Pi = \{\interval{1}{a}, \interval{a+1}{n}\}$ for some $a \in \nset{n-1}$.
In the former case, $\Pi' = \{\nset{n+1}\}$, and it holds that $\maxintervalsmiddle{\Pi'} = \emptyset$; hence $\maxblocksize(\Pi') = 1$.
In the latter case, $\Pi' = \{\interval{1}{a}, \{a+1\}, \interval{a+2}{n+1}\}$, and it holds that $\maxintervalsmiddle{\Pi'} = \{\{a+1\}\}$; hence $\maxblocksize(\Pi') = 1$.

Consider then the case where $\maxblocksize(\Pi) = 1$ and $\maxintervalsmiddle{\Pi} \neq \emptyset$.
Then every $\Pi$\hyp{}block distinct from $\eqclass{1}{\Pi}$ and $\eqclass{n}{\Pi}$ is trivial,
and it is clear from the definition of $\Pi'$ that every $\Pi'$\hyp{}block distinct from $\eqclass{1}{\Pi'}$ and $\eqclass{(n+1)}{\Pi'}$ is trivial.
Consequently, $\maxblocksize(\Pi') = 1$.

Finally, consider the case where $\maxblocksize(\Pi) > 1$.
Let $B' \in \maxintervalsmiddle{\Pi'}$.
Then either $B' = \{\min B\}$ or $B' = B \setminus \{\min B\}$ for some $B \in \maxintervalsmiddle{\Pi}$, or $B' = \{\min \eqclass{n}{\Pi}\}$,
so $\card{B'} = 1$ or $\card{B'} = \card{B} - 1$.
There exists a block $B \in \maxintervalsmiddle{\Pi}$ with $\card{B} = \maxblocksize(\Pi) > 1$.
It follows that
\[
\maxblocksize(\Pi')
= \max ( \{ \card{B'} \mid B' \in \maxintervalsmiddle{\Pi'} \} \cup \{ 1 \} )
= \max ( \{ \card{B} - 1 \mid \maxintervalsmiddle{\Pi} \} \cup \{ 1 \} )
= \maxblocksize(\Pi) - 1.
\]

\ref{item:nontrivial-blocks}
Since $\Pi \neq \{\nset{n}\}$,
the elements $1$ and $n$ are not in the same $\maxintervals{\Pi}$\hyp{}block.
By the definition of $\Pi'$, the blocks $\eqclass{1}{\maxintervals{\Pi}}$ and $\eqclass{n}{\maxintervals{\Pi}}$ of $\maxintervals{\Pi}$ give rise to the blocks $\eqclass{1}{\Pi'} = \eqclass{1}{\maxintervals{\Pi}}$ and $\eqclass{(n+1)}{\Pi'} = (\eqclass{n}{\maxintervals{\Pi}}) + 1$.
An obvious inductive argument then shows that for all $i \geq 1$, $\eqclass{1}{\Pi^{(i)}} = \eqclass{1}{\maxintervals{\Pi}}$ and $\eqclass{(n+i)}{\Pi^{(i)}} = (\eqclass{n}{\maxintervals{\Pi}}) + i$.

\ref{item:B-nontrivial}
Let $M := \maxblocksize(\Pi)$.
Then, by part~\ref{item:M-Pi'}, the sequence
\[
\maxblocksize(\Pi), \maxblocksize(\Pi^{(1)}), \maxblocksize(\Pi^{(2)}), \dots
\]
counts down from $M$ to $1$ and then it stays at $1$.
In other words, $\maxblocksize(\Pi^{(m)}) = 1$ whenever $m \geq \max(M-1, 1)$.
The claim then follows from part~\ref{item:nontrivial-blocks}.
\end{proof}

\begin{theorem}
\label{thm:SPi-asymptotic}
Let $\Pi$ be a partition of $\nset{n}$ with at least two blocks.
Let $c := \card{\eqclass{1}{\maxintervals{\Pi}}}$, $d := \card{\eqclass{n}{\maxintervals{\Pi}}}$.
Assume that $\symm{\Pi} \neq \symm{n}^{c,d}$.
Then $\Compn[n + i]{\symm{\Pi}} = \symm{n + i}^{c,d}$ or $\Compn[n + i]{\symm{\Pi}} = \gensg{\symm{n + i}^{c,d}, \desc{n + i}}$ if and only if $i \geq \max(\maxblocksize(\Pi)-1, 1)$.
\end{theorem}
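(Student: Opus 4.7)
The plan is to reduce the group equalities in the statement to the single partition identity $\Pi^{(i)} = \Pi_{n+i}^{c,d}$, and then analyze the iteration $\Pi \mapsto \Pi'$ directly by induction. For the reduction, Theorem~\ref{thm:CompSPi-general} gives $\Compn[n+i]{\symm{\Pi}} = \symm{\Pi^{(i)}}$ when $\desc{n} \notin \symm{\Pi}$ and $\Compn[n+i]{\symm{\Pi}} = \gensg{\symm{\Pi^{(i)}}, \desc{n+i}}$ when $\desc{n} \in \symm{\Pi}$; note also that $\symm{n+i}^{c,d} = \symm{\Pi_{n+i}^{c,d}}$. Since $\desc{n+i}$ sends $\interval{1}{c}$ to $\interval{n+i-c+1}{n+i}$, for $i \geq 1$ it lies in neither $\symm{n+i}^{c,d}$ nor $\symm{\Pi^{(i)}}$; Lemma~\ref{lem:descending} then pairs the two cases of Theorem~\ref{thm:CompSPi-general} with the two options in the statement, and the whole disjunction collapses to the single condition $\Pi^{(i)} = \Pi_{n+i}^{c,d}$.

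I would then establish by induction on $i \geq 0$ the following structural description of $\maxintervals{\Pi^{(i)}}$: its leftmost block is $\interval{1}{c}$, its rightmost block is $\interval{n+i-d+1}{n+i}$, and its remaining blocks are, on the one hand, for every middle block $\interval{a}{a+k-1} \in \maxintervalsmiddle{\Pi}$, the singletons $\{a\}, \{a+1\}, \ldots, \{a+\min(i,k)-1\}$ together with (when $k > i$) the interval $\interval{a+i}{a+k-1}$ of size $k-i$; and, on the other hand, the $i$ singletons $\{n-d+1\}, \{n-d+2\}, \ldots, \{n-d+i\}$ that have been peeled off the original rightmost block by the iteration. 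The induction uses the four case rules for $\Pi'$ stated just before the example in Section~\ref{sec:main-results}, together with the fact (Theorem~\ref{thm:CompSPi-more}(i)) that $\Pi^{(j)}$ is an interval partition for $j \geq 1$, so that $\maxintervals{\Pi^{(j)}} = \Pi^{(j)}$ from the first step onward.

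From this description the threshold reads off at once. For $i \geq 1$, the middle of $\Pi^{(i)}$ consists entirely of singletons---equivalently, $\Pi^{(i)} = \Pi_{n+i}^{c,d}$---if and only if every $\interval{a}{a+k-1} \in \maxintervalsmiddle{\Pi}$ satisfies $k - i \leq 1$, i.e., $\maxblocksize(\Pi) \leq i + 1$, or equivalently $i \geq \maxblocksize(\Pi) - 1$. For $i = 0$ one has $\Pi^{(0)} = \Pi$, and $\Pi = \Pi_n^{c,d}$ is equivalent to $\symm{\Pi} = \symm{n}^{c,d}$, which is excluded by hypothesis; so $i = 0$ is always ruled out. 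This accounts for the extra ``$\geq 1$'' in the maximum, and combining the two cases gives the bound $i \geq \max(\maxblocksize(\Pi) - 1, 1)$.

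The main obstacle will be the bookkeeping in the inductive step: one must verify that applying the four case rules of $\Pi'$ to the structure of $\maxintervals{\Pi^{(i)}}$ provided by the hypothesis yields exactly the predicted structure for $\maxintervals{\Pi^{(i+1)}}$. Particular care is needed for the many singleton middle blocks of $\maxintervals{\Pi^{(i)}}$, which accumulate over the iteration: each one stays fixed and contributes only the $\{a\}$-part of the middle rule, nothing more. One must also separately account for the newly peeled singleton that emerges from the rightmost block in going from step $i$ to step $i+1$, and for the small edge effects at the boundaries of the leftmost and rightmost blocks. Once this structural claim is secured, the rest of the argument is an immediate read-off.
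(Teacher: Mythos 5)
Your argument is correct and takes essentially the same route as the paper: reduce via Theorem~\ref{thm:CompSPi-general} (equivalently, iterated applications of Theorems~\ref{thm:CompSPi} and~\ref{thm:CompSPi-more}) to the question of when $\Pi^{(i)} = \Pi_{n+i}^{c,d}$, and then track the iteration $\Pi \mapsto \Pi'$, observing that each middle block sheds one element per step while the end blocks persist. Your explicit block-by-block induction is exactly what the paper packages more compactly as Lemma~\ref{lem:blocks-Pi-i} (namely $\maxblocksize(\Pi') = \maxblocksize(\Pi)-1$ together with the stability of $\eqclass{1}{\Pi^{(i)}}$ and $\eqclass{(n+i)}{\Pi^{(i)}}$), so the two proofs differ only in bookkeeping.
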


\begin{proof}
Lemma~\ref{lem:blocks-Pi-i} asserts that the only nontrivial $\Pi^{(i)}$\hyp{}blocks are $\eqclass{1}{\Pi^{(i)}} = \eqclass{1}{\maxintervals{\Pi}}$ and $\eqclass{n + \ell}{\Pi^{(i)}} = (\eqclass{n}{\maxintervals{\Pi}}) + i$ whenever $i \geq \max(\maxblocksize(\Pi)-1, 1)$.
The claim then follows from Propositions~\ref{prop:CompSPi} and \ref{prop:CompSPi-more}.
\end{proof}

\subsection{Arbitrary intransitive groups}

If $G$ is an arbitrary intransitive subgroup of $\symm{n}$, $\Pi := \orbits G$, and $a$ and $b$ are the largest integers $\alpha$ and $\beta$ such that $\symm{n}^{\alpha,\beta} \leq G$, then we have $\symm{n}^{a,b} \leq G \leq \symm{\Pi}$, and the monotonicity of $\Comp$ implies that
\[
\symm{n+i}^{a,b} = \Compn[n+i]{\symm{n}^{a,b}} \leq \Compn[n+i]{G} \leq \Compn[n+i]{\symm{\Pi}}
\]
for all $i \geq 1$.
According to Theorem~\ref{thm:CompSPi-general}, $\Compn[n+i]{\symm{\Pi}}$ equals either $\symm{\Pi^{(i)}}$ or $\gensg{\symm{\Pi^{(i)}}, \desc{n+i}}$.
Furthermore, if $c := \card{\eqclass{1}{\maxintervals{\Pi}}}$, $d := \card{\eqclass{n}{\maxintervals{\Pi}}}$ and $i$ is sufficiently large, as prescribed in Theorem~\ref{thm:SPi-asymptotic}, then $\symm{n+i}^{a,b} \leq \Compn[n+i]{G} \leq \gensg{\symm{n+i}^{c,d}, \desc{n+i}}$.
Since $a$ and $b$ may be strictly less than $c$ and $d$, respectively, it is not evident from this fact alone
which steady sequence is eventually reached by the level sequence of $G$.
We are now going to clarify this point.

Let $G \leq \symm{n}$ and let $B \subseteq \nset{n}$.
We say that $G$ is \emph{transitive on $B$} if for all $x, y \in B$ there exists $\pi \in G$ such that $\pi(x) = y$.
We also write $\symm{B}$ to denote the subgroup of $\symm{n}$ comprising all permutations that map the elements of $B$ to elements of $B$ and fix every element that is not in $B$.
In other words, $\symm{B} = \symm{\Pi}$ for the partition $\Pi$ of $\nset{n}$ in which the only potentially nontrivial block is $B$.

\begin{lemma}
\label{lem:G1G2}
Let $\Pi$ be
an
interval partition of $\nset{n}$ satisfying $\Pi = \desc{n}(\Pi)$.
Let $G \leq \gensg{\symm{\Pi}, \desc{n}}$.
Let $G_1 := G \cap \symm{\Pi}$, $G_2 := G \cap \desc{n} \symm{\Pi}$.
Then the following statements hold.
\begin{enumerate}[label=\upshape{(\roman*)}]
\item\label{item:cosets}
$G_1$ is a subgroup of $G$ and either $G_2 = \emptyset$ or
$G_2 = \pi G_1$ for any $\pi \in G_2$.

\item\label{item:union-orbits}
For every block $B \in \Pi$, the set $B \cup \desc{n}(B)$ is a union of orbits of $G$.

\item\label{item:transitive}
Let $B \in \Pi$, and assume that $B \cap \desc{n}(B) = \emptyset$.
If $G_2 \neq \emptyset$, then $G$ is transitive on $B \cup \desc{n}(B)$ if and only if $G_1$ is transitive on $B$ and on $\desc{n}(B)$.
\end{enumerate}
\end{lemma}

\begin{proof}
\ref{item:cosets}
The set $G_1$ is an intersection of two subgroups of $\gensg{\symm{\Pi}, \desc{n}}$, so it is a subgroup, too. Moreover, $G_1$ is contained in $G$, so we conclude that $G_1 \leq G$.

We have $\gensg{\symm{\Pi}, \desc{n}} = \symm{\Pi} \cup \desc{n} \symm{\Pi}$ by Lemma~\ref{lem:Pi-deltaPi}.
If $G_2 \neq \emptyset$, then fix an element $\pi \in G_2$, and let $\tau \in G_2$.
Then $\pi = \desc{n} \circ \hat{\pi}$ and $\tau = \desc{n} \circ \hat{\tau}$ for some $\hat{\pi}, \hat{\tau} \in \symm{\Pi}$, and we have $\pi^{-1} \circ \tau = (\desc{n} \circ \hat{\pi})^{-1} \circ (\desc{n} \circ \hat{\tau}) = \hat{\pi}^{-1} \circ \desc{n}^{-1} \circ \desc{n} \circ \hat{\tau} = \hat{\pi}^{-1} \circ \hat{\tau} \in \symm{\Pi}$.
Clearly, $\pi^{-1} \circ \tau$ is also a member of $G$, so we have $\pi^{-1} \circ \tau \in G_1$.
Consequently, $\tau = \pi \circ \pi^{-1} \circ \tau \in \pi G_1$; hence $G_2 \subseteq \pi G_1$.
For the converse inclusion, let $\rho \in \pi G_1$.
Then $\rho = \pi \circ \hat{\rho}$ for some $\hat{\rho} \in G_1$.
Since $\pi, \hat{\rho} \in G$, we also have $\rho \in G$.
Since $\pi = \desc{n} \circ \hat{\pi}$ for some $\hat{\pi} \in \symm{\Pi}$ and $\hat{\rho} \in \symm{\Pi}$, we also have that $\rho = \pi \circ \hat{\rho} = (\desc{n} \circ \hat{\pi}) \circ \hat{\rho} = \desc{n} \circ (\hat{\pi} \circ \hat{\rho}) \in \desc{n} \symm{\Pi}$.
Consequently, $\rho \in G \cap \desc{n} \symm{\Pi} = G_2$, so $\pi G_1 \subseteq G_2$.

\ref{item:union-orbits}
Clear.

\ref{item:transitive}
Assume first that $G$ is transitive on $B \cup \desc{n}(B)$.
This implies in particular that for all $x, y \in B$, there exists $\pi \in G$ such that $\pi(x) = y$.
Since $\pi$ maps an element of a block $B$ of $\Pi$ to an element of $B$ and $B \cap \desc{n}(B) = \emptyset$, we must have $\pi \in G \cap \symm{\Pi} = G_1$.
We conclude that $G_1$ is transitive on $B$.
In a similar way, we can also conclude that $G_1$ is transitive on $\desc{n}(B)$.

Assume then that $G_1$ is transitive on $B$ and on $\desc{n}(B)$.
By part~\ref{item:cosets}, $G_2 = \pi G_1$ for some $\pi \in G_2$, and we have that $\pi = \desc{n} \circ \hat{\pi}$ for some $\hat{\pi} \in \symm{\Pi}$.
The transitivity of $G_1$ on $B$ implies that for all $x, y \in B$, there exists a permutation $\sigma^{x,y} \in G_1$ such that $\sigma^{x,y}(x) = y$.
Furthermore, for $x \in B$ and $y \in \desc{n}(B)$, the permutation $\pi \circ \sigma^{x,\pi^{-1}(y)} \in G$ satisfies $\pi \circ \sigma^{x,\pi^{-1}(y)}(x) = y$.
A similar argument based on the transitivity of $G_1$ on $\desc{n}(B)$ shows that for every $x \in \desc{n}(B)$, $y \in B \cup \desc{n}(B)$, there exists a permutation $\tau \in G$ such that $\tau(x) = y$.
We conclude that $G$ is transitive on $B \cup \desc{n}(B)$.
\end{proof}

\begin{remark}
The assumption $B \cap \desc{n}(B) = \emptyset$ is necessary in Lemma~\ref{lem:G1G2}\ref{item:transitive}.
To see this, consider, for example, the interval partition $\Pi = \{\{1\}, \{2,3\}, \{4\}\}$ and $G = \{\asc{4}, \desc{4}\}$.
It holds that $\Pi = \desc{4}(\Pi)$, $G \leq \gensg{\symm{\Pi},\desc{4}}$ and $G$ is transitive on the block $B = \{2,3\}$ of $\Pi$, but $G_1 = G \cap \symm{\Pi} = \{\asc{4}\}$ is not transitive on $B$.
\end{remark}

\begin{lemma}
\label{lem:transitiveblocks}
Let $\Pi$ be a nontrivial partition of $\nset{n}$.
Assume that $B'$ is a nontrivial $\Pi'$\hyp{}block such that $B' \cap \desc{n+1}(B') = \emptyset$, and let $B$ be the $\maxintervals{\Pi}$\hyp{}block given by
\[
B :=
\begin{cases}
B' \cup \{\min B' - 1\}, & \text{if $B' \neq \eqclass{1}{\Pi'}$ and $B' \neq \eqclass{(n+1)}{\Pi'}$,} \\
B', & \text{if $B' = \eqclass{1}{\Pi'}$,} \\
B' - 1, & \text{if $B' = \eqclass{(n+1)}{\Pi'}$.}
\end{cases}
\]
Then the following statements hold.

\begin{enumerate}[label=\upshape{(\roman*)}]
\item\label{item:transitiveblocks-without-desc}
Assume that $G \leq \symm{\Pi}$, and write $G' := \Compn[n+1]{G}$.
Then $G'$ is transitive on $B'$ if and only if $\symm{B} \leq G$.

\item\label{item:transitiveblocks-with-desc}
Assume that $\Pi = \desc{n}(\Pi)$ and $G \leq \gensg{\symm{\Pi}, \desc{n}}$, and write $G' := \Compn[n+1]{G}$.
Let $G_1 := G \cap \symm{\Pi}$, $G_2 := G \cap \desc{n} \symm{\Pi}$.
If $G_2 = \emptyset$, then $G'$ is transitive on $B'$ if and only if $\symm{B} \leq G$.
If $G_2 \neq \emptyset$, then $G'$ is transitive on $B' \cup \desc{n+1}(B')$ if and only if $\symm{B} \leq G$ and $\symm{\desc{n}(B)} \leq G$.
\end{enumerate}
\end{lemma}

\begin{proof}
\ref{item:transitiveblocks-without-desc}
The sufficiency of the condition is clear:
it is easy to see that if $\symm{B} \leq G$, then $\symm{B'} = \Compn[n+1]{\symm{B}} \leq \Compn[n+1]{G} = G'$; hence $G'$ is transitive on $B'$.

Assume then that $G'$ is transitive on $B'$.
Write $B = \interval{a}{b}$ with $1 \leq a < b \leq n$.
Consider first the case where $B' \neq \eqclass{1}{\Pi'}$ and $B' \neq \eqclass{(n+1)}{\Pi'}$; then $B' = \interval{a+1}{b}$.
By the transitivity on $B'$, there exist permutations $\pi, \tau \in G'$ such that $\pi(a+1) = b$, $\tau(b) = b-1$.
We have $G' \leq \Compn[n+1]{\symm{\Pi}}$.
By Proposition~\ref{prop:CompSPi},
$\Compn[n+1]{\symm{\Pi}} = \symm{\Pi'}$ if $\desc{n} \notin \symm{\Pi}$, and
$\Compn[n+1]{\symm{\Pi}} = \gensg{\symm{\Pi'},\desc{n+1}} = \symm{\Pi'} \cup \desc{n+1} \symm{\Pi'}$ if $\desc{n} \in \symm{\Pi}$.
In any case $G' \subseteq \symm{\Pi'} \cup \desc{n+1} \symm{\Pi'}$.
Since both $\pi$ and $\tau$ map an element of $B'$ to an element of $B'$ and $B' \cap \desc{n}(B') = \emptyset$, we must have $\pi, \tau \in \symm{\Pi'}$.
It follows that $\pi(a) = a$ and $\tau(b+1) = b+1$, because $\{a\}$ and $\{b+1\}$ are $\Pi'$\hyp{}blocks.
By Lemma~\ref{lem:AtkBea-L11}, $G$ contains the cycles $(a \; a+1 \; \dots \; b-1)$ and $(b-1 \; b)$, which generate $\symm{B}$; thus $\symm{B} \leq G$.

Consider then the case where $B' = \eqclass{1}{\Pi'}$; then $B' = B = \interval{a}{b}$.
Since $G'$ is transitive on $B'$, there exist permutations $\pi, \tau \in G'$ such that $\pi(b) = a = 1$ and $\tau(b) = b-1$.
As above, we must have $\pi, \tau \in \symm{\Pi'}$; hence $\pi(b+1) = \tau(b+1) = b+1$, because $\{b+1\}$ is a $\Pi'$\hyp{}block.
By Lemma~\ref{lem:AtkBea-L11}, $G$ contains the cycles $(a \; a+1 \; \dots \; b)$ and $(b-1 \; b)$, which generate $\symm{B}$; thus $\symm{B} \leq G$.

Consider finally the case where $B' = \eqclass{(n+1)}{\Pi'}$; then $B' = B+1 = \interval{a+1}{b+1}$.
Since $G'$ is transitive on $B'$, there exist permutations $\pi, \tau \in G'$ such that $\pi(a+1) = b+1$ and $\tau(a+1) = a+2$.
As above, we must have $\pi, \tau \in \symm{\Pi'}$; hence $\pi(a) = \tau(a) = a$, because $\{a\}$ is a $\Pi'$\hyp{}block.
By Lemma~\ref{lem:AtkBea-L11}, $G$ contains the cycles $(a \; a+1 \; \dots \; b)$ and $(a \; a+1)$, which generate $\symm{B}$; thus $\symm{B} \leq G$.

\ref{item:transitiveblocks-with-desc}
If $G_2 = \emptyset$, then $G \leq \symm{\Pi}$, so the claim follows immediately from part~\ref{item:transitiveblocks-without-desc}.
Assume then that $G_2 \neq \emptyset$.
By Lemma~\ref{lem:Pi-deltaPi}, Proposition \ref{prop:CompSPi-more}, and the monotonicity of $\Compn[n+1]{}$,
\[
G'
= \Compn[n+1]{G} \leq \Compn[n+1]{\gensg{\symm{\Pi},\desc{n}}}
= \gensg{\symm{\Pi'},\desc{n+1}}
= \symm{\Pi'} \cup \desc{n+1} \symm{\Pi'}.
\]
Write $G'_1 := G' \cap \symm{\Pi'}$, $G'_2 := G' \cap \desc{n+1} \symm{\Pi'}$.
Since $\Pi'$ is an interval partition and $\Pi' = \desc{n+1}(\Pi')$ holds by Lemma~\ref{lem:Pi-PiDeltan}\ref{item:Pi=Pir-Pi'=Pi'r}, we obtain from Lemma~\ref{lem:G1G2}\ref{item:transitive} that $G'$ is transitive on $B' \cup \desc{n+1}(B')$ if and only if $G'_1$ is transitive on $B'$ and on $\desc{n}(B')$.

It remains to show that $G'_1$ is transitive on $B'$ and on $\desc{n}(B')$ if and only if $\symm{B} \leq G$ and $\symm{\desc{n}(B)} \leq G$.
If $G'_1$ is transitive on $B'$ and on $\desc{n}(B')$, then $G'$ is transitive on $B'$ and on $\desc{n}(B')$, and part~\ref{item:transitiveblocks-without-desc} yields $\symm{B} \leq G$ and $\symm{\desc{n}(B)} \leq G$.
Assume then that $\symm{B} \leq G$ and $\symm{\desc{n}(B)} \leq G$.
Then
\begin{gather*}
\symm{B'} = \Compn[n+1]{\symm{B}} \leq \Compn[n+1]{G} = G', \\
\symm{\desc{n+1}(B')} = \Compn[n+1]{\symm{\desc{n}(B)}} \leq \Compn[n+1]{G} = G'.
\end{gather*}
A similar argument as above, using the fact that $B' \cap \desc{n+1}(B') = \emptyset$, shows that $\symm{B'} \leq \symm{\Pi'}$ and $\symm{\desc{n+1}(B')} \leq \symm{\Pi'}$.
Consequently, $\symm{B'}$ and $\symm{\desc{n+1}(B')}$ are subgroups of $G' \cap \symm{Pi'} = G'_1$, which obviously implies that $G'_1$ is transitive on $B'$ and on $\desc{n+1}(B')$.
\end{proof}

\begin{lemma}
\label{lem:Mab}
Let $G \leq \symm{n}$ be an intransitive group, and let $G' := \Compn[n+1]{G}$.
Let $\Pi_0 := \orbits G$, $\Pi_1 := \orbits G'$, and let $a$ and $b$ be the largest numbers $\alpha$ and $\beta$, respectively, such that $\symm{n}^{\alpha,\beta} \leq G$.
Then either $\maxblocksize_{a,b}(\Pi_1) = \maxblocksize_{a,b}(\Pi_0) = 1$ or $1 \leq \maxblocksize_{a,b}(\Pi_1) < \maxblocksize_{a,b}(\Pi_0)$.
\end{lemma}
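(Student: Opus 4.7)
The plan is to establish the sharper estimate $\maxblocksize_{a,b}(\Pi_1) \leq \max(\maxblocksize_{a,b}(\Pi_0) - 1, 1)$; since $\maxblocksize_{a,b}(\Pi_1) \geq 1$ by definition, this yields the stated dichotomy. Throughout, write $c := \card{\eqclass{1}{\maxintervals{\Pi_0}}}$, $d := \card{\eqclass{n}{\maxintervals{\Pi_0}}}$, and let $c_1, d_1$ denote the analogous quantities for $\Pi_1$. The structural input is that $G \leq \symm{\Pi_0}$, so by Theorem~\ref{thm:CompSPi} we have $G' \leq \gensg{\symm{\Pi_0'}, \desc{n+1}}$; hence every $G'$\hyp{}orbit is contained either in a single $\Pi_0'$\hyp{}block or in a union $B' \cup \desc{n+1}(B')$ of two disjoint $\Pi_0'$\hyp{}blocks. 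Since the two blocks in such a pair are separate intervals, every block of $\maxintervals{\Pi_1}$ lies inside a single $\Pi_0'$\hyp{}block.

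The first task is to bound the endpoint contributions. Since $\symm{\interval{1}{a}} \leq G$, the map $\sigma \mapsto \sigma \oplus \asc{n+1-a}$ sends $\symm{\interval{1}{a}}$ into $G'$: indeed, a direct check shows that every $n$\hyp{}pattern of $\sigma \oplus \asc{n+1-a}$ lies in $\symm{\interval{1}{a}} \leq G$. This places $\interval{1}{a}$ inside the $G'$\hyp{}orbit of $1$, giving $c_1 \geq a$; the symmetric embedding $\tau \mapsto \asc{n+1-b} \oplus \tau$ yields $d_1 \geq b$. Conversely, the $G'$\hyp{}orbit of $1$ sits inside $\interval{1}{c} \cup \interval{n-d+2}{n+1}$, two disjoint intervals, so its maximal initial interval lives in $\interval{1}{c}$ and $c_1 \leq c$. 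When $c > a$, the maximality of $a$ gives $\symm{\interval{1}{c}} \nleq G$; Lemma~\ref{lem:transitiveblocks} then forces the $G' \cap \symm{\Pi_0'}$\hyp{}orbit of $1$ to omit some element of $\interval{1}{c}$, shrinking the maximal initial interval and yielding $c_1 < c$. Symmetric reasoning handles $d_1$, so that $c_1 - a + 1 \leq \max(c-a, 1)$ and $d_1 - b + 1 \leq \max(d-b, 1)$.

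Next I would bound $\maxblocksize(\Pi_1)$. Any middle block $C$ of $\maxintervals{\Pi_1}$ sits inside a single $\Pi_0'$\hyp{}block $B'$ by the remark above. If $B'$ is a middle $\Pi_0'$\hyp{}block, then the explicit construction of $\Pi_0'$ (see Theorem~\ref{thm:CompSPi-more}) gives $\card{B'} \leq \max(\maxblocksize(\Pi_0)-1, 1)$, which bounds $\card{C}$. If $B' = \eqclass{1}{\Pi_0'}$, then $\interval{1}{c_1}$ is itself the $\maxintervals{\Pi_1}$\hyp{}block containing $1$, so the middle block $C$ lies in $\interval{c_1+1}{c}$ and $\card{C} \leq c - c_1 \leq c - a$; the case $B' = \eqclass{n+1}{\Pi_0'}$ is symmetric. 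Hence $\maxblocksize(\Pi_1) \leq \max(\maxblocksize(\Pi_0) - 1, c-a, d-b, 1)$, and the right\hyp{}hand side equals $\max(\maxblocksize_{a,b}(\Pi_0) - 1, 1)$. Combining this with the endpoint bounds from the previous paragraph yields $\maxblocksize_{a,b}(\Pi_1) \leq \max(\maxblocksize_{a,b}(\Pi_0) - 1, 1)$, as desired. The main technical hurdle lies in the desc\hyp{}twisted case, where $G'$\hyp{}orbits can span two $\Pi_0'$\hyp{}blocks; the saving point is that those two blocks are disjoint intervals, so the maximal\hyp{}interval pieces of any orbit remain bounded by a single $\Pi_0'$\hyp{}block and all of the estimates above still go through.
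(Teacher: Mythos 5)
Your strategy is essentially the paper's: sandwich $G$ between $\symm{n}^{a,b}$ and $\symm{\Pi_0}$, push this through $\Compn[n+1]{}$ via Theorem~\ref{thm:CompSPi}, control the two end blocks with Lemma~\ref{lem:transitiveblocks}, and bound the middle blocks of $\maxintervals{\Pi_1}$ by locating each one inside a block of $\Pi_0'$. The endpoint analysis (the bounds $a \leq c_1 \leq c$ with $c_1 < c$ when $c > a$, via non\hyp{}transitivity of $G'$ on $\eqclass{1}{\Pi_0'}$) is sound and matches the paper's.

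The gap is in the middle\hyp{}block step, exactly at the sentence you yourself flag as the ``main technical hurdle''. You claim that because the paired blocks $B'$ and $\desc{n+1}(B')$ are ``separate intervals'', every block of $\maxintervals{\Pi_1}$ lies inside a single $\Pi_0'$\hyp{}block. This fails when $B'$ and $\desc{n+1}(B')$ are \emph{adjacent}: Theorem~\ref{thm:CompSPi-more} only forbids consecutive \emph{nontrivial} blocks of $\Pi_0'$, so the two singletons $\{k\}$ and $\{k+1\}$ straddling the midpoint of $\nset{n+1}$ can be swapped by an element of $G' \cap \desc{n+1}\symm{\Pi_0'}$, producing a $G'$\hyp{}orbit $\{k,k+1\}$ that is an interval of length $2$ meeting two $\Pi_0'$\hyp{}blocks. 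Concretely, take $n=3$ and $G = \gensg{\desc{3}}$, which is intransitive with $\Pi_0 = \{\{1,3\},\{2\}\}$, $a=b=1$ and $\maxblocksize_{a,b}(\Pi_0)=1$. Then $G' = \Compn[4]{G} = \gensg{\desc{4}}$ has orbits $\{1,4\}$ and $\{2,3\}$, so $\maxintervals{\Pi_1} = \{\{1\},\{2,3\},\{4\}\}$ and $\maxblocksize_{a,b}(\Pi_1) = 2 > 1 = \maxblocksize_{a,b}(\Pi_0)$. So the inequality you are aiming for --- and in fact the dichotomy asserted by the lemma itself --- fails in this configuration, and no local repair of your argument can avoid it without restricting or reformulating the statement. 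To be fair, the paper's own proof writes out only the case $\desc{n} \notin G$, where $G' \leq \symm{\Pi_0'}$ and $\Pi_1$ genuinely refines $\Pi_0'$, and dismisses the remaining case as requiring ``minor modifications''; the trouble you run into lives precisely in that omitted case, so your write\hyp{}up has at least the virtue of making the problematic step visible.
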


\begin{proof}
We consider the case where $\desc{n} \notin G$. The case where $\desc{n} \in G$ can be proved with minor modifications.
We have $\symm{n}^{a,b} \leq G \leq \symm{\Pi_0}$, so
\[
\interval{1}{a} \subseteq \eqclass{1}{\maxintervals{\Pi_0}} \subseteq \eqclass{1}{\Pi_0}, \qquad
\desc{n}(\interval{1}{b}) = \interval{n-b+1}{n} \subseteq \eqclass{n}{\maxintervals{\Pi_0}} \subseteq \eqclass{n}{\Pi_0}.
\]
The monotonicity of $\Compn[n+1]{}$ and the fact that $\Pi_1$ is the finest partition $\Gamma$ such that $G' \leq \symm{\Gamma}$ imply that
\[
\Compn[n+1]{\symm{n}^{a,b}} = \symm{n+1}^{a,b} \leq G' \leq \symm{\Pi_1} \leq \symm{\Pi_0'}.
\]
Thus $\Pi_1 \sqsubseteq \Pi_0'$.
Furthermore, since $\maxintervals{\Pi_1} \sqsubseteq \Pi_1$, we have
\begin{gather*}
\interval{1}{a} \subseteq \eqclass{1}{\maxintervals{\Pi_1}} \subseteq \eqclass{1}{\Pi_1} \subseteq \eqclass{1}{\Pi_0'}, \\
\desc{n+1}(\interval{1}{b}) = \interval{n-b+2}{n+1} \subseteq \eqclass{(n+1)}{\maxintervals{\Pi_1}} \subseteq \eqclass{(n+1)}{\Pi_1} \subseteq \eqclass{(n+1)}{\Pi_0'}.
\end{gather*}

Let us compare the numbers
\begin{gather*}
\maxblocksize_{a,b}(\Pi_0) = \max(\maxblocksize(\Pi_0), \card{\eqclass{1}{\maxintervals{\Pi_0}}} - a + 1, \card{\eqclass{n}{\maxintervals{\Pi_0}}} - b + 1), \\
\maxblocksize_{a,b}(\Pi_1) = \max(\maxblocksize(\Pi_1), \card{\eqclass{1}{\maxintervals{\Pi_1}}} - a + 1, \card{\eqclass{(n+1)}{\maxintervals{\Pi_1}}} - b + 1).
\end{gather*}
We focus first on the quantity $\card{\eqclass{1}{\maxintervals{\Pi_1}}} - a + 1$.
Assume first that $\eqclass{1}{\maxintervals{\Pi_0}} = \interval{1}{a}$.
Since $\eqclass{1}{\Pi_0'} = \eqclass{1}{\maxintervals{\Pi_0}}$, we have $\eqclass{1}{\maxintervals{\Pi_1}} = \interval{1}{a}$ as well.
Thus it holds that $\card{\eqclass{1}{\maxintervals{\Pi_1}}} - a + 1 = 1 = \card{\eqclass{1}{\maxintervals{\Pi_0}}} - a + 1$.
Assume then that $\interval{1}{a} \subsetneq \eqclass{1}{\maxintervals{\Pi_0}}$.
Since $\Pi_0$ is nontrivial, it holds that $\eqclass{1}{\maxintervals{\Pi_0}} \cap \desc{n}(\eqclass{1}{\maxintervals{\Pi_0}}) = \emptyset$.
By the choice of the number $a$, it follows from Lemma~\ref{lem:transitiveblocks} that $G'$ is not transitive on the set $\eqclass{1}{\maxintervals{\Pi_0}} = \eqclass{1}{\Pi_0'}$.
Therefore, $\eqclass{1}{\Pi_1}$ must be a proper subset of $\eqclass{1}{\maxintervals{\Pi_0}}$.
Since we also have $\interval{1}{a} \subseteq \eqclass{1}{\maxintervals{\Pi_1}} \subseteq \eqclass{1}{\Pi_1}$, it holds that $1 \leq \card{\eqclass{1}{\maxintervals{\Pi_1}}} - a + 1 < \card{\eqclass{1}{\maxintervals{\Pi_0}}} - a + 1$.

A similar argument shows that either
$\card{\eqclass{(n+1)}{\maxintervals{\Pi_1}}} - b + 1 = 1 = \card{\eqclass{n}{\maxintervals{\Pi_0}}} - b + 1$
or
$1 \leq \card{\eqclass{(n+1)}{\maxintervals{\Pi_1}}} - b + 1 < \card{\eqclass{n}{\maxintervals{\Pi_0}}} - b + 1$.

Finally, consider the quantity $\maxblocksize(\Pi_1) = \max(\{\card{B} : B \in \maxintervalsmiddle{\Pi_1}\} \cup \{1\})$.
If $\maxintervalsmiddle{\Pi_1} = \emptyset$, then $\maxblocksize(\Pi_1) = 1$.
Assume now that $\maxintervalsmiddle{\Pi_1} \neq \emptyset$.
Let $B \in \maxintervalsmiddle{\Pi_1}$.
Since $\Pi_1$ is a refinement of $\Pi_0'$, there exists a unique block $B'$ of $\maxintervals{\Pi_0}$ such that $B \subseteq B'$.
If $B'$ is neither $\eqclass{1}{\maxintervals{\Pi_0}}$ nor $\eqclass{n}{\maxintervals{\Pi_0}}$, i.e., $B' \in \maxintervalsmiddle{\Pi_0}$, then $B$ is contained in an $\maxintervalsmiddle{\Pi_0'}$\hyp{}block.
In this case, $\card{B} \leq \maxblocksize(\Pi_0')$ and, by Lemma~\ref{lem:blocks-Pi-i}, either $\maxblocksize(\Pi_0') = 1 = \maxblocksize(\Pi_0)$ or $1 \leq \maxblocksize(\Pi_0') < \maxblocksize(\Pi_0)$.
If $B' = \eqclass{1}{\maxintervals{\Pi_0}}$, then, since $B \neq \eqclass{1}{\maxintervals{\Pi_1}}$ and $\interval{1}{a} \subseteq \eqclass{1}{\maxintervals{\Pi_1}} \subseteq \eqclass{1}{\maxintervals{\Pi_0}}$, we have $\card{B} \leq \card{\eqclass{1}{\maxintervals{\Pi_0}}} - \card{\eqclass{1}{\maxintervals{\Pi_1}}} \leq \card{\eqclass{1}{\maxintervals{\Pi_0}}} - a < \card{\eqclass{1}{\maxintervals{\Pi_0}}} - a + 1$.
We can show in a similar way that if $B' = \eqclass{n}{\maxintervals{\Pi_0}}$, then $\card{B} < \card{\eqclass{n}{\maxintervals{\Pi_0}}} - b + 1$.
Thus, either
\[
\maxblocksize(\Pi_1) = 1 = \max(\maxblocksize(\Pi_0), \card{\eqclass{1}{\maxintervals{\Pi_0}}} - a + 1, \card{\eqclass{n}{\maxintervals{\Pi_0}}} - b + 1)
\]
or
\[
1 \leq \maxblocksize(\Pi_1) < \max(\maxblocksize(\Pi_0), \card{\eqclass{1}{\maxintervals{\Pi_0}}} - a + 1, \card{\eqclass{n}{\maxintervals{\Pi_0}}} - b + 1).
\]
From the above considerations, we conclude that either $\maxblocksize_{a,b}(\Pi_1) = 1 =  \maxblocksize_{a,b}(\Pi_0)$ or $1 \leq \maxblocksize_{a,b}(\Pi_1) < \maxblocksize_{a,b}(\Pi_0)$.
\end{proof}

\begin{theorem}
\label{thm:general-intransitive}
Let $G \leq \symm{n}$ be an intransitive group, and let $\Pi := \orbits G$.
Let $a$ and $b$ be the largest numbers $\alpha$ and $\beta$, respectively, such that $\symm{n}^{\alpha,\beta} \leq G$.
Then for all $\ell \geq \maxblocksize_{a,b}(\Pi)$, it holds that $\Compn[n + \ell]{G} = \symm{n + \ell}^{a,b}$ or $\Compn[n + \ell]{G} = \gensg{\symm{n + \ell}^{a,b}, \desc{n + \ell}}$.
\end{theorem}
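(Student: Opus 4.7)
The plan is to sandwich $\Compn[n+\ell]{G}$ by monotonicity between $\Compn[n+\ell]{\symm{n}^{a,b}}$ and $\Compn[n+\ell]{\symm{\Pi}}$, and then close the residual gap using the maximality of the pair $(a,b)$.

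For the lower bound, note that $\symm{n}^{a,b} = \symm{\Pi_n^{a,b}}$, and a direct check of the $'$-construction shows $(\Pi_n^{a,b})^{(\ell)} = \Pi_{n+\ell}^{a,b}$: since $\Pi_n^{a,b}$ is already an interval partition whose only nontrivial blocks are the two extreme ones, each $'$-iteration merely slides the right block one step to the right while leaving the left block unchanged. Theorem~\ref{thm:CompSPi-general} then yields $\Compn[n+\ell]{\symm{n}^{a,b}} = \symm{n+\ell}^{a,b}$, or $\gensg{\symm{n+\ell}^{a,b}, \desc{n+\ell}}$ when $a = b$. Combined with Lemma~\ref{lem:descending} (which says $\desc{n+\ell} \in \Compn[n+\ell]{G}$ iff $\desc{n} \in G$), monotonicity delivers the desired lower containment, with $\desc{n+\ell}$ adjoined precisely when $\desc{n} \in G$.

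For the upper bound, $G \leq \symm{\Pi}$ and Theorem~\ref{thm:SPi-asymptotic} (applicable since $\ell \geq \maxblocksize_{a,b}(\Pi) \geq \max(\maxblocksize(\Pi)-1,1)$) give $\Compn[n+\ell]{G} \leq \symm{n+\ell}^{c,d}$, or the same with $\desc{n+\ell}$ adjoined, where $c = \card{\eqclass{1}{\maxintervals{\Pi}}}$ and $d = \card{\eqclass{n}{\maxintervals{\Pi}}}$. Using Lemma~\ref{lem:descending} together with the fact that $\symm{n+\ell}^{c,d}$ has index at most $2$ in the overgroup, the task reduces to showing that every $\pi \in \Compn[n+\ell]{G} \cap \symm{n+\ell}^{c,d}$, written $\pi = (p, \mathrm{id}, q)$ with $p$ acting on $\interval{1}{c}$ and $q$ on $\interval{n+\ell-d+1}{n+\ell}$, actually lies in $\symm{n+\ell}^{a,b}$. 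The central tool is a maximality lemma: for every $\beta \leq b$, $G \cap \symm{n}^{a+1,\beta} = \symm{n}^{a,\beta}$ (and symmetrically on the right). Its proof projects the intersection onto $\symm{\interval{1}{a+1}}$: the image contains $\symm{\interval{1}{a}}$, and by the maximality of $\symm{\interval{1}{a}}$ in $\symm{\interval{1}{a+1}}$ it must be one of these two, with the larger option lifting via $\symm{\interval{n-\beta+1}{n}} \leq G$ to $\symm{n}^{a+1,b} \leq G$, contradicting the maximality of $a$.

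Assuming for contradiction that $p$ moves some $i \in \interval{a+1}{c}$, an inversion argument---case-splitting on whether $p$ preserves $\interval{1}{a}$ setwise---supplies an $(a+1)$-subset $A \subseteq \interval{1}{c}$ with $\red(p|_A)$ not fixing position $a+1$. I would then build an index set $I = A \cup M \cup B$ of size $n$, with $M$ drawn from the middle of $\nset{n+\ell}$ and $B \subseteq \interval{n+\ell-d+1}{n+\ell}$, so that the $n$-pattern $\tau_I$ has first-block portion $\red(p|_A)$, middle identity, and last-block portion a reduced pattern of $q$ on $B$. For $|B| \leq b$ the maximality lemma would then give $\tau_I \in G \cap \symm{n}^{a+1,|B|} = \symm{n}^{a,|B|}$, whose elements fix $a+1$, contradicting the construction; the symmetric argument handles $q$. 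The main obstacle is the combinatorial bookkeeping: the middle-capacity bound $|M| \leq n+\ell-c-d$ forces $|B| \geq c+d-a-1-\ell$, which can strictly exceed $b$ when both $c-a$ and $d-b$ are large, even at the theorem's threshold $\ell = \maxblocksize_{a,b}(\Pi)$. In that regime the direct construction fails and the proof must proceed more carefully---for instance, by choosing $B$ as an increasing subsequence of $q$ so that the effective last-block portion of $\tau_I$ shrinks and the maximality lemma applies with a reduced $\beta$, or by running the left- and right-side analyses in parallel via an interleaved induction that first pins down the extreme blocks on the easier side. This coordinated bookkeeping is where the principal technical work of the proof lies.
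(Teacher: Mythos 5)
Your lower bound and the reduction of the upper bound to $\symm{n+\ell}^{c,d}$ (via Theorems~\ref{thm:CompSPi-general} and~\ref{thm:SPi-asymptotic}) are sound, and the maximality lemma $G \cap \symm{n}^{a+1,\beta} = \symm{n}^{a,\beta}$ for $\beta \leq b$ is a reasonable tool. But the final step --- closing the gap from $(c,d)$ to $(a,b)$ by extracting a single $n$-pattern of $\pi$ at level $n+\ell$ --- has exactly the gap you flag, and it is fatal to the argument as written. The cardinality constraint $\card{I}=n$ with $\card{A}=a+1$ and $\card{M} \leq n+\ell-c-d$ forces $\card{B} \geq (c-a)+(d-b)-\ell+(b-1)$, and the theorem's threshold $\ell \geq \maxblocksize_{a,b}(\Pi) \geq \max(c-a+1,\,d-b+1)$ controls each excess $c-a$ and $d-b$ separately but not their sum. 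Concretely, with $\maxintervals{\Pi}=\{\interval{1}{10},\interval{11}{20}\}$ and $a=b=1$ the threshold is $\ell=10$, yet the construction needs $\card{B}\geq 8$ while the lemma only covers $\beta\leq 1$; the fallback of choosing $B$ so that $\red(\substring{q}{B})=\asc{\card{B}}$ requires an increasing subsequence of $q$ of length $8$ inside a $10$-permutation, which need not exist. Neither suggested remedy is an argument, so the proof is incomplete precisely where the technical content lies.

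The paper avoids this bookkeeping entirely by working one level at a time rather than in one shot. Lemma~\ref{lem:transitiveblocks} characterizes, for a single application of $\Compn[n+1]{}$, when $\Compn[n+1]{G}$ is transitive on a nontrivial block of $\Pi'$: this happens iff $\symm{B} \leq G$ for the corresponding $\maxintervals{\Pi}$-block $B$, and its proof needs only two pattern extractions producing an adjacent transposition and a long cycle --- the ``peel off one element'' version of what you attempt all at once. Lemma~\ref{lem:Mab} then shows that the quantity $\maxblocksize_{a,b}(\orbits{\Compn[n+i]{G}})$ strictly decreases with $i$ until it reaches $1$, and Theorem~\ref{thm:general-intransitive} follows by iterating. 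If you want to salvage your one-shot strategy, you would essentially have to rediscover this induction (your ``interleaved induction'' aside points in that direction); as it stands, the level-by-level descent is the missing idea.
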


\begin{proof}
We consider the case where $\desc{n} \notin G$. The case where $\desc{n} \in G$ can be proved with straightforward minor modifications.
Write $G_0 := G$, and for $i \geq 1$, let $G_i := \Compn[n+i]{G} = \Compn[n+i]{G_{i-1}}$.
For $i \geq 0$, let $\Pi_i := \orbits G_i$ and $M_i := \maxblocksize_{a,b}(\Pi_i)$.
It follows from Lemma~\ref{lem:Mab} that for all $i \geq 0$, either $M_{i+1} = M_i = 1$ or $1 \leq M_{i+1} < M_i$.
Therefore the sequence $M_0, M_1, M_2, \dots$ is strictly decreasing until it reaches $1$.
This obviously happens in at most $M_0 - 1$ steps, i.e., $M_i = 1$ whenever $i \geq M_0 - 1$.
Note that it is possible that $\maxintervals{\Pi_i}$ is a proper refinement of $\Pi_i$ even if $M_i = 1$, but in this case we necessarily have
\[
\maxintervals{\Pi_i} = \{\interval{1}{a}, \desc{n+i}(\interval{1}{b})\} \cup \{ \{j\} \mid a < j < \desc{n+i}(b) \},
\]
whence
\[
\maxintervals{\Pi_{i+1}} = \Pi_{i+1} = \{\interval{1}{a}, \desc{n+i+1}(\interval{1}{b})\} \cup \{\{j\} \mid a < j < \desc{n+i+1}(b)\}.
\]
We conclude that $G_i = \symm{n+i}^{a,b}$ whenever $i \geq M_0$.
\end{proof}

%%%%%%%%%%%%%%%%%%%%%%%%%%%%%%%%%%%%%%%%%%%%%%%%%%

\section{Imprimitive groups}
\label{sec:imprimitive}

In this section, we investigate level sequences of imprimitive groups.
Recall that a transitive subgroup of $\symm{n}$ is \emph{imprimitive} if it preserves a nontrivial partition of $\nset{n}$.
A group $G \leq \symm{n}$ \emph{preserves} a partition $\Pi$ of $\nset{n}$ if for all $\pi \in G$ we have $\pi(B) \in \Pi$ for every $B \in \Pi$.

For a partition $\Pi$ of $\nset{n}$, denote by $\Aut \Pi$ the automorphism group of $\Pi$, i.e., the set of all $\pi \in \symm{n}$ such that $\pi(B) \in \Pi$ for all $B \in \Pi$, or, equivalently, such that $\pi(x) \sim_\Pi \pi(y)$ if and only if $x \sim_\Pi y$.
Note that $\symm{\Pi} \leq \Aut \Pi$, and this inclusion is proper if the $\Pi$\hyp{}blocks are not of pairwise distinct sizes.
The imprimitive groups are thus exactly the transitive subgroups of the groups of the form $\Aut \Pi$, where $\Pi$ is a nontrivial partition with blocks of equal size.

Let $\Pi$ be a partition of $\nset{n}$.
We say that an interval $\interval{a}{b} \subseteq \nset{n}$ is a \emph{union of interwoven $\Pi$\hyp{}blocks} (in symbols, $\interval{a}{b} \interwoven \Pi$) if
there exist integers $k > 1$ and $\ell \geq 1$ such that $b - a + 1 = k \ell$ and the sets
\begin{equation}
\{j \in \interval{a}{b} \mid j \equiv i \pmod{k} \}
=
\{a + i + m k \mid 0 \leq m < \ell\}
\qquad (0 \leq i < k),
\label{eq:interval-blocks}
\end{equation}
which constitute a partition of $\interval{a}{b}$, are $\Pi$\hyp{}blocks.
In the case where $\ell = 1$, these blocks are trivial.
Note that if $\interval{a}{b} \interwoven \Pi$, then any two consecutive elements of $\interval{a}{b}$ belong to distinct $\Pi$\hyp{}blocks.
Moreover, if $\Pi$ has no trivial blocks, then distinct intervals that are unions of interwoven $\Pi$\hyp{}blocks do not overlap (see Lemma~\ref{lem:interwoven-overlap}).

For $n \in \IN_+$ and $\ell \in \nset{n}$, define the permutations $\dja{n}{\ell}, \ajd{n}{\ell} \in \symm{n}$ as
\[
\dja{n}{\ell} := \desc{\ell} \oplus \asc{n - \ell},
\qquad
\ajd{n}{\ell} := \asc{n - \ell} \oplus \desc{\ell}.
\]

Let $\pi \in \symm{n}$ and $t \in \nset{n-1}$.
If $\{\pi(t), \pi(t+1)\} = \{a, b\}$ for $a, b \in \nset{n}$ satisfying $a \leq b-2$, then we say that $\pi$ \emph{has a jump} (\emph{between $a$ and $b$}) \emph{at $t$,} or that $t$ is a \emph{jump} of $\pi$.
For $S \subseteq \nset{n-1}$, we say that $\pi$ \emph{has a jump in $S$} if there exists $t \in S$ such that $\pi$ has a jump at $t$.

\begin{lemma}
\label{lem:one-jump}
Let $\sigma \in \symm{n}$.
The permutation $\sigma$ has no jumps if and only if $\sigma$ is either $\asc{n}$ or $\desc{n}$.
The permutation $\sigma$ has exactly one jump if and only if $\sigma$ equals one of the following:
$\dja{n}{\ell}$,
$\ajd{n}{\ell}$,
$\desc{n} \circ \dja{n}{\ell}$,
$\desc{n} \circ \ajd{n}{\ell}$
for $\ell \in \{2, \dots, n-2\}$,
$\natcycle{n}^j$,
$\desc{n} \circ \natcycle{n}^j$
for $j \in \nset{n-1}$.
\end{lemma}

\begin{proof}
Straightforward verification.
\end{proof}

\begin{lemma}
\label{lem:interwoven-overlap}
Let $\Pi$ be a partition of $\nset{n}$ with no trivial blocks.
Let $a, b, c, d \in \nset{n}$ with $a < b$ and $c < d$, and assume that $\interval{a}{b}$ and $\interval{c}{d}$ are unions of interwoven $\Pi$\hyp{}blocks.
If $\interval{a}{b} \cap \interval{c}{d} \neq \emptyset$, then $\interval{a}{b} = \interval{c}{d}$.
\end{lemma}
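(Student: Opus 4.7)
The plan is to argue that if two interwoven intervals overlap then they coincide. The central observation, immediate from the definition, is the following: when $[a,b]$ is a union of interwoven $\Pi$-blocks with parameters $k, \ell$, every $x \in \interval{a}{b}$ has its $\Pi$-block of the form $\{a + i + mk \mid 0 \leq m < \ell\}$ for some $i$. In particular, $\eqclass{x}{\Pi} \subseteq \interval{a}{b}$ and $\eqclass{x}{\Pi}$ is an arithmetic progression of length $\ell$ with common difference $k$. The hypothesis that $\Pi$ has no trivial blocks forces $\ell \geq 2$.

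The first step is to pin down that both interwoven structures have the same parameters. Picking any $x \in \interval{a}{b} \cap \interval{c}{d}$ and setting $B := \eqclass{x}{\Pi}$, the set $B$ is an AP of length $\ell_1 \geq 2$ and common difference $k_1$ viewed inside $\interval{a}{b}$, and simultaneously an AP of length $\ell_2 \geq 2$ and common difference $k_2$ viewed inside $\interval{c}{d}$. Since a nontrivial finite arithmetic progression is determined as a set by its cardinality and common difference, this yields $k_1 = k_2 =: k$ and $\ell_1 = \ell_2 =: \ell$, so $b - a + 1 = k\ell = d - c + 1$.

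Now I would assume, without loss of generality, that $a \leq c$. The $\Pi$-block of $c$ read from the $\interval{c}{d}$-structure is $\{c, c+k, \dots, c + (\ell-1)k\}$; since $c \in \interval{a}{b}$ as well, this entire block must lie in $\interval{a}{b}$, which forces $c + (\ell-1)k \leq b = a + k\ell - 1$, i.e., $c - a \leq k - 1$. If $a = c$, then $b = d$ and the lemma follows.

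The main (and only mildly delicate) step is to rule out the strict inequality $a < c$. Here I consider the block $\eqclass{a}{\Pi} = \{a, a+k, \dots, a + (\ell-1)k\}$. Since $c - a < k$ we have $a + k > c$, and because $\ell \geq 2$ and $k \geq 2$, also $a + k \leq b \leq d$, so $a + k \in \interval{c}{d}$. Applying the interwoven structure of $\interval{c}{d}$ at the point $a + k$, the whole block $\eqclass{(a+k)}{\Pi} = \eqclass{a}{\Pi}$ is contained in $\interval{c}{d}$; but $a \in \eqclass{a}{\Pi}$ and $a < c$ imply $a \notin \interval{c}{d}$, a contradiction. Thus $a = c$, and the lemma is proved.
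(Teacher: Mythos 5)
Your proof is correct. It differs from the paper's in its mechanics: the paper takes a point $p$ in the intersection, notes that its block $\eqclass{p}{\Pi}$ lies in both intervals and contains two elements at distance $k$ apart, so that $\interval{c}{d}$ (being an interval) contains $k+1$ consecutive integers of $\interval{a}{b}$ and hence a representative of every interwoven block of $\interval{a}{b}$; since $\interval{c}{d}$ is a union of $\Pi$\hyp{}blocks, this immediately gives $\interval{a}{b} \subseteq \interval{c}{d}$, and equality follows by symmetry. You instead first extract that the two structures share the same parameters $k$ and $\ell$ (using that a common block is a nontrivial arithmetic progression, hence determines its common difference and length), conclude the intervals have equal length, and then rule out a nonzero offset by tracking the block of the left endpoint $a$ through the point $a+k \in \interval{c}{d}$. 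Both arguments rest on the same two facts -- that a block meeting an interwoven interval is contained in it, and that blocks are APs of difference $k$ with at least two terms -- so neither buys much over the other; the paper's version is slightly shorter because it never needs to compare parameters or endpoints, while yours makes the rigidity of the interwoven structure (unique $k$, $\ell$, and alignment) more explicit. All the small verifications your route requires ($c \in \interval{a}{b}$ from $a \leq c$ and nonempty intersection; $a+k \leq b \leq d$ from $\ell \geq 2$ and equal lengths) check out.
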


\begin{proof}
Let $p \in \interval{a}{b} \cap \interval{c}{d}$.
Since $\interval{a}{b} \interwoven \Pi$ and $\interval{c}{d} \interwoven \Pi$, we have $\eqclass{p}{\Pi} \subseteq \interval{a}{b}$.
Since $\card{\eqclass{p}{\Pi}} > 1$, there is an element $p' \in \eqclass{p}{\Pi}$ with $p \neq p'$.
Without loss of generality, assume $p < p'$.
Then $\interval{p}{p'} \subseteq \interval{a}{b} \cap \interval{c}{d}$.

Let now $x \in \interval{a}{b}$.
Since $\interval{a}{b} \interwoven \Pi$, $\eqclass{x}{\Pi}$ has a representative $q$ in the interval $\interval{p}{p'}$.
But then $\eqclass{q}{\Pi} \subseteq \interval{c}{d}$, because $\interval{c}{d} \interwoven \Pi$ and $q \in \interval{p}{p'} \subseteq \interval{c}{d}$.
Therefore $x \in \eqclass{q}{\Pi} \subseteq \interval{c}{d}$; hence $\interval{a}{b} \subseteq \interval{c}{d}$.
A similar argument shows that $\interval{c}{d} \subseteq \interval{a}{b}$.
\end{proof}

The following is a modification of Atkinson and Beals's Lemma~\ref{lem:AtkBea-L6}.

\begin{lemma}
\label{lem:AtkBea-L6-ext}
Let $\Pi$ be a partition of $\nset{n}$, and let $\pi \in \Compn[n+1]{\Aut \Pi}$.
Assume that $t \in \nset{n}$ is a number such that $t-1 \not\sim_\Pi t$ and either $t-2 \sim_\Pi t-1$ or $t \sim_\Pi t+1$.
Then $t$ satisfies condition \ref{AB-L6-1-a} or \ref{AB-L6-1-b} of Lemma~\ref{lem:AtkBea-L6}, that is,
\begin{enumerate}[label=\upshape{(\alph*)}]
\item\label{AB-L6-ext-1-a} $\pi(t) = t$ and $\pi$ maps each one of the intervals $\interval{1}{t-1}$ and $\interval{t+1}{n+1}$ onto itself, or
\item\label{AB-L6-ext-1-b}
$\pi(t) = \desc{n+1}(t) = n - t + 2$, $\pi(n - t + 2) = \desc{n+1}(n - t + 2) = t$, and $\pi$ interchanges the interval $\interval{1}{t-1}$ with $\desc{n+1}(\interval{1}{t-1}) = \interval{n - t + 3}{n+1}$ and the interval $\interval{t+1}{n+1}$ with $\desc{n+1}(\interval{t+1}{n+1}) = \interval{1}{n - t + 1}$.
\end{enumerate}
\end{lemma}

\begin{proof}
First we prove several claims.
We are going to make repeatedly use of the fact that $\subpermsimple{\pi}{i} \in \Aut \Pi$ for all $i \in \nset{n+1}$, because $\pi \in \Compn[n+1]{\Aut \Pi}$.

\begin{quotation}
\begin{claim}
\label{clm:t2t1asc}
If $t-2 \sim_\Pi t-1$ and $\pi(t-1) < \pi(t)$, then $\pi(i) < \pi(t)$ for all $i < t$ and $\pi(i) > \pi(t)$ for all $i > t$.
\end{claim}

\begin{proof}[Proof of Claim~\ref{clm:t2t1asc}]
Write $a := \pi(t-2)$, $b := \pi(t-1)$, $c := \pi(t)$, $d := \pi(t+1)$.
Our hypotheses assert that $b < c$.
Suppose, to the contrary, that $d < c$.
We proceed by case analysis on the different possible orderings of $a$, $b$, $c$, $d$; note that we must have $a < b < c$ or $b < a < c$ or $b < c < a$.
\begin{itemize}
\item If $a < b < d < c$,
then on the one hand $\subpermsimple{\pi}{t-1}(t-2) = a$, $\subpermsimple{\pi}{t-1}(t-1) = c-1$, so $a \sim_\Pi c-1$;
and on the other hand $\subpermsimple{\pi}{t+1}(t-2) = a$, $\subpermsimple{\pi}{t+1}(t) = c-1$, so $a \not\sim_\Pi c-1$,
a contradiction.

\item If $a < d < b < c$ or $d < a < b < c$,
then $\subpermsimple{\pi}{t-2}(t-2) = b-1$, $\subpermsimple{\pi}{t-2}(t-1) = c-1$, so $b-1 \sim_\Pi c-1$;
and $\subpermsimple{\pi}{t+1}(t-1) = b-1$, $\subpermsimple{\pi}{t+1}(t) = c-1$, so $b-1 \not\sim_\Pi c-1$,
a contradiction.

\item If $b < a < d < c$,
then $\subpermsimple{\pi}{t-2}(t-2) = b$, $\subpermsimple{\pi}{t-2}(t-1) = c-1$, so $b \sim_\Pi c-1$;
and $\subpermsimple{\pi}{t+1}(t-1) = b$, $\subpermsimple{\pi}{t+1}(t) = c-1$, so $b \not\sim_\Pi c-1$,
a contradiction.

\item If $b < d < a < c$ or $d < b < a < c$ or $b < d < c < a$ or $d < b < c < a$,
then $\subpermsimple{\pi}{t-1}(t-2) = a-1$, $\subpermsimple{\pi}{t-1}(t-1) = c-1$, so $a-1 \sim_\Pi c-1$;
and $\subpermsimple{\pi}{t+1}(t-2) = a-1$, $\subpermsimple{\pi}{t+1}(t) = c-1$, so $a-1 \not\sim_\Pi c-1$,
a contradiction.
\end{itemize}
We have thus established that $c < d$.

Suppose now, to the contrary, that $\pi(i) > c$ for some $i < t$.
Since we are assuming that $b < c$, we have $i < t-1$.
Then $\subpermsimple{\pi}{i}(t-2) = b$, $\subpermsimple{\pi}{i}(t-1) = c$, so $b \sim_\Pi c$;
and $\subpermsimple{\pi}{t+1}(t-1) = b$, $\subpermsimple{\pi}{t+1}(t) = c$, so $b \not\sim_\Pi c$,
a contradiction.

Finally, suppose, to the contrary, that $\pi(i) < c$ for some $i > t$.
Since $c < d$, we have $i > t+1$.
Let $e := \pi(i)$.
We proceed by case analysis on the possible orderings of $a$, $b$, $e$.
\begin{itemize}
\item If $a < b < e$ or $a < e < b$,
then $\subpermsimple{\pi}{t-1}(t-2) = a$, $\subpermsimple{\pi}{t-1}(t-1) = c-1$, so $a \sim_\Pi c-1$;
and $\subpermsimple{\pi}{i}(t-2) = a$, $\subpermsimple{\pi}{i}(t) = c-1$, so $a \not\sim_\Pi c-1$,
a contradiction.

\item If $b < a < e$ or $b < e < a$,
then $\subpermsimple{\pi}{t-2}(t-2) = b$, $\subpermsimple{\pi}{t-2}(t-1) = c-1$, so $b \sim_\Pi c-1$;
and $\subpermsimple{\pi}{i}(t-1) = b$, $\subpermsimple{\pi}{i}(t) = c-1$, so $b \not\sim_\Pi c-1$,
a contradiction.

\item If $e < a < b$,
then $\subpermsimple{\pi}{t-2}(t-2) = b-1$, $\subpermsimple{\pi}{t-2}(t-1) = c-1$, so $b-1 \sim_\Pi c-1$;
and $\subpermsimple{\pi}{i}(t-1) = b-1$, $\subpermsimple{\pi}{i}(t) = c-1$, so $b-1 \not\sim_\Pi c-1$,
a contradiction.

\item If $e < b < a$,
then $\subpermsimple{\pi}{t-1}(t-2) = a-1$, $\subpermsimple{\pi}{t-1}(t-1) = c-1$, so $a-1 \sim_\Pi c-1$;
and $\subpermsimple{\pi}{i}(t-2) = a-1$, $\subpermsimple{\pi}{i}(t) = c-1$, so $a-1 \not\sim_\Pi c-1$,
a contradiction.
\qedhere
\end{itemize}
\end{proof}

\begin{claim}
\label{clm:tt1asc}
If $t \sim_\Pi t+1$ and $\pi(t-1) < \pi(t)$, then $\pi(i) < \pi(t)$ for all $i < t$ and $\pi(i) > \pi(t)$ for all $i > t$.
\end{claim}

\begin{proof}[Proof of Claim~\ref{clm:tt1asc}]
Write $a := \pi(t-1)$, $b := \pi(t)$, $c := \pi(t+1)$, $d := \pi(t+2)$.
Our hypotheses assert that $a < b$.
Suppose, to the contrary, that $b > c$ or $b > d$.
We proceed by case analysis on the possible orderings of $a$, $b$, $c$, $d$.
\begin{itemize}
\item If $a < d < b < c$ or $a < d < c < b$ or $d < a < b < c$ or $d < a < c < b$,
then $\subpermsimple{\pi}{t-1}(t-1) = b-1$, $\subpermsimple{\pi}{t-1}(t) = c-1$, so $b-1 \not\sim_\Pi c-1$;
and $\subpermsimple{\pi}{t+2}(t) = b-1$, $\subpermsimple{\pi}{t+2}(t+1) = c-1$, so $b-1 \sim_\Pi c-1$,
a contradiction.

\item If $a < c < b < d$ or $a < c < d < b$ or $c < a < b < d$ or $c < a < d < b$,
then $\subpermsimple{\pi}{t-1}(t-1) = b-1$, $\subpermsimple{\pi}{t-1}(t+1) = d-1$, so $b-1 \not\sim_\Pi d-1$;
and $\subpermsimple{\pi}{t+1}(t) = b-1$, $\subpermsimple{\pi}{t+1}(t+1) = d-1$, so $b-1 \sim_\Pi d-1$,
a contradiction.

\item If $c < d < a < b$,
then $\subpermsimple{\pi}{t-1}(t-1) = b-1$, $\subpermsimple{\pi}{t-1}(t) = c$, so $b-1 \not\sim_\Pi c$;
and $\subpermsimple{\pi}{t+2}(t) = b-1$, $\subpermsimple{\pi}{t+2}(t+1) = c$, so $b-1 \sim_\Pi c$,
a contradiction.

\item If $d < c < a < b$,
then $\subpermsimple{\pi}{t-1}(t-1) = b-1$, $\subpermsimple{\pi}{t-1}(t+1) = d$, so $b-1 \not\sim_\Pi d$;
and $\subpermsimple{\pi}{t+1}(t) = b-1$, $\subpermsimple{\pi}{t+1}(t+1) = d$, so $b-1 \sim_\Pi d$,
a contradiction.
\end{itemize}
We have thus established that $b < c$ and $b < d$.

Suppose now, to the contrary, that $\pi(i) < b$ for some $i > t$.
Then in fact $i > t + 2$, and we have
$\subpermsimple{\pi}{t-1}(t-1) = b-1$, $\subpermsimple{\pi}{t-1}(t) = c-1$, so $b-1 \not\sim_\Pi c-1$;
and $\subpermsimple{\pi}{i}(t) = b-1$, $\subpermsimple{\pi}{i}(t+1) = c-1$, so $b-1 \sim_\Pi c-1$,
a contradiction.

Finally, suppose, to the contrary, that $\pi(i) > b$ for some $i < t$.
Then in fact $i < t-1$.
Let $e := \pi(i)$.
We proceed by case analysis on the possible orderings of $c$, $d$, $e$.
\begin{itemize}
\item If $e < c < d$ or $c < e < d$,
then $\subpermsimple{\pi}{i}(t-1) = b$, $\subpermsimple{\pi}{i}(t+1) = d-1$, so $b \not\sim_\Pi d-1$;
and $\subpermsimple{\pi}{t+1}(t) = b$, $\subpermsimple{\pi}{t+1}(t+1) = d-1$, so $b \sim_\Pi d-1$,
a contradiction.

\item If $e < d < c$ or $d < e < c$,
then $\subpermsimple{\pi}{i}(t-1) = b$, $\subpermsimple{\pi}{i}(t) = c-1$, so $b \not\sim_\Pi c-1$;
and $\subpermsimple{\pi}{t+2}(t) = b$, $\subpermsimple{\pi}{t+2}(t+1) = c-1$, so $b \sim_\Pi c-1$,
a contradiction.

\item If $c < d < e$,
then $\subpermsimple{\pi}{i}(t-1) = b$, $\subpermsimple{\pi}{i}(t) = c$, so $b \not\sim_\Pi c$;
and $\subpermsimple{\pi}{t+2}(t) = b$, $\subpermsimple{\pi}{t+2}(t+1) = c$, so $b \sim_\Pi c$,
a contradiction.

\item If $d < c < e$,
then $\subpermsimple{\pi}{i}(t-1) = b$, $\subpermsimple{\pi}{i}(t+1) = d$, so $b \not\sim_\Pi d$;
and $\subpermsimple{\pi}{t+1}(t) = b$, $\subpermsimple{\pi}{t+1}(t+1) = d$, so $b \sim_\Pi d$,
a contradiction.
\qedhere
\end{itemize}
\end{proof}

\begin{claim}
\label{clm:t2t1desc}
If $t-2 \sim_\Pi t-1$ and $\pi(t-1) > \pi(t)$, then $\pi(i) > \pi(t)$ for all $i < t$ and $\pi(i) < \pi(t)$ for all $i > t$.
\end{claim}

\begin{claim}
\label{clm:tt1desc}
If $t \sim_\Pi t+1$ and $\pi(t-1) > \pi(t)$, then $\pi(i) > \pi(t)$ for all $i < t$ and $\pi(i) < \pi(t)$ for all $i > t$.
\end{claim}

Claims~\ref{clm:t2t1desc} and \ref{clm:tt1desc} are proved in a similar way as Claims~\ref{clm:t2t1asc} and \ref{clm:tt1asc}, with the following changes:
in statements about the ordering of elements $a$, $b$, $c$, $d$, $e$, the order is reversed, i.e., the relation symbols ``$<$'' and ``$>$'' must be interchanged. Additionally, when we speak of the values of $\subpermsimple{\pi}{i}$ at various points, we must interchange $a$ with $a-1$, $b$ with $b-1$, $c$ with $c-1$, and $d$ with $d-1$.
\end{quotation}

By Claims~\ref{clm:t2t1asc}--\ref{clm:tt1desc}, we conclude that either
\begin{itemize}
\item $\pi(\interval{1}{t-1}) \subseteq \interval{1}{\pi(t)-1}$ and $\pi(\interval{t+1}{n+1} \subseteq \interval{\pi(t)+1}{n+1}$, or
\item $\pi(\interval{1}{t-1}) \subseteq \interval{\pi(t)+1}{n+1}$ and $\pi(\interval{t+1}{n+1} \subseteq \interval{1}{\pi(t)-1}$.
\end{itemize}
Since $\pi$ is a permutation of a finite set, a simple counting argument shows that either
\begin{enumerate}[label=\upshape{(\alph*)}]
\item\label{case:ext:t} $\pi(t) = t$, $\pi(\interval{1}{t-1}) = \interval{1}{t-1}$, and $\pi(\interval{t+1}{n+1}) = \interval{t+1}{n+1}$, or
\item\label{case:ext:n-t+2} $\pi(t) = n - t + 2$, $\pi(\interval{1}{t-1}) = \interval{n - t + 3}{n + 1}$, and $\pi(\interval{t+1}{n+1}) = \interval{1}{n - t + 1}$.
\end{enumerate}
If \ref{case:ext:t} holds, then we are done.
Assume that \ref{case:ext:n-t+2} holds.
If $t = n - t + 2$, then $\pi(n - t + 2) = \pi(t) = n - t + 2 = t$; furthermore, $\pi$ interchanges the intervals $\interval{1}{t-1} = \interval{1}{n - t + 1}$ and $\interval{n - t + 3}{n+1} = \interval{t+1}{n+1}$.
If $t \neq n - t + 2$, then consider the permutation $\tau := \pi \circ \pi$, which is also a member of $\Compn[n+1]{\Aut \Pi}$ by Proposition~\ref{prop:S-subgroup}.
It is not possible that $\tau(t) = n - t + 2$, because then we would have $\pi(t) = n - t + 2 = \tau(t) = \pi(\pi(t)) = \pi(n - t + 2)$, which would contradict the injectivity of $\pi$.
By what we have shown above, it must then hold that $\tau(t) = t$, $\tau(\interval{1}{t-1}) = \interval{1}{t-1}$, and $\tau(\interval{t+1}{n+1}) = \interval{t+1}{n+1}$.
Then
\begin{align*}
& \pi(n - t + 2) = \pi(\pi(t)) = \tau(t) = t, \\
& \pi(\interval{n - t + 3}{n+1}) = \pi(\pi(\interval{1}{t-1})) = \tau(\interval{1}{t-1}) = \interval{1}{t-1}, \\
& \pi(\interval{1}{n-t+1}) = \pi(\pi(\interval{t+1}{n+1})) = \tau(\interval{t+1}{n+1}) = \interval{t+1}{n+1}.
\qedhere
\end{align*}
\end{proof}

\begin{lemma}
\label{lem:AutPi-cycles}
Let $\Pi$ be a partition of $\nset{n}$, and assume that $\Aut \Pi$ contains the cycle $(a \; a+1 \; \cdots \; b)$ for some $a, b \in \nset{n}$ with $a < b$.
Then the interval $\interval{a}{b}$ is either contained in a $\Pi$\hyp{}block or it is a union of interwoven $\Pi$\hyp{}blocks.
\end{lemma}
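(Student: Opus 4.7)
Let me sketch the proof. Write $\sigma := (a \; a+1 \; \cdots \; b)$ and $n_0 := b - a + 1$; as a permutation of $\interval{a}{b}$, $\sigma$ acts as a single cycle of length $n_0$, while fixing every point outside $\interval{a}{b}$. The plan is: if $\interval{a}{b}$ is not contained in a single $\Pi$\hyp{}block, show that every block meeting $\interval{a}{b}$ sits inside $\interval{a}{b}$, and then use the transitive cyclic action of $\sigma$ on these blocks to force the arithmetic\hyp{}progression form required by the definition of interwoven blocks.

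First I would prove the dichotomy: \emph{every $\Pi$\hyp{}block $B$ with $B \cap \interval{a}{b} \neq \emptyset$ is either contained in $\interval{a}{b}$ or contains $\interval{a}{b}$.} Indeed, if $B$ has an element $y \notin \interval{a}{b}$, then $\sigma(y) = y$, so $y \in B \cap \sigma(B)$; since $\sigma \in \Aut \Pi$, $\sigma(B)$ is a $\Pi$\hyp{}block, hence $\sigma(B) = B$. But then, picking any $x \in B \cap \interval{a}{b}$, the $\sigma$\hyp{}invariance of $B$ together with the fact that $\langle \sigma \rangle$ acts transitively on $\interval{a}{b}$ yields $\interval{a}{b} \subseteq B$. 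In the case $\interval{a}{b} \subseteq B$ we are done, so we may assume that $\interval{a}{b}$ is partitioned by $\Pi$\hyp{}blocks $B_1, \dots, B_r$ with $r \geq 2$.

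Next I would exploit that $\sigma$ permutes $\{B_1,\dots,B_r\}$ (since $\sigma \in \Aut \Pi$) and acts transitively on this set (because $\langle\sigma\rangle$ is transitive on the points of $\interval{a}{b}$). Thus the blocks have a common size $\ell$ with $n_0 = r\ell$, and the induced action of the cyclic group $\langle\sigma\rangle$ on the blocks is a transitive cyclic action; set $k := r > 1$. Relabel so that $\sigma(B_i) = B_{i+1}$ with indices mod $k$, and let $B_1$ contain the element $a+i$ with $0 \leq i < k$ minimal. Since $\sigma^k$ fixes $B_1$ setwise and acts on $\interval{a}{b}$ by shifting by $k$ (with wrap\hyp{}around), iterating yields $B_1 = \{a+i, a+i+k, \dots, a+i+(\ell-1)k\}$, and then $B_j = \sigma^{j-1}(B_1) = \{a+i+(j-1) + mk : 0 \leq m < \ell\}$ for $1 \leq j \leq k$ (with indices reduced mod $n_0$ if necessary). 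Since these $k$ sets must partition $\interval{a}{b}$, the index $i$ can be absorbed into the relabeling, so the blocks are exactly $\{a + j + mk : 0 \leq m < \ell\}$ for $0 \leq j < k$. This is precisely the description \eqref{eq:interval-blocks} of interwoven $\Pi$\hyp{}blocks, so $\interval{a}{b} \interwoven \Pi$.

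The main obstacle, and the only step requiring genuine care, is verifying that the $\sigma$\hyp{}orbit of a single block $B_1$ actually fills $B_1$ entirely (so that $|B_1| = \ell$ with blocks of the stated arithmetic form) and that no ``off\hyp{}by\hyp{}one'' issue with the cyclic wrap\hyp{}around of $\sigma$ spoils the equality $B_1 = \{a+i, a+i+k, \dots, a+i+(\ell-1)k\}$ without reduction mod $n_0$. This is handled by the observation that $a+i + (\ell-1)k \leq a + (k-1) + (\ell-1)k = a + k\ell - 1 = b$, so no wrap\hyp{}around occurs within a single block. Everything else is a straightforward consequence of $\sigma$ being a single cycle and an automorphism of $\Pi$.
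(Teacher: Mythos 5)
Your proof is correct and follows essentially the same route as the paper: both arguments come down to the fact that the cycle $(a \; a+1 \; \cdots \; b)$ permutes the $\Pi$-blocks lying inside $\interval{a}{b}$ transitively and cyclically, so that each such block is an orbit of the $k$-th power of the cycle and hence has the arithmetic-progression form \eqref{eq:interval-blocks}. The only real difference is how you rule out blocks straddling the boundary of $\interval{a}{b}$ — you observe that a block containing a fixed point of the cycle is setwise invariant and therefore contains all of $\interval{a}{b}$, whereas the paper first shows that cyclically consecutive elements of $\interval{a}{b}$ lie in distinct blocks — but this is a cosmetic variation rather than a different method.
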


\begin{proof}
Let $\pi := (a \; a+1 \; \cdots \; b)$.
Assume that $\interval{a}{b}$ is not contained in a $\Pi$\hyp{}block.
Then $\interval{a}{b}$ contains elements from at least two distinct $\Pi$\hyp{}blocks.
In particular, there must exist $\ell \in \interval{a}{b}$ such that $\ell \not\sim_\Pi \ell + 1$, where addition is done modulo $b - a + 1$ and representatives of equivalence classes are chosen from the interval $\interval{a}{b}$.
Since $\pi \in \Aut \Pi$, it follows that $\ell + 1 = \pi(\ell) \not\sim_\Pi \pi(\ell + 1) = \ell + 2$ (with modular addition, as explained above).
Repeating this argument, we conclude that $i \not\sim_\Pi i + 1$ for all $i \in \interval{a}{b}$.

It also follows that if $B$ is a $\Pi$\hyp{}block satisfying $B \cap \interval{a}{b} \neq \emptyset$, then $B \subseteq \interval{a}{b}$. (For, if there exist $B \in \Pi$ and $i \in B \cap \interval{a}{b}$, $j \in B \setminus \interval{a}{b}$, then $i \sim_\Pi j$ but $\pi(i) = i + 1 \not\sim_\Pi i \sim_\Pi j = \pi(j)$, a contradiction.)
Furthermore, if $B$ is a $\Pi$\hyp{}block satisfying $B \subseteq \interval{a}{b}$, then
also $\pi(B)$
is a $\Pi$\hyp{}block contained in $\interval{a}{b}$.
Since $\interval{a}{b}$ is a finite interval, there exists a positive integer $m$ such that $\pi^m(B) \cap B \neq \emptyset$.
Let $k$ be the smallest such number; it is clear that $2 \leq k \leq b - a + 1$.
Then in fact $\pi^k(B) = B$, so for every $i \in B$, also $\pi^k(i) = i + k \pmod{b - a + 1}$ is in $B$.
It is then easy to see that $k$ is a divisor of $b - a + 1$, and $B$ is of the form \eqref{eq:interval-blocks}.
Consequently, $\pi(B)$ is also of the form \eqref{eq:interval-blocks}.
We conclude that $\interval{a}{b}$ is partitioned into $\Pi$\hyp{}blocks of the form prescribed in \eqref{eq:interval-blocks}, that is, $\interval{a}{b}$ is a union of interwoven $\Pi$\hyp{}blocks.
\end{proof}

\begin{lemma}
\label{lem:intervals-blocks}
Let $\Pi$ be a partition of $\nset{n}$ with no trivial blocks.
Assume that $a, b, c, d \in \nset{n}$ are numbers such that $a < b$ and $c < d$ and $\interval{a}{b} \cap \interval{c}{d} \neq \emptyset$.
If each one of $\interval{a}{b}$ and $\interval{c}{d}$ is either contained in a $\Pi$\hyp{}block or is a union of interwoven $\Pi$\hyp{}blocks, then $\interval{a}{b} \cup \interval{c}{d}$ is contained in a $\Pi$\hyp{}block or $\interval{a}{b} = \interval{c}{d}$.
\end{lemma}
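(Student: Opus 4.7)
The plan is to distinguish four cases according to whether each of $\interval{a}{b}$ and $\interval{c}{d}$ is contained in a single $\Pi$-block or is a union of interwoven $\Pi$-blocks. Fix an element $x \in \interval{a}{b} \cap \interval{c}{d}$ and set $B := \eqclass{x}{\Pi}$.

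First I would handle the case in which both intervals are contained in a $\Pi$-block: each containing block then must equal $B$ (being a $\Pi$-block that contains $x$), and consequently $\interval{a}{b} \cup \interval{c}{d} \subseteq B$, which is the first alternative of the conclusion. Next, if both intervals are unions of interwoven $\Pi$-blocks, then Lemma~\ref{lem:interwoven-overlap} applied to their non-empty overlap immediately yields $\interval{a}{b} = \interval{c}{d}$, the second alternative.

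The two mixed cases will be ruled out by a contradiction, and this is the main content of the argument. Suppose $\interval{a}{b} \subseteq B$ while $\interval{c}{d} \interwoven \Pi$. In the interwoven decomposition of $\interval{c}{d}$ the defining parameters satisfy $k \geq 2$ by definition, and $\ell \geq 2$ since the hypothesis that $\Pi$ has no trivial blocks forbids the case $\ell = 1$; hence every $\Pi$-block meeting $\interval{c}{d}$ is an arithmetic progression with common difference at least $2$, and so contains no two consecutive integers. Since $x \in \interval{c}{d}$, the block $B$ is one of these progressions. But $a < b$ forces $\{a, a+1\} \subseteq \interval{a}{b} \subseteq B$, producing a pair of consecutive integers in $B$, a contradiction. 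The remaining mixed case, where $\interval{c}{d} \subseteq B$ and $\interval{a}{b} \interwoven \Pi$, is dispatched identically by swapping the roles of the two intervals.
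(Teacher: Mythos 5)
Your proof is correct and follows essentially the same route as the paper: the same four-way case split, the same direct argument when both intervals lie in blocks, and the same appeal to Lemma~\ref{lem:interwoven-overlap} when both are interwoven. The only (minor, and in fact slightly cleaner) difference is in the mixed case, where you observe in one step that the block $B$ would have to be simultaneously an arithmetic progression of common difference $k \geq 2$ from the interwoven decomposition and a superset of an interval containing two consecutive integers, whereas the paper reaches the same contradiction via a sub-case analysis on the cardinality of $\interval{a}{b} \cap \interval{c}{d}$.
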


\begin{proof}
Consider first the case where each one of $\interval{a}{b}$ and $\interval{c}{d}$ is contained in a $\Pi$\hyp{}block, say $\interval{a}{b} \subseteq B$, $\interval{c}{d} \subseteq B'$, where $B, B' \in \Pi$.
Since $\emptyset \neq \interval{a}{b} \cap \interval{c}{d} \subseteq B \cap B'$ and since the blocks of the partition $\Pi$ are pairwise disjoint, it follows that $B = B'$.
Consequently, $\interval{a}{b} \cup \interval{c}{d} \subseteq B$.

The case where both $\interval{a}{b}$ and $\interval{c}{d}$ are unions of interwoven $\Pi$\hyp{}blocks is settled by
Lemma~\ref{lem:interwoven-overlap}, which asserts that in this case it holds that $\interval{a}{b} = \interval{c}{d}$.

Consider finally the case where $\interval{a}{b}$ is a union of interwoven $\Pi$\hyp{}blocks and $\interval{c}{d}$ is contained in a single $\Pi$\hyp{}block $B$.
Since $\interval{a}{b}$ and $\interval{c}{d}$ are not disjoint, the intersection $\interval{a}{b} \cap \interval{c}{d}$ is a nonempty interval.
If $\interval{a}{b} \cap \interval{c}{d}$ has at least two elements, then the interval $\interval{a}{b}$ contains two consecutive elements belonging to the block $B$; hence $\interval{a}{b} \notinterwoven \Pi$, a contradiction.
We may thus assume that $\interval{a}{b} \cap \interval{c}{d}$ has exactly one element.
Since $\Pi$ has no trivial blocks, this implies that the $\Pi$\hyp{}block $B$ contains an element that is in $\interval{a}{b}$ and another element that is not in $\interval{a}{b}$.
Therefore, $\interval{a}{b}$ is not a union of $\Pi$\hyp{}blocks, again a contradiction.

In the case where $\interval{a}{b}$ is contained in a single $\Pi$\hyp{}block and $\interval{c}{d}$ is a union of interwoven $\Pi$\hyp{}blocks, we reach a contradiction in a similar way as above.
\end{proof}

\begin{corollary}
\label{cor:pi-jumps}
Let $\Pi$ be a partition of $\nset{n}$.
Let $\pi \in \Compn[n+1]{\Aut \Pi}$.
Let $t, u \in \nset{n}$ and assume that $a, b, c, d \in \nset{n+1}$ are numbers such that $a \leq b-2$ and $c \leq d-2$.
Then the following statements hold.
\begin{enumerate}[label=\upshape{(\roman*)}]
\item
If $\pi$ has a jump between $a$ and $b$ at $t$, then either the interval $\interval{a}{b-1}$ is contained in a single $\Pi$\hyp{}block or $\interval{a}{b-1}$ is a union of interwoven $\Pi$\hyp{}blocks.
\item
Assume that $\Pi$ has no trivial blocks,
$\pi$ has a jump between $a$ and $b$ at $t$, and $\pi$ has a jump between $c$ and $d$ at $u$.
If $\interval{a}{b-1} \cap \interval{c}{d-1} \neq \emptyset$, then either $\interval{a}{b-1} = \interval{c}{d-1}$ \textup{(}and hence $t = u$\textup{)} or $\interval{a}{b-1} \cup \interval{c}{d-1}$ is contained in a single $\Pi$\hyp{}block.
\end{enumerate}
\end{corollary}

\begin{proof}
By Lemma~\ref{lem:AtkBea-L11}, the group $\Aut \Pi$ contains the cycles $(a \; a+1 \; \cdots \; b-1)$ and $(c \; c+1 \; \cdots \; d-1)$.
By Lemma~\ref{lem:AutPi-cycles}, each one of the intervals $\interval{a}{b-1}$ and $\interval{c}{d-1}$ is either contained in a $\Pi$\hyp{}block or is a union of interwoven $\Pi$\hyp{}blocks.
If $\interval{a}{b-1} \neq \interval{c}{d-1}$, then it follows from Lemma~\ref{lem:intervals-blocks} that $\interval{a}{b-1} \cup \interval{c}{d-1}$ is contained in a $\Pi$\hyp{}block.
(For the statement in round brackets, it clearly holds that $\interval{a}{b-1} = \interval{c}{d-1}$ if and only if $t = u$, because $\pi$ is a permutation.)
\end{proof}

\begin{lemma}
\label{lem:short-jump}
Let $p, q, r, s \in \nset{n+1}$ with $p < q$, $r < s$, and assume that $\pi(\interval{p}{q}) = \interval{r}{s}$.
If there exist $a, b \in \interval{r}{s}$ with $r < a < b < s$ and $a \leq b-2$ such that $\pi$ has a jump between $a$ and $b$ in $\interval{p}{q-1}$, then there exist $c, d \in \interval{r}{s}$ with $c \leq d-2$ and $a \neq c$ and $b \neq d$ such that $\pi$ has a jump between $c$ and $d$ in $\interval{p}{q-1}$ and $\interval{a}{b-1} \cap \interval{c}{d-1} \neq \emptyset$.
\end{lemma}

\begin{proof}
Let $t \in \interval{p}{q-1}$ be the point at which $\pi$ has a jump between $a$ and $b$.
The three intervals $I_1 := \interval{r}{a-1}$, $I_2 := \interval{a+1}{b-1}$, $I_3 := \interval{b+1}{s}$ are nonempty, and at least one of the intervals $J_1 := \interval{p}{t-1}$ and $J_2 := \interval{t+2}{q}$ is nonempty.
We have
\[
\pi(J_1) \cup \pi(J_2) = \pi(J_1 \cup J_2)) = \pi(\interval{p}{q} \setminus \{t, t+1\}) = \interval{r}{s} \setminus \{a,b\} = I_1 \cup I_2 \cup I_2.
\]
Therefore, $\pi(J_1)$ or $\pi(J_2)$ must have a nonempty intersection with at least two of the sets $I_1$, $I_2$, $I_3$.
Consequently, there exist numbers $c \in I_i$, $d \in I_j$ with $1 \leq i < j \leq 3$ such that $\pi$ has a jump between $c$ and $d$ in $\interval{p}{q-1}$.
We also have that $\interval{a}{b-1} \cap \interval{c}{d-1} \neq \emptyset$.
\end{proof}

We say that an interval $\interval{a}{b} \subseteq \nset{n}$ is a \emph{singleton run} of $\maxintervals{\Pi}$ if for every $i \in \interval{a}{b}$, the singleton $\{i\}$ is a $\maxintervals{\Pi}$\hyp{}block. We say that a singleton run $\interval{a}{b}$ of $\maxintervals{\Pi}$ is \emph{maximal}
if no proper superset of $\interval{a}{b}$ is a singleton run of $\maxintervals{\Pi}$.

\begin{lemma}
\label{lem:pi-jumps-extremes}
Let $\Pi$ be a partition of the set $\nset{n}$ with no trivial blocks, and let $\pi \in \Compn[n+1]{\Aut \Pi}$.
Let $p, q, r, s \in \nset{n+1}$ be such that $p < q$, $r < s$, and assume that $\pi(\interval{p}{q}) = \interval{r}{s}$ and $\interval{p}{q-1}$ is a singleton run of $\maxintervals{\Pi}$.
Assume that $a, b, c, d \in \interval{r}{s}$ satisfy $a < b$ and $c < d$.
Then the following statements hold.
\begin{enumerate}[label=\upshape{(\roman*)}]
\item\label{item:jump-interwoven}
If $\pi$ has a jump between $a$ and $b$ in $\interval{p}{q-1}$, then $\interval{a}{b-1} \interwoven \Pi$.

\item\label{item:overlapping-jumps}
If $\pi$ has a jump between $a$ and $b$ and between $c$ and $d$ in $\interval{p}{q-1}$ and $\interval{a}{b-1} \cap \interval{c}{d-1} \neq \emptyset$, then $(a,b) = (c,d)$.

\item\label{item:jump-between-r-or-s}
If $\pi$ has a jump between $a$ and $b$ in $\interval{p}{q-1}$, then $a = r$ or $b = s$.

\item\label{item:no-jump-at-p-q}
If $\pi(p) \in \{r, s\}$ and $\pi$ has a jump between $a$ and $b$ at $p$, then $(a,b) = (r,s)$.
If $\pi(q) \in \{r, s\}$ and $\pi$ has a jump between $a$ and $b$ at $q-1$, then $(a,b) = (r,s)$.
\end{enumerate}
\end{lemma}

\begin{proof}
\ref{item:jump-interwoven}
By Corollary~\ref{cor:pi-jumps}, $\interval{a}{b-1} \subseteq B$ for some $B \in \Pi$ or $\interval{a}{b-1} \interwoven \Pi$.
Suppose, to the contrary, that $\interval{a}{b-1} \subseteq B$ for some $B \in \Pi$; hence $a \sim_\Pi b-1$.
Let $i := \pi^{-1}(a+1)$.
If $\pi(t) = a$ and $\pi(t+1) = b$, then either $\subpermsimple{\pi}{i}(t-1) = a$ and $\subpermsimple{\pi}{i}(t) = b-1$ (if $i < t$) or $\subpermsimple{\pi}{i}(t) = a$ and $\subpermsimple{\pi}{i}(t+1) = b-1$ (if $i > t$).
If $\pi(t) = b$ and $\pi(t+1) = a$, then either $\subpermsimple{\pi}{i}(t-1) = b-1$ and $\subpermsimple{\pi}{i}(t) = a$ (if $i < t$) or $\subpermsimple{\pi}{i}(t) = b-1$ and $\subpermsimple{\pi}{i}(t+1) = a$ (if $i > t$).
In either case, $\subpermsimple{\pi}{i}$ maps points from distinct $\Pi$\hyp{}blocks to the same $\Pi$\hyp{}block.
Therefore, $\subpermsimple{\pi}{i} \notin \Aut \Pi$, so $\pi \notin \Compn[n+1]{\Aut \Pi}$, a contradiction.
We conclude that $\interval{a}{b-1} \interwoven \Pi$.

\ref{item:overlapping-jumps}
By Corollary~\ref{cor:pi-jumps}, either $\interval{a}{b-1} = \interval{c}{d-1}$ or $\interval{a}{b-1} \cup \interval{c}{d-1}$ is contained in a single $\Pi$\hyp{}block.
By part~\ref{item:jump-interwoven}, we have $\interval{a}{b-1} \interwoven \Pi$ and $\interval{c}{d-1} \interwoven \Pi$.
Therefore, $\interval{a}{b-1} \cup \interval{c}{d-1}$ is not contained in a single $\Pi$\hyp{}block.
It follows that $\interval{a}{b-1} = \interval{c}{d-1}$, that is, $(a,b) = (c,d)$.

\ref{item:jump-between-r-or-s}
Suppose, to the contrary, that $r < a$ and $b < s$.
By Lemma~\ref{lem:short-jump}, there exist $c, d \in \interval{r}{s}$ such that $(a, b) \neq (c, d)$, $\interval{a}{b-1} \cap \interval{c}{d-1} \neq \emptyset$ and $\pi$ has a jump between $c$ and $d$ in $\interval{p}{q-1}$.
It follows from part~\ref{item:overlapping-jumps} that $(a,b) = (c,d)$.
We have reached a contradiction.

\ref{item:no-jump-at-p-q}
Suppose, to the contrary, that $\pi(p) = r$ and $\pi(p+1) = b$ or $\pi(q) = r$ and $\pi(q-1) = b$ for some $b$ such that $r+2 \leq b < q$.
Then there must exist $c, d \in \interval{r}{s}$ such that $c < b < d$ and $\pi$ has a jump between $c$ and $d$ in $\interval{p}{q-1}$.
Since $\interval{r}{b-1} \cap \interval{c}{d-1} \neq \emptyset$, part~\ref{item:overlapping-jumps} implies that $(r,b) = (c,d)$.
We have reached a contradiction.
In a similar way, we reach a contradiction if we suppose that $\pi(p) = s$ and $\pi(p+1) = a$ or $\pi(q) = s$ and $\pi(q-1) = s$ for some $a$ such that $r < a \leq s-2$.
\end{proof}

\begin{lemma}
\label{lem:singleton-intervals}
Let $\Pi$ be a partition of $\nset{n}$ with no trivial blocks.
Assume that $\interval{p}{q} \subseteq \nset{n}$ is a nonempty maximal singleton run of $\maxintervals{\Pi}$.
Let $\pi \in \Compn[n+1]{\Aut \Pi}$.
Then one of the following conditions is satisfied.
\begin{enumerate}[label=\upshape{(\alph*)}]
\item\label{item:nojumps}
$\pi(i) = i$ for all $i \in \interval{p}{q+1}$ or $\pi(i) = \desc{n+1}(i)$ for all $i \in \interval{p}{q+1}$.

\item\label{item:jump-1q}
$p = 1$, there exists $\ell \in \interval{1}{q}$ such that $\interval{1}{\ell} \interwoven \Pi$, and
\begin{itemize}
\item $\pi(i) = \dja{n+1}{\ell}(i)$ for all $i \in \interval{1}{q+1}$, or
\item $\pi(i) = (\desc{n+1} \circ \dja{n+1}{\ell})(i)$ for all $i \in \interval{1}{q+1}$ and $\interval{n - \ell + 1}{n} \interwoven \Pi$.
\end{itemize}

\item\label{item:jump-pn+1}
$q = n+1$, there exists $\ell \in \interval{p}{n}$ such that $\interval{\ell}{n} \interwoven \Pi$, and
\begin{itemize}
\item $\pi(i) = \ajd{n+1}{n-\ell+1}(i)$ for all $i \in \interval{p}{n+1}$, or
\item $\pi(i) = (\desc{n+1} \circ \ajd{n+1}{n-\ell+1})(i)$ for all $i \in \interval{p}{n+1}$ and $\interval{1}{n - \ell + 1} \interwoven \Pi$.
\end{itemize}

\item\label{item:jump-1n+1}
$p = 1$, $q = n+1$, there exist $\ell, \ell' \in \nset{n}$ with $\ell < \ell'$ such that $\interval{1}{\ell} \interwoven \Pi$, $\interval{\ell'}{n} \interwoven \Pi$, and
\begin{itemize}
\item $\pi = \dja{n+1}{\ell} \circ \ajd{n+1}{n-\ell'+1}$, or
\item $\pi = \desc{n+1} \circ \dja{n+1}{\ell} \circ \ajd{n+1}{n-\ell'+1}$ and $\ell = n - \ell' + 1$.
\end{itemize}

\item\label{item:jump-cycle}
$p = 1$, $q = n+1$, $\interval{1}{n} \interwoven \Pi$, and $\pi = \natcycle{n+1}^j$ or $\pi = \desc{n+1} \circ \natcycle{n+1}^j$ for some $j \in \nset{n}$.
\end{enumerate}
\end{lemma}

\begin{proof}
Assume first that $p > 1$ and $q < n$.
From the assumption that $\interval{p}{q}$ is a maximal singleton run of $\maxintervals{\Pi}$, we can conclude using Lemmas~\ref{lem:AtkBea-L6-2} and \ref{lem:AtkBea-L6-ext} that either
\begin{enumerate}[label=\upshape{(\roman*)}]
\item $\pi(p) = p$, $\pi(q+1) = q+1$, $\pi(\interval{p+1}{q}) = \interval{p+1}{q}$, or
\item $\pi(p) = \desc{n+1}(p)$, $\pi(q+1) = \desc{n+1}(q+1)$, $\pi(\interval{p+1}{q}) = \desc{n+1}(\interval{p+1}{q})$.
\end{enumerate}
Suppose, to the contrary, that $\pi$ has a jump between $a$ and $b$ at some $t \in \interval{p}{q}$ ($a \leq b-2$).
Let us apply Lemma~\ref{lem:pi-jumps-extremes} with $r := p$ and $s := q+1$ when case (i) holds, or with $r := \desc{n+1}(q+1)$ and $s := \desc{n+1}(p)$ when case (ii) holds.
By Lemma~\ref{lem:pi-jumps-extremes}\ref{item:jump-between-r-or-s}, $a = r$ or $b = s$.
Since $\pi(p) = r$ and $\pi(q+1) = s$, this means that $\pi$ has a jump between $a$ and $b$ at $p$ or at $q$.
Then it follows from Lemma~\ref{lem:pi-jumps-extremes}\ref{item:no-jump-at-p-q} that $a = r$ and $b = s$.
This is clearly not possible, because $r$ and $s$ are nonconsecutive integers.
We have reached a contradiction, and we conclude that $\pi$ does not have any jumps in $\interval{p}{q}$, that is, condition \ref{item:nojumps} is satisfied.

Assume then that $p = 1$ and $q < n$.
As above, we can conclude using Lemmas~\ref{lem:AtkBea-L6-ext} and \ref{lem:AtkBea-L6-2} that either
\begin{enumerate}[label=\upshape{(\roman*)}]
\item $\pi(q+1) = q+1$, $\pi(\interval{1}{q}) = \interval{1}{q}$, or
\item $\pi(q+1) = \desc{n+1}(q+1)$, $\pi(\interval{1}{q}) = \desc{n+1}(\interval{1}{q})$.
\end{enumerate}
If $\pi$ does not have any jumps in $\interval{1}{q}$, then either $\pi(i) = i$ for all $i \in \interval{1}{q+1}$ or $\pi(i) = \desc{n+1}(i)$ for all $i \in \interval{1}{q+1}$, so condition \ref{item:nojumps} is satisfied. We may thus assume that $\pi$ has a jump between $a$ and $b$ at some $t \in \interval{1}{q}$.

If case (i) applies, then application of Lemma~\ref{lem:pi-jumps-extremes} with $r := 1$ and $s := q+1$ yields $a = 1$ or $b = q+1$; moreover, if $b = q+1$, then $a = 1$.
Thus $\pi$ has a jump between $1$ and $\ell + 1$ in $\interval{1}{q}$ for some $\ell \in \interval{1}{q}$, and this is the only jump of $\pi$ in the interval $\interval{1}{q}$. Consequently, $\pi(i) = \dja{n+1}{\ell}(i)$ for all $i \in \interval{1}{q+1}$ and $\interval{1}{\ell} \interwoven \Pi$, so condition \ref{item:jump-1q} is satisfied.

If case (ii) applies, then application of Lemma~\ref{lem:pi-jumps-extremes} with $r := \desc{n+1}(q+1)$ and $s := n+1$ yields $a = \desc{n+1}(q+1)$ or $b = n+1$; moreover, if $a = \desc{n+1}(q+1)$, then $b = n+1$ by Lemma~\ref{lem:pi-jumps-extremes}\ref{item:no-jump-at-p-q}.
We conclude that $\pi$ has a jump between $\desc{n+1}(\ell + 1)$ and $n+1$ in $\interval{1}{q}$ for some $\ell \in \interval{1}{q}$, and this is the only jump of $\pi$ in $\interval{1}{q}$. Consequently, $\pi(i) = \desc{n+1} \circ \dja{n+1}{\ell}(i)$ for all $i \in \interval{1}{q+1}$ and $\interval{\desc{n+1}(\ell+1)}{n} = \interval{n-\ell+1}{n} \interwoven \Pi$.

Consider the $n$\hyp{}pattern $\subpermsimple{\pi}{q+1}$ of $\pi$.
We have that $\subpermsimple{\pi}{q+1}(i) = \desc{n} \circ \dja{n}{\ell}(i) = n - \ell + i$ for all $i \in \interval{1}{\ell}$.
Since $\subpermsimple{\pi}{q+1} \in \Aut \Pi$, the preimage of every $\Pi$\hyp{}block under $\subpermsimple{\pi}{q+1}$ is a $\Pi$\hyp{}block.
In particular, for each $\Pi$\hyp{}block $B$ contained in the interval $\interval{n-\ell+1}{n}$ (which is a union of interwoven $\Pi$\hyp{}blocks), we have $\subpermsimple{\pi}{q+1}^{-1}(B) = B - (n - \ell)$. It is thus easy to see that $\interval{1}{\ell} \interwoven \Pi$.
We conclude that condition \ref{item:jump-1q} is satisfied.

Assume then that $p > 1$ and $q = n$.
A similar proof as in the previous case shows that condition \ref{item:nojumps} or \ref{item:jump-pn+1} is satisfied.

Finally, assume that $p = 1$ and $q = n$.
If $\pi$ does not have any jumps, then clearly $\pi = \asc{n+1}$ or $\pi = \desc{n+1}$, so condition \ref{item:nojumps} is satisfied.
If $\pi$ has exactly one jump, say, between $a$ and $b$ ($a \leq b-2$) and $(a,b) \neq (1, n+1)$, then the situation is similar to the ones considered above, and we can conclude that either condition \ref{item:jump-1q} or \ref{item:jump-pn+1} is satisfied.
If $\pi$ has a jump between $1$ and $n+1$, then $\pi$ cannot have any other jumps by Corollary~\ref{cor:pi-jumps}. In this case, it is easy to see that $\pi = \natcycle{n+1}^j$ or $\pi = \desc{n+1} \circ \natcycle{n+1}^j$ for some $j \in \{1, \dots, n\}$. Moreover, $\interval{1}{n} \interwoven \Pi$, so condition \ref{item:jump-cycle} is satisfied.
If $\pi$ has exactly two jumps, then by Lemma~\ref{lem:pi-jumps-extremes} and Corollary~\ref{cor:pi-jumps}, the jumps must be between $1$ and $a$ and $b$ and $n+1$ for some $a, b \in \nset{n}$ with $a \leq b$.
It is easy to see that then condition \ref{item:jump-1n+1} is satisfied.
It follows from Lemma~\ref{lem:pi-jumps-extremes} and Corollary~\ref{cor:pi-jumps} that $\pi$ has at most two jumps, so we have exhausted all possibilities.
\end{proof}

\begin{theorem}
\label{thm:CompAutPi}
Let $\Pi$ be a partition of $\nset{n}$ with no trivial blocks.
Then
\[
\Compn[n+1]{\Aut \Pi} =
\begin{cases}
\gensg{\symm{\Pi'}, E_\Pi}, & \text{if $\desc{n} \notin \Aut \Pi$,} \\
\gensg{\symm{\Pi'}, E_\Pi, \desc{n+1}}, & \text{if $\desc{n} \in \Aut \Pi$,}
\end{cases}
\]
where $E_\Pi$ is the set of permutations satisfying the following conditions:
\begin{itemize}
\item If $\interval{1}{\ell} \interwoven \Pi$ for some $\ell$ with $1 < \ell < n$, then $\dja{n+1}{\ell} \in E_\Pi$.
\item If $\interval{m}{n} \interwoven \Pi$ for some $m$ with $1 < m < n$, then $\ajd{n+1}{n-m+1} \in E_\Pi$.
\item If $\interval{1}{n} \interwoven \Pi$, then $\natcycle{n+1} \in E_\Pi$.
\item $E_\Pi$ does not contain any other elements than the ones implied by the previous conditions.
\end{itemize}
\end{theorem}

\begin{proof}
To prove that the group $\gensg{\symm{\Pi'}, E_\Pi}$ or $\gensg{\symm{\Pi'}, E_\Pi, \desc{n+1}}$, as the case may be, is included in $\Compn[n+1]{\Aut \Pi}$, it suffices to show that its generators are included. If $\desc{n} \in \Aut \Pi$, then $\desc{n+1} \in \Compn[n+1]{\Aut \Pi}$ by Lemma~\ref{lem:descending}.
Since $\symm{\Pi} \leq \Aut \Pi$, we have $\Compn[n+1]{\symm{\Pi}} \leq \Compn[n+1]{\Aut \Pi}$.
By Proposition~\ref{prop:CompSPi}, $\symm{\Pi'} \leq \Compn[n+1]{\symm{\Pi}}$.
Hence, $\symm{\Pi'} \subseteq \Compn[n+1]{\Aut \Pi}$.

Concerning the elements of $E_\Pi$, it is easy to verify that
\begin{itemize}
\item $\Patl[n]{\dja{n+1}{\ell}} = \{\dja{n}{\ell}, \dja{n}{\ell-1}\}$ and this is a subset of $\Aut \Pi$ if $\interval{1}{\ell} \interwoven \Pi$,
\item $\Patl[n]{\ajd{n+1}{\ell}} = \{\ajd{n}{\ell}, \ajd{n}{\ell-1}\}$ and this is a subset of $\Aut \Pi$ if $\interval{n - \ell + 1}{n} \interwoven \Pi$,
\item $\Patl[n]{\natcycle{n+1}} = \{\natcycle{n}, \asc{n}\}$ and this is a subset of $\Aut \Pi$ if $\interval{1}{n} \interwoven \Pi$.
\end{itemize}

We conclude that if $\desc{n} \notin \Aut \Pi$ then $\gensg{\symm{\Pi'}, E_\Pi} \subseteq \Compn[n+1]{\Aut \Pi}$, and if $\desc{n} \in \Aut \Pi$ then $\gensg{\symm{\Pi'}, E_\Pi, \desc{n+1}} \subseteq \Compn[n+1]{\Aut \Pi}$.

In order to prove the converse inclusion, let $\pi \in \Compn[n+1]{\Aut \Pi}$.
If $\maxintervals{\Pi}$ has no nontrivial blocks, then, by Lemma~\ref{lem:singleton-intervals}, one of the following conditions is satisfied:
\begin{itemize}
\item $\pi = \asc{n+1}$ or $\pi = \desc{n+1}$,
\item there exists $\ell \in \nset{n}$ such that $\interval{1}{\ell} \interwoven \Pi$ and $\pi = \dja{n+1}{\ell}$ or $\pi = \desc{n+1} \circ \dja{n+1}{\ell}$,
\item there exists $m \in \nset{n}$ such that $\interval{m}{n} \interwoven \Pi$ and $\pi = \ajd{n+1}{n-m+1}$ or $\pi = \desc{n+1} \circ \ajd{n+1}{n-m+1}$,
\item there exist $\ell, m \in \nset{n}$ such that $\interval{1}{\ell} \interwoven \Pi$, $\interval{m}{n} \interwoven \Pi$, and $\pi = \dja{n+1}{\ell} \circ \ajd{n+1}{n-m+1}$ or $\pi = \desc{n+1} \circ \dja{n+1}{\ell} \circ \ajd{n+1}{n-m+1}$,
\item $\interval{1}{n} \interwoven \Pi$ and $\pi = \natcycle{n+1}^j$ or $\pi = \desc{n+1} \circ \natcycle{n+1}^j$ for some $j \in \nset{n}$.
\end{itemize}
In each case, $\pi$ is in $\gensg{\symm{\Pi'},E_\Pi}$ or $\gensg{\symm{\Pi'},E_\Pi,\desc{n+1}}$, as appropriate.

Assume then that $\maxintervals{\Pi}$ has nontrivial blocks.
Then it follows from Lemma~\ref{lem:AtkBea-L6-ext} and \ref{lem:AtkBea-L6-2} that either statement \ref{stat:A} below holds for every nontrivial block $B$ of $\maxintervals{\Pi}$, or statement \ref{stat:B} below holds for every nontrivial block $B$ of $\maxintervals{\Pi}$.
\begin{enumerate}[label=\upshape{(\alph*)}]
\item\label{stat:A} Let $t := \min B$, $u := \max B + 1$.
\begin{itemize}
\item If $1, n \notin B$ then $\pi(t) = t$, $\pi(u) = u$, $\pi(B \setminus \{t\}) = B \setminus \{t\}$.
\item If $1 \in B$, $n \notin B$ then $\pi(u) = u$, $\pi(B) = B$.
\item If $1 \notin B$, $n \in B$ then $\pi(t) = t$, $\pi(B \setminus \{t\} \cup \{n+1\}) = B \setminus \{t\} \cup \{n+1\}$.
\end{itemize}

\item\label{stat:B} Let $t := \min B$, $u := \max B + 1$.
\begin{itemize}
\item If $1, n \notin B$ then $\pi(t) = \desc{n+1}(t)$, $\pi(u) = \desc{n+1}(u)$, $\pi(B \setminus \{t\}) = \desc{n+1}(B \setminus \{t\})$.
\item If $1 \in B$, $n \notin B$ then $\pi(u) = \desc{n+1}(u)$, $\pi(B) = \desc{n+1}(B)$.
\item If $1 \notin B$, $n \in B$ then $\pi(t) = \desc{n+1}(t)$, $\pi(B \setminus \{t\} \cup \{n+1\}) = \desc{n+1}(B \setminus \{t\} \cup \{n+1\})$.
\end{itemize}
\end{enumerate}

Writing
\[
T := \bigcup \{ B \cup (B+1) \mid \text{$B \in \maxintervals{\Pi}$ and $\card{B} > 1$}\},
\]
we can rephrase the above statements \ref{stat:A} and \ref{stat:B} as follows:
\begin{enumerate}
\item[\ref{stat:A}] The restriction of $\pi$ to $T$ coincides with the restriction of $\tau$ to $T$ (i.e., $\pi|_T = \tau|_T$) for some $\tau \in \symm{\Pi'}$.
\item[\ref{stat:B}] The restriction of $\pi$ to $T$ coincides with the restriction of $\desc{n+1} \circ \tau$ to $T$ (i.e., $\pi|_T = (\desc{n+1} \circ \tau)|_T$) for some $\tau \in \symm{\Pi'}$.
\end{enumerate}

Note that for every $i \in \nset{n+1} \setminus T$, the singleton $\{i\}$ is a $\Pi'$\hyp{}block. Hence, for every $\tau \in \symm{\Pi'}$, the restriction of $\tau$ to $\nset{n+1} \setminus T$ is the identity map on $\nset{n+1} \setminus T$ (i.e., $\tau|_{\nset{n+1} \setminus T} = \asc{n+1}|_{\nset{n+1} \setminus T}$).
Note also that if $\interval{1}{\ell} \interwoven \Pi$, then $\interval{1}{\ell} \cap T = \emptyset$.
Similarly, if $\interval{n-m+1}{n} \interwoven \Pi$, then $\interval{n-m+2}{n+1} \cap T = \emptyset$.

Based on this, we can deduce from Lemma~\ref{lem:singleton-intervals} that 
if statement \ref{stat:A} is true, then one of the following holds:
\begin{itemize}
\item $\pi|_{\nset{n+1} \setminus T} = \asc{n+1}|_{\nset{n+1} \setminus T}$,
\item $\pi|_{\nset{n+1} \setminus T} = \dja{n+1}{\ell}|_{\nset{n+1} \setminus T}$ and $\interval{1}{\ell} \interwoven \Pi$,
\item $\pi|_{\nset{n+1} \setminus T} = \ajd{n+1}{\ell}|_{\nset{n+1} \setminus T}$ and $\interval{n-\ell+1}{n} \interwoven \Pi$,
\item $\pi|_{\nset{n+1} \setminus T} = (\dja{n+1}{\ell} \circ \ajd{n+1}{m})|_{\nset{n+1} \setminus T}$ and $\interval{1}{\ell} \interwoven \Pi$ and $\interval{n-m+1}{n} \interwoven \Pi$.
\end{itemize}
Hence $\pi$ is of the form
$\tau$, $\dja{n+1}{\ell} \circ \tau$, $\ajd{n+1}{\ell} \circ \tau$, or $\dja{n+1}{\ell} \circ \ajd{n+1}{m} \circ \tau$ for some $\tau \in \symm{\Pi'}$.
Moreover, $E_\Pi$ contains $\dja{n+1}{\ell}$ or $\ajd{n+1}{\ell}$ or both or neither, as appropriate, so that we can conclude that $\pi \in \gensg{\symm{\Pi'}, E_\Pi}$.

Similarly, if statement \ref{stat:B} is true, then one of the following holds:
\begin{itemize}
\item $\pi|_{\nset{n+1} \setminus T} = \desc{n+1}|_{\nset{n+1} \setminus T}$,
\item $\pi|_{\nset{n+1} \setminus T} = (\desc{n+1} \circ \dja{n+1}{\ell})|_{\nset{n+1} \setminus T}$ and $\interval{1}{\ell} \interwoven \Pi$,
\item $\pi|_{\nset{n+1} \setminus T} = (\desc{n+1} \circ \ajd{n+1}{\ell})|_{\nset{n+1} \setminus T}$ and $\interval{n-\ell+1}{n} \interwoven \Pi$,
\item $\pi|_{\nset{n+1} \setminus T} = (\desc{n+1} \circ \dja{n+1}{\ell} \circ \ajd{n+1}{m})|_{\nset{n+1} \setminus T}$, $\interval{1}{\ell} \interwoven \Pi$, and $\interval{n-m+1}{n} \interwoven \Pi$.
\end{itemize}
Hence $\pi$ is of one of the forms
$\desc{n+1} \circ \tau$, $\desc{n+1} \circ \dja{n+1}{\ell} \circ \tau$, $\desc{n+1} \circ \ajd{n+1}{\ell} \circ \tau$, or $\desc{n+1} \circ \dja{n+1}{\ell} \circ \ajd{n+1}{m} \circ \tau$ for some $\tau \in \symm{\Pi'}$.
Moreover, $E_\Pi$ contains $\dja{n+1}{\ell}$ or $\ajd{n+1}{\ell}$ or both or neither, as appropriate, so that we can conclude that $\pi \in \gensg{\symm{\Pi'}, E_\Pi, \desc{n+1}}$.
We still need to verify that this case occurs only if $\desc{n} \in \Aut \Pi$.
We have shown above that $\gensg{\symm{\Pi'}, E_\Pi} \leq \Aut \Pi$.
Consequently, if $\pi = \desc{n+1} \circ \tau$ for some $\tau \in \symm{\Pi'}$, then $\tau^{-1} \in \symm{\Pi'} \leq \Aut \Pi$, so $\desc{n+1} = \pi \circ \tau^{-1} \in \Aut \Pi$.
Similarly, if $\pi$ is of the form $\desc{n+1} \circ \dja{n+1}{\ell} \circ \tau$, $\desc{n+1} \circ \ajd{n+1}{\ell} \circ \tau$, or $\desc{n+1} \circ \dja{n+1}{\ell} \circ \ajd{n+1}{m} \circ \tau$, then $(\dja{n+1}{\ell})^{-1}$ or $(\ajd{n+1}{\ell})^{-1}$ or both or neither, as appropriate, are in $\Aut \Pi$, and we can conclude in the same way that $\desc{n} \in \Aut \Pi$.

This establishes the inclusion $\Compn[n+1]{\Aut \Pi} \subseteq \gensg{\symm{\Pi'}, E_\Pi}$ if $\desc{n} \notin \Aut \Pi$ and $\Compn[n+1]{\Aut \Pi} \subseteq \gensg{\symm{\Pi'}, E_\Pi, \desc{n+1}}$ if $\desc{n} \in \Aut \Pi$.
\end{proof}

Theorem~\ref{thm:CompAutPi} describes $\Compn[n+1]{\Aut \Pi}$.
We are now going to find out how the level sequence of $\Aut \Pi$ continues.
Note that $\natcycle{n} \in \Aut \Pi$ if and only if $\interval{1}{n} \interwoven \Pi$ (see Lemma~\ref{lem:natcycle-1ninterwoven}).
In this case we also have $\desc{n} \in \Aut \Pi$, so $\dihed{n} \leq \Aut \Pi$.
Furthermore, if $n > 2$, then $\Aut \Pi \notin \{\alt{n}, \symm{n}\}$, so Theorem~\ref{thm:Comp-cycle} implies that $\Compn[n+1]{\Aut \Pi} = \dihed{n+1}$.
Moreover, $\Pi'$ is the partition of $\nset{n+1}$ into trivial blocks, so $\symm{\Pi'}$ is the trivial group, and Theorem~\ref{thm:CompAutPi} yields $\Compn[n+1]{\Aut \Pi} = \gensg{\natcycle{n+1}, \desc{n+1}} = \dihed{n+1}$, which is consistent with what we already know about groups containing the natural cycle (see Theorem~\ref{thm:Comp-cycle}).
Then we can apply Theorem~\ref{thm:Comp-cycle} repeatedly to conclude that $\Compn[n+i]{\Aut \Pi} = \dihed{n+i}$ for every $i \geq 1$.
In all other cases, we apply our results on intransitive groups.

\begin{lemma}
\label{lem:natcycle-1ninterwoven}
Let $\Pi$ be a nontrivial partition of $\nset{n}$ with no trivial blocks.
Then $\natcycle{n} \in \Aut \Pi$ if and only if $\interval{1}{n} \interwoven \Pi$.
\end{lemma}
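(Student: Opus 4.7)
The plan is to prove both directions directly from the definitions, with the backward implication being an immediate check and the forward implication requiring a short orbit analysis of the action of $\natcycle{n}$ on $\Pi$.

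For the backward direction, assume $\interval{1}{n} \interwoven \Pi$, so that $n = k\ell$ with $k > 1$ and the $\Pi$-blocks are exactly $B_i := \{1 + i + mk \mid 0 \leq m < \ell\}$ for $0 \leq i < k$ (indices taken modulo $n$). I would verify directly that $\natcycle{n}(B_i) = B_{i+1 \bmod k}$, since $\natcycle{n}$ adds $1$ modulo $n$ to every element. Hence $\natcycle{n}$ permutes the $\Pi$-blocks and belongs to $\Aut \Pi$.

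For the forward direction, assume $\natcycle{n} \in \Aut \Pi$, so $\natcycle{n}$ induces a permutation of $\Pi$. The key observation is that the union of any orbit of this induced action is a nonempty subset of $\nset{n}$ closed under $\natcycle{n}$; since $\natcycle{n}$ is a single $n$-cycle, such a union must equal $\nset{n}$. Therefore $\natcycle{n}$ acts \emph{transitively} on $\Pi$. In particular, all $\Pi$-blocks have the same cardinality $\ell$, and if $k := \card{\Pi}$ then $n = k\ell$. Fixing one block $B_0$ and writing $B_i := \natcycle{n}^i(B_0)$ for $0 \leq i < k$, the group element $\natcycle{n}^k$ fixes $B_0$ setwise, so for every $a \in B_0$ the elements $a, a+k, a+2k, \dots, a+(\ell-1)k$ (mod $n$) all lie in $B_0$, and these are $\ell$ distinct elements (since $k\ell = n$) so they exhaust $B_0$. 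Writing $a = 1 + i + m_0 k$ with $0 \leq i < k$ then gives $B_0 = \{1 + i + mk \mid 0 \leq m < \ell\}$, and applying $\natcycle{n}$ repeatedly yields that every $B_j$ has the shape required by the definition of $\interval{1}{n} \interwoven \Pi$.

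Finally, I would check the side conditions $k > 1$ and $\ell \geq 1$ required by the definition: $\ell \geq 1$ is automatic, and $k > 1$ follows from the assumption that $\Pi$ is nontrivial (for $k = 1$ would force the unique block to be $\nset{n}$). The nontriviality of blocks (no trivial blocks) is not actually needed for this lemma, but is consistent with the standing hypothesis on $\Pi$. The only mildly delicate step is the transitivity argument, which hinges on the fact that $\natcycle{n}$ has a single orbit on $\nset{n}$; everything else is routine modular arithmetic.
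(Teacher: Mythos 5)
Your proof is correct and follows essentially the same route as the paper: the backward direction is a direct check that $\natcycle{n}$ cyclically permutes the blocks, and the forward direction uses transitivity of the induced action of $\natcycle{n}$ on $\Pi$ (the paper phrases this via the list $B, \natcycle{n}(B), \dots, \natcycle{n}^{n-1}(B)$ exhausting all blocks) to get equal block sizes and then the interwoven form. Your explicit $\natcycle{n}^{k}$-stabilizer computation merely fills in the step the paper labels ``easy to verify.''
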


\begin{proof}
Under our assumption that $\Pi$ has no trivial blocks, the condition $\interval{1}{n} \interwoven \Pi$ is equivalent to the following:
there exist numbers $k \geq 2$ and $\ell \geq 2$ such that $k \ell = n$ and the $\Pi$\hyp{}blocks are precisely the sets
\begin{equation}
\eqclass{i}{\Pi} = \{i + m k \mid 0 \leq m < \ell\} \qquad (1 \leq i \leq k).
\label{eq:1n-blocks}
\end{equation}
This condition clearly implies $\natcycle{n} \in \Aut \Pi$, since $\natcycle{n}(\eqclass{i}{\Pi}) = \eqclass{(i+1)}{\Pi}$ for each $i \in \nset{k-1}$ and $\natcycle{n}(\eqclass{k}{\Pi}) = \eqclass{1}{\Pi}$.

Assume then that $\natcycle{n} \in \Aut \Pi$.
This implies that for every $\Pi$\hyp{}block $B$, we have $\natcycle{n}(B) \in \Pi$.
Fix a $\Pi$\hyp{}block $B$.
Then all of the following are $\Pi$\hyp{}blocks:
\[
B, \natcycle{n}(B), \natcycle{n}^2(B), \natcycle{n}^3(B), \dots, \natcycle{n}^{n-1}(B).
\]
Since $\natcycle{n}$ is an $n$\hyp{}cycle, the above list contains all $\Pi$\hyp{}blocks, possibly with repetitions.
From this we can conclude that the $\Pi$\hyp{}blocks have equal size, say $\ell$.
Since $\Pi$ has no trivial blocks, $\ell \geq 2$.
Let $k$ be the number of $\Pi$\hyp{}blocks.
Since $\Pi$ is nontrivial, $k \geq 2$.
Then clearly $k \ell = n$ holds.
It is then easy to verify that the $\Pi$\hyp{}blocks must be as in \eqref{eq:1n-blocks}.
This means that $\interval{1}{n} \interwoven \Pi$.
\end{proof}

\begin{theorem}
\label{thm:CompAutPi-2}
Let $\Pi$ be a nontrivial partition of $\nset{n}$ with no trivial blocks.
Then for $i \geq 2$, we have
\[
\Compn[n+i]{\Aut \Pi} =
\begin{cases}
\symm{\Pi^{(i)}}, & \text{if $\desc{n} \notin \Aut \Pi$ and $\natcycle{n} \notin \Aut \Pi$,} \\
\gensg{\symm{\Pi^{(i)}}, \desc{n+1}}, & \text{if $\desc{n} \in \Aut \Pi$ and $\natcycle{n} \notin \Aut \Pi$,} \\
\dihed{n+i}, & \text{if $\natcycle{n} \in \Aut \Pi$.}
\end{cases}
\]
\end{theorem}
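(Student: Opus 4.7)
The plan is to split the argument into three cases according to the trichotomy in the theorem. Case 3 (where $\natcycle{n} \in \Aut \Pi$) is almost handled by the paragraph preceding the theorem, which establishes $\Compn[n+1]{\Aut \Pi} = \dihed{n+1}$. From there I would note that $\dihed{n+1}$ contains $\natcycle{n+1}$ and is neither $\alt{n+1}$ nor $\symm{n+1}$ (the hypotheses on $\Pi$ force $n \geq 4$, so no exceptional small degrees arise), apply Theorem~\ref{thm:Comp-cycle}(i) to obtain $\Compn[n+2]{\dihed{n+1}} = \dihed{n+2}$, and iterate to reach $\Compn[n+i]{\Aut \Pi} = \dihed{n+i}$ for every $i \geq 1$.

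Cases 1 and 2 share the assumption $\natcycle{n} \notin \Aut \Pi$. The lower bound in each follows quickly: since $\symm{\Pi} \leq \Aut \Pi$, monotonicity together with Theorem~\ref{thm:CompSPi-general} gives $\symm{\Pi^{(i)}} \leq \Compn[n+i]{\Aut \Pi}$, and in case 2 the assumption $\desc{n} \in \Aut \Pi$ combined with Lemma~\ref{lem:descending} supplies $\desc{n+i}$ as an additional element of $\Compn[n+i]{\Aut \Pi}$. For the upper bound I would invoke Lemma~\ref{lem:Comp-Pat-transitive} to write $\Compn[n+i]{\Aut \Pi} = \Compn[n+i]{\Compn[n+1]{\Aut \Pi}}$, then substitute the explicit description $\Compn[n+1]{\Aut \Pi} = \gensg{\symm{\Pi'}, E_\Pi}$ (augmented by $\desc{n+1}$ in case 2) from Theorem~\ref{thm:CompAutPi}. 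By Lemma~\ref{lem:natcycle-1ninterwoven}, the hypothesis $\natcycle{n} \notin \Aut \Pi$ excludes $\interval{1}{n} \interwoven \Pi$, so $\natcycle{n+1} \notin E_\Pi$; by Lemma~\ref{lem:interwoven-overlap}, the two involutions $\dja{n+1}{\ell}$ and $\ajd{n+1}{n-m+1}$ that may lie in $E_\Pi$ act on disjoint boundary intervals.

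The crux is the base case $i = 2$: I would like to show $\Compn[n+2]{\gensg{\symm{\Pi'}, E_\Pi}} = \symm{\Pi''}$, together with its $\desc{n+2}$-augmented counterpart in case 2. My approach is to let $\Gamma$ denote the orbit partition of $\gensg{\symm{\Pi'}, E_\Pi}$, observe the inclusion $\gensg{\symm{\Pi'}, E_\Pi} \leq \symm{\Gamma}$, and apply Theorem~\ref{thm:CompSPi} to $\symm{\Gamma}$, reducing the problem to the structural identity $\Gamma' = \Pi''$. The fact that $\Pi$ has no trivial blocks forces any interwoven interval $\interval{1}{\ell}$ to satisfy $\ell \geq 4$, which in turn guarantees that the first (and, symmetrically, the last) block of $\maxintervals{\Gamma}$ coincides with that of $\Pi'$; in the interior, any additional merging in $\maxintervals{\Gamma}$ produces only blocks of size at most $2$ (the central pairs of the involutions in $E_\Pi$), which the shift-and-meet construction defining $\Gamma'$ refines back to singletons, matching $\Pi''$. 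Once this base case is settled, for $i \geq 3$ I would conclude by applying Theorem~\ref{thm:CompSPi-general} (or, for case 2, Theorem~\ref{thm:CompSPi-more}(ii), using that $\Pi''$ inherits symmetry under $\desc{n+2}$ from $\Pi = \desc{n}(\Pi)$ via the construction) to the already-determined group $\Compn[n+2]{\Aut \Pi}$. The main obstacle will be verifying the identity $\Gamma' = \Pi''$ rigorously, which requires a careful case analysis distinguishing boundary from interior blocks and tracking how the pairings introduced by $E_\Pi$ interact with the existing block structure of $\Pi'$.
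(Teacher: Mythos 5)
Your strategy is correct, and for the crux --- the base case $i=2$ when $\natcycle{n} \notin \Aut \Pi$ and $E_\Pi \neq \emptyset$ --- it takes a genuinely different route from the paper. The paper argues bare-handed: every element of $\gensg{\symm{\Pi'}, \dja{n+1}{\ell}}$ has the rigid form $\asc{\ell} \oplus 1 \oplus \tau$ or $\desc{\ell} \oplus 1 \oplus \tau$, so a hypothetical $\pi \in \Compn[n+2]{\gensg{\symm{\Pi'}, \dja{n+1}{\ell}}} \setminus \symm{\Pi^{(2)}}$ must have an $(n+1)$\hyp{}pattern of the second form, and a case analysis on the position of the deleted point yields a contradiction. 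You instead pass to the orbit partition $\Gamma$ of $\Compn[n+1]{\Aut \Pi}$, use Theorem~\ref{thm:CompSPi} for the upper bound $\Compn[n+2]{\Aut \Pi} \leq \Compn[n+2]{\symm{\Gamma}}$, and squeeze against the lower bound $\symm{\Pi^{(2)}}$ via the identity $\Gamma' = \Pi^{(2)}$. This is sound: the orbits contributed by $\dja{n+1}{\ell}$, $\ajd{n+1}{n-m+1}$, and (in the $\desc{n}$\hyp{}case) $\desc{n+1}$ are pairs or quadruples such as $\{j, \ell+1-j, n+2-j, n+1-\ell+j\}$, whose only interval constituents of size greater than $1$ are central pairs $\{a, a+1\}$ strictly between the extremal blocks (disjointness of the two interwoven ends, which forces $n \geq 2\ell$, rules out accidental adjacencies between the two reversals and with the $\Delta_{n+1}$ middle block), and the prime construction splits such pairs back into singletons exactly as in Lemma~\ref{lem:middle-block}; the extremal blocks of $\maxintervals{\Gamma}$ agree with those of $\maxintervals{\Pi'}$ because $\ell \geq 4$. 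Your route buys modularity --- Theorem~\ref{thm:CompSPi} absorbs all the pattern\hyp{}theoretic work and only a partition identity remains --- at the price that this identity carries the real content and still needs the careful boundary/interior case analysis you flag; the paper avoids the identity but pays with the explicit pattern chase. Two small points to nail down: in case 1, either check $\desc{n+1} \notin \symm{\Gamma}$ (true, since an orbit such as $\{1, \ell\}$ is not $\desc{n+1}$\hyp{}invariant) or add the one\hyp{}line coset argument that a subgroup of $\gensg{\symm{\Pi^{(2)}}, \desc{n+2}}$ containing $\symm{\Pi^{(2)}}$ but not $\desc{n+2}$ equals $\symm{\Pi^{(2)}}$; and in case 2 the needed invariance $\Pi^{(2)} = \desc{n+2}(\Pi^{(2)})$ follows from Lemma~\ref{lem:Pi-PiDeltan} applied twice, as you indicate.
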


\begin{proof}
Consider first the case that $\natcycle{n} \in \Aut \Pi$.
Then $\interval{1}{n} \interwoven \Pi$ by Lemma~\ref{lem:natcycle-1ninterwoven}.
Then we clearly have $\desc{n} \in \Aut \Pi$ as well.
Consequently, $\dihed{n} \leq \Aut \Pi$.
Since $\Aut \Pi \notin \{\symm{n}, \alt{n}\}$, Theorem~\ref{thm:Comp-cycle} implies $\Compn[n+1]{\Aut \Pi} = \dihed{n+1}$.
An easy inductive argument using Theorem~\ref{thm:Comp-cycle} then shows that $\Compn[n+i]{\Aut \Pi} = \dihed{n+i}$ for all $i \geq 1$.

Consider then the case that $\natcycle{n} \notin \Aut \Pi$, i.e., $\interval{1}{n} \notinterwoven \Pi$.
If there is no $\ell$ with $1 < \ell < n$ such that $\interval{1}{\ell} \interwoven \Pi$ and no $m$ with $1 < m < n$ such that $\interval{m}{n} \interwoven \Pi$, then Theorem~\ref{thm:CompAutPi} gives $\Compn[n+1]{\Aut \Pi} = \symm{\Pi'}$ if $\desc{n} \notin \Aut \Pi$ and $\Compn[n+1]{\Aut \Pi} = \gensg{\symm{\Pi'}, \desc{n+1}}$ if $\desc{n} \in \Aut \Pi$.
The result then follows immediately from Theorem~\ref{thm:CompSPi-general}.

Assume then that $\interval{1}{\ell} \interwoven \Pi$ for some $\ell$ with $1 < \ell < n$ but there is no $m$ with $1 < m < n$ such that $\interval{m}{n} \interwoven \Pi$.
Since $\ell$ must be a composite number, we have $\ell \geq 4$.
In this case clearly $\Pi \neq \desc{n}(\Pi)$, so $\desc{n} \notin \Aut \Pi$, and
Theorem~\ref{thm:CompAutPi} gives
$\Compn[n+1]{\Aut \Pi} = \gensg{\symm{\Pi'}, \dja{n+1}{\ell}}$.
Note that the singletons $\{1\}, \{2\}, \dots, \{\ell\}, \{\ell + 1\}$ are $\Pi'$\hyp{}blocks, and that any permutation in $\gensg{\symm{\Pi'}, \dja{n+1}{\ell}}$ is of the form
$\asc{\ell} \oplus 1 \oplus \tau$ or $\desc{\ell} \oplus 1 \oplus \tau$
for some $\tau \in \symm{n-\ell}$.
In particular, every permutation $\sigma \in \gensg{\symm{\Pi'}, \dja{n+1}{\ell}}$ satisfies $\sigma(\ell + 1) = \ell + 1$ and $\sigma(\nset{\ell}) = \nset{\ell}$.

By the monotonicity of $\Compn[n+2]{}$ and Proposition~\ref{prop:CompSPi}, we have
\[
\symm{\Pi^{(2)}} = \Compn[n+2]{\symm{\Pi'}} \subseteq \Compn[n+2]{\gensg{\symm{\Pi'}, \dja{n+1}{\ell}}}.
\]
Suppose, to the contrary, that the above inclusion is proper, i.e., there exists $\pi \in (\Compn[n+2]{\gensg{\symm{\Pi'}, \dja{n+1}{\ell}}}) \setminus \symm{\Pi^{(2)}}$.
Since $\Compn[n+2]{\symm{\Pi'}} = \symm{\Pi^{(2)}}$, the permutation $\pi$ has an $(n+1)$\hyp{}pattern in $\gensg{\symm{\Pi'}, \dja{n+1}{\ell}} \setminus \symm{\Pi'}$; say $\subpermsimple{\pi}{i} \in \gensg{\symm{\Pi'}, \dja{n+1}{\ell}} \setminus \symm{\Pi'}$.

If $i \leq \ell$, then $\pi(\ell + 1) \in \{1, 2\}$; hence $\subpermsimple{\pi}{\ell + 2}(\ell + 1) \leq 2 < \ell + 1$, a contradiction.

If $i = \ell + 1$, then we distinguish between three cases according to the value of $\pi$ at $\ell + 1$.
If $\pi(\ell + 1) < \ell + 1$, then we must have $\pi(\ell + 2) = \ell + 2$; hence $\subpermsimple{\pi}{\ell + 2}(\ell + 1) < \ell + 1$, a contradiction.
If $\pi(\ell + 1) = \ell + 1$, then we must have $\pi(\ell + 2) = \ell + 2$; hence $\subpermsimple{\pi}{1}(\ell + 1) = \ell < \ell + 1$, a contradiction.
If $\pi(\ell + 1) > \ell + 1$, then we must have $\pi(\ell + 2) = \ell + 1$; hence $\subpermsimple{\pi}{1}(\ell + 1) = \pi(\ell + 1) - 1 > \ell$, a contradiction.

Finally, if $i > \ell + 1$, then we distinguish between two cases according to the value of $\pi$ at $i$.
If $\pi(i) \geq \ell + 2$, then $\subpermsimple{\pi}{1} = \desc{\ell - 1} \oplus 1 \oplus \tau'$ for some $\tau' \in \symm{n - \ell - 1}$, a contradiction.
If $\pi(i) \leq \ell + 1$, then $\pi(\ell + 1) = \ell + 2$; hence $\subpermsimple{\pi}{1}(\ell) = \ell + 1$, a contradiction.

This completes the proof that $\Compn[n+2]{\Aut \Pi} = \symm{\Pi^{(2)}}$ when $\interval{1}{\ell} \interwoven \Pi$ for some $\ell$ with $1 < \ell < n$ but there is no $m$ with $1 < m < n$ such that $\interval{m}{n} \interwoven \Pi$.
An entirely analogous argument shows that $\Compn[n+2]{\Aut \Pi} = \symm{\Pi^{(2)}}$ also in the case where $\interval{m}{n} \interwoven \Pi$ for some $m$ with $1 < m < n$ but there is no $\ell$ with $1 < \ell < n$ such that $\interval{1}{\ell} \interwoven \Pi$.
A similar argument, with a few obvious modifications to take care of the descending permutation, also works in the case where $\interval{1}{\ell} \interwoven \Pi$ and $\interval{m}{n} \interwoven \Pi$, and we obtain
\[
\Compn[n+2]{\Aut \Pi} = 
\begin{cases}
\symm{\Pi^{(2)}}, & \text{if $\desc{n} \notin \Aut \Pi$,} \\
\gensg{\Pi^{(2)}, \desc{n+2}}, & \text{if $\desc{n} \in \Aut \Pi$.}
\end{cases}
\]
The result then follows immediately from Theorem~\ref{thm:CompSPi-general}.
\end{proof}

Application of Theorem~\ref{thm:general-intransitive} gives us upper bounds for $\Compn[n+i]{G}$ for $i \geq 2$, when $G$ is an arbitrary imprimitive group.
We can also determine which steady sequence is eventually reached by the level sequence of $G$ and how fast this happens.

\begin{theorem}
\label{thm:general-imprimitive}
Let $G \leq \symm{n}$ be an imprimitive group with $\natcycle{n} \notin G$, and let $\Pi$ be the finest partition of $\nset{n}$ such that $G \leq \Aut \Pi$.
Let $a$ and $b$ be the largest numbers $\alpha$ and $\beta$, respectively, such that $\symm{n}^{\alpha,\beta} \leq G$.
Then for all $\ell \geq \max(\maxblocksize_{a,b}(\Pi), 2)$, it holds that $\Compn[n + \ell]{G} = \symm{n + \ell}^{a,b}$ or $\Compn[n + \ell]{G} = \gensg{\symm{n + \ell}^{a,b}, \desc{n + \ell}}$.
\end{theorem}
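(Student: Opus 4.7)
The plan is to sandwich $\Compn[n+\ell]{G}$ between the groups $\symm{n+\ell}^{a,b}$ and $\gensg{\symm{n+\ell}^{a,b}, \desc{n+\ell}}$. Since these two groups differ only by adjoining $\desc{n+\ell}$, Lemma~\ref{lem:descending} (applied to whether $\desc{n} \in G$) then decides which of the two equals $\Compn[n+\ell]{G}$.

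The lower bound $\symm{n+\ell}^{a,b} \leq \Compn[n+\ell]{G}$ is immediate from the monotonicity of $\Compn[n+\ell]{}$ applied to $\symm{n}^{a,b} \leq G$, once we evaluate $\Compn[n+\ell]{\symm{n}^{a,b}} = \symm{n+\ell}^{a,b}$. The latter follows from Theorem~\ref{thm:CompSPi-general} once we verify, by a short induction on $\ell$ using the explicit description of the operation $\Pi \mapsto \Pi'$, the identity $(\Pi_n^{a,b})^{(\ell)} = \Pi_{n+\ell}^{a,b}$: the first block $\interval{1}{a}$ is preserved, each iteration spawns a new singleton adjacent to the last block, and the last block shifts by one while retaining size~$b$.

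For the upper bound I reduce to the intransitive situation handled by Theorem~\ref{thm:general-intransitive}. Starting from $\Compn[n+1]{G} \leq \Compn[n+1]{\Aut \Pi}$ and the description in Theorem~\ref{thm:CompAutPi}, I examine the generators. If $\natcycle{n} \in \Aut \Pi$, the text following Theorem~\ref{thm:CompAutPi} yields $\Compn[n+1]{\Aut \Pi} = \dihed{n+1}$; iterating Lemma~\ref{lem:cyclic-dihedral}(i) from the hypothesis $\natcycle{n} \notin G$ confines $\Compn[n+1]{G}$ to the reflection coset, and a pattern analysis rules out reflections other than $\desc{n+1}$ while forcing $a = b = 1$. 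If $\natcycle{n} \notin \Aut \Pi$, I examine each generator of $E_\Pi$ (of the forms $\dja{n+1}{\ell'}$ and $\ajd{n+1}{n-m+1}$) and check that its $n$-patterns lie outside $\symm{n}^{a,b}$, hence outside $G$ by the maximality of $a,b$, so the generator cannot belong to $\Compn[n+1]{G}$. What remains shows $\Compn[n+1]{G} \leq \gensg{\symm{\Pi^*}, \desc{n+1}}$ for some interval partition $\Pi^*$ refining $\Pi'$. The maximality of the parameters $a,b$ transfers from $G$ to $\Compn[n+1]{G}$: any overshoot $\symm{n+1}^{\alpha,\beta'} \leq \Compn[n+1]{G}$ with $\alpha > a$ would, via $\Patl[n]{}$, force $\symm{n}^{\alpha,\beta'} \subseteq G$, contradicting the choice of $a$. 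Then, using $\Compn[n+\ell]{G} = \Compn[n+\ell]{\Compn[n+1]{G}}$ from Lemma~\ref{lem:Comp-Pat-transitive}, I apply Theorem~\ref{thm:general-intransitive} to the intransitive group $\Compn[n+1]{G}$ for the remaining $\ell - 1$ iterations, which yields the upper bound provided $\ell - 1 \geq \maxblocksize_{a,b}(\Pi^*)$; this is guaranteed by $\ell \geq \max(\maxblocksize_{a,b}(\Pi), 2)$ since $\maxblocksize_{a,b}(\Pi^*) \leq \maxblocksize_{a,b}(\Pi) - 1$ when $\maxblocksize_{a,b}(\Pi) \geq 2$, and $\leq 1$ otherwise.

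The main obstacle I anticipate is the pattern analysis needed to eliminate the $E_\Pi$ generators in the non-cyclic case; this requires a careful reading of the $n$-patterns of $\dja{n+1}{\ell'}$ and $\ajd{n+1}{n-m+1}$, combined with the structural constraints on $\Pi$-blocks inside $\interval{1}{a}$ and $\interval{n-b+1}{n}$ that follow from $\symm{n}^{a,b} \leq G \leq \Aut \Pi$ and the transitivity of $G$. A secondary technical point is tracking the inequality $\maxblocksize_{a,b}(\Pi^*) \leq \max(\maxblocksize_{a,b}(\Pi) - 1, 1)$ carefully enough to match the hypothesis of the theorem.
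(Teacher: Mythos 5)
The overall architecture (lower bound via monotonicity from $\symm{n}^{a,b}$, upper bound by passing to $\Aut\Pi$ and then handing off to Theorem~\ref{thm:general-intransitive}) is the right one and is essentially what the paper does. But there is a genuine gap in your treatment of the generators $E_\Pi$.

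You claim that each generator $\dja{n+1}{\ell'}$ or $\ajd{n+1}{n-m+1}$ of $E_\Pi$ can be excluded from $\Compn[n+1]{G}$ because ``its $n$-patterns lie outside $\symm{n}^{a,b}$, hence outside $G$ by the maximality of $a,b$.'' This inference is invalid: maximality of $a,b$ says only that $\symm{n}^{\alpha,\beta}\nleq G$ for larger parameters; it does not say $G\leq\symm{n}^{a,b}$ (indeed $G$ is transitive while $\symm{n}^{a,b}$ is not), so a permutation whose patterns avoid $\symm{n}^{a,b}$ may perfectly well lie in $\Compn[n+1]{G}$. The conclusion is also false in general: if $\interval{1}{\ell}\interwoven\Pi$ for some $1<\ell<n$ and $G=\Aut\Pi$ (which can be transitive, imprimitive, and free of $\natcycle{n}$, e.g.\ $n=8$, $\Pi=\{\{1,3\},\{2,4\},\{5,7\},\{6,8\}\}$), then $\Patl[n]{\dja{n+1}{\ell}}=\{\dja{n}{\ell},\dja{n}{\ell-1}\}\subseteq\Aut\Pi$, so $\dja{n+1}{\ell}\in\Compn[n+1]{G}$ by Theorem~\ref{thm:CompAutPi}. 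Consequently your intermediate assertion $\Compn[n+1]{G}\leq\gensg{\symm{\Pi^*},\desc{n+1}}$ need not hold after one step.

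This is precisely why the statement carries the hypothesis $\ell\geq\max(\maxblocksize_{a,b}(\Pi),2)$: the extra generators are eliminated only at level $n+2$, and showing this is the nontrivial content of Theorem~\ref{thm:CompAutPi-2} (whose proof is a careful analysis of the $(n+1)$-patterns of elements of $\Compn[n+2]{\gensg{\symm{\Pi'},\dja{n+1}{\ell}}}$). The paper's proof of the present theorem simply combines Theorems~\ref{thm:CompSPi-general}, \ref{thm:CompAutPi-2}, and \ref{thm:general-intransitive}. Your proposal would be repaired by replacing the level-$(n+1)$ elimination with an appeal to (or a proof of) Theorem~\ref{thm:CompAutPi-2}, starting the intransitive bookkeeping at level $n+2$, and then carrying out the $\maxblocksize_{a,b}$ decrement argument from that point; the remaining ingredients of your outline (the identity $\Compn[n+\ell]{\symm{n}^{a,b}}=\symm{n+\ell}^{a,b}$, the transfer of maximality of $a,b$ up one level, and the handling of the case $\natcycle{n}\in\Aut\Pi$ via Lemma~\ref{lem:cyclic-dihedral}) are sound.
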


\begin{proof}
Follows immediately from Theorems \ref{thm:CompSPi-general}, \ref{thm:general-intransitive}, and \ref{thm:CompAutPi-2}.
\end{proof}

%%%%%%%%%%%%%%%%%%%%%%%%%%%%%%%%%%%%%%%%%%%%%%%%%%

\section{Primitive groups}
\label{sec:primitive}

In this section, we investigate level sequences of primitive groups.
Recall that a transitive subgroup of $\symm{n}$ is \emph{primitive} if it preserves no nontrivial partition of $\nset{n}$.

The following classical results are due to Jordan; for proofs, see, e.g., \cite[Theorems~8.17 and 8.23]{Isaacs}.

\begin{theorem}[Jordan]
\label{thm:Jordan-prim-trans}
Let $G \leq \symm{n}$. Assume that $G$ is primitive and contains a transposition. Then $G$ is the full symmetric group $\symm{n}$.
\end{theorem}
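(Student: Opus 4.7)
The plan is to manufacture a nontrivial $G$\hyp{}invariant partition of $\nset{n}$ directly from the hypothesis that $G$ contains a transposition, then invoke primitivity to force this partition to be the one\hyp{}block partition, whence every transposition lies in $G$.

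First, I would define a relation $\sim$ on $\nset{n}$ by declaring $x \sim y$ iff $x = y$ or the transposition $(x \; y)$ belongs to $G$. Reflexivity is built in; symmetry is immediate from $(x \; y) = (y \; x)$; and transitivity follows from the easy identity
\[
(x \; y)(y \; z)(x \; y) = (x \; z),
\]
which shows that whenever $(x \; y), (y \; z) \in G$, also $(x \; z) \in G$. Hence $\sim$ is an equivalence relation; let $\Pi$ denote the corresponding partition of $\nset{n}$.

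Second, I would check that $G \leq \Aut \Pi$. For any $\pi \in G$ and any transposition $(x \; y) \in G$, the conjugate $\pi (x \; y) \pi^{-1} = (\pi(x) \; \pi(y))$ is again a transposition in $G$, so $\pi(x) \sim \pi(y)$. Thus $\pi$ preserves the relation $\sim$, i.e., $\pi \in \Aut \Pi$.

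Third, by hypothesis $G$ contains a transposition $(a \; b)$ with $a \neq b$, so the $\Pi$\hyp{}block of $a$ contains at least the two elements $a$ and $b$; in particular $\Pi$ is not the partition into singletons. Since $G$ is primitive, the only $G$\hyp{}invariant partitions of $\nset{n}$ are the partition into singletons and $\{\nset{n}\}$, so we must have $\Pi = \{\nset{n}\}$. This means $x \sim y$ for every $x, y \in \nset{n}$, i.e., $(x \; y) \in G$ for every pair $x \neq y$. Since the transpositions generate $\symm{n}$, we conclude $G = \symm{n}$. The only delicate step is the transitivity identity; everything else is bookkeeping around the observation that conjugation sends transpositions to transpositions.
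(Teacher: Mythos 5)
Your argument is correct. The paper does not prove this statement at all --- it quotes it as a classical theorem of Jordan and refers to \cite[Theorems~8.17 and 8.23]{Isaacs} for a proof --- so there is nothing to compare against; what you give is the standard self-contained proof, and it is complete. The equivalence relation $\sim$ is well defined (the conjugation identity $(x\;y)(y\;z)(x\;y)=(x\;z)$, which incidentally also appears in the paper's proof of Lemma~\ref{lem:PiGamma}, gives transitivity), and the induced partition $\Pi$ is $G$\hyp{}invariant: you verify $x\sim y\implies\pi(x)\sim\pi(y)$, and the reverse implication, needed for $\pi\in\Aut\Pi$ in the paper's sense, follows by applying the same computation to $\pi^{-1}\in G$. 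Since $\Pi$ has a block of size at least $2$ and $G$ is primitive, $\Pi=\{\nset{n}\}$, so $G$ contains every transposition and hence equals $\symm{n}$.
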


\begin{theorem}[Jordan]
\label{thm:Jordan-prim-cycle}
Let $G \leq \symm{n}$. Assume that $G$ is primitive and contains a cycle of length $p$, where $p$ is prime and $p \leq n - 3$. Then $G$ is either the alternating group $\alt{n}$ or the full symmetric group $\symm{n}$.
\end{theorem}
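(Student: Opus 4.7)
The plan is to follow the classical Jordan argument via the notion of \emph{Jordan sets}. Call $\Delta \subseteq \nset{n}$ a Jordan set for $G$ if the pointwise stabilizer $G_{(\nset{n} \setminus \Delta)}$ acts transitively on $\Delta$. The first observation is that the support $\Delta_0$ of the given $p$-cycle $\sigma$ is a Jordan set of size $p$: the cyclic subgroup $\gensg{\sigma}$ fixes $\nset{n} \setminus \Delta_0$ pointwise and, because $p$ is prime, acts transitively on $\Delta_0$. Next I would prove (or quote) the Jordan union lemma: if $\Delta_1$ and $\Delta_2$ are Jordan sets with $\Delta_1 \cap \Delta_2 \neq \emptyset$, then $\Delta_1 \cup \Delta_2$ is a Jordan set as well; the proof splices elements of $G_{(\nset{n} \setminus \Delta_i)}$ along any common point to move an arbitrary element of $\Delta_1 \cup \Delta_2$ to any other.

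Then I would invoke primitivity to grow $\Delta_0$. If every $G$-translate of $\Delta_0$ were either equal to $\Delta_0$ or disjoint from it, those translates would form a $G$-invariant partition of $\nset{n}$ with nontrivial blocks of size $p$ (since $1 < p < n - 3 < n$), contradicting primitivity. Hence there exists $g \in G$ with $\Delta_0 \neq g(\Delta_0)$ and $\Delta_0 \cap g(\Delta_0) \neq \emptyset$; since $g(\Delta_0)$ is the support of $g\sigma g^{-1}$, it is again a Jordan set, and by the union lemma $\Delta_0 \cup g(\Delta_0)$ is a Jordan set of strictly larger size. The same primitivity argument applies to any proper Jordan set of size $\geq 2$ (its $G$-translates cannot form a block system), so the enlargement process can be iterated, producing Jordan sets of strictly increasing sizes until one reaches a Jordan set $\Delta^*$ with $|\nset{n} \setminus \Delta^*|$ as small as we like — in particular, of size at most $2$.

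Such a large Jordan set gives high transitivity: $G_{(\nset{n} \setminus \Delta^*)}$ is transitive on $\Delta^*$, which combined with transitivity of $G$ forces $G$ to be at least $(n - |\Delta^*| + 1)$-transitive. Now the endgame is to produce a $3$-cycle in $G$. Using the Jordan machinery, one can find two conjugates of $\sigma$ whose supports overlap in exactly $p - 1$ points; the commutator of two such $p$-cycles computes out to a $3$-cycle. Once a $3$-cycle lies in $G$, its $G$-conjugates generate a normal subgroup of $G$ which, by primitivity, is transitive, and which is known to be precisely $\alt{n}$ (this is the $3$-cycle case of the same circle of ideas, provable by the same Jordan-set enlargement applied to the $3$-cycle's support). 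Since $\alt{n}$ is maximal in $\symm{n}$ for $n \geq 5$ (and the hypothesis $p \leq n - 3$ with $p \geq 2$ forces $n \geq 5$), we conclude $G \in \{\alt{n}, \symm{n}\}$. The main obstacle is the combinatorial bookkeeping required in the last step: verifying that one can always arrange two conjugate $p$-cycles with supports intersecting in exactly $p - 1$ points, and that the resulting commutator is a genuine $3$-cycle and not the identity; this is where the hypothesis $p \leq n - 3$ is essential, guaranteeing enough ``room'' outside a single support to carry out the conjugation.
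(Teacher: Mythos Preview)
The paper does not actually prove this statement: it is quoted as a classical theorem of Jordan, with a pointer to Isaacs's textbook for a proof. So there is no ``paper's own proof'' to compare against; your sketch is to be judged on its own merits.

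Your overall framework (Jordan sets, the union lemma, exploiting primitivity to enlarge) is exactly the classical route, but two steps do not go through as written.

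First, the union lemma gives you a strictly larger Jordan set, but it gives \emph{no control over how much larger}. If $\Delta$ has size $m$ and $g(\Delta)$ overlaps it properly, then $|\Delta\cup g(\Delta)|$ can be anything from $m+1$ to $\min(2m,n)$; in particular the sequence of sizes may jump straight to $n$ and never hit $n-1$ or $n-2$, so you cannot conclude that the complement becomes ``as small as we like''. The correct incrementing lemma (grow a Jordan set by exactly one point) needs the extra hypothesis that $G_{(\nset{n}\setminus\Delta)}$ acts \emph{primitively} on $\Delta$ --- which is available here because $\langle\sigma\rangle$ has prime order $p$ --- and its proof is more delicate than the bare union argument. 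With that refined lemma one obtains Jordan sets of every size from $p$ up to $n-1$, hence $(n-p+1)$-transitivity; since $p\le n-3$ this gives at least $4$-transitivity.

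Second, the endgame is not right as stated. The commutator of two $p$-cycles whose supports meet in $p-1$ points is in general \emph{not} a $3$-cycle; for instance with $\sigma=(1\,2\,\cdots\,p)$ and $\tau=(2\,3\,\cdots\,p{+}1)$ one computes $\sigma^{-1}\tau^{-1}\sigma\tau=(1\;p{+}1)(p{-}1\;p)$, a product of two transpositions. What is true in that particular example is that the \emph{product} $\sigma\tau^{-1}$ equals the $3$-cycle $(1\;2\;p{+}1)$, but for an arbitrary conjugate $\tau=g\sigma g^{-1}$ with the same support you do not get to choose the cyclic ordering of $\tau$. The honest argument here first uses the $(n-p+1)$-transitivity already obtained to manufacture a conjugate whose support differs from that of $\sigma$ in a single prescribed point, and then analyses the subgroup $\langle\sigma,g\sigma g^{-1}\rangle\le\symm{p+1}$ (it is $2$-transitive on $p+1$ points with a point stabiliser containing a $p$-cycle) to extract a $3$-cycle; this last step is itself nontrivial and is where the full strength of $p\le n-3$ (giving genuinely high transitivity, not merely $2$-transitivity) is used. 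In short: right skeleton, but both the growth step and the $3$-cycle extraction need substantially more than you have supplied.
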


Jones showed that the primality condition can be omitted from Theorem \ref{thm:Jordan-prim-cycle}.
His theorem (Theorem~\ref{thm:Jones1.2} below) also brings together the characterization of primitive groups with a cycle fixing one point that is due to M\"uller~\cite[Theorem~6.2]{Muller} and the description of primitive groups with a fixed\hyp{}point\hyp{}free cycle from an earlier paper of Jones~\cite{Jones2002}, which completed the work of Feit~\cite{Feit}.

The following well\hyp{}known groups are referred to in what follows:
the cyclic group of order $p$, denoted by $C_p$,
the affine general linear group of degree $d$ over the finite field $\IF_q$ of order $q$, denoted by $\AGL{d}{q}$,
the affine semilinear group $\AGammaL{d}{q}$,
the projective special linear group $\PSL{d}{q}$,
the projective general linear group $\PGL{d}{q}$,
the projective semilinear group $\PGammaL{d}{q}$,
Mathieu groups $M_{11}$, $M_{12}$, $M_{24}$.

\begin{theorem}[{Jones~\cite[Theorem~1.2]{Jones2014}}]
\label{thm:Jones1.2}
Let $G \leq \symm{n}$. Assume that $G$ is primitive and $\alt{n} \nleq G$.
Assume that $G$ contains a cycle fixing $k$ points, where $0 \leq k \leq n-2$.
Then one of the following holds:
\begin{enumerate}[label=\upshape{(\arabic*)}]
\item\label{Jones:k=0} $k = 0$ and either
\begin{enumerate}[label=\upshape{(\alph*)}]
\item $C_p \leq G \leq \AGL{1}{p}$ with $n = p$ prime; or
\item $\PGL{d}{q} \leq G \leq \PGammaL{d}{q}$ with $n = (q^d - 1) / (q - 1)$ and $d \geq 2$ for some prime power $q$; or
\item $G = \PSL{2}{11}$, $M_{11}$ or $M_{23}$ with $n = 11$, $11$ or $23$, respectively.
\end{enumerate}
\item\label{Jones:k=1} $k = 1$ and either
\begin{enumerate}[label=\upshape{(\alph*)}]
\item $\AGL{d}{q} \leq G \leq \AGammaL{d}{q}$ with $n = q^d$ and $d \geq 1$ for some prime power $q$; or
\item $G = \PSL{2}{p}$ or $\PGL{2}{p}$ with $n = p + 1$ for some prime $p \geq 5$; or
\item $G = M_{11}$, $M_{12}$ or $M_{24}$ with $n = 12$, $12$ or $24$, respectively.
\end{enumerate}
\item\label{Jones:k=2} $k = 2$ and $\PGL{2}{q} \leq G \leq \PGammaL{2}{q}$ with $n = q + 1$ for some prime power $q$.
\end{enumerate}
\end{theorem}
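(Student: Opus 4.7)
The plan is to first constrain the shape of any nontrivial $\pi \in \Compn[n+1]{G}$ via Lemmas~\ref{lem:one-jump} and \ref{lem:AtkBea-L11}, then use the Jordan--Jones classification (Theorems~\ref{thm:Jordan-prim-trans} and \ref{thm:Jones1.2}) to pin down both the jump length and the group $G$, and finally bootstrap part (ii) from part (i) via Lemma~\ref{lem:Comp-Pat-transitive}.

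Let $\pi \in \Compn[n+1]{G} \setminus \{\asc{n+1}, \desc{n+1}\}$. By Lemma~\ref{lem:one-jump}, $\pi$ makes at least one jump; fix a jump position $t$ of length $m$. Lemma~\ref{lem:AtkBea-L11} places the cycle $(a \; a+1 \; \cdots \; a+m-1)$ into $\gPatl[n]{\pi} \subseteq G$ for suitable $a$. The value $m = 2$ is excluded because a transposition in a primitive $G$ would force $G = \symm{n}$ by Theorem~\ref{thm:Jordan-prim-trans}, contradicting $\alt{n} \nleq G$. The value $m = n$ forces $(a, m) = (1, n)$ and hence the cycle is $\natcycle{n}$, contradicting the standing hypothesis $\natcycle{n} \notin G$. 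For $3 \leq m \leq n-1$ the cycle fixes $k := n - m \geq 1$ points, and Jones's Theorem~\ref{thm:Jones1.2} (whose case \eqref{Jones:k=0} is ruled out above) forces $k \in \{1, 2\}$ and $G$ into a finite explicit list. Thus $m \in \{n-1, n-2\}$, severely restricting the surviving one-jump candidates.

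Combining with Lemma~\ref{lem:cyclic-dihedral}, which excludes the cyclic and dihedral one-jump candidates of Lemma~\ref{lem:one-jump} under the hypothesis $\natcycle{n} \notin G$, the remaining one-jump candidates for $\pi$ are $\dja{n+1}{\ell}$, $\ajd{n+1}{\ell}$, and their composites with $\desc{n+1}$, for $\ell \in \{n-2, n-1\}$. For each, $\pi \in \Compn[n+1]{G}$ is equivalent to a specific pair of $n$-patterns lying in $G$ (e.g.\ $\Patl[n]{\dja{n+1}{\ell}} = \{\dja{n}{\ell-1}, \dja{n}{\ell}\}$), which translates to $G$ containing the corresponding dihedral subgroup $\dihed{\interval{a}{b}}$. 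For $n \neq 6$, the Jones list forces $G$ to be an overgroup of one of $\dihed{\interval{1}{n-1}}$, $\dihed{\interval{1}{n-2}}$, $\dihed{\interval{2}{n}}$, $\dihed{\interval{3}{n}}$, yielding Table~\ref{table:primitive-neq-6}. For $n = 6$, case \eqref{Jones:k=2} permits the additional primitive groups $\PGL{2}{5}$ and $\PSL{2}{5}$ acting on the $6$ points of the projective line; direct inspection of these actions produces Table~\ref{table:primitive-eq-6}. A multi-jump $\pi$ is ruled out by observing that a second jump contributes a second cycle to $G$ whose interaction with the first (via conjugation by suitable patterns, or via application of Lemma~\ref{lem:AtkBea-L11} at additional positions) violates the Jones constraints on $G$ or manufactures a transposition. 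The adjunction of $\desc{n+1}$ whenever $\desc{n} \in G$ is handled by Lemma~\ref{lem:descending}.

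Part (ii) is then immediate. By Lemma~\ref{lem:Comp-Pat-transitive}, $\Compn[n+2]{G} = \Compn[n+2]{\Compn[n+1]{G}}$. In the generic ``otherwise'' case $\Compn[n+1]{G} \leq \gensg{\desc{n+1}}$, Theorem~\ref{thm:symmetric-trivial} gives $\Compn[n+2]{G} \leq \gensg{\desc{n+2}}$. In each exceptional row of Tables~\ref{table:primitive-eq-6} and \ref{table:primitive-neq-6}, $\Compn[n+1]{G}$ is an explicit small group, and rerunning the one-jump analysis on $\Compn[n+2]{\Compn[n+1]{G}}$ forces every $(n+2)$-candidate outside $\{\asc{n+2}, \desc{n+2}\}$ to have an $(n+1)$-pattern outside the explicit list. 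The main obstacle is the bookkeeping in the multi-jump/exceptional analysis: the Jones-exceptional groups contain many cycles, and the distinction between $\PGL{2}{5}$ and $\PSL{2}{5}$ in the $n = 6$ tables (and between $\dja{n+1}{\ell}$-type and $\ajd{n+1}{\ell}$-type permutations in the $n \neq 6$ table) requires careful tracking of which pairs of cycles can coexist as patterns of a single $\pi$ while respecting the Jones constraints.
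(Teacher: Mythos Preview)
You are proving the wrong statement. The theorem in question is Jones's classification of primitive groups containing a cycle (Theorem~\ref{thm:Jones1.2}), which the paper merely \emph{quotes} from~\cite{Jones2014} without proof; it is a deep external input relying on the classification of finite simple groups. Your proposal is not a proof of this theorem at all: it is a sketch of the proof of Theorem~\ref{thm:Comp-primitive}, and indeed it \emph{invokes} Theorem~\ref{thm:Jones1.2} as a tool (``Jones's Theorem~\ref{thm:Jones1.2} \ldots\ forces $k \in \{1,2\}$ and $G$ into a finite explicit list''). So as a proof of the stated theorem it is circular.

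If the intended target was actually Theorem~\ref{thm:Comp-primitive}, then your outline is broadly aligned with the paper's approach in Section~\ref{sec:primitive}: constrain jumps via Lemma~\ref{lem:AtkBea-L11}, rule out short cycles by Theorems~\ref{thm:Jordan-prim-trans} and~\ref{thm:Jones1.3}, and handle $n=6$ separately. Two points where your sketch is thinner than the paper: (a) the paper organizes the case analysis through the set $\jumps{\Compn[n+1]{G}}$ (Lemma~\ref{lem:jumpsets}), which cleanly handles the ``multi-jump'' exclusion you flag as the main obstacle, by showing that any two admissible jump pairs combine to produce a transposition or a short cycle in $G$; and (b) for part~(ii) the paper gives a direct one-line argument (Proposition~\ref{prop:Comp-n+2-primitive}) using only Lemma~\ref{lem:AtkBea-L11} and Theorem~\ref{thm:Jordan-prim-trans}, rather than bootstrapping through the tables as you propose.
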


\begin{theorem}[{Jones~\cite[Corollary~1.3]{Jones2014}}]
\label{thm:Jones1.3}
Let $G \leq \symm{n}$. Assume that $G$ is primitive and contains a cycle with $k$ fixed points.
Then $G \geq \alt{n}$ if $k \geq 3$, or if
$k = 0$, $1$ or $2$ and $n$ avoids the values listed in the respective parts of Theorem~\ref{thm:Jones1.2}.
\end{theorem}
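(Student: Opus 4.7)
The strategy is to analyze, for each primitive $G \leq \symm{n}$ satisfying the hypotheses, the set $\Compn[n+1]{G}$ via the jumps of its elements. The central tool is Lemma~\ref{lem:AtkBea-L11}: for every $\pi \in \Compn[n+1]{G}$, each jump of $\pi$ at position $t$ between values $j$ and $k$ produces the cycle $\subpermsimple{\pi}{t+1} \circ \subpermsimple{\pi}{t}^{-1}$ of length $|j-k|$ inside $G$ (both $n$\hyp{}patterns lie in $G$ by definition of $\Compn[n+1]{G}$). Since $G$ is primitive and $\alt{n} \nleq G$, Theorem~\ref{thm:Jones1.3} forces every cycle in $G$ to fix at most two points, so every jump of $\pi$ has size at least $n-2$. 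A jump of size $n$ has endpoints $\{1, n+1\}$ and would yield $\natcycle{n} \in G$, contradicting the second hypothesis; thus every jump has size in $\{n-2, n-1\}$.

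Next I would invoke Lemma~\ref{lem:one-jump} to enumerate single-jump $(n+1)$\hyp{}permutations and exclude the natural-cycle families $\natcycle{n+1}^j$ and $\desc{n+1} \circ \natcycle{n+1}^j$ via Lemma~\ref{lem:cyclic-dihedral} together with $\natcycle{n} \notin G$. The surviving candidates are the eight permutations $\dja{n+1}{\ell}$, $\ajd{n+1}{\ell}$, $\desc{n+1} \circ \dja{n+1}{\ell}$ and $\desc{n+1} \circ \ajd{n+1}{\ell}$ for $\ell \in \{n-2, n-1\}$. For each I would compute $\Patl[n]{\pi}$ directly; for instance, $\Patl[n]{\dja{n+1}{n-1}} = \{\dja{n}{n-1}, \dja{n}{n-2}\}$, and these two involutions generate $\dihed{\interval{1}{n-1}}$. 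This yields the equivalence $\dja{n+1}{n-1} \in \Compn[n+1]{G} \iff \dihed{\interval{1}{n-1}} \leq G$, and the three other $\dja/\ajd$ choices produce the remaining rows of Table~\ref{table:primitive-neq-6}; the descending-complement variants follow by the reverse-complement symmetry recorded in Section~\ref{sec:simple}, contributing the factor $\desc{n+1}$ precisely when $\desc{n} \in G$ (Lemma~\ref{lem:descending}). One further checks that no two candidates can simultaneously lie in $\Compn[n+1]{G}$: combining, say, $\dihed{\interval{1}{n-1}}$ with $\dihed{\interval{1}{n-2}}$ would supply $G$ with an $(n-1)$\hyp{}cycle and an $(n-2)$\hyp{}cycle on different supports, forcing $\alt{n} \leq G$ via Theorem~\ref{thm:Jones1.3}.

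Multi-jump permutations are ruled out for $n \neq 6$ as follows: each jump has endpoints in the disjoint edge-triples $\{1,2,3\}$ and $\{n-1,n,n+1\}$ (disjoint for $n \geq 5$), every monotone run between consecutive jumps consists of consecutive integers forcing the middle run to have length at most three, and applying Lemma~\ref{lem:AtkBea-L11} simultaneously to all jumps places several cycles in $G$ which, by the finer classification of Theorem~\ref{thm:Jones1.2}, are incompatible with the hypotheses unless $n = 6$, where the $\PGL{2}{5}$- and $\PSL{2}{5}$-type families permit two-jump $\pi$'s. For $n = 6$ I would verify Table~\ref{table:primitive-eq-6} by direct computation over the finite list of primitive subgroups of $\symm{6}$ satisfying the hypotheses. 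Part~(ii) then follows from Part~(i) via Lemma~\ref{lem:Comp-Pat-transitive}, giving $\Compn[n+2]{G} = \Compn[n+2]{\Compn[n+1]{G}}$: in the generic subcase $\Compn[n+1]{G} \leq \gensg{\desc{n+1}}$, Theorem~\ref{thm:symmetric-trivial}(iii) yields $\Compn[n+2]{\gensg{\desc{n+1}}} = \gensg{\desc{n+2}}$, while for each exceptional $\Compn[n+1]{G}$ from Tables~\ref{table:primitive-eq-6}--\ref{table:primitive-neq-6}, every nontrivial element is a product of at least two disjoint transpositions (hence not a proper cycle), so Lemma~\ref{lem:AtkBea-L11} applied to any $\tau \in \Compn[n+2]{\Compn[n+1]{G}}$ forces $\tau$ to make no jumps, giving $\tau \in \gensg{\desc{n+2}}$. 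The main obstacle I anticipate is the multi-jump analysis in Part~(i): extracting simultaneous cycle information from several jumps and matching it against Theorem~\ref{thm:Jones1.2} to isolate exactly the $n = 6$ coincidences, which requires using the full classification rather than its corollary~\ref{thm:Jones1.3}.
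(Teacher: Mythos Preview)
Your proposal is not a proof of Theorem~\ref{thm:Jones1.3} at all. The statement you were asked to prove is Jones's corollary: if a primitive group $G \leq \symm{n}$ contains a cycle with $k$ fixed points, then $G \geq \alt{n}$ whenever $k \geq 3$ (or $k \leq 2$ and $n$ avoids the exceptional values). Your write-up instead outlines a proof of Theorem~\ref{thm:Comp-primitive}, the classification of $\Compn[n+1]{G}$ for primitive $G$ with $\natcycle{n} \notin G$ and $\alt{n} \nleq G$. Worse, you invoke Theorem~\ref{thm:Jones1.3} itself as a tool (``Theorem~\ref{thm:Jones1.3} forces every cycle in $G$ to fix at most two points''), so even if the target were unclear, the argument would be circular.

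Note also that the paper does not prove Theorem~\ref{thm:Jones1.3}; it is quoted verbatim from Jones~\cite{Jones2014} as an external input. A genuine proof lives in Jones's paper and ultimately rests on the classification of finite simple groups via the analysis in Theorem~\ref{thm:Jones1.2}; it is not something one derives from the pattern-involvement machinery developed here. If your intention was actually to prove Theorem~\ref{thm:Comp-primitive}, then your outline is broadly aligned with the paper's approach (Lemmas~\ref{lem:jumpsets}, \ref{lem:Comp-prim-n=6}, \ref{lem:Comp-prim-general}, and Proposition~\ref{prop:Comp-n+2-primitive}), but that is a different task from the one posed.
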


\begin{proposition}
\label{prop:Comp-n+2-primitive}
Let $G \leq \symm{n}$.
Assume that $G$ is primitive and does not contain the natural cycle $\natcycle{n}$.
Then $\Compn[n+2]{G} \leq \gensg{\desc{n+2}}$.
\end{proposition}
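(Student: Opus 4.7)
The plan is to invoke Lemma~\ref{lem:Comp-Pat-transitive} to rewrite $\Compn[n+2]{G} = \Compn[n+2]{\Compn[n+1]{G}}$, and then appeal to the classification of $\Compn[n+1]{G}$ already established in part~\eqref{item:prim-1} of Theorem~\ref{thm:Comp-primitive}. It will then suffice to show $\Compn[n+2]{K} \leq \gensg{\desc{n+2}}$ for each group $K$ that can appear as $\Compn[n+1]{G}$ in that classification, which naturally splits the work into three subcases.

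The generic subcase, namely $\Compn[n+1]{G} \leq \gensg{\desc{n+1}}$, is immediate: by the monotonicity of $\Compn[n+2]{}$ together with Theorem~\ref{thm:symmetric-trivial}\textup{(iii)},
\[
\Compn[n+2]{G} \leq \Compn[n+2]{\gensg{\desc{n+1}}} = \gensg{\desc{n+2}}.
\]

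Next I would dispose of the cases listed in Table~\ref{table:primitive-neq-6}. Here $K$ is one of the four two-element groups $\gensg{\dja{n+1}{\ell}}$ or $\gensg{\ajd{n+1}{\ell}}$ with $\ell \in \{n-1, n-2\}$, each generator being an involution. Because $\desc{n+1} \circ \dja{n+1}{\ell} \circ \desc{n+1} = \ajd{n+1}{\ell}$, the reverse-complement symmetry recalled in Section~\ref{sec:tools} collapses these four cases to two, and it remains to prove $\Compn[n+2]{\{\asc{n+1}, \dja{n+1}{\ell}\}} = \{\asc{n+2}\}$ for $\ell \in \{n-1, n-2\}$. Given $\pi \in \Compn[n+2]{K}$, every $(n+1)$-pattern $\subpermsimple{\pi}{i}$ must be either $\asc{n+1}$ or $\dja{n+1}{\ell}$. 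The rigid block structure of $\dja{n+1}{\ell}$---a decreasing prefix of length $\ell$ followed by the $n+1-\ell \in \{2,3\}$ largest entries in increasing order---makes it impossible for overlapping patterns $\subpermsimple{\pi}{i}$ and $\subpermsimple{\pi}{j}$ to both equal $\dja{n+1}{\ell}$ while a third pattern is also in $K$, unless $\pi = \asc{n+2}$. I plan to carry this out by a case analysis on the two patterns $\subpermsimple{\pi}{1}$ and $\subpermsimple{\pi}{n+2}$, with one further pattern invoked if needed for the small values of $n$. Note that $\desc{n+2}$ is automatically excluded, since its unique $(n+1)$-pattern is $\desc{n+1} \notin K$.

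Finally, the $n=6$ exceptions of Table~\ref{table:primitive-eq-6} will be handled individually: for each of the three four-element groups $K \leq \symm{7}$ in the first three rows, a direct enumeration of $8$-permutations whose $7$-patterns all lie in $K$ verifies $\Compn[8]{K} \leq \gensg{\desc{8}}$; the last two rows are covered by the preceding paragraph with $n=6$, $\ell=5$. The main obstacle is the pattern analysis for Table~\ref{table:primitive-neq-6}: although each $K$ has only two elements, ruling out a non-trivial $\pi \in \Compn[n+2]{K}$ requires carefully tracking how the single jump of $\dja{n+1}{\ell}$ must propagate consistently through the neighbouring $(n+1)$-patterns of $\pi$, and this bookkeeping becomes most delicate when $n$ is small.
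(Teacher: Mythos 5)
Your route is genuinely different from the paper's, and much heavier. The paper does not go through the classification of $\Compn[n+1]{G}$ at all: it takes any $\pi \in \Compn[n+2]{G}$ other than $\asc{n+2}$ or $\desc{n+2}$, notes via Lemma~\ref{lem:cyclic-dihedral} that $\pi \notin \dihed{n+2}$ (else $\natcycle{n} \in G$), hence by Fact~\ref{fact:dihedral} some $\pi(t),\pi(t+1)$ are not consecutive modulo $n+2$, so by Lemma~\ref{lem:AtkBea-L11} the group $\gPatl[n]{\Compn[n+2]{G}} \subseteq G$ contains a transposition, and Jordan's Theorem~\ref{thm:Jordan-prim-trans} then forces $G=\symm{n}$, contradicting $\natcycle{n}\notin G$. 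That argument is a few lines, is self-contained, and in the paper actually \emph{precedes} the machinery behind Theorem~\ref{thm:Comp-primitive}\eqref{item:prim-1}. Your plan instead pulls in that entire machinery (Jones's classification, the jump-set analysis, the degree-$6$ tables) and then still leaves the real work --- the pattern bookkeeping for $\gensg{\dja{n+1}{\ell}}$ and the finite enumeration for the three four-element groups in $\symm{7}$ --- as a sketch. There is no circularity in doing so, since part~\eqref{item:prim-1} rests only on Lemmas~\ref{lem:jumpsets}--\ref{lem:Comp-prim-general}, and the individual steps you outline (the monotonicity argument in the generic case, the reverse-complement reduction, the analysis of $\subpermsimple{\pi}{1}$ and $\subpermsimple{\pi}{n+2}$) are sound and would go through. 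But you gain nothing for the extra cost; consider whether the transposition-plus-Jordan shortcut is available before case-splitting on $\Compn[n+1]{G}$.

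There is also one genuine gap in coverage. The proposition assumes only that $G$ is primitive with $\natcycle{n}\notin G$; it does \emph{not} assume $\alt{n}\nleq G$. Theorem~\ref{thm:Comp-primitive}\eqref{item:prim-1}, which your whole argument rests on, does assume $\alt{n}\nleq G$, so the case $G=\alt{n}$ with $n$ even (e.g.\ $\alt{6}$, which is primitive and does not contain the odd permutation $\natcycle{6}$) is simply not covered by your case split. It can be patched --- Theorem~\ref{thm:Comp-alt-n+2} gives $\Compn[n+2]{\alt{n}}=\gensg{\desc{n+2}}$ or $\{\asc{n+2}\}$ for even $n$ --- but as written your proof does not establish the statement for all groups satisfying its hypotheses, whereas the paper's argument needs no such exclusion.
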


\begin{proof}
Write $H := \Compn[n+2]{G}$.
It follows from Lemma~\ref{lem:cyclic-dihedral} that $H$ does not contain any permutation in $\dihed{n+2} \setminus \{\asc{n+2}, \desc{n+2}\}$.
Suppose, to the contrary, that $H$ contains a permutation $\pi$ that is neither $\asc{n+2}$ not $\desc{n+2}$.
By Fact~\ref{fact:dihedral}, there exists $t \in \nset{n+1}$ such that $\pi(t)$ and $\pi(t+1)$ are not consecutive modulo $n+2$.
By Lemma~\ref{lem:AtkBea-L11}, $\gPatl[n]{H}$ contains a transposition.
Since $\gPatl[n]{H} \subseteq G$, it follows from Jordan's Theorem~\ref{thm:Jordan-prim-trans} that $G = \symm{n}$.
This contradicts the assumption that $\natcycle{n} \notin G$.
\end{proof}

For $\sigma \in \symm{m}$, we write
\[
\jumps{\sigma} := \{(a,b) \in \nset{m} \times \nset{m} \mid
\text{$a \leq b-1$ and $\sigma$ has a jump between $a$ and $b$}\},
\]
and for $S \subseteq \symm{m}$, we write
\[
\jumps{S} := \bigcup_{\sigma \in S} \jumps{\sigma}.
\]

\begin{lemma}
\label{lem:jumpsets}
Assume that $G \leq \symm{n}$ is a primitive group that does not contain the natural cycle $\natcycle{n}$ and $\alt{n} \nleq G$.
Then $\jumps{\Compn[n+1]{G}}$ equals one of the following sets:
\begin{itemize}
\item $\emptyset$, $\{(1,n)\}$, $\{(2,n+1)\}$, $\{(1,n-1)\}$, $\{(3,n+1)\}$,
\item $\{(1,5), (2,7)\}$, $\{(1,6), (3,7)\}$, $\{(1,5), (3,7)\}$ \textup{(}the last three are possible only if $n = 6$\textup{)}.
\end{itemize}
\end{lemma}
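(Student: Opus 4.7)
The plan is to combine Lemma~\ref{lem:AtkBea-L11}, which converts jumps of a permutation into cycles of consecutive integers inside $G$, with Jones's classification (Theorems~\ref{thm:Jones1.2} and~\ref{thm:Jones1.3}) of primitive groups containing a cycle with few fixed points. The argument proceeds in three steps: narrow the candidate pairs, eliminate $(2, n)$, and classify the admissible multi-element jump sets.

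First, suppose $\pi \in \Compn[n+1]{G}$ makes a jump between $a$ and $b$ at a position $t$. By Lemma~\ref{lem:AtkBea-L11}, the cycle $c_{a,b} := (a \; a{+}1 \; \cdots \; b{-}1)$ can be written as $\subpermsimple{\pi}{t+1} \circ \subpermsimple{\pi}{t}^{-1}$; both factors are $n$-patterns of $\pi$ and therefore lie in $G$ by the Galois connection $\gPatl[n]{\gCompn[n+1]{G}} \leq G$. Hence $c_{a,b} \in G$; as an element of $\symm{n}$ it fixes the $n - (b-a)$ points outside $\interval{a}{b-1}$. Its length cannot be $n$ (that would give $c_{a,b} = \natcycle{n}$, forbidden by hypothesis), and it cannot fix three or more points (else $\alt{n} \leq G$ by Jones's Theorem~\ref{thm:Jones1.3}). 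So $b - a \in \{n-1, n-2\}$, leaving only the five candidates $(1, n), (2, n+1), (1, n-1), (2, n), (3, n+1)$.

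Second, I would eliminate the candidate $(2, n)$. Assume $\pi \in \Compn[n+1]{G}$ has $\pi(t) = 2$ and $\pi(t+1) = n$ (the opposite orientation reduces to this by taking the reverse). Applying Lemma~\ref{lem:AtkBea-L11} at position $t-1$ (when $t > 1$) together with Jones's theorem on the resulting cycle restricts $\pi(t-1) \in \{1, 3, n+1\}$; symmetrically $\pi(t+2) \in \{1, n-1, n+1\}$ when $t+1 < n+1$. Crucially, $\pi(t-1) = n+1$ would make $\pi$ itself produce a second jump of type $(2, n+1)$ at position $t-1$, and $\pi(t+2) = 1$ would produce a jump of type $(1, n)$ at position $t+1$. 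Propagating this neighbour analysis outward along $\pi$, while exploiting the fact that $c_{2,n} \in G$ forces $\PGL{2}{q} \leq G \leq \PGammaL{2}{q}$ with $n = q+1$ by Theorem~\ref{thm:Jones1.2}\eqref{Jones:k=2}, shows that any $\pi$ witnessing $(2, n)$ must simultaneously witness one of the other four pairs, putting us into the situation of the third step and never delivering a consistent group with a jump set containing $(2, n)$.

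Third, I would classify which pairs among $(1, n), (2, n+1), (1, n-1), (3, n+1)$ can coexist. If two distinct pairs lie in $\jumps{\Compn[n+1]{G}}$, then $G$ contains both of their cycles. For combinations like $\{(1, n), (2, n+1)\}$ or $\{(1, n), (1, n-1)\}$, the subgroup generated contains either a $3$-cycle fixing at least three points (violating Jones's Theorem~\ref{thm:Jones1.3}) or a transposition (forcing $G = \symm{n}$ by Jordan's Theorem~\ref{thm:Jordan-prim-trans}), both contradicting the hypotheses. A case-by-case inspection shows that only $\{(1, n-1), (2, n+1)\}$, $\{(1, n), (3, n+1)\}$, and $\{(1, n-1), (3, n+1)\}$ survive, and in each the two cycles lock $G$ into a $\PGL{2}{q}$-type group from Theorem~\ref{thm:Jones1.2}\eqref{Jones:k=2}; the arithmetic then forces $n = q+1 = 6$, in agreement with the three $\PGL{2}{5}$ entries of Table~\ref{table:primitive-eq-6}. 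An analogous but lighter argument rules out jump sets of size three or more.

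The main obstacle is the propagation in the second step. Cascading the local constraints on $\pi(t-1), \pi(t+2)$ outward without missing cases is delicate, and the key lever is that $\Compn[n+1]{G}$ is itself a group (Proposition~\ref{prop:S-subgroup}), so products like $\pi^{-1} \circ \sigma$ with other elements of $\Compn[n+1]{G}$ are subject to the same jump constraints and contribute extra cycles to $G$ that must still fit inside Jones's very tight list for $k \leq 2$. Keeping careful track of which cycle conjugates are thereby forced into $G$, and translating that back into forbidden local patterns of $\pi$, is where the real work lies.
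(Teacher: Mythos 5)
Your Step~1 (reducing the candidate pairs to $(1,n-1)$, $(1,n)$, $(2,n)$, $(2,n+1)$, $(3,n+1)$ via Lemma~\ref{lem:AtkBea-L11}, Jordan, and Jones) and your Step~3 (pairwise case analysis producing exactly the three two-element sets at $n=6$) follow the same route as the paper, although Step~3 glosses over the low-degree exceptions: for $n=5$ and $n=7$ the relevant quotient cycles do not have three fixed points, so Theorem~\ref{thm:Jones1.3} does not apply directly and the paper has to argue separately (e.g.\ that $(1\;2\;3)$ and $(2\;3\;4\;5)$ generate $\symm{5}$, or that $6$ is not a prime power). Those are repairable omissions.

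The genuine gap is your Step~2, the elimination of $(2,n)$. Your own text concedes that the outward propagation of the constraints on $\pi(t-1)$ and $\pi(t+2)$ is ``where the real work lies'' --- that work is not done, and as sketched it is not clear it terminates in a contradiction rather than an ever-growing case tree. The paper avoids this entirely with a much simpler observation. First, if $\jumps{\Compn[n+1]{G}}=\{(a,b)\}$ is a singleton, then every $\sigma$ in the group makes at most one jump: a jump between $a$ and $b$ occurs only at the unique position where $\sigma^{-1}(a)$ and $\sigma^{-1}(b)$ are adjacent. Second, Lemma~\ref{lem:one-jump} classifies all permutations of $\symm{n+1}$ making exactly one jump --- they are $\dja{n+1}{\ell}$, $\ajd{n+1}{\ell}$, $\natcycle{n+1}^{j}$ and their composites with $\desc{n+1}$ --- and every such permutation's unique jump has $a=1$ or $b=n+1$. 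Hence $\{(2,n)\}$ cannot occur as a singleton jump set; and $(2,n)$ in combination with any other surviving candidate is already killed by your Step~3 machinery (it always forms a pair of type $(a,b),(a,b+1)$ or $(a,b),(a+1,b)$ or $(a,b),(a+1,b+1)$, yielding a transposition or a $3$-cycle in $G$). Until you either invoke Lemma~\ref{lem:one-jump} (or prove its relevant consequence directly) the case $\jumps{\Compn[n+1]{G}}=\{(2,n)\}$ is not excluded and the lemma is not proved.
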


\begin{proof}
The claim is vacuously true whenever $n \leq 4$, because the only primitive groups of degree at most $4$ are the alternating and symmetric groups.
Therefore, we assume from now on that $n \geq 5$.

Let $G \leq \symm{n}$ be a primitive group, and write $H := \Compn[n+1]{G}$.
By Lemma~\ref{lem:AtkBea-L11}, $G$ contains the cycle $(a \; a+1 \; \cdots \; b-1)$ for every $(a,b) \in \jumps{H}$.
Making use of this fact, we are going to show that if $\jumps{H}$ is not one of the sets listed in the statement of the lemma, then a contradiction arises (usually by means of denying our premise that $\natcycle{n} \notin G$ and $\alt{n} \nleq G$).

If $(1,n+1) \in \jumps{H}$, then $\natcycle{n} \in G$.
If $\jumps{H}$ contains a pair $(a,b)$ such that $b - a = 2$, then $G$ contains the transposition $(a \; a+1)$ and consequently $G = \symm{n}$ by Theorem~\ref{thm:Jordan-prim-trans}.
If $\jumps{H}$ contains a pair $(a,b)$ such that $b - a \leq n - 3$, then $G$ contains a cycle of length at most $n-3$ and consequently $\alt{n} \leq G$ by Theorem~\ref{thm:Jones1.3}.
By our premises, none of the above is possible.
Therefore, we may assume that every pair $(a,b) \in \jumps{H}$ satisfies $n - 2 \leq b - a \leq n - 1$ and $b - a \geq 3$, that is,
\begin{equation}
\jumps{H} \subseteq \{(1,n-1), (1,n), (2,n), (2,n+1), (3,n+1)\}.
\label{eq:J}
\end{equation}

Consider first the case where $\card{\jumps{H}} = 1$, say $\jumps{H} = \{(a,b)\}$.
Then each permutation in $H$ either has no jumps or has a jump between $a$ and $b$.
By Lemma~\ref{lem:one-jump}, we must have $a = 1$ or $b = n+1$.
A contradiction arises if $\jumps{H} = \{(2,n)\}$.

Consider then the case where $\card{\jumps{H}} \geq 2$.
If $\jumps{H}$ contains pairs $(a,b)$ and $(a,b+1)$ for some $a$ and $b$, then $G$ contains the cycles $(a \; a+1 \; \cdots \; b-1)$ and $(a \; a+1 \; \cdots \; b)$ and hence also
\[
(a \; a+1 \; \cdots \; b)^{-1} \circ (a \; a+1 \; \cdots \; b-1) = (b-1 \; b) \in G. 
\]
Consequently, $G = \symm{n}$ by Theorem~\ref{thm:Jordan-prim-trans}, and we have reached a contradiction with the premise that $\alt{n} \nleq G$.
In a similar way, we can conclude that if $\jumps{H}$ contains pairs $(a,b)$ and $(a+1,b)$ for some $a$ and $b$, then $(a \; b-1) \in G$ and hence $G = \symm{n}$, again a contradiction.

If $\jumps{H}$ contains pairs $(a,b)$ and $(a+1,b+1)$ for some $a$ and $b$, then $G$ contains the cycles $(a \; a+1 \; \cdots \; b-1)$ and $(a+1 \; a+2 \; \cdots \; b)$ and hence also
\[
(a \; a+1 \; \cdots \; b-1)^{-1} \circ (a+1 \; a+2 \; \cdots \; b) = (a \; b-1 \; b) \in G.
\]
If $n \geq 6$, then $\alt{n} \leq G$ by Theorem~\ref{thm:Jordan-prim-cycle}, and we have reached again a contradiction.
If $n = 5$, then one of the following cases holds:
\begin{itemize}
\item Case 1: $(1,4), (2,5) \in \jumps{H}$ and consequently $(1 \; 2 \; 3), (2 \; 3 \; 4) \in G$. Since these permutations generate $\alt{\interval{1}{4}}$ and since the only transitive overgroups of $\alt{\interval{1}{4}}$ in $\symm{5}$ are $\alt{5}$ and $\symm{5}$, we conclude that $\alt{5} \leq G$, a contradiction.
\item Case 2: $(2,5), (3,6) \in \jumps{H}$ and consequently $(2 \; 3 \; 4), (3 \; 4 \; 5) \in G$.
By considering conjugates with respect to the descending permutation, the previous case implies that $\alt{5} \leq G$, a contradiction.
\item Case 3: $(1,5), (2,6) \in \jumps{H}$ and consequently $(1 \; 2 \; 3 \; 4), (2 \; 3 \; 4 \; 5) \in G$. These permutations generate $\symm{5}$, so we have $G = \symm{5}$, a contradiction.
\end{itemize}

If $\jumps{H}$ contains pairs $(1, n-1)$ and $(2, n+1)$, then $G$ contains the cycles $(1 \; 2 \; \cdots \; n-2)$ and $(2 \; 3 \; \cdots \; n)$ and hence also
\[
(1 \; 2 \; \cdots \; n-2)^{-1} \circ (2 \; 3 \; \cdots \; n) = (1 \; n-2 \; n-1 \; n) \in G.
\]
If $n \geq 7$, then $\alt{n} \leq G$ by Theorem~\ref{thm:Jones1.3}, a contradiction.
If $n = 5$, then $G = \symm{5}$, because the cycles $(1 \; 2 \; 3)$ and $(2 \; 3 \; 4 \; 5)$ generate the full symmetric group $\symm{5}$, again a contradiction.
If $n = 6$, then $G$ contains the cycles $(1 \; 2 \; 3 \; 4)$ and $(2 \; 3 \; 4 \; 5 \; 6)$; this is fine.

By considering conjugates with respect to the descending permutation, a contradiction arises in a similar way if we assume that $\jumps{H}$ contains pairs $(1,n)$ and $(3,n+1)$, unless $n = 6$.

If $\jumps{H}$ contains pairs $(1,n-1)$ and $(3,n+1)$, then $G$ contains the cycles $(1 \; 2 \; \cdots \; n-2)$ and $(3 \; 4 \; \cdots \; n)$ and hence also
\[
(1 \; 2 \; \cdots \; n-2)^{-1} \circ (3 \; 4 \; \cdots \; n) = (1 \; n-2 \; n-1 \; n \; 2) \in G.
\]
If $n \geq 8$, then $\alt{n} \leq G$ by Theorem~\ref{thm:Jones1.3}.
If $n = 7$, then we also have $\alt{n} \leq G$ by Theorem~\ref{thm:Jones1.3}, because the above\hyp{}mentioned cycles have two fixed points, $n = 6 + 1$, and $6$ is not a prime power.
If $n = 5$, then $\alt{5} \leq G$, because the cycles $(1 \; 2 \; 3)$ and $(3 \; 4 \; 5)$ generate the alternating group $\alt{5}$.
In all the above cases, we reached a contradiction.
If $n = 6$, then $G$ contains the cycles $(1 \; 2 \; 3 \; 4)$ and $(3 \; 4 \; 5 \; 6)$; this is fine.

We have exhausted all possible two\hyp{}element subsets of the set on the right side of \eqref{eq:J}.
It is easy to see that any subset with three or more elements contains pairs of the form $(a,b)$ and $(a,b+1)$; or $(a,b)$ and $(a+1,b)$; or $(a,b)$ and $(a+1,b+1)$ for some $a$ and $b$, and we have seen above that this leads to a contradiction.
We conclude that $\card{\jumps{H}} \geq 2$ only if $n = 6$ and $\jumps{H}$ is one of the following two\hyp{}element sets: $\{(1,5), (2,7)\}$, $\{(1,6), (3,7)\}$, $\{(1,5), (3,7)\}$.

Finally, we note that it is also possible that $\jumps{H} = \emptyset$.
This concludes the proof of the lemma.
\end{proof}

\begin{fact}
\label{fact:primitive6}
The primitive permutation groups of degree $6$ are well known, and they are listed, up to isomorphism, in Table~\ref{table:primitive6}. The list also provides a generating set for each, and it indicates the point stabilizer of each group. (Source:~\cite[Section~VI.1.4]{CRC-Combinatorial-Designs}.)
\end{fact}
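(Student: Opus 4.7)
Since Fact~\ref{fact:primitive6} records a classical classification rather than a new result, the plan is not to prove anything from scratch but to describe how Table~\ref{table:primitive6} is compiled and checked. The starting point is the standard correspondence between transitive actions of a group $G$ of degree $n$ (up to equivalence) and conjugacy classes of core-free subgroups $H \leq G$ of index $n$; the action is primitive precisely when $H$ is maximal in $G$. Hence classifying primitive permutation groups of degree $6$ reduces to enumerating, up to isomorphism, pairs $(G,H)$ with $H$ maximal and core-free in $G$ and $[G:H]=6$, together with a faithful embedding of $G$ into $\symm{6}$ via the coset action.

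To carry out this enumeration, I would invoke the O'Nan--Scott theorem to stratify the primitive groups by socle type. The affine type requires the degree to be a prime power, which excludes $6$. The simple diagonal, product action, and twisted wreath types all force the socle to be a direct power $T^k$ with $k \geq 2$ and produce lower bounds on the degree that are incompatible with $n=6$. The only remaining case is the almost simple type: $\mathrm{soc}(G) = T$ for a nonabelian simple group $T$ admitting a maximal subgroup of index $6$. A short inspection of the nonabelian simple groups of order at most $720$ shows that only two qualify, namely $A_5$ (with the dihedral group $D_5$ of order $10$ as a maximal subgroup of index $6$, realized by the exceptional isomorphism $A_5 \cong \PSL{2}{5}$ acting on the projective line over $\mathbb{F}_5$) and $A_6$ (with $A_5$ sitting as a maximal subgroup of index $6$ via the exotic embedding made possible by the outer automorphism of $A_6$). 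For each socle I would then list the almost simple overgroups $\mathrm{soc}(G) \leq G \leq \Aut(\mathrm{soc}(G))$ that still act primitively of degree $6$: for socle $A_5$ these are $A_5 \cong \PSL{2}{5}$ itself and $S_5 \cong \PGL{2}{5}$, while for socle $A_6$ they are $A_6$ and $S_6$, the other two almost simple extensions $\PGL{2}{9}$ and $M_{10}$ not admitting a maximal subgroup of index $6$.

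The final step is bookkeeping: for each of the four resulting isomorphism types, exhibit concrete generators inside $\symm{6}$ and identify the point stabilizer by its order and isomorphism type. For $\PSL{2}{5}$ and $\PGL{2}{5}$ one reads generators off from the standard projective action on $\mathbb{P}^1(\mathbb{F}_5)$, the stabilizers being $D_5$ and the Frobenius group of order $20$, respectively; for $\alt{6}$ and $\symm{6}$ one uses the obvious generators with stabilizers $\alt{5}$ and $\symm{5}$. The main obstacle in this plan is confidence in completeness at the O'Nan--Scott step, since one must be sure no sporadic almost simple overgroup of $A_5$ or $A_6$ has been overlooked; for a degree as small as $6$, however, this is routinely double-checked by direct computation, and the list is indeed the one reproduced in the cited reference~\cite[Section~VI.1.4]{CRC-Combinatorial-Designs}.
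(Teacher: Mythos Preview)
Your outline is sound, but note that the paper does not prove Fact~\ref{fact:primitive6} at all: it is stated as a \emph{Fact} and simply cited from~\cite[Section~VI.1.4]{CRC-Combinatorial-Designs}, with Table~\ref{table:primitive6} copied from that source. So there is no ``paper's own proof'' to compare against; your O'Nan--Scott argument is genuinely additional content rather than an alternative route to an existing argument.

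That said, your derivation is correct in its essentials. The elimination of the affine, diagonal, product, and twisted wreath types for degree $6$ is straightforward, and the identification of $A_5$ and $A_6$ as the only nonabelian simple groups with a core-free maximal subgroup of index $6$ is right. One small omission: among the almost simple extensions of $A_6$ you mention $\PGL{2}{9}$ and $M_{10}$ but not $P\Gamma L_2(9) = \Aut(A_6)$; this is harmless since $\lvert P\Gamma L_2(9)\rvert = 1440 > 720 = \lvert\symm{6}\rvert$, so no faithful degree-$6$ action is possible. More directly, any primitive group of degree $6$ embeds in $\symm{6}$, so for socle $A_6$ the only candidates are subgroups of $\symm{6}$ containing $A_6$, namely $A_6$ and $\symm{6}$ themselves; this bypasses the need to inspect the other three extensions individually. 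Your identification of the point stabilizers ($D_5$, the Frobenius group $\AGL{1}{5}$ of order $20$, $A_5$, $\symm{5}$) matches the table.
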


\begin{table}
\begin{center}
\begin{tabular}{lll}
\toprule
Group & Generators & Point stabilizer \\
\midrule
$\PSL{2}{5} \cong \alt{5}$ & $(1 \; 2 \; 3 \; 4 \; 5), (1 \; 3 \; 4)(2 \; 5 \; 6)$ & $\dihed{5}$ \\
$\PGL{2}{5} \cong \symm{5}$ & $(1 \; 2 \; 3 \; 4 \; 5), (1 \; 5 \; 2 \; 4 \; 3 \; 6)$ & $\AGL{1}{5}$ \\
$\alt{6}$ & $(1 \; 2 \; 3 \; 4 \; 5), (1 \; 2 \; 3)(4 \; 5 \; 6)$ & $\alt{5}$ \\
$\symm{6}$ & $(1 \; 2 \; 3 \; 4 \; 5), (1 \; 2 \; 3 \; 4 \; 5 \; 6)$ & $\symm{5}$ \\
\bottomrule
\end{tabular}
\end{center}

\medskip
\caption{Primitive permutation groups of degree $6$.}
\label{table:primitive6}
\end{table}

\begin{fact}
\label{fact:PGL25-S5-S6}
The following equalities involving subgroups of $\symm{6}$ hold:
\begin{gather*}
\gensg{(1 \; 2 \; 3 \; 4), (2 \; 3 \; 4 \; 5 \; 6)} = \gensg{456123, 156234, 145623} = \gensg{432165, 143265, 154326}, \\
\gensg{(1 \; 2 \; 3 \; 4 \; 5), (3 \; 4 \; 5 \; 6)} = \gensg{154326, 215436, 216543} = \gensg{451236, 345126, 456123}, \\
\gensg{(1 \; 2 \; 3 \; 4), (3 \; 4 \; 5 \; 6)} = \gensg{143265, 214365, 215436} = \gensg{634512, 563412, 562341}.
\end{gather*}
Each one of the three groups mentioned above is isomorphic to $\PGL{2}{5} \cong \symm{5}$.
Hence, each one of them is a maximal subgroup of $\symm{6}$.
Each one is a primitive group that does not contain the natural cycle $\natcycle{6}$ and does not include the alternating group $\alt{6}$.
\end{fact}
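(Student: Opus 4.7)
The plan is to reduce this multi-part Fact to a finite collection of verifications about concrete subgroups of $\symm{6}$, after which the stated properties follow from Fact~\ref{fact:primitive6} together with elementary group theory. First, I would prove the three chains of generating-set equalities. For each chain I fix one presentation as the reference and express every permutation appearing in the other two presentations as an explicit word in the reference generators and their inverses. For instance, to settle the first line, I rewrite $456123$, $156234$, $145623$, $432165$, $143265$, $154326$ as short products of $(1\;2\;3\;4)$ and $(2\;3\;4\;5\;6)$. Since each word is short and both directions of containment are required for every pair of presentations, this is a routine but finite calculation.

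Next, I would identify the isomorphism type of each of the three groups. Using one reference generating set per group, I would enumerate the group by closing the generators under composition (a Schreier--Sims style procedure on six points) and verify that the resulting group has order $120$. By Fact~\ref{fact:primitive6}, the only primitive permutation groups of degree $6$ of order $120$ are the conjugates of $\PGL{2}{5}$; so it remains to show primitivity. I would do this by checking directly that no nontrivial partition of $\nset{6}$ into blocks of size $2$ or $3$ is preserved: for each candidate block system one exhibits a generator that maps some block to a set that is not a block. Having order $120$ and being primitive forces each group to be isomorphic to $\PGL{2}{5} \cong \symm{5}$.

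For maximality, I exploit the fact that primitivity is inherited by transitive overgroups: if $G$ is primitive and $G \leq H \leq \symm{6}$, then $H$ is certainly transitive and any $H$\hyp{}invariant partition is $G$\hyp{}invariant, hence trivial. So any proper overgroup of $G$ in $\symm{6}$ must itself be primitive, and by Fact~\ref{fact:primitive6} it can only be $\alt{6}$ or $\symm{6}$. But $\alt{6}$ cannot contain $G$, since $G$ contains an odd permutation (the $4$\hyp{}cycle $(1\;2\;3\;4)$ in cases one and three, and $(3\;4\;5\;6)$ in case two). The only remaining option is $H = \symm{6}$, so $G$ is a maximal subgroup.

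Finally, the concluding negative assertions are immediate. Primitivity has just been established. The inclusion $\alt{6} \leq G$ is ruled out by $|\alt{6}| = 360 > 120 = |G|$. Non\hyp{}membership $\natcycle{6} \notin G$ is a direct check: having already enumerated $G$, one simply observes that the specific $6$\hyp{}cycle $(1\;2\;3\;4\;5\;6)$ is not among its elements. The main obstacle throughout is bookkeeping rather than ideas: the verification of the generating-set equalities, the order computation, the primitivity check, and the direct test for $\natcycle{6}$ each consist of many small permutation calculations that must be executed without error. Conceptually the Fact is shallow, but to avoid tedium and mistakes the routine verifications are best entrusted to a computer algebra system such as \texttt{GAP}.
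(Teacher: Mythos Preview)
Your proposal is correct. The paper states this as a \emph{Fact} without proof, treating it as a collection of assertions about concrete subgroups of $\symm{6}$ that can be verified directly (by hand or by computer algebra); your plan supplies exactly such a verification, and each step---mutual containment of generating sets, order computation, primitivity check via block systems, identification via Fact~\ref{fact:primitive6}, maximality via inheritance of primitivity and exclusion of $\alt{6}$ by parity, and direct exclusion of $\natcycle{6}$---is sound.
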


\begin{lemma}
\label{lem:Comp-prim-n=6}
Let $n = 6$, and assume that $G \leq \symm{n}$ is a primitive group that does not contain the natural cycle $\natcycle{n}$ and $\alt{n} \nleq G$.
Then the following statements hold.
\begin{enumerate}[label=\upshape{(\roman*)}]
\item\label{lem:Comp-prim-n=6:i}
The following conditions are equivalent:
\begin{enumerate}[label=\upshape{(\arabic*)}]
\item\label{cond:jumpsG'} $\jumps{\Compn[n+1]{G}} = \{(1,5), (2,7)\}$,
\item\label{cond:G'description} $\Compn[n+1]{G} = \{1234567, 1276543, 1567234, 1543276\}$,
\item\label{cond:Gdescription} $G = \gensg{(1 \; 2 \; 3 \; 4), (2 \; 3 \; 4 \; 5 \; 6)}$.
\end{enumerate}

\item\label{lem:Comp-prim-n=6:ii}
The following conditions are equivalent:
\begin{enumerate}[label=\upshape{(\arabic*)}]
\item $\jumps{\Compn[n+1]{G}} = \{(1,6), (3,7)\}$,
\item $\Compn[n+1]{G} = \{1234567, 2165437, 4561237, 5432167\}$.
\item $G = \gensg{(1 \; 2 \; 3 \; 4 \; 5), (3 \; 4 \; 5 \; 6)}$.
\end{enumerate}

\item\label{lem:Comp-prim-n=6:iii}
The following conditions are equivalent:
\begin{enumerate}[label=\upshape{(\arabic*)}]
\item $\jumps{\Compn[n+1]{G}} = \{(1,5), (3,7)\}$,
\item $\Compn[n+1]{G} = \{1234567, 2154376, 6734512, 7654321\}$.
\item $G = \gensg{(1 \; 2 \; 3 \; 4), (3 \; 4 \; 5 \; 6)}$.
\end{enumerate}
\end{enumerate}
\end{lemma}

\begin{proof}
\ref{lem:Comp-prim-n=6:i}
We prove the implications
(1)$\implies$(2)$\implies$(1)$\implies$(3)$\implies$(1).
Write $H := \Compn[n+1]{G}$.
To prove (1)$\implies$(2), assume that $\jumps{H} = \{(1,5), (2,7)\}$.
Then $\gensg{(1 \; 2 \; 3 \; 4), (2 \; 3 \; 4 \; 5 \; 6)} \leq G$ by Lemma~\ref{lem:AtkBea-L11}.
Then $H$ is a subset of
\begin{multline*}
K := \{\sigma \in \symm{n+1} \mid \jumps{\sigma} \subseteq \{(1,5), (2,7)\}\} \\
\begin{array}{rlllll}
= \{1234567, & 4321567, & 1276543, & 1543276, & 1567234, & 3451276, \\
    7654321, & 7651234, & 3456721, & 6723451, & 4327651, & 6721543 \}.
\end{array}
\end{multline*}
However, since $H$ is a group and hence closed under composition, not all elements of $K$ can be members of $H$.
Namely, for each
\[
\sigma \in \{3451276, 3456721, 4327651, 6721543, 6723451, 7651234\},
\]
we have $\sigma \circ \sigma \notin K$ or $\sigma \circ \sigma \circ \sigma \notin K$, so $\sigma$ cannot be in $H$.
Furthermore, for $\sigma = 4321567$ and for each $\tau \in \{1276543, 1567234, 1543276, 7654321\}$, we have $\sigma \circ \tau \notin K$, so we cannot have both $\sigma$ and $\tau$ in $H$.
From this it follows that $4321567 \notin H$, because otherwise we would have $H = \{1234567, 4321567\}$ and consequently $(2,7) \notin \jumps{H}$, contradicting our assumptions.
Similarly, for $\sigma = 7654321$ and for each $\tau \in \{1276543, 1543276, 1567234\}$, we have $\sigma \circ \tau \notin K$, so we cannot have both $\sigma$ and $\tau$ in $H$.
From this it follows that $7654321 \notin H$, because otherwise we would have $H \subseteq \{1234567, 7654321\}$ and hence $\jumps{H} = \emptyset$.
Therefore, $H \subseteq \{1234567, 1276543, 1543276, 1567234\}$.
Furthermore, either $1567234$ or $1563276$ must be in $H$, because otherwise we would have $(1,5) \notin \jumps{H}$.

Observe that
\begin{align*}
\Patl[6]{1234567} &= \{123456\}, \\
\Patl[6]{1276543} &= \{165432, 126543\}, \\
\Patl[6]{1543276} &= \{432165, 143265, 154326\}, \\
\Patl[6]{1567234} &= \{456123, 156234, 145623\}.
\end{align*}
By Fact~\ref{fact:PGL25-S5-S6}, we have
\[
\gensg{456123, 156234, 145623} = \gensg{432165, 143265, 154326} = \gensg{(1 \; 2 \; 3 \; 4), (2 \; 3 \; 4 \; 5 \; 6)},
\]
and it also holds that
\[
\{1234567, 165432, 126543\} \subseteq \gensg{(1 \; 2 \; 3 \; 4), (2 \; 3 \; 4 \; 5 \; 6)}.
\]
Consequently,
\[
\{1234567, 1276543, 1543276, 1567234\}
\subseteq \Compn[n+1]{\gensg{(1 \; 2 \; 3 \; 4), (2 \; 3 \; 4 \; 5 \; 6)}} \subseteq H,
\]
and we conclude that $H = \{1234567, 1276543, 1543276, 1567234\}$.

Implication (2)$\implies$(1) is obvious.

For implication (1)$\implies$(3), assume that $\jumps{H} = \{(1,5), (2,7)\}$.
As observed above, this implies that $\gensg{(1 \; 2 \; 3 \; 4), (2 \; 3 \; 4 \; 5 \; 6)} \leq G$ by Lemma~\ref{lem:AtkBea-L11}.
By Fact~\ref{fact:PGL25-S5-S6}, $\gensg{(1 \; 2 \; 3 \; 4), (2 \; 3 \; 4 \; 5 \; 6)}$ is primitive and it does not contain $\natcycle{6}$ nor $\alt{6}$, but the only proper overgroup of $\gensg{(1 \; 2 \; 3 \; 4), (2 \; 3 \; 4 \; 5 \; 6)}$, namely $\symm{6}$, fails to have these properties.
Therefore, the subgroup inclusion must be satisfied as an equality, i.e., $G = \gensg{(1 \; 2 \; 3 \; 4), (2 \; 3 \; 4 \; 5 \; 6)}$.

Finally, for implication (3)$\implies$(1), assume that $G = \gensg{(1 \; 2 \; 3 \; 4), (2 \; 3 \; 4 \; 5 \; 6)}$.
As observed above, $\Patl[6]{1543276} \subseteq \gensg{(1 \; 2 \; 3 \; 4), (2 \; 3 \; 4 \; 5 \; 6)}$, so $1543276 \in H$.
Consequently, $\{(1,5), (2,7)\} \subseteq \jumps{H}$.
By Lemma~\ref{lem:jumpsets}, $\jumps{H}$ cannot contain more than two pairs, so we conclude that $\jumps{H} = \{(1,5), (2,7)\}$.

\ref{lem:Comp-prim-n=6:ii}
The claim follows from part \ref{lem:Comp-prim-n=6:i} by considering conjugates with respect to the descending permutation and noting the following two facts:
(1) For every $\sigma \in \symm{n+1}$, we have $(a,b) \in \jumps{\sigma}$ if and only if $(\desc{n+1}(b), \desc{n+1}(a)) \in \jumps{\desc{n+1} \circ \sigma \circ \desc{n+1}}$.
(2) Replacing generators of a group with their inverses does not change the group generated.

\ref{lem:Comp-prim-n=6:iii}
The proof is similar to part \ref{lem:Comp-prim-n=6:i}.
\end{proof}

For an interval $\interval{a}{b} \subseteq \nset{n}$, we denote by $\dihed{\interval{a}{b}}$ the subgroup of $\symm{n}$ comprising all permutations of the form $\asc{a-1} \oplus \pi \oplus \asc{n-b}$, where $\pi \in \dihed{b-a+1}$.

\begin{lemma}
\label{lem:Comp-prim-general}
Let $n \geq 5$, and assume that $G \leq \symm{n}$ is a primitive group that does not contain the natural cycle $\natcycle{n}$ and $\alt{n} \nleq G$.
Then the following statements hold.
\begin{enumerate}[label=\upshape{(\roman*)}]
\item\label{lem:Comp-prim-general:i}
The following conditions are equivalent:
\begin{enumerate}[label=\upshape{(\arabic*)}]
\item $\jumps{\Compn[n+1]{G}} = \{(1,n)\}$,
\item $\Compn[n+1]{G} = \gensg{\desc{n-1} \oplus \asc{2}}$,
\item $\dihed{\interval{1}{n-1}} \leq G$ and if $n = 6$ then $G = \gensg{(1 \; 2 \; 3 \; 4 \; 5), (1 \; 3 \; 4)(2 \; 5 \; 6)}$.
\end{enumerate}

\item\label{lem:Comp-prim-general:ii}
The following conditions are equivalent:
\begin{enumerate}[label=\upshape{(\arabic*)}]
\item $\jumps{\Compn[n+1]{G}} = \{(1,n-1)\}$,
\item $\Compn[n+1]{G} = \gensg{\desc{n-2} \oplus \asc{3}}$,
\item $\dihed{\interval{1}{n-2}} \leq G$.
\end{enumerate}

\item\label{lem:Comp-prim-general:iii}
The following conditions are equivalent:
\begin{enumerate}[label=\upshape{(\arabic*)}]
\item $\jumps{\Compn[n+1]{G}} = \{(2, n+1)\}$,
\item $\Compn[n+1]{G} = \gensg{\asc{2} \oplus \desc{n-1}}$,
\item $\dihed{\interval{2}{n}} \leq G$ and if $n = 6$ then $G = \gensg{(2 \; 3 \; 4 \; 5 \; 6), (1 \; 2 \; 5)(3 \; 4 \; 6)}$.
\end{enumerate}

\item\label{lem:Comp-prim-general:iv}
The following conditions are equivalent:
\begin{enumerate}[label=\upshape{(\arabic*)}]
\item $\jumps{\Compn[n+1]{G}} = \{(3, n+1)\}$,
\item $\Compn[n+1]{G} = \gensg{\asc{3} \oplus \desc{n-2}}$,
\item $\dihed{\interval{3}{n}} \leq G$.
\end{enumerate}
\end{enumerate}
\end{lemma}

\begin{proof}
Write $H := \Compn[n+1]{G}$.

\ref{lem:Comp-prim-general:i}
(1)$\implies$(2):
If $\jumps{H} = \{(1, n)\}$, then
\[
H \subseteq \{\sigma \in \symm{n+1} \mid \jumps{\sigma} \subseteq \{(1,n)\}\} = \{\asc{n+1}, \desc{n+1}, \desc{n-1} \oplus \asc{2}, \desc{2} \ominus \asc{n-1}\}.
\]
However, $\desc{2} \ominus \asc{n-1} \notin H$, because $(\desc{2} \ominus \asc{n-1}) \circ (\desc{2} \ominus \asc{n-1}) = (\desc{2} \oplus \desc{2}) \ominus \asc{n-3}$ and this permutation has a jump between $n-2$ and $n+1$.
Moreover, $\desc{n+1}$ and $\desc{n-1} \oplus \asc{2}$ cannot both be in $H$, because
$\desc{n+1} \circ (\desc{n-1} \oplus \asc{2}) = \asc{n-1} \ominus \desc{2}$, and this permutation has a jump between $2$ and $n+1$.
At the same time, we must have $\desc{n-1} \oplus \asc{2} \in H$, because otherwise we would have $\jumps{H} = \emptyset$.
Therefore, $H = \{\asc{n+1}, \desc{n-1} \oplus \asc{2}\} = \gensg{\desc{n-1} \oplus \asc{2}}$.

(2)$\implies$(3):
If $H = \{\asc{n+1}, \desc{n-1} \oplus \asc{2}\}$, then $\dihed{\interval{1}{n-1}} \leq G$, because it holds that $\Patl[n]{\desc{n-1} \oplus \asc{2}} = \{\desc{n-2} \oplus \asc{2}, \desc{n-1} \oplus \asc{1}\}$ and this set generates $\dihed{\interval{1}{n-1}}$.

We can conclude from Facts~\ref{fact:primitive6} and~\ref{fact:PGL25-S5-S6} that in the case where $n = 6$, the primitive groups that include $\dihed{\interval{1}{n-1}}$ are $\gensg{(1 \; 2 \; 3 \; 4 \; 5), (1 \; 3 \; 4)(2 \; 5 \; 6)} \cong \PSL{2}{5}$, $\gensg{(1 \; 2 \; 3 \; 4 \; 5), (3 \; 4 \; 5 \; 6)} \cong \PGL{2}{5}$, and $\symm{6}$.
But the last two options are not possible, because $\alt{6} \leq \symm{6}$ and we have shown in Lemma~\ref{lem:Comp-prim-n=6} that
\[
\Compn[n+1]{\gensg{(1 \; 2 \; 3 \; 4 \; 5), (3 \; 4 \; 5 \; 6)}} = \{1234567, 2165437, 4561237, 5432167\}.
\]
Therefore, $G = \gensg{(1 \; 2 \; 3 \; 4 \; 5), (1 \; 3 \; 4)(2 \; 5 \; 6)}$.

(3)$\implies$(1):
If $\dihed{\interval{1}{n-1}} \leq G$, then $\desc{n-1} \oplus \asc{2} \in H$ because $\Patl[n]{\desc{n-1} \oplus \asc{2}} = \{\desc{n-2} \oplus \asc{2}, \desc{n-1} \oplus \asc{1}\} \subseteq \dihed{\interval{1}{n-1}}$.
Consequently, $(1,n) \in \jumps{H}$.
It follows from Lemma~\ref{lem:jumpsets} that $\jumps{H} = \{(1,n)\}$, unless $n = 6$.
If $n = 6$, then $\jumps{H}$ equals either $\{(1,6)\}$ or $\{(1,6), (3,7)\}$.
Lemma~\ref{lem:Comp-prim-n=6} asserts that if $n = 6$ and $\jumps{H} = \{(1,6), (3,7)\}$, then $G = \gensg{(1 \; 2 \; 3 \; 4 \; 5), (3 \; 4 \; 5 \; 6)}$.
But we are assuming that $G = \gensg{(1 \; 2 \; 3 \; 4 \; 5), (1 \; 3 \; 4)(2 \; 5 \; 6)} \neq \gensg{(1 \; 2 \; 3 \; 4 \; 5), (3 \; 4 \; 5 \; 6)}$, so we have $\jumps{H} = \{(1,6)\}$ also in this case.

\ref{lem:Comp-prim-general:ii}
(1)$\implies$(2):
If $J = \{(1, n-1)\}$, then
\[
H \subseteq \{\sigma \in \symm{n+1} \mid \jumps{\sigma} \subseteq \{(1,n-1)\}\} = \{\asc{n+1}, \desc{n+1}, \desc{n-2} \oplus \asc{3}, \desc{3} \ominus \asc{n-2}\}.
\]
However, $\desc{3} \ominus \asc{n-2} \notin H$, because
\[
(\desc{3} \ominus \asc{n-2}) \circ (\desc{3} \ominus \asc{n-2}) =
\begin{cases}
(\desc{3} \oplus \desc{3}) \ominus \asc{n-5}, & \text{if $n \geq 6$,} \\
\desc{3} \oplus \desc{3}, & \text{if $n = 6$,} \\
21354, & \text{if $n = 5$,}
\end{cases}
\]
and in each case we have a permutation that has a jump that is not allowed.
Moreover, $\desc{n+1}$ and $\desc{n-2} \oplus \asc{3}$ cannot both be in $H$, because $\desc{n+1} \circ (\desc{n-2} \oplus \asc{3}) = \asc{n-2} \ominus \desc{3}$, and this permutation has a jump between $3$ and $n+1$.
At the same time, we must have $\desc{n-2} \oplus \asc{3} \in H$, because otherwise we would have $\jumps{H} = \emptyset$.
Therefore, $H = \{\asc{n+1}, \desc{n-2} \oplus \asc{3}\}$.

(2)$\implies$(3):
If $H = \{\asc{n+1}, \desc{n-2} \oplus \asc{3}\}$, then $\dihed{\interval{1}{n-2}} \leq G$, because it holds that $\Patl[n]{\desc{n-2} \oplus \asc{3}} = \{\desc{n-3} \oplus \asc{3}, \desc{n-2} \oplus \asc{2}\}$, and this set generates $\dihed{\interval{1}{n-2}}$.

(3)$\implies$(1):
Assume that $\dihed{\interval{1}{n-2}} \leq G$.
Then we have $\desc{n-2} \oplus \asc{3} \in H$, because $\Patl[n]{\desc{n-2} \oplus \asc{3}} = \{\desc{n-3} \oplus \asc{3}, \desc{n-2} \oplus \asc{2}\} \subseteq \dihed{\interval{1}{n-2}}$.
Therefore, $(1,n-1) \in \jumps{H}$.
It follows from Lemma~\ref{lem:jumpsets} that $\jumps{H} = \{(1,n-1)\}$, unless $n = 6$.
If $n = 6$, then $\jumps{H}$ equals either $\{(1,5)\}$ or $\{(1,5), (2,7)\}$.
Suppose, to the contrary, that $n = 6$ and $\jumps{H} = \{(1,5), (2,7)\}$. Then $G = \gensg{(1 \; 2 \; 3 \; 4), (2 \; 3 \; 4 \; 5 \; 6)}$ by Lemma~\ref{lem:Comp-prim-n=6}. But $432156 \notin \gensg{(1 \; 2 \; 3 \; 4), (2 \; 3 \; 4 \; 5 \; 6)}$, which contradicts the assumption that $\dihed{\interval{1}{n-2}} \leq G$.
We conclude that $\jumps{H} = \{(1,5)\}$ also in the case where $n = 6$.

Statements \ref{lem:Comp-prim-general:iii} and \ref{lem:Comp-prim-general:iv}
follow from \ref{lem:Comp-prim-general:i} and \ref{lem:Comp-prim-general:ii} by considering conjugates with respect to the descending permutation.
\end{proof}

\begin{theorem}
\label{thm:Comp-primitive}
Assume that $G \leq \symm{n}$ is a primitive group that does not contain the natural cycle $\natcycle{n}$ and $\alt{n} \nleq G$.
Then the following statements hold.
\begin{enumerate}[label=\upshape{(\roman*)}]
\item\label{item:prim-1}
\begin{enumerate}[label=\upshape{(\alph*)}]
\item\label{item:prim-1-6}
If $n = 6$ and $G$ is one of the groups listed in Table~\ref{table:primitive-eq-6}, then $\Compn[n+1]{G}$ equals the group listed on the corresponding row of the table.

\item\label{item:prim-1-not6}
If $n \neq 6$ and $G$ is an overgroup of one of the groups $H$ listed in Table~\ref{table:primitive-neq-6}, then $\Compn[n+1]{G}$ equals the group listed on the corresponding row of the table.

\item Otherwise $\Compn[n+1]{G} \leq \gensg{\desc{n+1}}$.
\end{enumerate}

\item\label{item:prim-2}
$\Compn[n+2]{G} \leq \gensg{\desc{n+2}}$.
\end{enumerate}
\end{theorem}

\begin{table}
\begin{center}
\begin{tabular}{ll}
\toprule
$G$ & $\Compn[7]{G}$ \\
\midrule
$\gensg{(1 \; 2 \; 3 \; 4), (3 \; 4 \; 5 \; 6)}$ & $\{1234567, 2154376, 6734512, 7654321\}$ \\
$\gensg{(1 \; 2 \; 3 \; 4), (2 \; 3 \; 4 \; 5 \; 6)}$ & $\{1234567, 1276543, 1543276, 1567234\}$ \\
$\gensg{(1 \; 2 \; 3 \; 4 \; 5), (3 \; 4 \; 5 \; 6)}$ & $\{1234567, 2165437, 4561237, 5432167\}$ \\
$\gensg{(1 \; 2 \; 3 \; 4 \; 5), (1 \; 3 \; 4)(2 \; 5 \; 6)}$ & $\gensg{\dja{7}{5}}$ \\
$\gensg{(2 \; 3 \; 4 \; 5 \; 6), (1 \; 2 \; 5)(3 \; 4 \; 6)}$ & $\gensg{\ajd{7}{5}}$ \\
\bottomrule
\end{tabular}
\end{center}

\medskip
\caption{The sets $\Compn[7]{G}$ for some primitive groups $G \leq \symm{6}$.}
\label{table:primitive-eq-6}
\end{table}

\begin{table}
\begin{center}
\begin{tabular}{ll}
\toprule
$H$ & $\Compn[n+1]{G}$ \\
\midrule
$\dihed{\interval{1}{n-1}}$ & $\gensg{\dja{n+1}{n-1}}$ \\
$\dihed{\interval{1}{n-2}}$ & $\gensg{\dja{n+1}{n-2}}$ \\
$\dihed{\interval{2}{n}}$ & $\gensg{\ajd{n+1}{n-1}}$ \\
$\dihed{\interval{3}{n}}$ & $\gensg{\ajd{n+1}{n-2}}$ \\
\bottomrule
\end{tabular}
\end{center}

\medskip
\caption{The sets $\Compn[n+1]{G}$ for primitive groups $G$ of degree $n \neq 6$ satisfying $H \leq G$.}
\label{table:primitive-neq-6}
\end{table}

\begin{proof}
\ref{item:prim-1}
This follows easily from Lemmas~\ref{lem:jumpsets}, \ref{lem:Comp-prim-n=6}, and \ref{lem:Comp-prim-general}.
More precisely, Lemma~\ref{lem:jumpsets} describes which subsets of $\nset{n+1} \times \nset{n+1}$ may appear as sets of the form $\jumps{\Compn[n+1]{G}}$, where $G \leq \symm{n}$ is a primitive group satisfying the prescribed conditions.
For each such nonempty feasible subset,
Lemmas~\ref{lem:Comp-prim-n=6} and~\ref{lem:Comp-prim-general} characterize the corresponding groups $G$ and their compatibility sets $\Compn[n+1]{G}$.
Finally, note that $\jumps{\Compn[n+1]{G}} = \emptyset$ if and only if $\Compn[n+1]{G} \leq \gensg{\desc{n+1}}$.

\ref{item:prim-2}
This is proved in Lemma~\ref{prop:Comp-n+2-primitive}.
\end{proof}

\begin{remark}
The first three groups listed in Theorem~\ref{thm:Comp-primitive}\ref{item:prim-1}\ref{item:prim-1-6} (Table~\ref{table:primitive-eq-6}) are isomorphic to the projective general linear group $\PGL{2}{5}$ and the last two are isomorphic to the projective special linear group $\PSL{2}{5}$.
\end{remark}

\begin{remark}
\label{rem:primitive}
It remains a bit out of the scope of the current paper to determine which primitive groups satisfy the conditions of Theorem~\ref{thm:Comp-primitive}\ref{item:prim-1}\ref{item:prim-1-not6}.
Note that if $G \leq \symm{n}$ ($n \neq 6$) is a primitive group that is an overgroup of
$\dihed{\interval{1}{n-1}}$, $\dihed{\interval{1}{n-2}}$, $\dihed{\interval{2}{n}}$, or $\dihed{\interval{3}{n}}$, then $G$ contains a cycle fixing one or two points, so either $\alt{n} \leq G$ or $G$ is isomorphic to one of the groups listed in Theorem~\ref{thm:Jones1.2} parts \ref{Jones:k=1} and \ref{Jones:k=2}.
We leave it as an open question which ones of these groups, if any, do actually contain an appropriate dihedral subgroup.
\end{remark}

%%%%%%%%%%%%%%%%%%%%%%%%%%%%%%%%%%%%%%%%%%%%%%%%%%

\section{Concluding remarks}
\label{sec:concluding}

By repeatedly applying the theorems of previous sections, we can determine for every permutation group $G \leq \symm{n}$
the level sequence
\[
G, \Compn[n+1]{G}, \Compn[n+2]{G}, \dots, \Compn[n+i]{G}, \dots,
\]
as in \eqref{eq:comp-sequence},
or, in the case of intransitive or imprimitive groups, at least have good upper and lower bounds for the members of the sequence.
We have also determined which one of the stable sequences of Theorem~\ref{thm:AtkBea-asymptotic} the level sequence eventually reaches.

Concerning the question how fast the level sequence reaches one of the possible eventual steady sequences, our results can be summarized quickly as follows.
The upper bounds for $m$ given here are quite rough, and sharper bounds can be found in the theorems of previous sections.

\begin{corollary}
\label{cor:nutshell}
Let $G \leq \symm{n}$ and let $m$ be the smallest number $i$ such that $\Compn[n+i]{G}$ is one of the groups $\symm{n+i}$, $\dihed{n+i}$, $\cycl{n+i}$, $\symm{n+i}^{a,b}$, $\gensg{\symm{n+i}^{a,a}, \desc{n+i}}$ \textup{(}see Theorem~\ref{thm:AtkBea-asymptotic}\textup{)}.
\begin{enumerate}[label=\upshape{(\roman*)}]
\item If $G$ is neither an intransitive group nor an imprimitive group with $\natcycle{n} \notin G$, then $m \leq 2$.
\item If $G$ is intransitive, then $m \leq n-1$.
\item If $G$ is imprimitive and $\natcycle{n} \notin G$, then $m \leq p$, where $p$ is the largest proper divisor of $n$.
\end{enumerate}
\end{corollary}

\begin{proof}
This can be read off from Theorems
\ref{thm:symmetric-trivial},
\ref{thm:Comp-alt-n+2},
\ref{thm:Comp-cycle},
\ref{thm:general-intransitive},
\ref{thm:general-imprimitive},
\ref{thm:Comp-primitive}.
Note that if $\Pi$ is a nontrivial partition of $\nset{n}$, then $\maxblocksize_{a,b}(\Pi) \leq n-1$.
Moreover, if the blocks of $\Pi$ have equal size $b$, then $\maxblocksize_{a,b}(\Pi) \leq b \leq p$, where $p$ is the largest proper divisor of $n$.
\end{proof}

%%%%%%%%%%%%%%%%%%%%%%%%%%%%%%%%%%%%%%%%%%%%%%%%%%

\section*{Acknowledgments}

The author would like to thank Reinhard P\"oschel, Sven Reichard, Nik Ru\v{s}kuc, and Tobias Schlemmer for insightful discussions.

%%%%%%%%%%%%%%%%%%%%%%%%%%%%%%%%%%%%%%%%%%%%%%%%%%

\end{document}